\documentclass[a4paper,12pt]{article}

\usepackage{amsthm,amsmath,stmaryrd,bbm,hyperref,geometry,color,authblk}
\usepackage[utf8]{inputenc}
\usepackage{amssymb}
\usepackage[english]{babel}
\usepackage{graphicx}
\usepackage{amsfonts,amssymb}
\usepackage{verbatim}
\usepackage{enumitem}
\usepackage{graphicx}
\usepackage{subcaption}
\usepackage{float}
\usepackage{xcolor}
\definecolor{purple}{rgb}{0.5, 0, 0.5}

\setcounter{tocdepth}{2}
\geometry{hscale=0.8,vscale=0.85,centering}

\newcommand{\po}{\left(}
\newcommand{\pf}{\right)}

\newcommand{\cco}{\llbracket}
\newcommand{\ccf}{\rrbracket}
\newcommand{\R}{\mathbb R} 
\newcommand{\E}{\mathbb E}
\newcommand{\F}{\mathcal{F}} 
 
\newcommand{\T}{\mathbb T}
\newcommand{\mH}{\mathcal{H}}
\newcommand{\Z}{\mathbb Z} 
\newcommand{\N}{\mathbb N} 
\newcommand{\dd}{\text{d}}

\newcommand{\na}{\nabla}
 
\newcommand{\pierre}[1]{\textcolor{blue}{(P: #1)}}


\newtheorem{theorem}{Theorem}

\newtheorem*{assu*}{Assumption}
\newtheorem{lem}[theorem]{Lemma}
\newtheorem{defi}[theorem]{Definition}
\newtheorem{cor}[theorem]{Corollary}
\newtheorem{prop}[theorem]{Proposition}
\newtheorem{rem}{Remark}

\title{Convergence rates for an Adaptive Biasing Potential scheme from a Wasserstein optimization perspective}

\author[1]{Tony Lelièvre}
\author[2]{Xuyang Lin}
\author[3,4]{Pierre Monmarché}
\affil[1]{CERMICS, ENPC, Institut Polytechnique de Paris, and Inria, France}
\affil[2]{École Polytechnique}
\affil[3]{Sorbonne Université,  Laboratoire Jacques-Louis Lions \& Laboratoire de Chimie théorique, LJLL \& LCT, F-75005 Paris}
\affil[4]{Institut Universitaire de France}

\begin{document}

\maketitle

\begin{abstract}
Free-energy-based adaptive biasing methods, such as
Metadynamics, the Adaptive Biasing Force (ABF) and their variants, are enhanced sampling algorithms widely used in molecular simulations. Although their efficiency has been empirically acknowledged for decades, providing theoretical insights  via a quantitative convergence analysis is a difficult problem, in particular for the 
kinetic Langevin diffusion, which is non-reversible and hypocoercive.
We obtain the first exponential convergence result for such a process, in an idealized setting where the dynamics can be associated with a mean-field non-linear flow on the space of probability measures. A key of the analysis is the interpretation of the (idealized) algorithm as the gradient descent of a suitable functional over the space of probability distributions.
\end{abstract}

\section{Introduction}

\subsection{Sampling with Langevin diffusion}\label{sec:intro-motiv}

Given a temperature $\beta^{-1}>0$ and a potential energy function $U:\T^d \to \R$ (where $\T^d$ is the $d$-dimensional torus) such that $$U\in C^\infty(\T^d)$$ we consider  the Boltzmann-Gibbs probability measure with density\footnote{For simplicity, we choose a setting with periodic boundary conditions -which are actually used in many situations of practical interest in the context of molecular dynamics-, in order to avoid technical discussions about the behaviour at infinity of $U$. Generalizations to the state space $\R^d$ are possible, see Remark~\ref{re:T->R}}.
\begin{equation}\label{eq:def_gibbs}
\rho^*  = \frac{e^{-\beta U}}{\mathcal Z},    
\end{equation}
with $\mathcal Z:= \int_{\T^d} e^{-\beta U}$. We are interested in sampling $\rho^*$, namely in generating (approximately) independent random variables distributed according to~$\rho^*$. These samples can for example be used to estimate the expectation of some observables with respect to~$\rho^*$ by empirical averages. This question is ubiquitous in Molecular dynamics (MD)~\cite{FreeEnergy,ActaNumerica}, Bayesian statistics~\cite{robert-casella-2004} or Machine Learning~\cite{bach-2024}. Samples are typically obtained by Markov Chain Monte Carlo (MCMC) methods, that is the simulation of a Markov chain which is ergodic with respect to $\rho^*$. There are two types of numerical errors associated with such simulations: deterministic errors (biases) related to the fact that it takes a long time to reach stationarity, or that the measure actually sampled by the Markov chain slightly differs from $\rho^*$ (because of time discretization errors for example), and statistical errors related to the variances of the estimators.  In many cases, in particular in MD for physical motivations, the Markov process is a kinetic dynamics (with a position $x$ and a velocity $v$) ergodic with respect to the product measure in position and velocity
\begin{equation}\label{eq:def_gibbs_kin}
\nu^* = \rho^* \otimes \mathcal N(0, \beta^{-1} I_d) \propto e^{-\beta H}\,,
\end{equation}
where $\mathcal N(m,\Sigma)$ denotes the Gaussian distribution with mean $m\in\R^d$ and covariance matrix $\Sigma \in \R^{d\times d}$, and $H(x,v)=U(x)+|v|^2/2$ is called the Hamiltonian\footnote{The mass is here chosen to be the identity for simplicity. Generalizations to a general mass matrix are straightforward.}. Most MD simulations are based on the (kinetic, or underdamped) Langevin diffusion process
\begin{equation}
    \label{eq:kineticLangevin}
    \left\{\begin{aligned}
   \dd X_t  & =  V_t \, \dd t   \\
    \dd V_t & =  -\na U(X_t) \, \dd t -\gamma V_t \, \dd t + \sqrt{2\gamma/\beta} \, \dd B_t\,,
\end{aligned}\right.
\end{equation} 
 with $\gamma>0$ a friction parameter and $(B_t)_{t \ge 0}$ a $d$-dimensional Brownian motion. When $\gamma$ goes to infinity, $X_{\gamma t}$ converges in law to the so-called  overdamped Langevin diffusion
 \begin{equation}
     \label{eq:overdampedLangvein}
     \dd Y_t= - \na U(Y_t) \dd t + \sqrt{2/\beta} \dd B_t\,,
 \end{equation} 
 which is also a popular sampler for $\rho^*$. Under mild assumptions on $U$, the kinetic and overdamped processes are known to be ergodic with respect to their equilibria, respectively $\nu^*$ and~$\rho^*$. To assess the practical efficiency of these samplers, it is important to quantify their rates of convergence to equilibrium. When the rates are too small, the laws of $X_t$ or $Y_t$ are poor approximations of $\rho^*$ for all times $t$ reachable in a simulation. Moreover, the asymptotic variances of time averages over these dynamics are also related to these rates of convergence: the larger the rate, the smaller the asymptotic variance.

 Obtaining convergence rates for the overdamped process~\eqref{eq:overdampedLangvein} (which is an elliptic reversible diffusion process) is a very classical topic, for which many tools have been developed, depending on the assumptions on $U$ or on the (pseudo)distance used to quantify the convergence. In particular, denoting by $\rho_t$ the law of $Y_t$, the exponential convergence at some rate $\lambda>0$,
 \begin{equation}\label{eq:CV_OL}
     \forall t\geqslant 0\,,\qquad  \mathcal H(\rho_t|\rho^*) \leq e^{-2\lambda t} \,  \mathcal H(\rho_0|\rho^*)\,,
 \end{equation} 
for any initial condition $\rho_0$ with finite relative entropy with respect to $\rho^*$, is known to be equivalent to the so-called log-Sobolev inequality with constant $\lambda$,
\begin{equation}\label{eq:LSI}
\forall \rho \ll \rho^*\,,\qquad \mathcal H(\rho|\rho^*) \leq \frac{1}{2\lambda}\mathcal I(\rho|\rho^*)\,, 
\end{equation}
where 
\begin{equation}\label{eq:entropy}
\mathcal H(\nu_1|\nu_2) = \int_{\T^d} \ln\left(\frac{\dd\nu_1}{\dd\nu_2}\right)\nu_1
\end{equation} is the relative entropy of $\nu_1$ with respect to $\nu_2$  and 
\begin{equation}\label{eq:fisher}
\mathcal I(\nu_1|\nu_2) = \int_{\T^d} \left|\na \ln \left(\frac{\dd\nu_1}{\dd\nu_2}\right)\right|^2 \nu_1
\end{equation}
stands for the Fisher information  of $\nu_1$ with respect to $\nu_2$, see \cite{BakryGentilLedoux}. A log-Sobolev inequality is known to hold for a smooth potential $U$ defined on the torus. The largest $\lambda$ for which~\eqref{eq:LSI} holds is called the optimal log-Sobolev constant of $\rho^*$. When~\eqref{eq:LSI} holds, in order to achieve convergence, simulations over time horizons of order at least $1/\lambda$ are required, which is prohibitive when the log-Sobolev constant $\lambda$ is very small, a situation which is typically encountered in many practical cases of interest, in particular in MD, as will be discussed below.

In the kinetic case~\eqref{eq:kineticLangevin}, the stochastic process is non-reversible and non-elliptic which makes the analysis more difficult. Nevertheless, it is known since Villani's work on hypocoercivity~\cite{Villani2009} that, assuming that $\rho^*$ satisfies a log-Sobolev inequality and that $\na^2 U$ is bounded, an exponential entropy decay 
\begin{equation}\label{eq:CV_Lang}
   \forall t\geqslant 0\,,\qquad  \mathcal H(\nu_t|\nu^*) \leq C e^{-\lambda' t} \mathcal H(\nu_0|\nu^*)\,, 
\end{equation}
 holds for some $C,\lambda'>0$, where $\nu_t$ is the law of $(X_t,V_t)$ solution to~\eqref{eq:kineticLangevin}. The constants $C,\lambda'$ are explicit in terms of $\gamma,\|\na^2 U\|_\infty$ and the log-Sobolev constant $\lambda$. At fixed $\gamma$ and $\|\na^2 U\|_\infty$, the rate $\lambda'$ obtained with Villani's modified entropy method is proportional to $\lambda$ when $\lambda$ is small\footnote{In some specific cases, in particular when $U$ is convex -- which is not the focus of the present work --, the optimal convergence rate of the kinetic process scales like $\sqrt{\lambda}$, which is better than $\lambda$ in the small $\lambda$ regime, but this is false in the general non-convex case.}. Again, this means the sampling is  poor when $\lambda$ is small.

Very slow convergences in~\eqref{eq:CV_OL} or~\eqref{eq:CV_Lang} are particularly observed for multi-modal targets, namely when $U$ has several local minima, which is the typical case in MD. Both dynamics~\eqref{eq:kineticLangevin} and \eqref{eq:overdampedLangvein} are then metastable, as they stay for long times in the vicinity of each local minima and make rare transitions from one potential well to another. Such metastable behaviors are also observed when large flat potential areas are separated by narrow channels. The metastable regions correspond to modes of the target distributions (high probability regions surrounded by a low probability region).
We refer to \cite{lelievre2012two} for a more detailed discussion on metastability due to energetic or entropic barriers. Since the ergodic convergence requires to sample each mode, it only happens at a time-scale where such transitions are observed. In particular, for energetic barriers, the optimal log-Sobolev constant of $\rho^*$ is known to be of order $e^{-\beta c}$, up to polynomial terms in $\beta$, where $c>0$ is called the critical depth of the potential $U$ \cite{MenzSchlichting,Holley}. If $\beta$ is large (i.e. if temperature is small), this is prohibitive for naive MCMC estimators based on time discretizations of either~\eqref{eq:kineticLangevin} or \eqref{eq:overdampedLangvein}.

In such a metastable situation, a standard strategy is to use an importance sampling method, namely to target another probability measure $\bar{\rho}^* \propto e^{-\bar U}$,  easier to sample, and then to compute expectations with respect to $\rho^*$ by considering weighted averages as
\begin{equation}\label{eq:IS}
\int_{\T^d} f(x) \rho^*(x)\dd x = \frac{\int_{\T^d} f(x) \omega(x) \bar{\rho}^*(x)\dd x}{\int_{\T^d}  \omega(x) \bar{\rho}^*(x)\dd x}\,,\qquad \omega(x) =e^{-\beta U(x) + \bar U(x)} \propto  \frac{{\rho}^*(x)}{\bar{\rho}^*(x)}\,.
\end{equation}
There is a balance to find when choosing the probability measure $\bar{\rho}^*$: $\bar{\rho}^*$ should be close to $\rho^*$ in order for the weights not to vary too much (and thus to keep a large effective sample size), but $\bar{\rho}^*$ should not be too close to $\rho^*$ otherwise it is as difficult to sample $\bar{\rho}^*$ as to sample $\rho^*$ (see e.g.~\cite{chopin-lelievre-stoltz-12} for discussions about this in a specific example).

Since the log-Sobolev constant behaves badly at low temperature (i.e. when $\beta$ is large), a natural idea is to take $\bar{\rho}^* \propto e^{-\beta' U}$ where $\beta'<\beta$, i.e. to sample at a larger temperature (this is called tempering and can be declined in many variants, see e.g. \cite[Section 11.1]{henin-lelievre-shirts-valsson-delemotte-22}). However, especially in high dimension, $\bar{\rho}^*\propto e^{-\beta' U}$ may be very different from $\rho^*$, leading to a very high variance of the importance sampling estimator (due to a large amplitude of weights). This is notably the case in MD where, often, the dimension $d$ is very large (it could be of order $10^6$) but in fact most of the mass of $\rho^*$ is concentrated in the vicinity of a submanifold of much lower dimension. This structure is lost when sampling at high temperature. Indeed, at high temperatures, the samples are obtained from high-dimensional dynamics in a flat energy landscape and the resulting estimators are not efficient since only a few samples in the vicinity of the low-dimensional submanifold where $\rho^*$ is concentrated contribute to the trajectory average.


Another natural idea to overcome the difficulties induced by metastability is to define $\bar{\rho}^*$ as $\rho^*$ divided by the marginal of $\rho^*$ along well-chosen low-dimensional degrees of freedom, called collective variables. The biasing is only performed along these collective variables. This leads to the so-called free-energy importance sampling methods, that we will introduce in the next section. These methods can be seen as a variant of the tempering approach presented above: in some sense, the biasing measure $\bar{\rho}^*$ is then such that the temperature is set to $+ \infty$ but only for the collective variables, and the energy landscape is thus flattened only along these degrees of freedom. In the method that we will explore in this work, we will study a variant where the temperature is set to $(\alpha +1) \beta^{-1}$ on the collective variables, for some parameter $\alpha >0$ (see Section~\ref{sec:idealfreeenergy}).

\subsection{Free-energy adaptive biasing methods}\label{sec:PMFmethods}

One idea to build a good biasing function is to rely on the so-called free energy associated with a well-chosen collective variable. Let us make this precise.

A collective variable is a function\footnote{In this introductory section, we present these methods for a general $\xi$. In the remaining of this work, in order to avoid technicalities, we will however concentrate on the simple case $\xi(x_1, \ldots,x_d)=(x_1, \ldots,x_m)$.} 
$$\xi:\T^d \to \T^m$$
with $m$ much smaller than $d$, and such that knowing $\xi(x)$ is essentially sufficient to reconstruct~$x$. Mathematically, this can be formalized and quantified~\cite{lelievre2012two} by the following: the family of measures $\rho^*$ conditioned to a fixed value of $\xi$ satisfies a log-Sobolev inequality~\eqref{eq:LSI} with a large log-Sobolev constant~$\lambda$, uniformly in the value of $\xi$. Roughly speaking, this means that the conditional measures are unimodal and strongly peaked.

Let us now introduce the free energy $A: \T^m \to \R$ associated with the measure $\rho^*$ and the collective variable $\xi$ as follows:
\begin{equation}\label{eq:PMF}
    \exp(-\beta A(z)) dz = \xi \sharp \rho^*
\end{equation}
where $\xi \sharp \rho^*$ is the image (push forward) of the measure $\rho^*$ by $\xi$. We are here implicitly assuming that $\xi$ is such $\xi \sharp \rho^*$ admits a density with respect to the Lebesgue measure on $\T^m$. In the following, we will actually use the name ``free energy" for another object, and thus from now on, in order to avoid confusion, we will refer to $A$ as the potential of mean force (PMF), another name which is commonly used in practice~\cite{FreeEnergy}. The PMF $A$ can be seen as an effective potential in the $\xi$ variable, in the sense that the Boltzmann-Gibbs distribution on $\T^m$ associated with $A$ is exactly the image of $\rho^*$ by $\xi$.

The bottom line of PMF-biasing method is to modify the target measure $\rho^*$ to the biased measure 
$$\bar \rho^*=\frac{e^{-\beta (U-A\circ \xi)}}{\bar{\mathcal Z}^*}$$
where $A\circ \xi$ denotes the composition of $A$ with $\xi$.
The idea is that this biased measure $\bar \rho^*$ should be easier to sample than $\rho^*$, since the push-forward by $\xi$ of $\bar \rho^*$ is by definition the uniform measure on $\T^m$. In particular, for a good choice of the collective variable, the multi-modal features of the original measure $\rho^*$ should not be present anymore in the biased measure $\bar \rho^*$. This can be formalized and quantified in terms of log-Sobolev constants: if the conditional measures of $\rho^*$ given $\xi$ (which are actually the same as the conditional measures of $\bar \rho^*$ given~$\xi$) have large log-Sobolev inequality constants, then the measure $\bar \rho^*$ too, see~\cite{Lelievre_twoscale}, and thus the Langevin dynamics using the biased potential $U-A\circ \xi$ should quickly reach equilibrium (see~\eqref{eq:CV_OL} and~\eqref{eq:CV_Lang}).

\begin{rem}\label{re:T->R}
    When considering sampling problems in  $\R^d$ instead of $\T^d$ and $\xi$ takes values in $\R^m$, it is not possible to target the uniform measure (i.e. a flat histogram) for the collective variables. This issue is addressed by adding a nice (e.g. convex) additional confining potential $W:\R^m\rightarrow \R$ and then targeting the measure $\bar\rho^* \propto e^{-\beta(U+(W-A)\circ\xi )} $, as discussed in \cite[Equations~(10)-(11)]{LRS07} to which we refer for details. This modification can be in particular  applied to the schemes introduced in the next sections. 
\end{rem}

Of course, in practice, the PMF $A$ is  unknown, and actually computing $A$ is the objective of many works in the field of computational statistical physics~\cite{chipot-pohorille-07,FreeEnergy}. One idea is then to learn on the fly an approximation $A_t$ of the PMF $A$, while using the (time-dependent) biased potential $U-A_t \circ \xi$ in the dynamics. Such techniques are called free-energy adaptive biasing methods, and actually encompass many variants which differ in particular in the way the approximation $A_t$ of the PMF is updated~\cite{lelievre-rousset-stoltz-07-b}. Let us mention in particular the metadynamics \cite{Metadynamics} and its well-tempered variant \cite{welltempered}, together with the Adaptive Biasing Force method \cite{Darve-Pohorille,Henin-Chipot,Chipot2011,LRS07,Comer}, self-healing umbrella sampling \cite{marsili2006self,fort2017self} or the Wang-Landau algorithm \cite{wang2001efficient,chevallier2020wang} among others. One can distinguish in particular between adaptive biasing force methods which directly update the mean force $\nabla A_t$ and adaptive biasing potential methods which learn the potential of mean force $A_t$ rather than $\nabla A_t$. More precisely, on the one hand, adaptive biasing force methods rely on the fact that $$\forall z \in \T^m, \, \nabla A(z)=\E_{\rho^*}(f(X)|\xi(X)=z)=\E_{\bar \rho^*}(f(X)|\xi(X)=z)$$ for a so-called local mean force\footnote{In the simple case $\xi(x_1, \ldots, x_d)=(x_1,\ldots,x_m)$, one simply has for all $j \in \{1, \ldots, m\}$, $f_j=\partial_{x_j} U$. For a general function $\xi$, explicit formulas for $f$ in terms of $U$ and $\xi$ can be obtained using the co-area formula, see for example~\cite[Lemma 3.9]{FreeEnergy}.}
 $f:\T^d \to \R^m$  which can be written explicitly in terms of $U$ and $\xi$, so that an approximation of $\nabla A$ can be obtained by sampling the conditional measure of the marginal in time of the stochastic process, given $\xi$. On the other hand, adaptive biasing potential methods rely on the formula~\eqref{eq:PMF} to build an approximation of $A$ by using an estimator of the occupation measure in $\xi$. We refer to~\cite[Chapter 5]{FreeEnergy} for more details. As will become clearer below, the biasing techniques analyzed in the present work  falls into the family of the adaptive biasing potential methods.

In terms of the mathematical analysis of such adaptive biasing techniques, one can distinguish between three kinds of results\footnote{Notice that, although the mathematical analysis and results for algorithms based on mean-field interacting particles or on a single ergodic trajectory are quite different, in practice any algorithm can be declined in these two versions, and both approaches are often considered simultaneously, using ergodic averages of interacting particles.}, all obtained on the overdamped Langevin dynamics. In~\cite{fort-jourdain-kuhn-lelievre-stoltz-15,benaim-brehier-16,fort2017self,fort-jourdain-lelievre-stoltz-18}, results have been obtained on adaptive biasing potential algorithms, with an approximation of $A_t$ made using time averages along the path of a single trajectory, using essentially tools from the convergence analysis of stochastic approximation algorithms. 

In~\cite{LRS07,lelievre-minoukadeh-11,lelievre-maurin-monmarche-22}, convergence results are obtained for adaptive biasing force methods where $\nabla A_t(z)$ is approximated by empirical averages, conditionally on $\xi(x)=z$, over infinitely many interacting replicas.

Finally, in~\cite{benaim-brehier-monmarche-20,ehrlacher-lelievre-monmarche-22}, convergence results are proven for the adaptive biasing force process, where $\nabla A_t(z)$ is approximated by empirical averages, conditionally on $\xi(x)=z$, over the path of a single trajectory of the process.

To the best of our knowledge, there are no convergence results for such adaptive biasing methods applied to the underdamped Langevin dynamics. Besides, there are no convergence results on adaptive biasing potential techniques using 
infinitely many interacting replicas to estimate the current marginal distribution in $\xi$. The objective of this paper is to prove convergence results in those two directions, by proposing a new family of algorithms for which the associated Fokker-Planck dynamics can be interpreted as a gradient descent for some functional (a so-called free energy) defined over the space of probability distributions.

\subsection{Convergence rates for non-linear kinetic Langevin dynamics}

The first convergence rates for the kinetic Langevin diffusion~\eqref{eq:kineticLangevin}, established in~\cite{Talay,mattingly2002ergodicity}, were based on Harris theorem and thus were stated in terms of the total variation norm (or more general but similar $V$-norms). In view notably of the behaviour of this norm in high dimension, it was not clear how to extend these results to systems with mean-field interacting particles and then to their non-linear limit (although recently non-linear versions of Harris theorem have been developed \cite{JournelMonmarcheFV,Elementary}, using a particle-wise total variation norm and coupling arguments).

By contrast, due to the extensivity property of the entropy\footnote{For instance, $\mathcal H(\nu^{\otimes d}|\mu^{\otimes d})= d \mathcal H(\nu|\mu)$, to be compared with $\|\nu^{\otimes d} - \mu^{\otimes d}\|_{TV}/2 \leqslant 1 - (1-\|\nu-\mu\|_{TV}/2)^d$ (sharp in general) and $\| \nu^{\otimes d}/\mu^{\otimes d}-1\|_{L^2(\mu^{\otimes d})}^2 = (\|\nu/\mu-1\|_{L^2(\mu)}^2 +1)^d -1$.},
the  modified entropy method  introduced by Villani in 2009 in \cite{Villani2009}, initially developed for linear dynamics such as the kinetic Fokker-Planck equation associated to the Langevin diffusion~\eqref{eq:kineticLangevin}, was a good candidate for addressing the non-linear case, beyond the perturbation regime with respect to the linear case obtained by linearization \cite{herau2007short,herau2016global} or coupling arguments \cite{bolley2010trend,Schuh,M31}. The first results in that direction were obtained by applying Villani's method to the linear equation associated to a  system of $N$ interacting particles and then letting $N$ go to infinity to get a result for the non-linear equation \cite{GuillinMonmarche,M15}, relying on the fact
\begin{equation}
    \label{eq:DefFreeEnergy}
    \frac1N \mathcal H (\rho^{\otimes N}|\mu_N) \underset{N\rightarrow \infty}{\longrightarrow} \mathcal F(\rho)\,, 
\end{equation}
where $\mu_N$ is the equilibrium of the system of $N$ particles and $\mathcal F$ is the so-called free energy associated to the non-linear equation, see~\cite{GuillinWuZhang} and references therein. This approach requires the log-Sobolev constant of the interacting system to be uniform in $N$, which is in general difficult to establish. More recently, the modified entropy method has been applied directly to the non-linear flow \cite{chen-lin-ren-wang-2024,monmarche2023note,MonmarcheReygner}. Instead of the uniform log-Sobolev inequality, this requires an inequality between the free energy and its dissipation along the flow (called a non-linear log-Sobolev inequality in \cite{MonmarcheReygner}). Let us mention than this non-linear log-Sobolev inequality is actually implied by a uniform-in-$N$ log-Sobolev inequality for $\mu_N$ as shown in \cite{Delgadino} (the converse is conjectured; however, this is still an open question), but it can also be established directly, at least in some cases. For example, when $\mathcal F$ is convex along flat interpolation (i.e. $t\mapsto (1-t)\rho_0 + t\rho_1$) and under some regularity assumptions,
 the authors of \cite{chen-lin-ren-wang-2024} show an exponential convergence by using such an approach. Interestingly, it has recently been proven in \cite{Songbo} that, under the same convexity assumption, the log-Sobolev constant of the $N$ particle system is indeed uniform (see also \cite{2024arXiv240917901M} for a  generalization of~\cite{Songbo}).

In view of these recent progresses, it seems natural to try and apply this approach to PMF adaptive biasing methods. The objective of this work is actually to demonstrate that this is possible for an adaptive biasing potential technique.

\subsection{Contributions of this work}

In this work, we introduce a PMF-based adaptive biasing algorithm as the Wasserstein gradient flow of a suitable explicit free energy. As explained in Section~\ref{sec:PMFmethods}, it is an adaptive biasing potential method in the sense that it is based on the estimation of the PMF, contrary to ABF which estimates the gradient of the PMF (see Section~\ref{sec:ABF} for some perspectives on the application of our approach to ABF). It can be seen as one variant of metadynamics~\cite{Metadynamics,welltempered}, which is an adaptive biasing technique extensively used in MD. The first interest of this approach is that the associated free energy turns out to be a very natural quantity to minimize in view of our PMF-based enhanced sampling objective. Moreover, as we will see, this free energy turns out to be convex (along flat interpolations), which gives an insight from an optimization perspective on the efficiency of the algorithm. The second interest is that, thanks to the explicit free energy, we are able to obtain convergence rates for the kinetic process, which is more often used in practice in MD than the overdamped process. We also study the corresponding overdamped process, in which case we get convergence rates explicit enough to quantify an improvement with respect  to a naive (unbiased) MCMC sampler.

\subsection{Outline}

The remaining of this work is organized as follows. General notations are gathered in Section~\ref{sec:notation}. The settings and main results are stated in Section~\ref{sec:settings}.
In Section~\ref{sec:numerics}, we provide numerical experiments on a toy problem to illustrate the efficiency of the method.
Section~\ref{sec:proofFE} contains the proof of the first main result, Theorem~\ref{thm:cv_infty}, which concerns the stability with respect to a regularization parameter of the minimizers of the free energy. The other main results, namely Theorems~\ref{thm:CVoverdamped}, \ref{thm:CVkinetic} and \ref{th:CVoverdamped-sharp}, addressing the long time convergence of the algorithm towards these minimizers, are proven in Section~\ref{
sec:longtimeproof}. Technical results about Sobolev spaces and Gaussian kernels on the torus are  finally gathered in Appendices, as well as an analysis of the long-time behavior of a non-regularized version of the dynamics introduced in this work, which is however restricted to the overdamped dynamics and relies on a different entropy technique than the convergence results presented in the main text.

\subsection{Notations}\label{sec:notation}

Whenever a probability measure admits a Lebesgue density, we use the same letter for the measure and the density, namely $\mu(\dd x)=\mu(x)\dd x$. 


In the following, we suppose that $m,d\in \N^+$ (the set of positive integers; in this work $\N$ denotes the set of non-negative integers), $m\leq d.$ We decompose $x=(x_1,x_2)\in\T^d = \T^m\times\T^{d-m}$ and we denote by $\mu^1$ the first $m$-dimensional marginal law of a probability distribution $\mu$ on $\T^d$ (i.e.  $X_1\sim\mu^1$ when $(X_1,X_2)\sim \mu$). Given a function $f$ defined in~$\T^m$, we identify it with the function $\tilde f$ defined in $\T^d$ as $\tilde f(x)=\tilde{f}(x_1,x_2)=f(x_1)$. For example, given another function $g$ defined in $\T^d$, the notation $f\star g$ represents $\tilde{f}\star g$, that is, $f\star g(x_1,x_2)=\int_{\T^d}f(x_1-y_1)g(y_1,y_2)\dd y_1\dd y_2$. Give a measurable function $f$ on $\T^d$ or $\T^m$, we denote its essential supremum and essential infimum on $\T^d$ or $\T^m$ as $\inf f$ and $ \sup f$, i.e. $$\sup f=\inf\{M\in \R|M\geq f \text{ a.e.} \} \text{ and }\inf f=\sup\{M\in \R|M\leq f \text{ a.e.} \}.$$    
We will use the notation $\rho \propto g$, where $\rho: E \to \R_+$ is a probability density and $g: E \to \R_+$ is an integrable function, both defined  on a measurable space $E$, to state that $\rho=(\int_E g)^{-1} g$. Depending on the context, this may either state a property of the probability density $\rho$, or define the probability density $\rho$.

Let us now introduce Sobolev spaces on $\T^m$. Let us first clarify the use of notation related to multi-indices. We say that $\theta$ is a multi-index of dimension $m$, if $\theta=(\theta_1,\theta_2,...,\theta_m)\in \mathbb{N}^m$, and we define the length of $\theta$ as $|\theta|=\sum_{i=1}^m \theta_i.$ Given another multi-index $\eta=(\eta_1,\eta_2,...,\eta_m)\in \mathbb{N}^m,$ we define their sum  $\theta+\eta$ as $(\theta_1+\eta_1,\theta_2+\eta_2,...,\theta_m+\eta_m)\in\N^m$. Notice that $|\theta+\eta|=|\theta|+|\eta|.$ Given a function $g\in C^{|\theta|}(\T^m),$ we define the derivative
\begin{equation}\label{eq:deri}
D^\theta g = \frac{\partial^{|\theta|} g}{\partial x_1^{\theta_1} \partial x_2^{\theta_2} \dots \partial x_m^{\theta_m}}.    
\end{equation}
Given a measurable function $f$ defined in $\T^m$, we say that $f$ admits a weak derivative $D^\theta f$, if for any test function $\phi\in C^\infty(\T^m),$
\begin{equation}\label{eq:weak_deri}
\int_{\T^m} \phi D^{\theta} f=(-1)^{|\theta|}\int_{\T^m} f D^{\theta} \phi.    
\end{equation}
Notice that by Green's formula, when $g\in C^{|\theta|}(\T^m)$, the two definitions~\eqref{eq:deri} and~\eqref{eq:weak_deri} coincide. 

For a space $E=\T^m$ or $E=\Omega$ where $\Omega$ is a subdomain of $\R^m$, for $p\in [1,\infty]$ and $ k\in \N$, we denote by $W^{k,p}(E)$ the Sobolev space of measurable functions $f: E \to \R$ such that when $|\theta|\leq k$, $D^\theta f \in L^p(E)$. The norm on $W^{k,p}(E)$ is defined by $\|f\|_{W^{k,p}(E)}=\left(\sum_{|\theta|=0}^k \int_{E}|D^\theta f|^p\right)^\frac{1}{p}$ for $p\in[1,\infty)$ and $\|f\|_{W^{k,\infty}(E)}=\sup_{|\theta|\leq k} \|D^\theta f\|_{L^\infty(E)}$. In particular,  We denote $W^{k,2}(E)$ as $H^k(E)$. For $j \in \N$, we will use the notation $\|D^{j}f\|_{\infty}=\max_{|\theta|=j}\|D^\theta f\|_\infty=\max_{|\theta|=j}\sup |D^\theta  f|$. When $k=0$, $W^{k,p}(E)$ reduces to the $L^p(E)$ space, $p\in[1,\infty]$ and we denote the associated norm $\|f\|_{L^p(E)}$. When the domain $E$ is not made precise, then, implicitly, $E=\T^m$ in which case, for $f \in W^{k,p}(\T^m)$ (resp. $f \in H^k(\T^m)$), we simplify the notation for their norms as
$\|f\|_{W^{k,p}} = \|f\|_{W^{k,p}(\mathbb{T}^m)}$ (resp. $\|f\|_{H^k} = \|f\|_{H^k(\mathbb{T}^m)}$).

Finally, we denote by $\mathcal P(E)$ (resp. $\mathcal P_2(E)$) the set of probability measures on $E$ (resp. with finite second moments).

\section{Mathematical framework and results}\label{sec:settings}

In all this work, we assume that the collective variable is given by:
$$\xi:\left\{
\begin{aligned}
\T^m \times \T^{d-m} &\to \T^m\\
 (x_1,x_2) &\mapsto x_1.
\end{aligned}
\right.$$
In particular, the PMF defined by~\eqref{eq:PMF} is simply:
\begin{equation}\label{eq:FE_x1}
    \forall x_1 \in \T^m, \qquad A(x_1) = - \beta^{-1} \ln \left( \mathcal Z^{-1} \int_{\T^{d-m}} \exp(-\beta U(x_1,x_2)) \, \dd x_2 \right).
\end{equation}
Since $U \in C^\infty(\T^d)$, the PMF $A$ is also $C^\infty(\T^m)$. The objective of this section is to state the main results of this work without any proof, by first presenting the adaptive biasing potential dynamics of interest in an idealized setting in Section~\ref{sec:idealfreeenergy}, and then introducing a regularized version in Section~\ref{sec:regularized} together with the associated convergence results. Section~\ref{sec:ABF}  is finally devoted to a discussion of the difficulties raised by generalizing these convergence results to the Adaptive Biasing Force case.

\subsection{Ideal free energy and associated dynamics}\label{sec:idealfreeenergy}

 Let us introduce the free energy functional: for any probability density $\rho$ on $\T^d$, 
\begin{equation}\label{eq:FE_original}
\mathcal F_{\alpha}\po \rho\pf = \int_{\T^d} U \rho + \frac{1}{\beta}  \left( \int_{\T^d} \rho \ln \rho  + \alpha \int_{\T^m}  \rho^1 \ln \rho^1 \right)\,,
\end{equation}
parametrized by the inverse temperature $\beta$ and an additional parameter $\alpha>0$. By convention $\mathcal F_{\alpha}(\rho)=+\infty$ for probability measures $\rho$ that do not admit a density with respect to the Lebesgue measure on $\T^d$.

The free energy is the sum of the energy term $\int_{\T^d} U \rho$, the entropy of $\rho$ and the entropy of the marginal~$\rho^1$. When $\alpha=0$, up to an additive constant and a multiplicative factor $\beta^{-1}$, the free energy is exactly the relative entropy with respect to $\rho^*$ (defined by~\eqref{eq:def_gibbs}). The additional marginal entropy term is introduced to sample the collective variables at a higher temperature $(\alpha+1)\beta^{-1}$ than the rest of the system, as discussed in Section~\ref{sec:intro-motiv}, see Equation~\eqref{eq:marginal} below. 

Since the energy term and $\rho\mapsto \rho^1$ are linear in $\rho$, while the entropy is strictly convex (along flat interpolations), it is straightforward to see that $\mathcal F_\alpha$ is flat-convex, namely, for any $\rho_0,\rho_1$ with finite free energy,
\[\forall t\in[0,1],\qquad \mathcal F_{\alpha}\po (1-t)\rho_0 + t \rho_1\pf \leq (1-t)\mathcal F_\alpha(\rho_0) + t \mathcal F_\alpha(\rho_1)\,, \]
with a strict inequality if $\rho_0\neq \rho_1$ and $t\in(0,1)$. 

The minimizer of $\F_\alpha$ can be characterized as follows.

\begin{prop}\label{prop:rho_alpha}
For all $\alpha > 0$, $\mathcal F_\alpha$ admits a unique minimizer\footnote{According to the notations introduced in Section~\ref{sec:notation}, we do not indicate explicitly the dependency on $x_1$ and $x_2$, neither the normalizing constant. Equation~\eqref{eq:rho*alpha} means: for all $(x_1,x_2) \in \T^m \times \T^{d-m}$, $\rho^*_\alpha(x_1,x_2) = (\mathcal Z^*_\alpha)^{-1}   \exp \po -\beta\left(U(x_1,x_2) - \frac{\alpha}{\alpha+1} A(x_1)\right)\pf$ where  $\mathcal Z^*_\alpha=\int_{\T^m \times \T^{d-m}} \exp \po -\beta\left(U(x_1,x_2) - \frac{\alpha}{\alpha+1} A(x_1)\right)\pf dx_1 \, dx_2$.} 
\begin{equation}
\label{eq:rho*alpha}
\rho^*_\alpha \propto \exp \po -\beta\left(U - \frac{\alpha}{\alpha+1} A\right)\pf\,.
\end{equation}
Moreover, denoting by $D_\infty$ the optimal log-Sobolev constant of 
$\bar{\rho}^* \propto \exp\bigl(-\beta(U-A)\bigr)$ and setting
\begin{equation}
    \label{eq:Dalpha}
D_\alpha := D_\infty \exp\!\bigl(-\tfrac{2\beta}{\alpha+1}\|A\|_\infty\bigr),
\end{equation}
$\rho^*_\alpha$ satisfies a log-Sobolev inequality with constant $D_\alpha$.

\end{prop}
The proof of Proposition~\ref{prop:rho_alpha} is given in Section~\ref{sec:minimizer}. Recall from Section~\ref{sec:PMFmethods} that, under suitable conditions (particularily, a good choice of collective variables),  \(D_\infty\) is large. The fact that \(D_\alpha\) converges to \(D_\infty\) as \(\alpha\) goes to infinity motivates the introduction of \(\F_\alpha\).
More precisely, notice that the marginal of $\rho^*_\alpha$ in the collective variables satisfies:
\begin{equation}\label{eq:marginal}
\rho^{*,1}_\alpha \propto \exp\po - \frac{\beta}{\alpha+1} A\pf\,. \end{equation}
In other words, compared with $\rho^*$, $\rho^*_\alpha$ is the probability measure which has, on the one hand, the same conditional densities of $x_2$ given $x_1$, but, on the other hand, a marginal distribution in $x_1$ for which the temperature has been multiplied by $1+\alpha$. As a consequence, following~\cite{Lelievre_twoscale}, when the conditional distributions of $x_2$ given $x_1$ are not multimodal while the marginal law along $x_1$ is at a sufficiently large temperature,  the optimal log-Sobolev constant of $\rho_\alpha^*$ is much larger than the one of $\rho^*$, and thus much easier to sample. Indeed, for a non-convex potential, at low temperature, the log-Sobolev constant (namely the parameter $\lambda$ in~\eqref{eq:LSI}) decays exponentially fast to zero with the energy barriers  and the system size (besides, let us mention that for temperatures larger than a critical threshold, it can be shown that the log-Sobolev constant becomes independent from the dimension), see \cite{Delgadino,lelievre2012two,M40} and references within. 

In order to sample $\rho^*_\alpha$, one can use algorithms to minimize $\mathcal F_\alpha$, which can be done following the associated  Wasserstein gradient descent~\cite{ambrosio2005gradient}:
\[\partial_t \rho_t = \na\cdot \po \rho_t \na \frac{\delta\mathcal F_\alpha}{\delta \rho}(\rho_t) \pf \]
where
\[\frac{\delta\mathcal F_\alpha}{\delta \rho}(\rho,x) =  U(x)+ \frac{1}{\beta}\ln \rho (x)  +  \frac{\alpha}{\beta} \ln \rho^1(x_1) \]
is the linear functional derivative of the free energy. In other words,
\begin{equation}
    \label{eq:PDEoverdamped}
    \partial_t \rho_t = \na\cdot \po \rho_t \na \left(U+\frac\alpha\beta\ln \rho_t^1\right)   \pf + \frac{1}{\beta}\Delta \rho_t\,.
\end{equation}
This is a non-linear Fokker-Planck equation associated to the McKean-Vlasov diffusion process solution to
\begin{equation}
    \label{eq:McKean-Voverdamped}
    \left\{\begin{aligned}
     \dd X_t & =  -\na \po U + \frac\alpha\beta \ln \rho_t^1\pf(X_t) \dd t + \sqrt{\frac{2}{\beta}}\dd B_t  \\
     \rho_t &  = \mathcal Law(X_t)\,. 
\end{aligned}\right.
\end{equation}

Similarly, one can introduce a kinetic process which can be seen as a Nesterov second-order version~\cite{nesterov} of the  gradient descent dynamics~\eqref{eq:PDEoverdamped}--\eqref{eq:McKean-Voverdamped}. Denoting by $\nu(x,v)$ the probability density in the phase space, with $\nu^x$ the position marginal (and $\nu^{x,1}$ the first $m$-dimensional marginal of $\nu^x$), this corresponds to the Vlasov-Fokker-Planck-type equation
\begin{equation}
    \label{eq:PDEkinetic}
    \partial_t \nu_t + v\cdot\na_x \nu_t = \na_v\cdot \po \nu_t \po  v+ \na_x\left(U+\frac\alpha\beta\ln\nu_t^{x,1}\right)\pf \pf + \frac{1}{\beta}\Delta_v \nu_t \,.
\end{equation}
This is the Kolmogorov equation of a (self-interacting time-inhomogeneous) kinetic Langevin process solving 
\begin{equation}
    \label{eq:McKeanVkinetic}
    \left\{\begin{aligned}
     \dd X_t & =  V_t\dd t\\
     \dd V_t &=  -\na_x \po U + \frac\alpha\beta \ln \nu^{x,1}_t \pf(X_t) \dd t - V_t\dd t +  \sqrt{\frac{2}{\beta}}\dd B_t  \\
     \nu_t &  = \mathcal Law(X_t,V_t)\,. 
\end{aligned}\right.
\end{equation}

 To simulate the dynamics~\eqref{eq:McKean-Voverdamped} or \eqref{eq:McKeanVkinetic}, one typically introduces an approximation by a system of $N$ interacting particles, for which the marginal probability densities $\rho^1$ or $\nu^{x,1}$ are approximated by the convolution of the empirical distribution of the system with a smooth (typically Gaussian) kernel \cite{jourdain2010existence} (and, in addition, the stochastic differential equations are approximated by an appropriate time-discretization). The resulting practical algorithm based on~\eqref{eq:McKeanVkinetic} can then be seen as a particular version of the well-tempered Metadynamics~\cite{welltempered}, since it is an adaptive biasing potential method, where the potential is eventually modified by using only a fraction of the current estimate of the potential of mean force (see Proposition~\ref{prop:rho_alpha}). However, the algorithm based on~\eqref{eq:McKeanVkinetic} differs from the practical implementation of the well-tempered Metadynamics since we consider a Markovian mean-field system of interacting particles, while in classical presentation and use of such adaptive algorithms, the particles also interact with their past. Indeed, in well-tempered Metadynamics, the biasing potential is updated using the occupation measure, namely the trajectory in the collective variable space up to the current time. In this context, one needs to add a vanishing adaption mechanism in order for the method to converge, as in any stochastic approximation algorithm: as time goes, the last positions should less and less contribute to the biasing potential. We refer for example to~\cite{ehrlacher-lelievre-monmarche-22,benaim-brehier-16,benaim-brehier-monmarche-20} for a mathematical perspective on this non-Markovian setting.

\begin{rem}\label{rem:other-error}
     In this work, we focus only on the long-time convergence of non-linear dynamics, since it is specific to the particular processes considered here. We will not discuss the error induced by the particle approximation or the time-discretization in a practical implementation. However,  when proving our main results (specifically, Theorems~\ref{thm:CVoverdamped} and \ref{thm:CVkinetic} below for the regularized dynamics~\eqref{eq:McKean-Voverdamped_convolution} and \eqref{eq:McKeanVkinetic_convolution}), we check all the conditions of \cite{chen-lin-ren-wang-2024,chen2022uniform} where uniform-in-time propagation of chaos is proven. Under the same conditions,   \cite{Songbo}  directly gives long-time convergence rates for the particle system and \cite{suzuki2024mean,Schuh} additionally address the discretization error (respectively in the overdamped and kinetic case).
 \end{rem}

 The processes~\eqref{eq:McKean-Voverdamped} and \eqref{eq:McKeanVkinetic} are natural dynamics to study the efficiency of PMF-based adaptive algorithms in idealized mathematical settings. However, the pointwise non-linearity   $\na \ln \rho^1$ or $\na \ln \nu^{x,1}$, in the spirit of the Stein variational gradient descent introduced in \cite{liu2016stein}, raises some difficulties in the theoretical analysis, starting with the well-posedness of~\eqref{eq:PDEoverdamped} and~\eqref{eq:PDEkinetic}. It may be possible to work with a suitable notion of solutions, as in the theory of viscosity solutions~\cite{crandall1992user}, or more specifically in our situation by working with the JKO scheme as in \cite{burger2023porous}. However,  beyond the qualitative well-posedness of the equation, our analysis requires uniform-in-time bounds on the solutions, see Remarks~\ref{rem:adapt} and~\ref{rem:sharpkinetic} in particular.  For this reason, as explained in the next section, we will rather work with mollified versions of these processes. Moreover, this is actually more consistent with the algorithms used in  practice than the idealized dynamics~\eqref{eq:McKean-Voverdamped} and \eqref{eq:McKeanVkinetic}, since this mollification corresponds to the Gaussian kernel regularization mentioned above (see also Section~\ref{sec:numerics} for the practical implementation).

\subsection{Regularized free energy and processes}\label{sec:regularized}

As motivated above, we introduce the regularization of the free energy~\eqref{eq:FE_original}: for a probability density $\rho$ on $\T^d$,
\begin{align*}
\mathcal F_{\alpha,\epsilon}\po \rho\pf =& \int_{\T^d} U \rho + \frac{1}{\beta}\left(  \int_{\T^d} \rho \ln \rho  + \alpha \int_{\T^m}  (K^m_\epsilon\star\rho^1) \ln (K^m_\epsilon\star\rho^1)\right)\\=&  \int_{\T^d} U \rho + \frac{1}{\beta}\left(  \int_{\T^d} \rho \ln \rho  + \alpha \int_{\T^d}  (K^m_\epsilon\star\rho) \ln (K^m_\epsilon\star\rho)\right),  
\end{align*}
which is parametrized by $\beta,\alpha>0$ and an additional parameter $\epsilon\in(0,1]$, with $K^m_\epsilon$ the Gaussian kernel on $\T^m$, as introduced in the Appendix~\ref{sec:ap}. To make the notation more convenient in the following,
recall that, from Section~\ref{sec:notation},  $K^m_\epsilon\star\rho$ is defined by regarding $K^m_\epsilon$ as a function on $\T^d$, i.e. for $x=(x_1,x_2)\in \T^d=\T^m\times\T^{d-m}$ and $ y=(y_1,y_2)\in \T^d=\T^m\times\T^{d-m}$, one has
$K^m_\epsilon\star\rho(x)=\int_{\T^d}K^m_\epsilon(x_1-y_1)\rho(y)dy_1dy_2=\int_{\T^m}K^m_\epsilon(x_1-y_1)\rho^1(y_1)dy_1=K^m_\epsilon\star \rho^1(x_1).$
In particular $K^m_\epsilon\star\rho(x)$ is a probability density over $\T^d$. By convention $\mathcal F_{\alpha,\epsilon}(\rho)=+\infty$ for probability measures $\rho$ that do not admit a density with respect to the Lebesgue measure on $\T^d$.




Using similar arguments as for $\F_\alpha$, one can check that $\F_{\alpha,\epsilon}$ is convex and admits a unique minimizer $\rho^*_{\alpha,\epsilon}$. But unlike $\rho^*_\alpha$, $\rho^*_{\alpha,\epsilon}$ has no explicit representation in general. However, we will prove below in Lemma~\ref{lem:limit_entropy} that when $\F_\alpha(\rho)<\infty$, $\lim_{\epsilon\rightarrow 0}\F_{\alpha,\epsilon}(\rho)=\F_{\alpha}(\rho)$. It is thus natural to expect the convergence of the minimizer of $\F_{\alpha,\epsilon}$ to the minimizer of $\F_\alpha$ as $\epsilon\rightarrow 0$, as stated in Theorem~\ref{thm:cv_infty}. In view of the results of Theorem~\ref{thm:cv_infty}, we will sometimes use in the following the index $\epsilon=0$ to refer to the ideal case introduced in Section~\ref{sec:idealfreeenergy}.

\begin{theorem}
\label{thm:cv_infty}
For $\epsilon\in(0,1]$, the functional $\mathcal{F}_{\alpha,\epsilon}$  admits a unique minimizer $\rho^*_{\alpha,\epsilon}$, which has a density with respect to the Lebesgue measure on $\T^d$.

Moreover, there exist constants $C_1,C_2>0$, which are independent of $\alpha,\epsilon$, such that,
when $\epsilon\leq \frac{C_1}{(\alpha+1)}$,
\begin{equation}\label{eq:diffrhostar}
\|\rho_{\alpha,\epsilon}^*-\rho_{\alpha}^*\|_{\infty}\leq C_2{\epsilon},
\end{equation}
where $\rho_{\alpha}^*$ has been defined in Proposition~\ref{prop:rho_alpha}.

Finally, $\rho^*_{\alpha,\epsilon}$ satisfies a log-Sobolev inequality, with a constant $$D_{\alpha,\epsilon}=D_\alpha \exp(-C_3 \epsilon),$$ for some $C_3>0$ independent of $\alpha,\epsilon$, 
where $D_\alpha$ is given in~\eqref{eq:Dalpha}.
\end{theorem}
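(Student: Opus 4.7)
\textbf{Plan of proof for Theorem~\ref{thm:cv_infty}.}

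For existence and uniqueness, I would first note that $\mathcal F_{\alpha,\epsilon}$ is strictly convex along flat interpolations: the energy term $\int U\rho$ and the marginal map $\rho\mapsto K^m_\epsilon\star\rho$ are linear, the entropy $\rho\mapsto\int\rho\ln\rho$ is strictly convex, and the term $\int(K^m_\epsilon\star\rho)\ln(K^m_\epsilon\star\rho)$ is convex as the composition of a linear map with the convex map $\mu\mapsto\int\mu\ln\mu$. Combining this with weak-$\ast$ lower semicontinuity of the entropy and compactness of $\mathcal P(\T^d)$, one obtains existence and uniqueness of the minimizer $\rho^*_{\alpha,\epsilon}$. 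Any minimizer must have finite entropy, hence a density; writing the linear functional derivative equal to a constant gives the Euler--Lagrange equation
\[
\rho^*_{\alpha,\epsilon}(x) = Z_\epsilon^{-1}\exp\Bigl(-\beta U(x)-\alpha\, K^m_\epsilon\star\ln\bigl(K^m_\epsilon\star\rho^{*,1}_{\alpha,\epsilon}\bigr)(x_1)\Bigr),
\]
which in particular shows $\rho^*_{\alpha,\epsilon}$ is bounded away from $0$ and $\infty$.

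For the $L^\infty$ estimate~\eqref{eq:diffrhostar}, the plan is to compare this Euler--Lagrange equation with that for the ideal case, $\rho^*_\alpha(x)\propto\exp(-\beta U(x)-\alpha\ln\rho^{*,1}_\alpha(x_1))$ (which follows from the explicit form~\eqref{eq:rho*alpha} since $-\alpha\ln\rho^{*,1}_\alpha$ coincides with $\frac{\alpha\beta}{\alpha+1}A$ up to an additive constant). Taking logarithms and subtracting, the difference $\ln\rho^*_{\alpha,\epsilon}-\ln\rho^*_\alpha$ is controlled, up to a normalizing constant, by
\[
\bigl\|K^m_\epsilon\star\ln(K^m_\epsilon\star\rho^{*,1}_{\alpha,\epsilon})-\ln\rho^{*,1}_{\alpha,\epsilon}\bigr\|_\infty+\bigl\|\ln\rho^{*,1}_{\alpha,\epsilon}-\ln\rho^{*,1}_\alpha\bigr\|_\infty.
\]
The first term is a pure smoothing error: using smoothness properties of the Gaussian kernel from Appendix~\ref{sec:ap}, together with uniform $W^{2,\infty}$-type control of $\ln(K^m_\epsilon\star\rho^{*,1}_{\alpha,\epsilon})$ (which I would bootstrap from the Euler--Lagrange equation, since $K^m_\epsilon\star\rho^{*,1}_{\alpha,\epsilon}$ inherits the regularity of the Gaussian kernel with bounds depending only on $U$ and $\alpha$), it is $O(\epsilon)$. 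The second term, via the uniform lower bound on the marginals, is controlled by $\|\rho^*_{\alpha,\epsilon}-\rho^*_\alpha\|_\infty$ times a constant that depends on $\alpha$. This yields a self-consistent inequality
\[
\bigl\|\rho^*_{\alpha,\epsilon}-\rho^*_\alpha\bigr\|_\infty\le C\bigl[\epsilon+\alpha\,\kappa\,\|\rho^*_{\alpha,\epsilon}-\rho^*_\alpha\|_\infty\bigr],
\]
which closes into $\|\rho^*_{\alpha,\epsilon}-\rho^*_\alpha\|_\infty\le C_2\,\epsilon$ precisely under a smallness assumption $\epsilon\le C_1/(\alpha+1)$.

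For the log-Sobolev inequality, the approach is a Holley--Stroock perturbation argument. Writing $\rho^*_{\alpha,\epsilon}=h_\epsilon\rho^*_\alpha$ with $h_\epsilon$ of unit mean against $\rho^*_\alpha$, the uniform lower bound on $\rho^*_\alpha$ (which is smooth and positive on the compact torus) together with~\eqref{eq:diffrhostar} yield $\mathrm{osc}(\ln h_\epsilon)\le C\epsilon$. The Holley--Stroock lemma then upgrades the log-Sobolev constant $D_\alpha$ of $\rho^*_\alpha$ to $D_{\alpha,\epsilon}\ge D_\alpha\exp(-C_3\epsilon)$.

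\textbf{Main obstacle.} The delicate step is making the perturbation argument truly quantitative with the announced dependence in $(\alpha+1)\epsilon$: this requires uniform (in $\epsilon$) two-sided $L^\infty$ bounds and some $W^{2,\infty}$ regularity on $\rho^*_{\alpha,\epsilon}$, obtained by a bootstrap from the Euler--Lagrange equation and the regularizing action of $K^m_\epsilon$, with constants tracked carefully in $\alpha$ so that the self-consistent inequality closes under $\epsilon\le C_1/(\alpha+1)$ rather than a more restrictive condition.
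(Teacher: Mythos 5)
Your treatment of existence/uniqueness (strict convexity, lower semicontinuity, tightness on the torus) and of the log-Sobolev inequality (Holley--Stroock perturbation from $\rho^*_\alpha$ using the uniform lower bound on $\rho^*_\alpha$ and the $L^\infty$ closeness) essentially matches the paper's Proposition~\ref{prop:rho*_uniqueness} and Corollary~\ref{cor:ULSI}. The genuine gap is in the $L^\infty$ estimate~\eqref{eq:diffrhostar}, and it is twofold.

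First, the self-consistent inequality you write, $\|\rho^*_{\alpha,\epsilon}-\rho^*_\alpha\|_\infty\le C[\epsilon+\alpha\kappa\|\rho^*_{\alpha,\epsilon}-\rho^*_\alpha\|_\infty]$, does not close under $\epsilon\le C_1/(\alpha+1)$: for $a\le C\epsilon+C\alpha\kappa a$ to imply $a\lesssim\epsilon$ you need $C\alpha\kappa<1$, which is a smallness condition on $\alpha$, not on $\epsilon$, since the coefficient $\alpha\kappa$ comes from the $\alpha$ in the Euler--Lagrange equation and the $\alpha$-independent lower bound on the marginal, and therefore does not depend on $\epsilon$. The missing observation is a structural cancellation: subtracting the two Euler--Lagrange equations, the ratio $\rho^*_{\alpha,\epsilon}/\rho^*_\alpha$ is a function of $x_1$ alone, because the $-\beta U$ terms cancel; integrating in $x_2$ then gives $\ln\rho^{*,1}_{\alpha,\epsilon}-\ln\rho^{*,1}_\alpha=\ln\rho^*_{\alpha,\epsilon}-\ln\rho^*_\alpha$ exactly. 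Plugging this back into the Euler--Lagrange difference, the unknown log-difference $\psi$ appears linearly on both sides and can be isolated, producing a prefactor $\frac{1}{1+\alpha}$ in front of the whole right-hand side, not a multiplicative coefficient $\alpha\kappa$ in front of $\|\psi\|_\infty$. In the paper this is precisely the role of the preconditioned map $K(\epsilon,f)=f-\frac{f_0}{\alpha+1}F(\epsilon,f)$ in the proof of Theorem~\ref{thm:implicit}: the $\frac{1}{\alpha+1}$ factor turns the leading-order linearization into an operator of norm $\frac{\alpha}{\alpha+1}\bigl(1+O(\sqrt\epsilon/(\alpha+1))\bigr)<1$, and the smallness condition $\epsilon\le C_1/(\alpha+1)$ is what keeps the Lipschitz constant strictly below $1$ (and the fixed point inside the ball where the contraction holds), which is a different mechanism from the one you invoke.

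Second, even granting the cancellation, the estimate $\|K^m_\epsilon\star\ln(K^m_\epsilon\star\rho^{*,1}_{\alpha,\epsilon})-\ln\rho^{*,1}_{\alpha,\epsilon}\|_\infty=O(\epsilon)$ requires a uniform-in-$\epsilon$ bound on $\|D^2\ln\rho^{*,1}_{\alpha,\epsilon}\|_\infty$, and your proposed bootstrap ("$K^m_\epsilon\star\rho^{*,1}_{\alpha,\epsilon}$ inherits the regularity of the Gaussian kernel with bounds depending only on $U$ and $\alpha$") does not deliver this: bounds coming from the kernel itself scale like negative powers of $\epsilon$ (e.g.\ $\|D^2K^m_\epsilon\|_\infty\sim\epsilon^{-m/2-1}$) and blow up as $\epsilon\to 0$. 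One must instead propagate regularity through the Euler--Lagrange fixed point itself. The paper does this by running the Banach fixed-point argument directly in $H^k(\T^m)$ with $2k>m$ (Theorem~\ref{thm:implicit}, Lemmas~\ref{lem:convolution}--\ref{lem:h_1h_2}), so that both the $L^\infty$ proximity and the higher-order control on $\rho^{*,1}_{\alpha,\epsilon}$ needed for the smoothing error come out at once; the smoothing error for the limiting profile $f_0=\exp(-\beta A/(\alpha+1))$ is in fact $O(\epsilon/(\alpha+1))$ because $\ln f_0$ has small derivatives (Lemma~\ref{lem:KlnKf-lnf}), which is what lets the $\alpha$-factors in the fixed-point estimate $\|f_\epsilon-f_0\|_{H^k}\le\frac{\|K(\epsilon,f_0)-f_0\|_{H^k}}{1-\mathrm{Lip}}$ cancel and yield an $\alpha$-independent bound. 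Your outline is a reasonable sketch of the \emph{shape} of the answer, but as written the key contraction and the regularity bootstrap are both absent, and the stated closing mechanism is incorrect.
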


The first statement of Theorem~\ref{thm:cv_infty} is a consequence of Proposition~\ref{prop:rho*_uniqueness}, and the proofs of the other results are provided in Section~\ref{sec:CV_H1}. In view of the explicit form~\eqref{eq:rho*alpha} of $\rho_\alpha^*$ and the discussion which follows this equation, this result shows that sampling the minimizer of $\mathcal F_{\alpha,\epsilon}$ for a small $\epsilon$ leads to a relevant importance sampling scheme if the conditional distributions of $x_2$ given $x_1$ are not multimodal and $\alpha$ is chosen sufficiently large.

Following the same procedure as in the previous section (see~\eqref{eq:PDEoverdamped} and~\eqref{eq:McKean-Voverdamped}), let us introduce the  non-linear Fokker-Planck equation associated with $\F_{\alpha,\epsilon}$:
\begin{equation}
    \label{eq:PDEoverdamped_convolution}
    \partial_t \rho_t = \na\cdot \po \rho_t \na \left(U+
\frac\alpha\beta K^m_\epsilon\star\ln (K^m_\epsilon\star\rho_t)\right)   \pf + \frac{1}{\beta}\Delta \rho_t\,,
\end{equation}
and the associated  McKean-Vlasov diffusion process solution to:
\begin{equation}
    \label{eq:McKean-Voverdamped_convolution}
    \left\{\begin{aligned}
     \dd X_t & =  -\na \po U + \frac\alpha\beta K^m_\epsilon\star\ln (K^m_\epsilon\star\rho_t)\pf(X_t) \dd t + \sqrt{\frac{2}{\beta}}\dd B_t  \\
     \rho_t &  = \mathcal Law(X_t)\,. 
\end{aligned}\right.
\end{equation}

Likewise (see~\eqref{eq:PDEkinetic} and~\eqref{eq:McKeanVkinetic}), we also consider their kinetic counterparts, namely the Vlasov-Fokker-Planck equation
\begin{equation}
    \label{eq:PDEkinetic_convolution}
    \partial_t \nu_t + v\cdot\na_x \nu_t = \na_v\cdot \po \nu_t \po - v+ \na_x\left(U+\frac\alpha\beta K^m_\epsilon\star\ln(K^m_\epsilon\star\nu_t^{x})\right)\pf \pf + \frac{1}{\beta}\Delta_v \nu_t \,,
\end{equation}
and the associated non-linear Langevin diffusion
\begin{equation}
    \label{eq:McKeanVkinetic_convolution}
    \left\{\begin{aligned}
     \dd X_t & =  V_t\dd t\\
     \dd V_t &=  -\na \po U + \frac\alpha\beta K^m_\epsilon\star\ln (K^m_\epsilon\star\nu_t^{x})\pf(X_t) \dd t - V_t\dd t +  \sqrt{\frac{2}{\beta}}\dd B_t  \\
     \nu_t &  = \mathcal Law(X_t,V_t)\,. 
\end{aligned}\right.
\end{equation}

Thanks to the introduction of the regularized free energy, the drift terms in~\eqref{eq:PDEoverdamped_convolution} and~\eqref{eq:PDEkinetic_convolution} are smooth (compare with the non-regularized versions~\eqref{eq:PDEoverdamped}--\eqref{eq:PDEkinetic}). In particular, the well-posedness of \eqref{eq:PDEoverdamped_convolution} and \eqref{eq:PDEkinetic_convolution} in the set of probability distributions with finite second moments  follows respectively from \cite[Lemma 4.5]{chen2022uniform} and \cite[Proposition 4.6]{chen-lin-ren-wang-2024}, from which moreover for all times $t>0$ the solution admits a density, and has finite entropy and finite Fisher information.

Since we are interested in sampling the Boltzmann-Gibbs measure $\rho^*$ thanks to the adaptive biasing dynamics~\eqref{eq:PDEoverdamped_convolution}-\eqref{eq:PDEkinetic_convolution}, the convergence analysis as $\varepsilon\rightarrow 0$ in Theorem~\ref{thm:cv_infty} concentrates on the minimizer of the free energy $\mathcal F_{\alpha,\epsilon}$, namely the stationary solutions of their associated gradient flows. The analysis of the convergence of the adaptive biasing processes~\eqref{eq:PDEoverdamped_convolution} and \eqref{eq:PDEkinetic_convolution} towards solutions (in a suitable sense) to~\eqref{eq:PDEoverdamped} and~\eqref{eq:PDEkinetic} as $\varepsilon\rightarrow 0$ is beyond the scope of this paper. We refer readers interested in this question to \cite{burger2023porous} and references therein, where such an analysis is carried out for related equations. 

The main objective of this work is to study the long time convergence of the non-linear partial differential equations~\eqref{eq:PDEoverdamped_convolution} and \eqref{eq:PDEkinetic_convolution}.
Concerning~\eqref{eq:PDEoverdamped_convolution}, using the fact that $\rho^*\propto\exp(-\beta U)$ satisfies a log-Sobolev inequality, an exponential convergence for $\mH(\rho_t|\rho^*_{\alpha,\epsilon})$ to zero can be proven.

\begin{theorem}\label{thm:CVoverdamped}
Let $D>0$ be such that $\rho^*\propto\exp(-\beta U)$ satisfies a log-Sobolev inequality with constant $D$. There exists a constant $C>0$ independent of $\alpha,\epsilon$, such that for any $\epsilon \in (0,1]$, and for any initial condition $\rho_0$ in $\T^d$ with finite entropy, the solution $(\rho_t)_{t \ge 0}$ to~\eqref{eq:PDEoverdamped_convolution} satisfies:
$$\forall t\ge 0, \, \mathcal{H}(\rho_t \mid \rho^*_{\alpha,\epsilon}) \leq \beta\F_{\alpha,\epsilon}\left(\rho_t\right)-\beta\F_{\alpha,\epsilon}\left(\rho^{*}_{\alpha,\epsilon}\right)\leq \beta \left(\F_{\alpha,\epsilon}(\rho_0)-\F_{\alpha,\epsilon}\left(\rho^{*}_{\alpha,\epsilon}\right)\right)e^{-c_{\alpha,\epsilon} t},$$
where $c_{\alpha,\epsilon}=\frac{2D}{\beta}\exp(-C\alpha-\frac{m\alpha}{8\epsilon})$.    
\end{theorem}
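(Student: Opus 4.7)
The plan is to identify \eqref{eq:PDEoverdamped_convolution} as the Wasserstein gradient flow of $\mathcal F_{\alpha,\epsilon}$, establish a Polyak--Lojasiewicz-type bound of the free energy gap by its dissipation, and conclude by Gronwall's lemma. The first inequality of the theorem will drop out of a purely algebraic identity: expanding $\beta \mathcal F_{\alpha,\epsilon}(\rho) - \mathcal H(\rho \mid \rho^*_{\alpha,\epsilon})$ using the Euler--Lagrange condition $\ln \rho^*_{\alpha,\epsilon} = -\beta U - \alpha K^m_\epsilon \star \ln(K^m_\epsilon \star \rho^{*,1}_{\alpha,\epsilon}) - \ln Z^*_{\alpha,\epsilon}$ for the minimizer, together with the kernel symmetry $\int \rho \cdot (K^m_\epsilon \star f) = \int (K^m_\epsilon \star \rho^1) \cdot f$, yields
\[\beta \bigl(\mathcal F_{\alpha,\epsilon}(\rho) - \mathcal F_{\alpha,\epsilon}(\rho^*_{\alpha,\epsilon})\bigr) \;=\; \mathcal H(\rho \mid \rho^*_{\alpha,\epsilon}) + \alpha \, \mathcal H\bigl(K^m_\epsilon \star \rho^1 \mid K^m_\epsilon \star \rho^{*,1}_{\alpha,\epsilon}\bigr),\]
and non-negativity of the second term gives the first inequality of the theorem.

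For the dissipation, I introduce the $\rho$-dependent probability density $\mu_\rho \propto \exp(-\beta U - \alpha K^m_\epsilon \star \ln(K^m_\epsilon \star \rho))$, so that $\nabla \tfrac{\delta \mathcal F_{\alpha,\epsilon}}{\delta \rho}(\rho) = \tfrac{1}{\beta}\nabla \ln(\rho/\mu_\rho)$ and, integrating by parts on \eqref{eq:PDEoverdamped_convolution},
\[-\frac{d}{dt}\mathcal F_{\alpha,\epsilon}(\rho_t) \;=\; \int \rho_t \bigl|\nabla \tfrac{\delta \mathcal F_{\alpha,\epsilon}}{\delta \rho}(\rho_t)\bigr|^2 \;=\; \frac{1}{\beta^2}\,\mathcal I(\rho_t \mid \mu_{\rho_t}).\]
The central comparison is the PL-type bound $\mathcal H(\rho \mid \mu_\rho) \geq \beta(\mathcal F_{\alpha,\epsilon}(\rho) - \mathcal F_{\alpha,\epsilon}(\rho^*_{\alpha,\epsilon}))$. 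Expanding $-\ln \mu_\rho$ and comparing with the identity of the previous paragraph, this reduces to showing $\ln Z_\rho \geq \ln Z^*_{\alpha,\epsilon}$. This follows from the Gibbs variational principle applied to $\mu_\rho$ with test density $\rho^*_{\alpha,\epsilon}$, which gives $\ln Z^*_{\alpha,\epsilon} - \ln Z_\rho \leq -\alpha\,\mathcal H(K^m_\epsilon \star \rho^{*,1}_{\alpha,\epsilon} \mid K^m_\epsilon \star \rho^1) \leq 0$, the last inequality being non-negativity of the relative entropy between two genuine probability densities on $\T^m$ (equivalently, Jensen applied to $\ln$).

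To close the loop, I apply the Holley--Stroock perturbation lemma to $\mu_\rho = \rho^* \cdot e^{-\alpha K^m_\epsilon \star \ln(K^m_\epsilon \star \rho)}/(\text{normalization})$: since by assumption $\rho^*$ satisfies LSI$(D)$, one obtains that $\mu_\rho$ satisfies LSI$(\lambda_{\alpha,\epsilon})$ with $\lambda_{\alpha,\epsilon} \geq D \exp(-\alpha \, \mathrm{osc}(K^m_\epsilon \star \ln(K^m_\epsilon \star \rho)))$. Using the pointwise estimates on the torus Gaussian kernel from the appendix, namely $\sup K^m_\epsilon \lesssim \epsilon^{-m/2}$ and $\inf K^m_\epsilon \gtrsim \epsilon^{-m/2}\,e^{-m/(8\epsilon)}$, the oscillation is bounded \emph{uniformly in} $\rho$ by $C + \tfrac{m}{8\epsilon}$, which gives $\lambda_{\alpha,\epsilon} \geq D\, e^{-C\alpha - m\alpha/(8\epsilon)}$. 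Combining the LSI $\mathcal I(\rho_t \mid \mu_{\rho_t}) \geq 2\lambda_{\alpha,\epsilon}\,\mathcal H(\rho_t \mid \mu_{\rho_t})$ with the PL bound yields
\[-\frac{d}{dt}\bigl(\mathcal F_{\alpha,\epsilon}(\rho_t) - \mathcal F_{\alpha,\epsilon}(\rho^*_{\alpha,\epsilon})\bigr) \;\geq\; \frac{2\lambda_{\alpha,\epsilon}}{\beta}\bigl(\mathcal F_{\alpha,\epsilon}(\rho_t) - \mathcal F_{\alpha,\epsilon}(\rho^*_{\alpha,\epsilon})\bigr),\]
and Gronwall's lemma finishes the proof with $c_{\alpha,\epsilon} = 2\lambda_{\alpha,\epsilon}/\beta$. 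The main obstacle is precisely the quantitative oscillation estimate entering Holley--Stroock: the uniformity in $\rho$ is essential (a pointwise bound on $\|K^m_\epsilon \star \ln(K^m_\epsilon \star \rho)\|_\infty$ would blow up with the density $\rho$), and it is the degeneracy of $\inf K^m_\epsilon$ as $\epsilon \to 0$ that produces the explicit $e^{-m\alpha/(8\epsilon)}$ factor in the rate.
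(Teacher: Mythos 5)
Your proposal is correct and follows the same overall strategy as the paper: interpret the PDE as a Wasserstein gradient flow of $\mathcal F_{\alpha,\epsilon}$, compute the dissipation along the flow, apply Holley--Stroock with the pointwise bounds on $K^m_\epsilon$ to obtain an LSI for $\Gamma_{\alpha,\epsilon}(\rho_t)$ that is \emph{uniform in $\rho_t$}, and close with Gronwall. The only difference is in how you obtain the ``sandwich'' inequalities $\mathcal H(\rho\mid\rho^*_{\alpha,\epsilon})\leq\beta(\F_{\alpha,\epsilon}(\rho)-\F_{\alpha,\epsilon}(\rho^*_{\alpha,\epsilon}))\leq\mathcal H(\rho\mid\Gamma_{\alpha,\epsilon}(\rho))$: you expand the relative entropies directly, use the fixed-point characterization $\ln\rho^*_{\alpha,\epsilon}=-\beta U-\alpha K^m_\epsilon\star\ln(K^m_\epsilon\star\rho^{*,1}_{\alpha,\epsilon})-\ln Z^*$ and the evenness of $K^m_\epsilon$ (i.e.\ $\int\rho\,(K^m_\epsilon\star f)=\int(K^m_\epsilon\star\rho^1)f$), and compare partition functions via the non-negativity of the relative entropy. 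The paper instead derives the same bounds from flat convexity of the non-entropic part $G(\rho)=\int\beta U\rho+\alpha\int(K^m_\epsilon\star\rho)\ln(K^m_\epsilon\star\rho)$ via a subgradient argument (Proposition~\ref{prop:chizat}, following a lemma of Chizat). The two routes are algebraically equivalent; your version has the mild advantage of producing an exact identity for the lower bound,
\[
\beta\bigl(\F_{\alpha,\epsilon}(\rho)-\F_{\alpha,\epsilon}(\rho^*_{\alpha,\epsilon})\bigr)=\mathcal H(\rho\mid\rho^*_{\alpha,\epsilon})+\alpha\,\mathcal H\bigl(K^m_\epsilon\star\rho^1\mid K^m_\epsilon\star\rho^{*,1}_{\alpha,\epsilon}\bigr),
\]
whereas the paper only records the inequality. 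Everything else --- the dissipation $\tfrac{d}{dt}\F_{\alpha,\epsilon}(\rho_t)=-\beta^{-2}\mathcal I(\rho_t\mid\Gamma_{\alpha,\epsilon}(\rho_t))$, the oscillation estimate $\operatorname{osc}(\alpha K^m_\epsilon\star\ln(K^m_\epsilon\star\rho))\le\alpha(C+\tfrac{m}{8\epsilon})$ yielding the rate $c_{\alpha,\epsilon}=\tfrac{2D}{\beta}e^{-C\alpha-m\alpha/(8\epsilon)}$, and Gronwall --- is identical.
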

This result is proven in Section~\ref{sec:longtime_overdamped_dependent}

We prove a similar result for the kinetic case~\eqref{eq:PDEkinetic_convolution}, for which the stationary solution is 
\[\nu^*_{\alpha,\epsilon} = \rho^*_{\alpha,\epsilon} \otimes \mathcal N(0, \beta^{-1} I_d).\]
Let us introduce the extended free energy defined for a probability density $\nu$ on $\T^d\times\R^d$ by
\[\mathcal{\tilde{F}}_{\alpha,\epsilon}\left(\nu\right) = \int_{\T^d\times \R^d} H \nu + \frac{1}{\beta}\left(  \int_{\T^d\times \R^d} \nu \ln \nu  + \alpha \int_{\T^d}  K^m_\epsilon\star\nu^x \ln (K^m_\epsilon\star\nu^x)\right),  \]
where $H(x,v)=U(x)+ \frac12|v|^2$, and as in the overdamped case, $K^m_\epsilon\star\nu^x(x)=K^m_\epsilon\star \nu^{x,1}(x^1)$. Again, by convention, $\mathcal{\tilde{F}}_{\alpha,\epsilon}\left(\nu\right)=+\infty$ if $\nu$ is a probability distribution which does not admit a density with respect to the Lebesgue measure on $\T^d \times \R^d$.

\begin{theorem}\label{thm:CVkinetic}
For any $\epsilon\in(0,1]$, there exist constants $C_{\alpha,\epsilon},\kappa_{\alpha,\epsilon}>0$ such that for any initial distribution $\nu_0$ on $\T^d\times\R^d$ with finite second moment and finite entropy, the solution $(\nu_t)_{t \ge 0}$ to~\eqref{eq:PDEkinetic_convolution} satisfies:
$$\forall t \ge 0, \, \mathcal{H}(\nu_t \mid \nu^*_{\alpha,\epsilon}) \leq \beta \mathcal{\tilde{F}}_{\alpha,\epsilon}\left(\nu_{t}\right)-\beta\mathcal{\tilde{F}}_{\alpha,\epsilon}\left(\nu^*_{\alpha,\epsilon}\right) \leq C_{\alpha,\epsilon} \left(\mathcal{\tilde{F}}_{\alpha,\epsilon}\left(\nu_0\right)-\mathcal{\tilde{F}}_{\alpha,\epsilon}\left(\nu^*_{\alpha,\epsilon}\right)  \right) e^{-\kappa_{\alpha,\epsilon} t}.  $$ 
\end{theorem}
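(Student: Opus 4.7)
The plan is to apply Villani's modified-entropy hypocoercivity method to the free-energy flow structure of~\eqref{eq:PDEkinetic_convolution}, following the non-linear adaptation developed in \cite{chen-lin-ren-wang-2024,monmarche2023note}. Two preliminary facts will be used throughout. First, by combining the log-Sobolev inequality of Theorem~\ref{thm:cv_infty} with its standard tensorization property, the stationary measure $\nu^*_{\alpha,\epsilon}=\rho^*_{\alpha,\epsilon}\otimes\mathcal N(0,\beta^{-1}I_d)$ satisfies a log-Sobolev inequality with constant $\min(D_{\alpha,\epsilon},\beta)$. Second, because $K^m_\epsilon$ is smooth and bounded away from $0$ on $\T^m$, the ratio $K^m_\epsilon\star\nu^x$ is pinched between two positive constants independent of $\nu$, so two differentiations of the biasing term $x\mapsto \tfrac{\alpha}{\beta}\,K^m_\epsilon\star\ln(K^m_\epsilon\star\nu^x)(x)$ yield a Hessian bounded uniformly in $\nu$ by a constant $M_{\alpha,\epsilon}$ depending only on $\alpha,\beta,\epsilon,m$. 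Combined with $\|\nabla^2 U\|_\infty<\infty$ (since $U\in C^\infty(\T^d)$), this provides the drift-regularity input needed to run a Villani-type argument.

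A standard computation yields an $H$-theorem: $\tilde{\mathcal F}_{\alpha,\epsilon}(\nu_t)$ is non-increasing along~\eqref{eq:PDEkinetic_convolution}, with dissipation acting only through velocity derivatives. This is insufficient on its own to derive exponential decay, since it does not control $\nabla_x$ directly. To remedy this, I would introduce a Villani-type modified free energy
\[\Phi(\nu_t) = \tilde{\mathcal F}_{\alpha,\epsilon}(\nu_t) - \tilde{\mathcal F}_{\alpha,\epsilon}(\nu^*_{\alpha,\epsilon}) + \int \bigl\langle M\,\nabla h_t,\nabla h_t\bigr\rangle\,\nu_t,\]
with $h_t=\ln(\nu_t/\nu^*_{\alpha,\epsilon})$, $\nabla=(\nabla_x,\nabla_v)$, and $M$ a positive-definite symmetric $2d\times 2d$ matrix with nonzero $\nabla_x$--$\nabla_v$ cross-blocks. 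Differentiating $\Phi(\nu_t)$ along~\eqref{eq:PDEkinetic_convolution} produces, on the one hand, the standard negative-definite Villani quadratic form in $(\nabla_x h_t,\nabla_v h_t)$ arising from the linear part, and on the other hand a non-linear error term stemming from the $\nu$-dependence of the drift, which is controlled by $M_{\alpha,\epsilon}$ and absorbed into the leading quadratic form upon choosing the entries of $M$ sufficiently small. The log-Sobolev inequality on $\nu^*_{\alpha,\epsilon}$ (and its $x$- and $v$-marginal counterparts) then bounds the full quadratic form from below by a multiple of $\Phi(\nu_t)$ itself, giving a differential inequality $\frac{d}{dt}\Phi(\nu_t)\leq -2\kappa_{\alpha,\epsilon}\Phi(\nu_t)$ for some $\kappa_{\alpha,\epsilon}>0$.

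Integrating this inequality yields exponential decay of $\Phi$; comparing $\Phi(\nu_t)$ with $\tilde{\mathcal F}_{\alpha,\epsilon}(\nu_t)-\tilde{\mathcal F}_{\alpha,\epsilon}(\nu^*_{\alpha,\epsilon})$ up to multiplicative constants (using LSI once more to control the quadratic correction by the entropy gap) gives the second inequality of the theorem, with $C_{\alpha,\epsilon}$ absorbing the equivalence constants. The first inequality $\mathcal H(\nu\mid\nu^*_{\alpha,\epsilon})\leq\beta\bigl[\tilde{\mathcal F}_{\alpha,\epsilon}(\nu)-\tilde{\mathcal F}_{\alpha,\epsilon}(\nu^*_{\alpha,\epsilon})\bigr]$ is the Bregman inequality associated with the flat convexity of $\tilde{\mathcal F}_{\alpha,\epsilon}$: the non-linear contribution $\int K^m_\epsilon\star\nu^x\ln(K^m_\epsilon\star\nu^x)$ is convex in $\nu$, so that the free-energy gap dominates the strictly convex $\beta^{-1}$-relative-entropy term.

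The main obstacle is the non-linear error coming from $\nabla_x(K^m_\epsilon\star\ln(K^m_\epsilon\star\nu_t^x))$ in the drift: it prevents a direct application of Villani's linear hypocoercivity theorem, and closing the estimates requires both the uniform Hessian bound $M_{\alpha,\epsilon}$ (which is finite only because $\epsilon>0$) and the flat convexity of $\tilde{\mathcal F}_{\alpha,\epsilon}$. This is also the reason why the rate $\kappa_{\alpha,\epsilon}$ and the constant $C_{\alpha,\epsilon}$ in the statement are allowed to degenerate as $\epsilon\to 0$, and why obtaining rates independent of $\epsilon$ (as in Theorem~\ref{th:CVoverdamped-sharp} for the overdamped case) would require a genuinely different approach.
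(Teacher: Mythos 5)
Your proposal follows essentially the same route as the paper: both rest on the non-linear Villani modified-entropy framework of \cite{chen-lin-ren-wang-2024,monmarche2023note}, with the sandwich/Bregman inequality handling the first bound and a modified free energy handling the second. The difference is that the paper directly invokes \cite[Theorem 2.1 and Proposition 5.5]{chen-lin-ren-wang-2024} and checks their hypotheses, whereas you re-derive the modified-entropy argument by hand.

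In doing so, however, your sketch misses or conflates two of the three hypotheses that actually make the argument close, and these are exactly what the paper verifies in its Step 2. First, the log-Sobolev inequality that enters the hypocoercive estimate is not that of $\nu^*_{\alpha,\epsilon}$ alone but a \emph{uniform-in-$\nu$} log-Sobolev inequality for the family of local equilibria $\Gamma_{\alpha,\epsilon}(\nu^x)$ (equivalently $\tilde\Gamma_{\alpha,\epsilon}(\nu)$). This is because the $H$-theorem dissipation and the sandwich inequality are expressed in terms of $\mathcal H(\nu_t\mid\tilde\Gamma_{\alpha,\epsilon}(\nu_t))$, not $\mathcal H(\nu_t\mid\nu^*_{\alpha,\epsilon})$; the paper deduces this uniform LSI from Holley--Stroock and the $L^\infty$ pinching of $K^m_\epsilon$. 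Second, the ``non-linear error term stemming from the $\nu$-dependence of the drift'' is not controlled by your uniform Hessian bound $M_{\alpha,\epsilon}$: that bound is the Lipschitz constant of the drift in $x$ (the constant $L_x$ in the paper). To close the Gr\"onwall inequality you also need Lipschitz continuity of $\nu\mapsto\nabla\tfrac{\delta F_{\alpha,\epsilon}}{\delta\nu^x}(\nu,\cdot)$ in the Wasserstein-$1$ distance (the constant $L_\nu$), which is a genuinely distinct estimate obtained in the paper from $\|\nabla K^m_\epsilon\|_\infty^2/c_\epsilon$. Without $L_\nu$ the differential inequality does not close. Finally, a smaller technical omission: the modified-entropy argument requires finite Fisher information $\mathcal I(\nu_{t_0}\mid\tilde\Gamma_{\alpha,\epsilon}(\nu_{t_0}))<\infty$ for some $t_0>0$, which the paper secures via a regularization lemma (\cite[Proposition 5.5]{chen-lin-ren-wang-2024}) before invoking the convergence theorem; your sketch starts directly from the differential inequality and does not justify that the quadratic correction in $\Phi(\nu_0)$ is finite.
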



This result is proven in Section~\ref{sec:proofCVkinetic}. To the best of our knowledge, this is the first convergence result for a PMF-based enhanced sampling algorithm applied to the kinetic Langevin diffusion. As mentioned in the introduction, the Langevin diffusion is the standard method used in MD. 

The convergence rate $\kappa_{\alpha,\epsilon}$ and the prefactor $C_{\alpha,\epsilon}$ in Theorem~\ref{thm:CVkinetic} could be made explicit by following the proof but they are not very informative. In fact, in both Theorems~\ref{thm:CVoverdamped} and~\ref{thm:CVkinetic}, the estimates of the convergence rates $c_{\alpha,\epsilon}$ and $\kappa_{\alpha,\epsilon}$ obtained from the proofs go exponentially fast to zero as $\epsilon \rightarrow 0$ and $\alpha\rightarrow+\infty$. As a consequence, these results are not sufficiently sharp to compare the efficiencies of these adaptive biasing schemes with the standard naive (overdamped) Langevin dynamics. 

This issue stems from a loose bound on the log-Sobolev constant of certain 
time-dependent probability distributions appearing in the proof. In the proof of Theorems~\ref{thm:CVoverdamped} and~\ref{thm:CVkinetic}, this bound is obtained from the non-explicit fixed-point characterization of 
$\rho^*_{\alpha,\epsilon}$, which will be shown in Proposition~\ref{prop:rho*}, together 
with uniform bounds on the 
Gaussian kernel $K^m_\epsilon$ and a crude dependence on $\alpha$. These bounds deteriorate 
as $\epsilon \to 0$ and $\alpha \to +\infty$.
Let us explain how we are able to circumvent this difficulty, at least for the overdamped  dynamics~\eqref{eq:PDEoverdamped_convolution}.

Thanks to Theorem~\ref{thm:CVoverdamped}, $ \rho_t $ converges to $\rho_{\alpha,\epsilon}^*$ as $t\rightarrow \infty$, and we thus may expect the convergence rate to be given by the log-Sobolev constant $D_{\alpha,\epsilon}$ of   $\rho_{\alpha,\epsilon}^*$. This is stated in the next theorem.

\begin{theorem}\label{th:CVoverdamped-sharp} Let $\rho^*_{\alpha,\epsilon},D_{\alpha,\epsilon}$ be as in Theorem~\ref{thm:cv_infty}.
There exists a constant $c>0$ independent of $\epsilon,\alpha$, such that when $\epsilon\in(0,\frac{c}{\alpha+1}]$, for any initial condition $\rho_0$ in $\T^d$ with finite entropy, there exists a constant $M>0$ which depends on $\epsilon, \alpha,\beta$ and $\mathcal F_{\alpha,\epsilon}(\rho_0)$, such that
  the solution $(\rho_t)_{t \ge 0}$ to~\eqref{eq:PDEoverdamped_convolution}, satisfies:
\begin{equation}\label{eq:CV_ind_eps} \forall t \ge0, \, \mathcal{H}(\rho_t \mid {\rho}^{*}_{\alpha,\epsilon})  \leq  \beta\F_{\alpha,\epsilon}(\rho_{t})-\beta\F_{\alpha,\epsilon}\left(\rho^{*}_{\alpha,\epsilon}\right)\leq  M e^{-\frac{2 D_{\alpha,\epsilon}}{\beta} t}. \end{equation}
\end{theorem}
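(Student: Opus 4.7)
Setting $\mathcal{E}(t) := \mathcal{F}_{\alpha,\epsilon}(\rho_t) - \mathcal{F}_{\alpha,\epsilon}(\rho^{*}_{\alpha,\epsilon})$, the first inequality $\mathcal{H}(\rho_t \mid \rho^{*}_{\alpha,\epsilon}) \leq \beta \mathcal{E}(t)$ follows from a Bregman-divergence decomposition of $\mathcal{F}_{\alpha,\epsilon}$ around its minimizer. Splitting $\mathcal{F}_{\alpha,\epsilon} = \mathcal{G}_1 + \mathcal{G}_2$ with $\mathcal{G}_1(\rho) = \int U \rho + \beta^{-1}\int \rho \ln \rho$ and $\mathcal{G}_2(\rho) = (\alpha/\beta) \int_{\T^m} (K^{m}_{\epsilon} \star \rho^1) \ln (K^{m}_{\epsilon} \star \rho^1)$, the first-order optimality condition at $\rho^{*}_{\alpha,\epsilon}$ kills the linear piece of the Bregman expansion, so that $\beta\bigl(\mathcal{F}_{\alpha,\epsilon}(\rho) - \mathcal{F}_{\alpha,\epsilon}(\rho^{*}_{\alpha,\epsilon})\bigr)$ decomposes as the sum of $\mathcal{H}(\rho \mid \rho^{*}_{\alpha,\epsilon})$ (coming from $\mathcal{G}_1$) and a non-negative integral of the scalar relative entropy of $K^{m}_{\epsilon}\star\rho^1$ against $K^{m}_{\epsilon}\star\rho^{*,1}_{\alpha,\epsilon}$ (coming from the convex $\mathcal{G}_2$).

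The sharp exponential decay of $\mathcal{E}(t)$ is built on the gradient-flow dissipation identity $\mathcal{E}'(t) = -\beta^{-2} \mathcal{I}(\rho_t \mid \bar{\rho}_t)$, where $\bar{\rho}_t \propto \exp(-\beta \Phi_\epsilon(\rho_t))$ with $\Phi_\epsilon(\rho) := U + (\alpha/\beta) K^{m}_{\epsilon}\star \ln(K^{m}_{\epsilon} \star \rho^1)$. I would first use Theorem~\ref{thm:CVoverdamped} as a rough bootstrap, coupled with Pinsker's inequality and the first inequality, to obtain exponential convergence of $\rho^1_t$ to $\rho^{*,1}_{\alpha,\epsilon}$ in $L^1(\T^m)$. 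The regularizing properties of $K^{m}_{\epsilon}$ recalled in Appendix~\ref{sec:ap} (uniform upper bound, strict positivity for $\epsilon > 0$, gradient bounds) then transfer this into exponential decay of $\|\Phi_\epsilon(\rho_t) - \Phi_\epsilon(\rho^{*}_{\alpha,\epsilon})\|_{W^{1,\infty}}$. Holley--Stroock perturbation of the LSI of $\rho^{*}_{\alpha,\epsilon}$ (with constant $D_{\alpha,\epsilon}$ from Theorem~\ref{thm:cv_infty}) yields an LSI for $\bar{\rho}_t$ with constant $D_{\alpha,\epsilon}\exp(-2\beta \|\Phi_\epsilon(\rho_t) - \Phi_\epsilon(\rho^{*}_{\alpha,\epsilon})\|_\infty)$, converging to $D_{\alpha,\epsilon}$ as $t \to \infty$. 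Using the identity $\beta \mathcal{F}_{\alpha,\epsilon}(\rho) = \mathcal{H}(\rho \mid \bar{\rho}(\rho)) - \ln Z(\rho)$ (with $Z(\rho) = \int e^{-\beta \Phi_\epsilon(\rho)}$) to relate $\mathcal{E}(t)$ to $\mathcal{H}(\rho_t \mid \bar{\rho}_t)$ up to the fluctuation $\ln(Z_t/Z_\infty)$, which is controlled by $\beta\|\Phi_\epsilon(\rho_t) - \Phi_\epsilon(\rho^{*}_{\alpha,\epsilon})\|_\infty$, produces a differential inequality $\mathcal{E}'(t) + \lambda(t)\mathcal{E}(t) \leq r(t)$ with $\lambda(t) \to 2 D_{\alpha,\epsilon}/\beta$ and $r(t) \to 0$ exponentially.

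The conclusion follows by Gronwall's inequality. On a finite interval $[0, T_0]$ the monotonicity $\mathcal{E}(t) \leq \mathcal{E}(0)$ absorbs the initial phase into the prefactor $M$ (hence its dependence on $\mathcal{F}_{\alpha,\epsilon}(\rho_0)$), and beyond $T_0$ the integration of the differential inequality delivers the exponential decay at the claimed rate $2D_{\alpha,\epsilon}/\beta$. The main obstacle is to secure precisely the sharp constant $2 D_{\alpha,\epsilon}/\beta$ rather than a degraded $(1-\eta) \cdot 2D_{\alpha,\epsilon}/\beta$ that a fixed-weight Cauchy--Schwarz would produce; this forces a bootstrap with a time-dependent splitting $\eta_0(t) \to 0$ that exploits the coupled self-improvement of $\mathcal{E}(t)$ and of $\|\Phi_\epsilon(\rho_t) - \Phi_\epsilon(\rho^{*}_{\alpha,\epsilon})\|_\infty$ as $t$ grows. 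The smallness assumption $\epsilon \leq c/(\alpha+1)$ enters through Theorem~\ref{thm:cv_infty}: it delivers uniform $L^\infty$ control on $\rho^{*}_{\alpha,\epsilon}$ via $\|\rho^{*}_{\alpha,\epsilon} - \rho^{*}_{\alpha}\|_\infty \leq C_2 \epsilon$, and it keeps the Holley--Stroock correction from overwhelming the base constant $D_{\alpha,\epsilon}$ in the long-time regime.
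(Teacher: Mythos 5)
Your skeleton is right — dissipation identity, Holley--Stroock perturbation of the LSI of $\Gamma_{\alpha,\epsilon}(\rho_t)$, bootstrap from Theorem~\ref{thm:CVoverdamped} via Pinsker to get $\rho_t^1\to\rho^{*,1}_{\alpha,\epsilon}$, and a Gronwall closing step. But there is a genuine gap in the way you close the argument, and it comes from dropping one half of the sandwich inequality in Proposition~\ref{prop:chizat}.

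Writing $\mathcal{E}(t)=\mathcal{F}_{\alpha,\epsilon}(\rho_t)-\mathcal{F}_{\alpha,\epsilon}(\rho^*_{\alpha,\epsilon})$ and $\bar\rho_t=\Gamma_{\alpha,\epsilon}(\rho_t)$, your identity gives $\mathcal{H}(\rho_t\mid\bar\rho_t)=\beta\mathcal{E}(t)+\ln\bigl(Z_t/Z_\infty\bigr)$, and you then bound the fluctuation $|\ln(Z_t/Z_\infty)|\leq\beta\|\Phi_\epsilon(\rho_t)-\Phi_\epsilon(\rho^*_{\alpha,\epsilon})\|_\infty$ to produce a source term $r(t)$ in a differential inequality $\mathcal{E}'+\lambda(t)\mathcal{E}\leq r(t)$. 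The problem is that, as you establish through Pinsker and the bounds of Appendix~\ref{sec:ap}, $\|\Phi_\epsilon(\rho_t)-\Phi_\epsilon(\rho^*_{\alpha,\epsilon})\|_\infty\lesssim_{\alpha,\epsilon}\sqrt{\mathcal{H}(\rho_t\mid\rho^*_{\alpha,\epsilon})}\lesssim\sqrt{\mathcal{E}(t)}$. Hence $r(t)\lesssim\sqrt{\mathcal{E}(t)}$, and the inequality $\mathcal{E}'\leq-\lambda\mathcal{E}+C\sqrt{\mathcal{E}}$ does not give decay to zero at all (set $u=\sqrt{\mathcal{E}}$: $u'\leq-\tfrac{\lambda}{2}u+\tfrac{C}{2}$, so $u$ stabilises near $C/\lambda$). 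Your proposed "time-dependent splitting $\eta_0(t)\to0$" does not fix this, since absorbing $r(t)$ into $\eta_0(t)\lambda\mathcal{E}(t)$ would require $\eta_0(t)\gtrsim 1/\sqrt{\mathcal{E}(t)}\to\infty$. The bootstrap also runs the wrong way: if $\mathcal{E}(t)\lesssim e^{-\kappa t}$ then $r(t)\lesssim e^{-\kappa t/2}$ and Gronwall gives only $e^{-\min(\lambda,\kappa/2)t}$, degrading rather than improving the rate.

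The missing ingredient is the \emph{second} half of the sandwich inequality in Proposition~\ref{prop:chizat}: $\beta\mathcal{E}(t)\leq\mathcal{H}(\rho_t\mid\Gamma_{\alpha,\epsilon}(\rho_t))$. In your notation this says precisely $\ln(Z_t/Z_\infty)\geq 0$ for \emph{all} $t$ — a one-sided inequality coming from convexity and first-order optimality of $\rho^*_{\alpha,\epsilon}$ (it is equivalent to $\mathcal{H}(\rho^*_{\alpha,\epsilon}\mid\Gamma_{\alpha,\epsilon}(\rho))\geq 0$). Once you know the sign of the fluctuation, the source term disappears entirely and you obtain directly $\mathcal{E}'(t)\leq-\tfrac{2D_t}{\beta}\mathcal{E}(t)$ with $D_t$ the LSI constant of $\Gamma_{\alpha,\epsilon}(\rho_t)$. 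From there, the paper uses Theorem~\ref{thm:CVoverdamped} on $[0,t_0]$ to enter the small-entropy regime, Lemma~\ref{lem:LSI_gamma} to get $D_t\geq D_{\alpha,\epsilon}\exp\bigl(-C\alpha\epsilon^{-m/2}\sqrt{\mathcal{H}(\rho_t\mid\rho^*_{\alpha,\epsilon})}\bigr)$ for $t\geq t_0$, and then estimates $\int_0^t D_s\,\dd s$ via $e^{-x}\geq 1-x$ — no source term, no weight-splitting, and the sharp rate $2D_{\alpha,\epsilon}/\beta$ comes out cleanly (the initial phase $[0,t_0]$ is absorbed into the prefactor $M$, as you anticipated).
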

The proof of Theorem~\ref{th:CVoverdamped-sharp} is given in Section~\ref{sec:longtime_overdamped}. In contrast with the proof of Theorem~\ref{thm:CVoverdamped}, it relies on a technical fixed-point argument (see Theorem~\ref{thm:implicit} below). According to Theorem~\ref{thm:cv_infty}, for any  $\epsilon_0 \in(0,\frac{c}{\alpha+1}]$, for any $\epsilon \in (0,\epsilon_0)$, 
$D_{\alpha,\epsilon} \ge D_\alpha \exp(-C_3 \epsilon_0)$: for sufficiently small $\epsilon$, the convergence rate in~\eqref{eq:CV_ind_eps} can thus be made independent of $\epsilon$.
We will thus refer to Theorem~\ref{th:CVoverdamped-sharp} as an exponential convergence result with an $\epsilon$-independent rate in the following, as opposed to the results of Theorems~\ref{thm:CVoverdamped} and~\ref{thm:CVkinetic} which provide exponential convergence results with $\epsilon$-dependent rates (in the sense that those rates are {\em a priori} not bounded away from $0$ when $\epsilon \to 0$).

Moreover, according again to Theorem~\ref{thm:cv_infty}, $D_{\alpha,\epsilon}$ can be made arbitrarily close to the optimal log-Sobolev constant of $\rho_{\alpha}^*$, for $\epsilon$ sufficiently small, which, thanks to \cite{Lelievre_twoscale}, is bigger (at least at low temperature and when the collective variable is well-chosen) than the one of the Boltzmann-Gibbs measure $\rho^*\propto \exp(-\beta U)$, see Section~\ref{sec:idealfreeenergy}. 
Theorem~\ref{th:CVoverdamped-sharp} thus shows that, in this setting, the dynamics~\eqref{eq:PDEoverdamped_convolution} converges to equilibrium at a much larger rate than the standard overdamped diffusion~\eqref{eq:overdampedLangvein}.

The same result could be expected to hold in the kinetic case~\eqref{eq:PDEkinetic_convolution}, i.e. it may be possible to improve~Theorem~\ref{thm:CVkinetic} to get a convergence rate independent of $\epsilon$ in the regime $\epsilon \to 0$. However, for the kinetic Langevin diffusion, even in the linear setting, the convergence rate obtained with the modified entropy approach, that we follow here, depends not only on the log-Sobolev constant of the stationary state but also on bounds on the Hessian of some log-densities.
This leads to additional difficulties when trying to establish the analogue of Theorem~\ref{th:CVoverdamped-sharp} in the kinetic case, see Remark~\ref{rem:sharpkinetic} at the end of Section~\ref{sec:longtime_overdamped} for details.



\begin{rem}
    In practice, when the dynamics~\eqref{eq:McKean-Voverdamped_convolution} (resp.~\eqref{eq:McKeanVkinetic_convolution}) reach their equilibria, they do not sample the target measure $\rho^*$ (resp. $\nu^*$) but biased measures. However, the ratio of the equilibrium measure over the target measure is proportional to $\omega(x)=\exp(\alpha K^m_\epsilon \star \ln (K^m_\epsilon \star \rho_t))$ (resp. $\omega(x)=\exp(\alpha K^m_\epsilon \star \ln (K^m_\epsilon \star \nu^x_t))$), namely a quantity  which depends on the biasing potential which is learnt on the fly: one can thus use weighted averages, as in~\eqref{eq:IS} to get unbiased  estimators. 
\end{rem}

\subsection{Discussion about the convergence of the Adaptive Biasing Force method}\label{sec:ABF}

Before presenting the proofs of the convergence results in the following sections, let us make a few comments on applying the mean-field free energy approach to the Adaptive Biasing Force (ABF) method (rather than to an adaptive biasing potential, as the one described above). A key point in the recent works  \cite{chen-lin-ren-wang-2024,monmarche2023note,MonmarcheReygner} on the mean-field underdamped Langevin diffusion is that the free energy, which can always be defined by~\eqref{eq:DefFreeEnergy} as discussed in \cite{monmarche2023note}, is explicit. This is not the case for ABF. Even for the overdamped dynamics, the free energy (in the sense of~\eqref{eq:DefFreeEnergy}) decays along the non-linear flow (cf.~\cite[Proposition 2]{monmarche2023note}), but neither this free energy nor its dissipation along the non-linear flow are explicit. It is actually unclear whether  the dynamics can be seen as a Wasserstein gradient flow  in the sense of~\cite{ambrosio2005gradient}.
That being said, since ABF relies on the estimation of the gradient of   a log density (which is related to what is referred to as score matching in machine learning \cite{hyvarinen2005estimation}), it may be possible to interpret it as a gradient flow of a Fisher Information, but then not with respect to the  Wasserstein distance (which would yield a fourth-order equation \cite{gianazza2009wasserstein}) but to another metric, such as the Fisher-Rao one \cite{carrillo2024fisher}. To our knowledge, modified entropy methods have not yet been developed for kinetic versions of Fisher-Rao gradient flows. This could be an interesting research direction for the future. 

Let us finally comment on the convergence of ABF for the overdamped dynamics. In~\cite{LRS07}, exponential rates of convergence in entropy are established for the overdamped ABF dynamics. The computations are based on the usual relative entropy with respect to the stationary solution (and not on the non-linear free energy given by~\eqref{eq:DefFreeEnergy}) and they rely on a very specific feature of the dynamics, namely that the marginal distribution in the collective variable solves an autonomous linear Fokker-Planck equation. This gives a convergence of this marginal distribution to its equilibrium, which can be used to control some error terms in the derivation of the full relative entropy, leading to an overall exponential decay (we exemplify in Appendix~\ref{sec:McKean-Voverdamped} how such techniques can be used on the example of the non-regularized dynamics~\eqref{eq:McKean-Voverdamped}).
Extending this analysis to the kinetic case is unclear. Indeed, with the standard definition of $\nabla A_t$ used in practice, it is not true that the marginal density of the collective variables (and possibly their velocities) solves an autonomous equation. We may modify the algorithm to enforce this, but then, with a given fixed biasing force, the process is a non-equilibrium kinetic Langevin process for which the equilibrium is not explicit~\cite{M35}. It is then unclear how to apply a modified entropy version of \cite{LRS07}, since this would require to have a bound on the Hessian of the logarithm of these non-explicit local equilibria.

The convergence of the ABF method for the (kinetic) Langevin dynamics thus remains an open problem.

\section{Numerical Experiments}\label{sec:numerics}
The objective of this section is to illustrate the efficiency of the Adaptive Biasing Potential method~\eqref{eq:McKean-Voverdamped_convolution} on a toy problem. We refer to the review papers~\cite{valsson2016enhancing,henin-lelievre-shirts-valsson-delemotte-22,Comer} for references to realistic molecular systems where similar adaptive biasing techniques are used.

\subsection{Numerical scheme}
In practice, the McKean-Vlasov process~\eqref{eq:McKean-Voverdamped_convolution} is first approximated by a system of $N$ interacting particles $(X_{t}^1,\dots,X_{t}^N)\in\T^{dN}$ evolving according to
\begin{equation}
     \dd X_{t}^i  =  -\na \po U + \frac\alpha{\beta } K_\epsilon^m \star \ln \po \frac1N \sum_{j=1}^N K_\epsilon^m(\cdot - X_{t}^j)  \pf  \pf(X_{t}^i) \dd t + \sqrt{\frac{2}{\beta}}\dd B_{t}^i
\end{equation}
with independent Brownian motions $B^{1},\dots,B^N$. Time is then discretized  using e.g. an Euler-Maruyama scheme with some step size $h>0$, and space is discretized on a grid $\mathcal G^m$ of $\T^m$ with step $\delta_x$ for the convolution terms. Notice that the number of points in the grid $\mathcal G^m$ remains manageable in pratice since $m$ is  small (typically $m\in\{1,2,3\}$). The resulting practical algorithm is thus the Markov chain $(Y_k^1,\dots,Y_k^N) \in \T^{dN}$ with transitions
\begin{equation}\label{Euler}
     Y_{k+1}^i  = Y_{k}^i  - h  \na \po  U + W_k\pf  (Y_{k}^i)   + \sqrt{\frac{2h}{\beta}} G_k^i
\end{equation}
with $G^i_{k} \sim \mathcal N(0,I_d)$ i.i.d. The biasing potential $W_k$ and its gradient, which depend only on the first $m$ coordinates, are evaluated at $x\in\T^d$ as follows:

\begin{align}
W_k(x)
& = \frac{\delta_x^m \alpha}{\beta}\sum_{y\in\mathcal G^m}
K_\epsilon^m(x_1-y)
\ln\!\Bigl(\frac{1}{N}\sum_{j=1}^N K_\epsilon^m(y-Y_{k,1}^j)\Bigr),
\label{loc:sim1}
\\
\nabla W_k(x)
& =\Biggl(
\frac{\delta_x^m \alpha}{\beta}\sum_{y\in\mathcal G^m}
\nabla K_\epsilon^m(x_1-y)
\ln\!\Bigl(\frac1N\sum_{j=1}^N K_\epsilon^m(y-Y_{k,1}^j)\Bigr),
\;0_{d-m}
\Biggr),\label{loc:sim2}
\end{align}
where $Y_{k,1}^i$ denotes the first $m$ coordinates of $Y_k^i$ and $0_{d-m}$ denotes the zero vector in $\T^{d-m}$.



These approximations for $W_k$ and $\nabla W_k$ crucially exploits the smoothness of the Gaussian kernel and the regularizing/convolution structure, which in turn provides additional motivation for introducing convolution-regularized dynamics.

The expectation of an observable $g$ with respect to the Gibbs measure $\rho_* \propto e^{-\beta U}$ is then approximated following the reweighting procedure~\eqref{eq:IS}. The average is done both over the particles $i\in\cco 1,N\ccf$  and time $k\geqslant 1$. The weight for the samples generated at time $k$ is $\omega_k(x) = e^{\beta W_k(x)}$. After $M$ time-steps,  the estimator is thus
\begin{equation}\label{eq:reweighted_estimator}
    \po \sum_{k=1}^M \sum_{i=1}^N  \omega_{k}(Y_k^i)\pf^{-1}\sum_{k=1}^M \sum_{i=1}^N g\po Y_{k}^i\pf \omega_{k}(Y_k^i)\,.
\end{equation}

As in practical use of MCMC methods, a burn-in time is introduced to reduce the influence of the initial transient behavior: specifically, we sometimes average only over the second half of the trajectory.
More precisely, we introduce a time window $\mathcal I\subset\{1,\dots,M\}$, and in practice take
$\mathcal I=\{\lceil M/2\rceil,\dots,M\}$.
The corresponding windowed reweighted estimator is
\begin{equation}\label{eq:reweighted_estimator_window}
    \Bigg(\sum_{k\in\mathcal I}\sum_{i=1}^N \omega_k(Y_k^i)\Bigg)^{-1}
    \sum_{k\in\mathcal I}\sum_{i=1}^N g\bigl(Y_k^i\bigr)\,\omega_k(Y_k^i).
\end{equation}

\subsection{Numerical results}
\textbf{General setting for the numerical experiments.} In the experiment below, the parameters are chosen as follows:
$m=1$, $d=2$, $\beta=1.5$, $h= 10^{-4}$, $\epsilon=10^{-3},\delta_x=10^{-3}$, $N=500$, $M=3000$. Thus, the time horizon is $T=M\cdot h=0.3$.
The state space is the two-dimensional torus $\mathbb T^2$, which we represent as $[-\tfrac12,\tfrac12)^2$
with periodic wrapping after each Euler step.
We consider the potential $U:\mathbb T^2 \to \R$ defined by:
\[
U(x_1,x_2)
= \bigl(\sin(4\pi x_1)-1\bigr)^2
+ \theta\,\sin(2\pi x_1)
+ 2\,\sin(2\pi x_2)\sin(2\pi x_1),
\qquad \theta=-0.5.
\]

The heatmap of $U$ is shown on Figure~\ref{fig:heatmap_minusU_torus}.We observe that the potential $U$ exhibits two wells, located near $(x_1,x_2)\approx(-0.32,\,0.25)$ and $(x_1,x_2)\approx(0.19,\,-0.25)$, which is also confirmed by the numerical search for local minima reported by the code.
This is consistent with a multimodal situation, typically associated with a small log-Sobolev constant for the associated Boltzmann-Gibbs probability measure $\rho^*$.

\begin{figure}[H]
  \centering
  \includegraphics[width=0.55\textwidth]{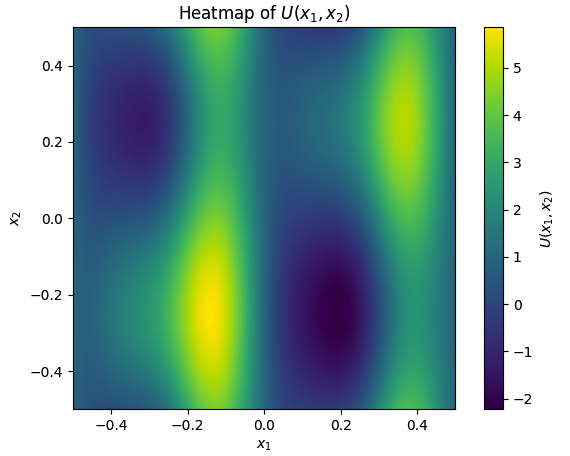}
  \caption{Heatmap of $-U$ on  $\mathbb T^2$, represented as $[-\tfrac12,\tfrac12)^2$.}
\label{fig:heatmap_minusU_torus}
\end{figure}

We compare two dynamics (see~\eqref{Euler}--\eqref{loc:sim2}): the original (unbiased) dynamics ($\alpha=0$) and the dynamics with an adaptive biasing potential ($\alpha>0$, here $\alpha=10$), keeping the same numerical scheme and parameters. We focus on three aspects: (i) the typical exit times from a metastable state, (ii) the sampling of the $x_1$-marginal of the Boltzmann-Gibbs measure $\rho^*$ (via reweighting when $\alpha>0$), and (iii) the convergence of empirical (weighted) averages for two observables, $g_1(x)=\mathbf{1}_{\{x_1\in[0,\frac{1}{2})\}}$ and $g_2(x)=U(x)$.
We simulate a system of $N$ interacting particles over a finite time horizon $[0,T]$, starting from an initial condition concentrated in the left well $(-0.32,0.25)$. 

Notice that we compare the two algorithms using the same number of iterations $M$, instead of the same computational time. On our toy example, the interacting particle system is obviously more expensive than the basic algorithm. However, in practical situations of interest for such free-energy adaptive biasing algorithm, the dimension $d$ is very large (typically $d\gg 10^4$), so that the cost for computing the biasing potential is negligible compared to the evaluation of $\nabla U$.

\medskip

\textbf{Quantities reported from simulations.} Let us now give some details on the numerical experiments.
Let $(Y_k^1,\dots,Y_k^N)$ denote the particle system (for $\alpha=0$ or $\alpha>0$) simulated by~\eqref{Euler} and~\eqref{loc:sim2}, with initial positions $(Y_1^1,\dots,Y_1^N)$ drawn independently as
\[
Y_1^i \sim \mathcal N\!\left(
\begin{pmatrix}
-0.32\\[2pt]
0.25
\end{pmatrix},
\begin{pmatrix}
0.02^2 & 0\\
0 & 0.02^2
\end{pmatrix}
\right),
\qquad i=1,\dots,N,
\]
and then wrapped onto $\T^2$ (identified with $[-\frac{1}{2},\frac{1}{2})^2$). In other words, the particles are initialized close to the local minimum of $U$ that is near $(-0.32,0.25)$. 

For the biased run, particles at iteration $k$ are reweighted with  weights
\[
\omega_k^i:=\omega_k(Y_k^i),
\qquad \omega_k(x)=e^{\beta W_k(x)}, \]
where $W_k$ is defined by~\eqref{loc:sim1} 
(whereas for the unbiased run $\omega_k^i\equiv 1$).

\emph{(i) Monitoring of the transitions.}
We plot the proportion of particles that have visited the right well at least once, for each $k=1,...,M$:
\[
p_k
=
\frac1N\sum_{i=1}^N \mathbf{1}_{\{\max_{0\le \ell\le k} Y_{\ell,1}^{i}\ge 0\}}.
\]

\emph{(ii) Unbiased $x_1$-marginal via reweighting.}
We estimate the unbiased $x_1$-marginal by a reweighted empirical measure over a time window
$\mathcal I\subset\{1,\dots,M\}$ (in practice, we take the last half of the run
$\mathcal I=\{\lceil M/2\rceil,\dots,M\}$):
\[
\widehat\rho_{x_1}(\dd x_1)
=
\frac{\sum_{k\in\mathcal I}\sum_{i=1}^N \omega_k^i\,\delta_{Y_{k,1}^{i}}(\dd x_1)}
{\sum_{k\in\mathcal I}\sum_{i=1}^N \omega_k^i}.
\]
This empirical measure is represented using histogram.

\emph{(iii) Convergence of (weighted) empirical averages for two observables.}
For $g_1(x)=\mathbf{1}_{\{x_1\in[0,\frac{1}{2})\}}$ and $g_2(x)=U(x)$, we monitor a windowed (recent-half) estimator:
for each $n=1,\dots,M$, let $\mathcal I_n=\{\lceil n/2\rceil,\dots,n\}$ and evaluate
\[
\widehat{\E}_{n}[g_j]
=
\frac{\sum_{k\in\mathcal I_n}\sum_{i=1}^N g_j(Y_k^i)\,\omega_k^i}
{\sum_{k\in\mathcal I_n}\sum_{i=1}^N \omega_k^i},
\qquad j\in\{1,2\}.
\]

In practice, when computing the convolution of $K_\epsilon^m$
on the torus, as in~\eqref{loc:sim1} and~\eqref{loc:sim2}, we approximate the wrapped Gaussian kernel
$K^m_\epsilon$ defined by~\eqref{eq:sum_gaussian_kernel} and~\eqref{eq:product_gaussian_kernel} by keeping only the leading term $n=0$.
Since $\epsilon$ is small in our simulations, the contributions of the terms $n\neq 0$ are exponentially small and can be safely neglected.
Moreover, when keeping only the $n=0$ term, the gradient of $K_\epsilon^m$ can be written as a product of $K_\epsilon^m$ and its argument (up to the factor $1/\epsilon$), so that at each time step we compute $K_\epsilon^m\!\bigl(y_i-Y_{k,1}^{j}\bigr)$ only once for all pairs $(i,j)$ (particles $j=1,\dots,N$ and grid points $y_i$), i.e.\ we form the kernel matrix with entries
\[
K_{j,i}=K_\epsilon^m\!\bigl(y_i-Y_{k,1}^j\bigr),
\]
and reuse it to evaluate both $W_k$ in~\eqref{loc:sim1} and $\nabla W_k$ in~\eqref{loc:sim2} at the particle positions $Y_{k,1}^j$.

Figure~\ref{fig:fourpanels} summarizes the results obtained for these three convergence indicators:
Transition times (panel~(a));
Convergence of the $x_1$-marginal (panel~(b)); Convergence of reweighted estimates for $g_1$ and $g_2$ (panels~(c)--(d)).

The codes are available for reproducibility (Google Colab notebook): 
\url{https://colab.research.google.com/drive/1-SbNpi1_DmQ-ffD_y230_R6OtJaWt_WC?usp=sharing}

\paragraph{Discussion of the results.}
Panel~(a) shows that the biased dynamics ($\alpha=10$) explores the two wells in $x_1$ more efficiently than the unbiased one ($\alpha=0$),
in the sense that a significantly larger fraction of particles visits the region $\{x_1\in[0,\frac{1}{2})\}$ over a fixed time horizon.
Panel~(b) illustrates that the reweighted empirical $x_1$-marginal (from the $\alpha=10$ run)
matches the theoretical marginal of the unbiased Gibbs measure more accurately than the direct empirical marginal obtained from the $\alpha=0$ run at the same simulation time.
Finally, panels~(c)--(d) confirm that (weighted) empirical averages converge to the theoretical averages for the observables
$g_1(x)=\mathbf 1_{\{x_1\in [0,\frac{1}{2})\}}$ and $g_2(x)=U(x)$:
the weighted windowed estimators based on the $\alpha=10$ trajectory converge faster to the corresponding theoretical values than their unweighted counterparts for the $\alpha=0$ simulation.

\begin{figure}[H]
\centering
\captionsetup{font=small}
\captionsetup[sub]{font=small}

\begin{subfigure}[t]{0.48\linewidth}
  \centering
  \includegraphics[width=\linewidth,height=0.22\textheight,keepaspectratio]{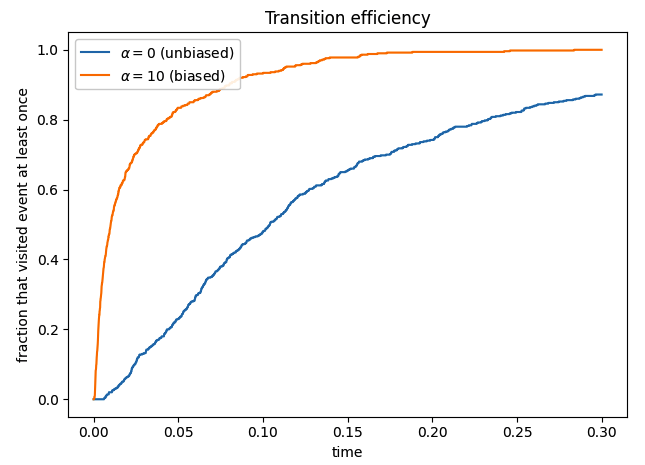}
  \caption{Exit from the initial metastable state}
\end{subfigure}\hfill
\begin{subfigure}[t]{0.48\linewidth}
  \centering
  \includegraphics[width=\linewidth,height=0.22\textheight,keepaspectratio]{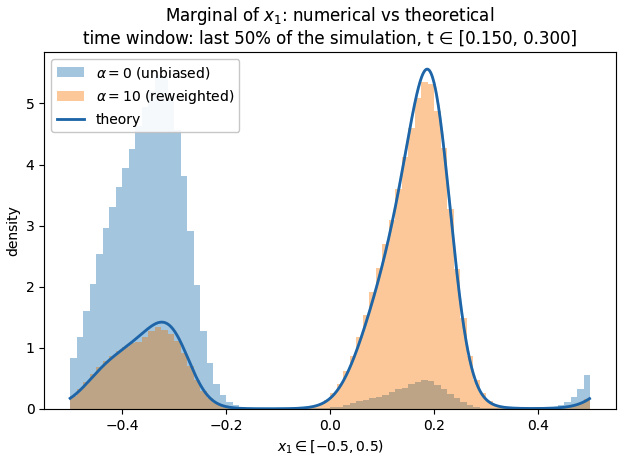}
  \caption{Convergence of the $x_1$-marginal}
\end{subfigure}

\begin{subfigure}[t]{0.48\linewidth}
  \centering
  \includegraphics[width=\linewidth,height=0.22\textheight,keepaspectratio]{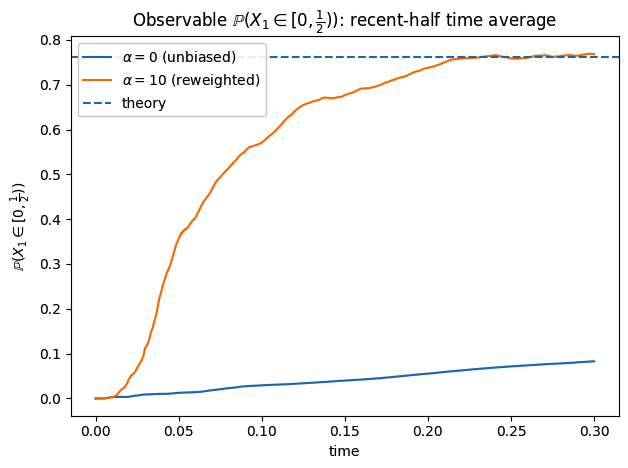}
  \caption{Approximation of ${\mathbb{P}}\!\big(X_1\in[0,\frac{1}{2}))$}
\end{subfigure}\hfill
\begin{subfigure}[t]{0.48\linewidth}
  \centering
  \includegraphics[width=\linewidth,height=0.22\textheight,keepaspectratio]{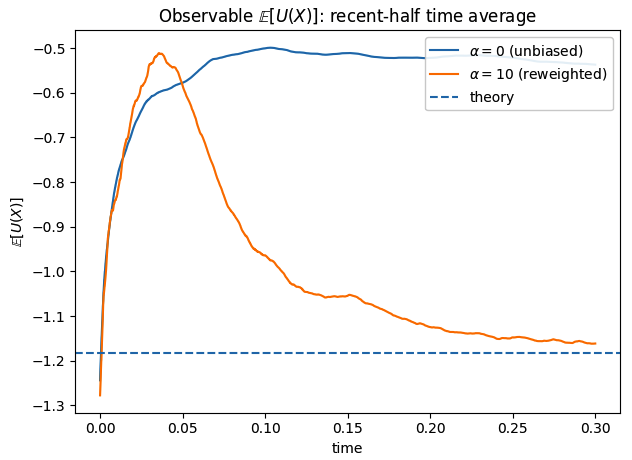}
  \caption{Approximation of ${\mathbb{E}}\!\left[U(X)\right]$}
\end{subfigure}

\caption{Numerical experiments on $\mathbb T^2$, for the unbiased and biased dynamics: (a) proportion of particles which leave the original metastable state as a function of time, (b) convergence of the empirical estimate of the $x_1$-marginal, (c) estimate of ${\mathbb{P}}\!\big(X_1\in[0,\frac{1}{2})\big)$ by a (weighted) empirical average, as a function of time, (d) estimate of ${\mathbb{E}}[U(X)]$ by a (weighted) empirical average, as a function of time.}
\label{fig:fourpanels_torus}

\end{figure}\label{fig:fourpanels}

\section{Proofs of the results on the free energies}\label{sec:proofFE}
In this section, we will prove some properties on the minimizers $\rho^*_{\alpha,\epsilon}$ (resp. $\rho^*_\alpha$) of $\F_{\alpha,\epsilon}$ (resp.~$\F_\alpha$), and the fact that when $\epsilon\rightarrow 0^+, \rho^*_{\alpha,\epsilon}\rightarrow \rho^*_{\alpha}$ in the sense of $L^\infty$ norm. To alleviate  notation, some proofs will be restricted to the case $\beta=1$,  which simply amounts to replace $\F_{\alpha,\epsilon}$ by $\beta \F_{\alpha,\epsilon}$, $U$ by $\beta U$, and $A$ by $\beta A$.

\subsection{Characterization of the free energy minimizers}\label{sec:minimizer}
In this subsection, we prove the existence and various properties of the minimizers of the free energies $\F_{\alpha,\epsilon}$ and $\F_\alpha$, following ideas of \cite{Hu2021MeanField}. In particular, we prove the results stated in Proposition~\ref{prop:rho_alpha}.

\begin{prop}\label{prop:rho*_uniqueness}
For $\epsilon\in(0,1]$ (resp. $\epsilon=0$), the functional $\mathcal{F}_{\alpha,\epsilon}$ (resp. $\mathcal{F}_{\alpha}$) admits a unique minimizer $\rho^*_{\alpha,\epsilon}$ (resp. $\rho^*_{\alpha}$), which has a density on $\T^d$.
\end{prop}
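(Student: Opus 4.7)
The plan is the direct method of the calculus of variations: first a lower bound, then weak-$*$ compactness combined with lower semicontinuity to produce a minimizer, then strict convexity to obtain uniqueness, and finally finiteness of entropy at the minimizer to get absolute continuity. I treat both cases $\epsilon\in(0,1]$ and $\epsilon=0$ in parallel; the only point at which they diverge is the analysis of the third (marginal / convoluted) entropy term.

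First I would bound $\mathcal F_{\alpha,\epsilon}$ from below. Since $\T^d$ and $\T^m$ carry unit-volume Lebesgue measures, Jensen's inequality applied to the convex function $t\mapsto t\ln t$ gives $\int_{\T^d}\rho\ln\rho\geq 0$ for every probability density $\rho$ on $\T^d$, and likewise $\int_{\T^m}\rho^1\ln\rho^1\geq 0$ and $\int_{\T^m}(K^m_\epsilon\star\rho^1)\ln(K^m_\epsilon\star\rho^1)\geq 0$, because $\rho^1$ and $K^m_\epsilon\star\rho^1$ are themselves probability densities on $\T^m$. Together with boundedness of $U\in C^\infty(\T^d)$, this shows $\mathcal F_{\alpha,\epsilon}(\rho)\geq \inf U$ uniformly in $\rho$, and consequently any minimizing sequence $(\rho_n)$ has uniformly bounded global entropy.

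Next I would extract a minimizer by compactness. As $\T^d$ is compact, $\mathcal P(\T^d)$ is weakly-$*$ compact, so up to extraction $\rho_n\rightharpoonup\rho_\infty$. I then verify lower semicontinuity term by term: the energy $\rho\mapsto\int U\rho$ is weakly-$*$ continuous since $U$ is continuous; the global entropy $\rho\mapsto\int\rho\ln\rho$ is classically weakly-$*$ l.s.c.\ on $\mathcal P(\T^d)$ (with value $+\infty$ on measures without density), for instance via the Donsker--Varadhan variational formula. For $\epsilon>0$, smoothness of $K^m_\epsilon\in C^\infty(\T^m)$ makes $K^m_\epsilon\star\rho^1_n\to K^m_\epsilon\star\rho^1_\infty$ \emph{uniformly} on $\T^m$ (the family is uniformly bounded by $\|K^m_\epsilon\|_\infty$ and equi-Lipschitz), so dominated convergence yields continuity of the third term. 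For $\epsilon=0$, weak-$*$ continuity of the marginalization $\rho\mapsto\rho^1$ together with the same l.s.c.\ statement on $\mathcal P(\T^m)$ gives l.s.c.\ of the marginal entropy. In both cases $\rho_\infty$ minimizes, and finiteness of $\int\rho_\infty\ln\rho_\infty$ forces $\rho_\infty\ll\mathrm{Leb}$ on $\T^d$.

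For uniqueness I would use strict convexity of the global entropy: if $\rho_0\neq\rho_1$ were two minimizers with densities differing on a set of positive Lebesgue measure, pointwise strict convexity of $t\mapsto t\ln t$ integrated over $\T^d$ would yield $\int[(1-t)\rho_0+t\rho_1]\ln[(1-t)\rho_0+t\rho_1]<(1-t)\int\rho_0\ln\rho_0+t\int\rho_1\ln\rho_1$ for any $t\in(0,1)$, while the energy is linear and the marginal / convoluted term is convex as the composition of the convex $t\mapsto t\ln t$ with a linear operator on $\rho$; summing contradicts minimality. The main technical subtlety I would anticipate is the $\epsilon=0$ case: without the smoothing effect of $K^m_\epsilon$ one cannot make the third term continuous, so both existence and the later quantitative $\epsilon\downarrow 0$ stability (Theorem~\ref{thm:cv_infty}) rest on the l.s.c.\ of the marginal entropy rather than on continuity---a standard but non-trivial ingredient that I would handle via the variational characterization of relative entropy.
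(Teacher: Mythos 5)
Your proposal is correct and follows essentially the same direct-method argument as the paper: a uniform lower bound, weak-$*$ compactness of a minimizing sequence, lower semicontinuity of the entropy terms (the paper phrases all three in terms of relative entropy with respect to $\rho^*$ and $1_{\T^m}$, while you use Jensen and, for $\epsilon>0$, the extra observation that $K^m_\epsilon\star\rho^1_n$ converges uniformly), and strict convexity of $x\mapsto x\ln x$ for uniqueness. The only cosmetic difference is this choice of phrasing; the structure and the key lemmas are the same.
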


\begin{proof}We only prove this result for the case $\rho^*_{\alpha,\epsilon}$, the case for $\rho^*_\alpha$ follows by the same arguments.
We use standard properties of relative entropy, see~\cite[Lemma 1.4.3]{dupuis1997weak}. In the following, limit points and semicontinuity are meant in the sense of the weak topology on probability measures.

We recall that the Boltzmann-Gibbs measure $\rho^*$ is given by~\eqref{eq:def_gibbs}, where $\mathcal Z$ is the normalizing constant. Denoting by $1_{\T^m}$  the constant probability density over $\T^m$, then  for a probability density $\rho$ on $\T^d$, $\mathcal{F}_{\alpha,\epsilon}(\rho)=\frac{1}{\beta}(\mH(\rho|\rho^*)-\ln \mathcal Z)+ \alpha\mH(K^m_\epsilon\star\rho^1|1_{\T^m}))\geq-\frac{\ln \mathcal{Z}}{\beta}$. Hence, there exists a sequence of probability densities $(\rho_n)_{n\geq 0}$ such that $\mathcal{F}_{\alpha,\epsilon}(\rho_n)$ converges to $\inf_{\rho} \mathcal{F}_{\alpha,\epsilon}(\rho)\geq -\frac{\ln \mathcal{Z}}{\beta}$ where the infimum is taken over probability densities on~$\T^d$.
Since  $\F_{\alpha,\epsilon}(\rho_n)$ is bounded, thus so is
$\mH(\rho_n|\rho^*)_{n\geq 0}$, implying that $\rho_n$ admits a limit point   $\rho^*_{\alpha,\epsilon}$. Since $\mH(\cdot|\rho^*)$ is lower semicontinuous, $\mH(\rho^*_{\alpha,\epsilon}|\rho^*)\leq \sup_n \mH(\rho_n|\rho^*)<\infty$, thus $\rho^*_{\alpha,\epsilon}$  admits a density with respect to $\rho^*$ thus also with respect to  the Lebesgue measure, which we also denote  $\rho^*_{\alpha,\epsilon}$. 
It is easy to verify that $K^m_\epsilon\star\rho^{*,1}_{\alpha,\epsilon}$ is also a limit point of $K^m_\epsilon\star\rho^{1}_{n}$. Using again the lower semicontinuity of the relative entropy, we get that $\F_{\alpha,\epsilon}(\rho^*_{\alpha,\epsilon})\leq \inf_\rho \F_{\alpha,\epsilon}(\rho)$, so that $\rho^*_{\alpha,\epsilon}$ is a minimizer of $\mathcal{F} _{\alpha,\epsilon}$.

The uniqueness of the minimizer is a consequence of  the strict convexity of $\mathcal{F} _{\alpha,\epsilon}$, consequence of the  strict convexity of  $x \mapsto x\ln x$. Indeed, for any two probability densities $\rho_1,\rho_2$ and $\lambda\in[0,1]$, 
$$\lambda\mathcal{F} _{\alpha,\epsilon}(\rho_1)+(1-\lambda)\mathcal{F} _{\alpha,\epsilon}(\rho_2)\geq \mathcal{F} _{\alpha,\epsilon}(\lambda\rho_1+(1-\lambda)\rho_2)$$
with equality if and only if $\lambda\in\{0,1\}$ or $\rho_1=\rho_2$.

\end{proof}

From now on, $\rho^*_{\alpha,\epsilon}$ (resp. $\rho^*_{\alpha}$) always refers to the unique minimizer of $\F_{\alpha,\epsilon}$ (resp. $\F_{\alpha}$) introduced in Proposition~\ref{prop:rho*_uniqueness}. We will need the following technical lemma.

\begin{lem}\label{lem:limit_F}
For any probability density $\rho$ on $\T^d$ with $\F_{\alpha}(\rho)<\infty$, 
$$\lim_{\delta\rightarrow 0^+}\F_\alpha(K^d_\delta\star \rho)=\F_\alpha(\rho).$$  
\end{lem}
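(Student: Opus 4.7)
Write $\rho_\delta := K^d_\delta \star \rho$ for $\delta \in (0,1]$, and decompose
$$\F_\alpha(\rho_\delta) \;=\; \int_{\T^d} U \rho_\delta \;+\; \beta^{-1}\int_{\T^d}\rho_\delta \ln \rho_\delta \;+\; \alpha\beta^{-1}\int_{\T^m}\rho_\delta^1 \ln \rho_\delta^1.$$
The finiteness assumption $\F_\alpha(\rho) < \infty$, combined with the boundedness of $U$ on $\T^d$ and the lower bound $u\ln u \geq -e^{-1}$ on bounded domains, guarantees that each of the three corresponding terms for $\rho$ is individually finite. My plan is to establish continuity at $\delta = 0^+$ term by term.

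For the energy term, by self-adjointness of convolution, $\int_{\T^d} U \rho_\delta = \int_{\T^d} (K^d_\delta \star U) \rho$. Since $U\in C^\infty(\T^d)$, the standard approximation-of-identity property of the Gaussian kernel (cf.~Appendix~\ref{sec:ap}) gives $K^d_\delta \star U \to U$ uniformly as $\delta\to 0^+$, so this term converges to $\int U \rho$.

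For the two entropy terms, the key observation is the product structure of the Gaussian kernel, $K^d_\delta(x_1,x_2) = K^m_\delta(x_1) K^{d-m}_\delta(x_2)$, which implies $\rho_\delta^1 = K^m_\delta \star \rho^1$. Both terms thus reduce to the same statement: for $n\in\{m,d\}$ and a probability density $\mu$ on $\T^n$ with $\int_{\T^n} \mu\ln\mu < \infty$, one has $E_n(K^n_\delta\star\mu) \to E_n(\mu)$, where $E_n(\nu) := \int_{\T^n}\nu\ln\nu$. I would prove this by matching a $\limsup$ and a $\liminf$ bound. For the upper bound, applying Jensen's inequality to the convex map $\phi(u)=u\ln u$ and to the probability measure $y\mapsto K^n_\delta(x-y)\,\dd y$ yields $\phi((K^n_\delta\star\mu)(x)) \leq (K^n_\delta\star\phi(\mu))(x)$; integrating in $x$ and using Fubini gives $E_n(K^n_\delta\star\mu) \leq E_n(\mu)$, hence $\limsup_{\delta\to 0}E_n(K^n_\delta\star\mu) \leq E_n(\mu)$. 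For the matching $\liminf$ bound I would use that $K^n_\delta\star\mu \to \mu$ in $L^1(\T^n)$ (again by the approximation-of-identity property), hence weakly, together with the classical weak lower semicontinuity of the relative entropy with respect to the uniform probability on the compact space $\T^n$ (see \cite[Lemma 1.4.3]{dupuis1997weak}), which differs from $E_n$ by an additive constant.

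Combining the three limits yields $\F_\alpha(\rho_\delta)\to \F_\alpha(\rho)$. The only modestly delicate point is to carefully record the factorization of $K^d_\delta$ to reduce the marginal entropy term to a convolution on $\T^m$; the decisive analytic input is the pairing of Jensen's inequality (for the upper bound) with lower semicontinuity of the entropy (for the lower bound), which bypasses any need for pointwise or higher-integrability control on $\rho$ beyond the finiteness of its entropies.
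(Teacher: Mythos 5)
Your proof is correct and follows the same three-term decomposition and the same Jensen-plus-Fubini argument for the $\limsup$ direction, but the $\liminf$ direction is handled by a genuinely different mechanism. The paper proves the entropy continuity as a standalone lemma (Lemma~\ref{lem:limit_entropy}) and obtains the lower bound from the almost-everywhere convergence $K_\delta\star\mu\to\mu$ (established in Lemma~\ref{lem:cv_Dirac} via a Lebesgue-point argument on the torus) combined with Fatou's lemma, using that $u\mapsto u\ln u$ is bounded below. You instead pass from $L^1$ convergence of the mollification to weak convergence, identify $\int\nu\ln\nu$ as the relative entropy with respect to the uniform measure, and invoke the classical weak lower semicontinuity of relative entropy. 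Both are sound; the paper's route is more self-contained in the sense that it reuses the a.e.\ convergence statement (Lemma~\ref{lem:cv_Dirac}) that is already needed elsewhere, whereas your route bypasses the pointwise analysis entirely and rests on a general functional-analytic fact about entropy. A small presentational point: you state that $U\in C^\infty$ gives $K^d_\delta\star U\to U$ uniformly, which is indeed true (and stronger than what is needed); the paper instead uses dominated convergence from the a.e.\ convergence of $K^d_\delta\star U$, which would also work for merely bounded measurable $U$.
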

\begin{proof}
Since $\F_\alpha(\rho)<\infty$, $\int_{\T^d} \rho\ln \rho$ and $\int_{\T^m} \rho^1\ln\rho^1 $ are finite.
By Lemma~\ref{lem:limit_entropy}, 
$$\lim_{\delta\rightarrow 0^+}\int_{\T^d}K^d_\delta\star \rho\ln(K^d_\delta\star \rho)=\int_{\T^d}\rho\ln\rho.$$
Since both  $U$ and  $K^d_\delta\star U$ are bounded by $\|U\|_\infty$,  Lemma~\ref{lem:cv_Dirac} gives  $K^d_\delta\star U\to U$ a.e. Moreover, $\rho$ being integrable, by dominated convergence theorem,
$$\lim_{\delta\rightarrow 0^+}\int_{\T^d}U (K^d_\delta\star \rho)=\lim_{\delta\rightarrow 0^+}\int_{\T^d}\rho(K^d_\delta\star U)=\int_{\T^d}U\rho.$$
Noticing that $(K^d_\delta\star\rho)^1=K^m_\delta\star\rho^1$, using again  Lemma~\ref{lem:limit_entropy},
$$\lim_{\delta\rightarrow 0^+}\int_{\T^m}(K^d_\delta\star \rho)^1\ln((K^d_\delta\star \rho)^1)=\lim_{\delta\rightarrow 0^+}\int_{\T^m}(K^m_\delta\star \rho^1)\ln(K^m_\delta\star \rho^1)=\int_{\T^m}\rho^1\ln\rho^1.$$
This concludes the proof.
\end{proof}

To characterize the minimizer $\rho^*_{\alpha,\epsilon}$ of $\F_{\alpha,\epsilon}$ for $\epsilon\in(0,1]$, we introduce the map $\Gamma_{\alpha,\epsilon}$ defined as follows. For any probability density $\rho$ on $\T^d$,  $\Gamma_{\alpha,\epsilon}(\rho)$ is the probability density on $\T^d$ satisfying
\begin{equation}\label{eq:Gamma}
    \Gamma_{\alpha,\epsilon}(\rho)\propto\exp(-\beta U - \alpha K^m_\epsilon \star \ln(K^m_\epsilon \star \rho))\,,
\end{equation}
which is well-defined since for any  $\epsilon\in(0,1]$,  $\exp(-\beta U - \alpha K^m_\epsilon \star \ln(K^m_\epsilon \star \rho))$ is continuous and positive, hence in $ L^1(\T^d)$.

\begin{prop}\label{prop:rho*}
For $\epsilon\in(0,1]$,
$\rho^*_{\alpha,\epsilon}$ is the only probability density on $\T^d$ which satisfies the relation
$\Gamma_{\alpha,\epsilon}(\rho)=\rho$.
In addition, for $\epsilon=0$, it holds $\rho^*_\alpha\propto\exp(-\beta(U-\frac{\alpha}{\alpha+1}A))$, and thus $\rho^{*,1}_\alpha\propto\exp(-\frac{\beta}{\alpha+1}A).$    
\end{prop}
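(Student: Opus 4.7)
The plan is to derive the fixed-point relation $\Gamma_{\alpha,\epsilon}(\rho^*_{\alpha,\epsilon}) = \rho^*_{\alpha,\epsilon}$ from the first-order (Euler-Lagrange) conditions characterizing the minimizer of the strictly convex functional $\mathcal{F}_{\alpha,\epsilon}$, and then exploit strict convexity to upgrade this to uniqueness among all fixed points. The case $\epsilon=0$ will follow from an analogous first-order condition, solved explicitly by integrating the marginal in $x_2$ and invoking the definition~\eqref{eq:FE_x1} of $A$.

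First, I would compute the linear functional derivative of $\mathcal{F}_{\alpha,\epsilon}$. For a test perturbation $\eta$ on $\T^d$ with $\int_{\T^d}\eta = 0$, the energy and full-entropy contributions are standard: $\int U\eta$ and $\frac{1}{\beta}\int \eta \ln\rho$. For the convolution-entropy term, writing $K^m_\epsilon \star \rho = K^m_\epsilon \star \rho^1$ and using the symmetry of the Gaussian kernel (Lemma of Appendix~\ref{sec:ap}), one gets
\[
\frac{d}{dt}\Big|_{t=0} \int_{\T^d} (K^m_\epsilon\star(\rho+t\eta))\ln(K^m_\epsilon\star(\rho+t\eta)) = \int_{\T^d} \eta \cdot K^m_\epsilon\star\ln(K^m_\epsilon\star\rho),
\]
where the constant $+1$ term disappears because $\int \eta=0$. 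Thus the full first variation in admissible directions is $\int_{\T^d}\eta\,[U + \frac{1}{\beta}\ln\rho + \frac{\alpha}{\beta}K^m_\epsilon\star\ln(K^m_\epsilon\star\rho)]$.

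Evaluated at the minimizer $\rho^*_{\alpha,\epsilon}$, this vanishes for every mean-zero $\eta$; such variations are admissible because $\rho^*_{\alpha,\epsilon}$ is strictly positive (one can see this, e.g., by noting that moving along the path $(1-t)\rho^*_{\alpha,\epsilon}+t\rho^*$ would decrease $\mathcal{F}_{\alpha,\epsilon}$ at $t=0^+$ if $\rho^*_{\alpha,\epsilon}$ had a zero set of positive measure). Hence the bracketed quantity is constant, and exponentiating yields $\rho^*_{\alpha,\epsilon} = \Gamma_{\alpha,\epsilon}(\rho^*_{\alpha,\epsilon})$. For uniqueness of the fixed point, note that any $\rho$ satisfying $\rho=\Gamma_{\alpha,\epsilon}(\rho)$ is automatically bounded above and below by positive constants (since the exponent in~\eqref{eq:Gamma} is bounded, as $U$ is continuous on $\T^d$ and $K^m_\epsilon\star\rho$ is bounded above and below by positive constants by Lemma~\ref{lem:cv_Dirac}/Appendix). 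Therefore $\rho$ satisfies the Euler-Lagrange equation, so by the strict convexity of $\mathcal{F}_{\alpha,\epsilon}$ (already used in the proof of Proposition~\ref{prop:rho*_uniqueness}) it must equal the unique minimizer $\rho^*_{\alpha,\epsilon}$.

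For the case $\epsilon=0$, the same computation applied to $\mathcal{F}_\alpha$ replaces the convolutional term by $\frac{\alpha}{\beta}\ln\rho^{*,1}_\alpha(x_1)$, giving at the minimizer
\[
U(x) + \frac{1}{\beta}\ln\rho^*_\alpha(x) + \frac{\alpha}{\beta}\ln\rho^{*,1}_\alpha(x_1) = \text{const},
\]
equivalently $\rho^*_\alpha(x_1,x_2) \propto e^{-\beta U(x_1,x_2)}(\rho^{*,1}_\alpha(x_1))^{-\alpha}$. Integrating over $x_2\in\T^{d-m}$ and using~\eqref{eq:FE_x1} produces the self-consistent relation $(\rho^{*,1}_\alpha)^{\alpha+1} \propto e^{-\beta A}$, from which $\rho^{*,1}_\alpha \propto \exp(-\frac{\beta}{\alpha+1}A)$, and substituting back gives $\rho^*_\alpha \propto \exp(-\beta(U - \frac{\alpha}{\alpha+1}A))$.

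The main technical point to handle carefully is the justification of the first-order condition: one must argue that the minimizer is strictly positive (and in the $\epsilon>0$ case, that $K^m_\epsilon\star\ln(K^m_\epsilon\star\rho^*_{\alpha,\epsilon})$ is well-defined, which follows from the positivity and boundedness of the Gaussian kernel convolution). Everything else is a routine variational computation combined with the strict convexity already established.
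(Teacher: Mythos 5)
Your overall strategy -- derive the Euler--Lagrange relation at the minimizer, exponentiate to get the fixed-point identity, and use strict convexity of $\F_{\alpha,\epsilon}$ for uniqueness -- is the same as the paper's, and the formal computation of the first variation of the convolution-entropy term (including the use of the symmetry of $K^m_\epsilon$ to move the convolution onto $\ln(K^m_\epsilon\star\rho)$, and the cancellation of the ``$+1$'' against $\int\eta=0$) is correct. Your self-consistency derivation of the $\epsilon=0$ formula by integrating out $x_2$ and using~\eqref{eq:FE_x1} is also a clean way to arrive at the explicit form of $\rho^*_\alpha$.

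However, the central step is hand-waved in a way that does not hold as stated. You claim that the first variation vanishes for \emph{every} mean-zero $\eta$ because $\rho^*_{\alpha,\epsilon}$ is strictly positive. Two problems. First, your positivity argument (moving toward $\rho^*$ along $(1-t)\rho^*_{\alpha,\epsilon}+t\rho^*$) at best shows $\rho^*_{\alpha,\epsilon}>0$ a.e., not boundedness away from zero; you would also need to check that all other terms of $\F_{\alpha,\epsilon}$ remain differentiable along that path to turn the $-\infty$ slope of the entropy on the zero set into a contradiction. Second, even a.e.-positivity is not enough to perturb in an arbitrary mean-zero direction $\eta$: to keep $\rho^*_{\alpha,\epsilon}+t\eta\ge 0$ for $|t|$ small you need $\eta$ controlled from below by $\rho^*_{\alpha,\epsilon}$, and since the minimizer is not known a priori to be bounded below, minimality over the convex set of probability densities only gives you the one-sided condition $F'_+(0)\ge 0$ along directions $\rho-\rho^*_{\alpha,\epsilon}$. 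The paper's Step~1 is precisely devoted to closing this gap: it first uses Fatou's lemma to make sense of $F'_+(0)$ (without a priori integrability), then picks $\rho=1_{\T^d}$ to establish $\ln\rho^*_{\alpha,\epsilon}\in L^1$ (so the Euler--Lagrange quantity is at least integrable), then takes $\rho=K^d_\delta(x-\cdot)$ and lets $\delta\to 0^+$ to obtain a \emph{pointwise} lower bound $\ln\rho^*_{\alpha,\epsilon}+U+\alpha K^m_\epsilon\star\ln(K^m_\epsilon\star\rho^*_{\alpha,\epsilon})\ge \F_{\alpha,\epsilon}(\rho^*_{\alpha,\epsilon})$ a.e., and finally integrates against $\rho^*_{\alpha,\epsilon}$ (where the integral equals $\F_{\alpha,\epsilon}(\rho^*_{\alpha,\epsilon})$ exactly) to promote this to an equality a.e.\ via the sign argument $f\ge 0$, $\int f\rho^*_{\alpha,\epsilon}=0$, $\rho^*_{\alpha,\epsilon}>0$ a.e. $\Rightarrow f=0$ a.e. Your proposal simply asserts the equality without this mechanism for converting the variational inequality into an identity.

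For the $\epsilon=0$ case, your approach has the additional difficulty that the Euler--Lagrange equation involves $\ln\rho^{*,1}_\alpha$ directly, with no convolution available to make the term bounded; establishing that this is well-defined and that the formal manipulations are justified is not obviously ``the same computation.'' The paper sidesteps this entirely by guessing $\hat\rho^*_\alpha\propto\exp(-\beta(U-\frac{\alpha}{\alpha+1}A))$, computing the one-sided derivative $F'_+(0)$ along mollified directions $K^d_\delta\star\rho$ (which are bounded below, so no integrability issues arise), showing it vanishes because $\ln\hat\rho^*_\alpha+\beta U+\alpha\ln\hat\rho^{*,1}_\alpha$ is constant by construction, and then invoking Lemma~\ref{lem:limit_F} to extend the minimality from mollified $\rho$'s to general $\rho$. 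Your derivation would be correct \emph{if} the Euler--Lagrange equation were already rigorously established at the minimizer of $\F_\alpha$, but establishing it is the hard part, and verification of a candidate is the safer route.
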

\begin{proof}
We only prove this result for $\beta=1$. The result for a general $\beta$ can then be easily deduced, by changing $\F_{\alpha,\epsilon}$ to $\beta \F_{\alpha,\epsilon}$, $U$ to $\beta U$, and $A$ to $\beta A$.

\medskip\noindent
{\bf Step 1: Characterization of $\rho^*_{\alpha,\epsilon}$.} Let us first consider the case $\epsilon \in(0,1]$, and prove that $\Gamma_{\alpha,\epsilon}(\rho^*_{\alpha,\epsilon})=\rho^*_{\alpha,\epsilon}$. The idea is essentially to write the Euler-Lagrange equations associated with the minimization problem defining $\rho^*_{\alpha,\epsilon}$ (see Proposition~\ref{prop:rho*_uniqueness}).
Consider a probability density $\rho$ on $\T^d$ with $\F_{\alpha,\epsilon}(\rho)<\infty$. Writing $\rho_\lambda=\lambda\rho+(1-\lambda)\rho^*_{\alpha,\epsilon},\lambda\in (0,1]$, then by the convexity of $\F_{\alpha,\epsilon}$, $\F_{\alpha,\epsilon}(\rho_\lambda)<\infty$, and 
 since $\rho^*_{\alpha,\epsilon}$ is the minimizer of $\F_{\alpha,\epsilon}$, $\F_{\alpha,\epsilon}(\rho_\lambda)\geq \F_{\alpha,\epsilon}(\rho^*_{\alpha,\epsilon})$. 

Writing $\phi(x)=x\ln x$, let us introduce the scalar valued functions on $\T^d$:
\begin{align*}
g_\lambda&=\frac{1}{\lambda}\left( \phi(\rho_\lambda)-\phi(\rho^*_{\alpha,\epsilon})+U\rho_\lambda-U\rho^*_{\alpha,\epsilon}+\alpha\phi(K^m_\epsilon\star\rho_\lambda)-\alpha\phi(K^m_\epsilon\star\rho^*_{\alpha,\epsilon})\right),\lambda \in(0,1]\\g_0&=(\rho-\rho^*_{\alpha,\epsilon})(1+\ln\rho^*_{\alpha,\epsilon}+U)+\alpha K^m_\epsilon\star(\rho-\rho^*_{\alpha,\epsilon})(1+\ln(K^m_\epsilon\star \rho^*_{\alpha,\epsilon})).
\end{align*}
 Then $\lim_{\lambda\rightarrow 0^+}g_\lambda=g_0.$
 By convexity of $\phi$, $\frac{1}{\lambda}(\phi((1-\lambda)x_0+\lambda x_1 )-\phi(x_0))\leq \phi(x_1)-\phi(x_0)$ for all $x_1,x_0\in \R,\lambda\in(0,1].$ Thus $g_\lambda\leq g_1,a.e.$ for $ \lambda\in[0,1]$. Moreover, from the definition of $\F_{\alpha,\epsilon}$, for $\lambda\in(0,1]$, $g_\lambda \in L^1(\T^d)$, and
$$\int_{\T^d}g_\lambda=\frac{\F_{\alpha,\epsilon}(\rho_\lambda)-\F_{\alpha,\epsilon}(\rho^*_{\alpha,\epsilon})}{\lambda}\geq 0.$$
Thus by Fatou's lemma, 
$$\int_{\T^d}g_1-g_0=\int_{\T^d}\liminf_{\lambda\to 0^+}g_1-g_{\lambda}\leq \liminf_{\lambda\rightarrow 0^+} \int_{\T^d}g_1-g_{\lambda}\leq \int_{\T^d} g_1<+\infty.$$
Notice that $g_1-g_0\geq 0, a.e.$, so that the first integral is well defined. As a consequence, $g_1-g_0\in L^1(\T^d)$ and so is $g_0$, and $\int_{\T^d} g_0=\int_{\T^d} g_1-\int_{\T^d} g_1-g_0\geq 0$. Since $\F_{\alpha,\epsilon}(\rho^*_{\alpha,\epsilon})$ is finite, so that $\rho^*_{\alpha,\epsilon}(\ln\rho^*_{\alpha,\epsilon}+U)+\alpha K^m_\epsilon\star\rho^*_{\alpha,\epsilon}\ln(K^m_\epsilon\star \rho^*_{\alpha,\epsilon})\in L^1(\T^d)$, we also get that
$\rho(\ln\rho^*_{\alpha,\epsilon}+U)+\alpha K^m_\epsilon\star\rho\ln(K^m_\epsilon\star \rho^*_{\alpha,\epsilon})\in L^1(\T^d)$.

In particular, if we choose $\rho=1_{\T^d}$ as the probability density on $\T^d$ that takes the constant value $1$, then we have that $\ln\rho^*_{\alpha,\epsilon}+U+\alpha\ln(K^m_\epsilon\star \rho^*_{\alpha,\epsilon})\in L^1(\T^d).$ By Lemma~\ref{lem:estim_K}, $U$ and $\ln(K^m_\epsilon\star\rho^*_{\alpha,\epsilon})$ are bounded, thus $\ln \rho^*_{\alpha,\epsilon}\in L^1(\T^d).$ This implies that $\rho^*_{\alpha,\epsilon}>0 $ a.e. and $\ln\rho^*_{\alpha,\epsilon}+U+\alpha K^m_\epsilon\star\ln(K^m_\epsilon\star \rho^*_{\alpha,\epsilon})\in L^1(\T^d)$.

For general $\rho$ such that $\F_{\alpha,\epsilon}(\rho)<\infty,$ using that $\int_{\T^d} g_0\geq 0,$
\begin{align*}
0\leq&\int_{\T^d} (\rho-\rho^*_{\alpha,\epsilon})(1+\ln\rho^*_{\alpha,\epsilon}+U)+\alpha K^m_\epsilon\star(\rho-\rho^*_{\alpha,\epsilon})(1+\ln(K^m_\epsilon\star \rho^*_{\alpha,\epsilon}))\\=&\int_{\T^d}(\rho-\rho^*_{\alpha,\epsilon})\left(\ln\rho^*_{\alpha,\epsilon}+U+\alpha K^m_\epsilon\star\ln(K^m_\epsilon\star\rho^*_{\alpha,\epsilon})\right).  
\end{align*}
By Lemma~\ref{lem:estim_K}, when $\delta\in (0,1],x\in \T^d$, $\F_{\alpha,\epsilon}(K^d_\delta(x-\cdot))<\infty,$ thus we can take $\rho(y)=K^d_\delta(x-y),y\in\T^d,\delta\in(0,1]$ in the inequality above. By passing to the limit $\delta\rightarrow 0^+$, by Lemma~\ref{lem:cv_Dirac}, it holds, a.e.
$$\ln\rho^*_{\alpha,\epsilon}+U+\alpha K^m_\epsilon\star\ln(K^m_\epsilon\star\rho^*_{\alpha,\epsilon})\geq \int_{\T^d}\rho^*_{\alpha,\epsilon}(\ln\rho^*_{\alpha,\epsilon}+U+\alpha K^m_\epsilon\star\ln(K^m_\epsilon\star\rho^*_{\alpha,\epsilon}))=\F_{\alpha,\epsilon}(\rho^*_{\alpha,\epsilon}).$$

Let us define   $f=\ln\rho^*_{\alpha,\epsilon}+U+\alpha K^m_\epsilon\star\ln(K^m_\epsilon\star\rho^*_{\alpha,\epsilon})-\F_{\alpha,\epsilon}(\rho^*_{\alpha,\epsilon}).$ Then $f\geq 0$ a.e., thus $f\rho^*_{\alpha,\epsilon}\geq 0$ a.e. Since $\int_{\T^d}f \rho^*_{\alpha,\epsilon}=0$, one gets $f\rho^*_{\alpha,\epsilon}=0,$ a.e., and thus $f=0$ a.e. since $\rho^*_{\alpha,\epsilon}>0$ a.e.  Thus $\rho^*_{\alpha,\epsilon}=\exp(-U-\alpha K^m_\epsilon\star \ln (K^m_\epsilon\star \rho^*_{\alpha,\epsilon})+\F_{\alpha,\epsilon}(\rho^*_{\alpha,\epsilon}))$ a.e. which yields $\rho^*_{\alpha,\epsilon}=\Gamma_{\alpha,\epsilon}(\rho^*_{\alpha,\epsilon}).$

\medskip\noindent
{\bf Step 2: Uniqueness of the fixed point of $\Gamma_{\alpha,\epsilon}$.} Let us now prove that $\rho^*_{\alpha,\epsilon}$ is the only fixed point of  $\Gamma_{\alpha,\epsilon}$. Suppose that $\hat\rho^*_{\alpha,\epsilon}$ is a probability density satisfying
$\Gamma_{\alpha,\epsilon}(\hat\rho^*_{\alpha,\epsilon})=\hat\rho^*_{\alpha,\epsilon}.$ Let us define $F(\lambda)=\F_{\alpha,\epsilon}((1-\lambda)\hat\rho^*_{\alpha,\epsilon}+\lambda\rho^*_{\alpha,\epsilon}), \lambda\in [0,1].$ Notice that $\rho^*_{\alpha,\epsilon},\hat\rho^*_{\alpha,\epsilon}$ both belonging to the image of $\Gamma_{\alpha,\epsilon}$, they are bounded from below and above by positive constants, and this justifies the use of dominated convergence theorem which leads to the right derivative of $F$ at $0$, as
\begin{align*}
F'_+(0)=\lim_{\lambda\rightarrow 0^+}\frac{F(\lambda)-F(0)}{\lambda}=\int_{\T^d}(\rho^*_{\alpha,\epsilon}-\hat\rho^*_{\alpha,\epsilon})(\ln \hat\rho^*_{\alpha,\epsilon}+U+\alpha K^m_\epsilon\star\ln(K^m_\epsilon\star \hat\rho^*_{\alpha,\epsilon}))=0.
\end{align*}
The last equality follows from $\Gamma_{\alpha,\epsilon}(\hat \rho^*_{\alpha,\epsilon})=\hat \rho^*_{\alpha,\epsilon}$, which implies  that $\ln \hat\rho^*_{\alpha,\epsilon}+U+\alpha K^m_\epsilon\star\ln(K^m_\epsilon\star \hat\rho^*_{\alpha,\epsilon})$
is constant.

Then by the convexity of $F$ inherited from the convexity of $\F_{\alpha,\epsilon}$ we have that $\F_{\alpha,\epsilon}(\hat\rho^*_{\alpha,\epsilon})=F(0)\leq F(1)=\F_{\alpha,\epsilon}(\rho^*_{\alpha,\epsilon}).$ Since $\rho^*_{\alpha,\epsilon}$ is the unique minimizer of $\F_{\alpha,\epsilon}$, we have that $\rho^*_{\alpha,\epsilon}=\hat \rho^*_{\alpha,\epsilon}.$ This concludes the proof of the properties stated in the case $\epsilon\in(0,1]$.

\medskip\noindent
{\bf Step 3: Results on $\rho^*_\alpha$.} Let us now consider the case $\epsilon=0$. Let us prove  that the probability density  $\hat\rho^*_\alpha\propto\exp(-U+\frac{\alpha}{\alpha+1}A)$ is the minimizer of $\F_{\alpha}.$ For a probability density $\rho$ such that $\F_{\alpha}(\rho)<\infty,\delta\in(0,1]$, we define $F(\lambda)=\F_{\alpha}((1-\lambda)\hat\rho^*_\alpha+\lambda K^d_\delta\star \rho),\lambda \in [0,1]$. Then, since $\hat\rho^*_\alpha$ and $K^d_\delta\star\rho$ are bounded from below and above by positive constants, it holds
\begin{align*}
F'_+(0)=\lim_{\lambda\rightarrow 0^+}\frac{F(\lambda)-F(0)}{\lambda}=\int_{\T^d}(K^d_\delta\star\rho-\hat\rho^*_{\alpha})(\ln \hat\rho^*_{\alpha}+U+\alpha \ln\hat\rho^{*,1}_{\alpha})=0.
\end{align*}
The last equality is again a consequence of the fact that $\ln \hat\rho^*_{\alpha}+U+\alpha \ln\hat\rho^{*,1}_{\alpha}$ is a constant, by definition of $\hat\rho^{*,1}_{\alpha}$.

By the convexity of $F$, $\F_{\alpha}(\hat\rho^*_{\alpha})=F(0)\leq F(1)=\F_{\alpha}(K^d_\delta\star\rho)$. Thanks to Lemma~\ref{lem:limit_F}, one thus gets
$\F_{\alpha}(\hat\rho^*_{\alpha})\leq \F_{\alpha}(\rho).$ This implies that $\hat\rho^*_\alpha$ is the minimizer of $\F_{\alpha}.$ Thus, by the uniqueness of the minimizer of $\F_\alpha$ stated in Proposition~\ref{prop:rho*_uniqueness}, $\rho^*_\alpha=\hat \rho^*_\alpha\propto \exp(-U+\frac{\alpha}{\alpha+1}A)$, which concludes the proof in the case $\epsilon=0$.
\end{proof}

\begin{cor}\label{cor:inf_rho}
The function $\rho^*_\alpha$ satisfies the following: $\inf_{\alpha>0} \inf_{x\in \T^d} \rho^*_\alpha(x)>0$.
\end{cor}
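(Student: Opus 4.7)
The plan is to exploit the explicit formula $\rho^*_\alpha \propto \exp\bigl(-\beta(U - \tfrac{\alpha}{\alpha+1} A)\bigr)$ provided by Proposition~\ref{prop:rho*}, reducing everything to elementary uniform $L^\infty$ bounds on $U$ and $A$.

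First I would record that $U \in C^\infty(\T^d)$ implies $M_U := \|U\|_\infty < \infty$, and that the definition~\eqref{eq:FE_x1} of $A$ together with the boundedness of $U$ on the torus yields $M_A := \|A\|_\infty < \infty$ (since $-\beta A$ is the logarithm of an integral of a function bounded above and below by positive constants over the compact set $\T^{d-m}$). The smoothness of $A$ was already noted just after~\eqref{eq:FE_x1}.

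Next, observe that for every $\alpha > 0$ one has $\tfrac{\alpha}{\alpha+1} \in (0,1)$, hence
\[
\bigl|U(x) - \tfrac{\alpha}{\alpha+1} A(x_1)\bigr| \leq M_U + M_A \qquad \text{for all } x=(x_1,x_2)\in\T^d,
\]
uniformly in $\alpha$. Writing $g_\alpha(x) := \exp\bigl(-\beta(U(x) - \tfrac{\alpha}{\alpha+1} A(x_1))\bigr)$ and $Z_\alpha := \int_{\T^d} g_\alpha$, this gives simultaneously
\[
e^{-\beta(M_U + M_A)} \leq g_\alpha(x) \leq e^{\beta(M_U + M_A)} \qquad \text{and} \qquad Z_\alpha \leq |\T^d|\, e^{\beta(M_U + M_A)}.
\]

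Therefore, for all $\alpha>0$ and all $x \in \T^d$,
\[
\rho^*_\alpha(x) \;=\; \frac{g_\alpha(x)}{Z_\alpha} \;\geq\; \frac{e^{-\beta(M_U + M_A)}}{|\T^d|\, e^{\beta(M_U + M_A)}} \;=\; \frac{e^{-2\beta(M_U+M_A)}}{|\T^d|} \;>\; 0,
\]
and the right-hand side is independent of $\alpha$ and $x$, which is exactly the claim. There is no real obstacle here: the corollary is a direct consequence of the explicit form of $\rho^*_\alpha$ established in Proposition~\ref{prop:rho*} and the fact that the factor $\tfrac{\alpha}{\alpha+1}$ stays bounded in $(0,1)$ so that the exponent does not blow up as $\alpha$ varies.
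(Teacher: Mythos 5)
Your argument is correct and follows essentially the same route as the paper: invoke the explicit formula for $\rho^*_\alpha$ from Proposition~\ref{prop:rho*}, then use $\tfrac{\alpha}{\alpha+1}\in(0,1)$ together with the boundedness of $U$ and $A$ on the compact torus to bound the exponent uniformly in $\alpha$. The paper's proof is just a more compact statement of the same $L^\infty$ sandwich on $U-\tfrac{\alpha}{\alpha+1}A$.
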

\begin{proof}
By Proposition~\ref{prop:rho*}, $\rho^*_\alpha\propto\exp(-\beta(U-\frac{\alpha}{\alpha+1}A)).$ Conclusion follows by noticing that $$\inf U-\max\{0,\sup A\}\leq U-\frac{\alpha}{\alpha+1}A\leq \sup U-\min\{0,\inf A\}.$$
\end{proof}

In order to prove Proposition~\ref{prop:rho_alpha}, we will also need 
the standard Holley--Stroock perturbation criterion for log-Sobolev 
inequalities. For the reader's convenience, we recall a standard form of this 
criterion (see, for example,~\cite{ane-2000,deuschel-stroock-1990}).
\begin{prop}\label{prop:HS}
(Holley–Stroock) If a probability density \(\rho\) on $E=\T^d$ or $\T^d\times \R^d$  satisfies the log-Sobolev inequality with constant $D$ and if \(\tilde{\rho} = e^{-\psi} \rho\) is also a probability density on $E$ for some \(\psi \in L^{\infty}(E)\), then \(\tilde{\rho}\) satisfies a log-Sobolev inequality with constant \(\tilde{D} = D e^{\inf \psi - \sup \psi}\).    
\end{prop}

We are now in position to prove Proposition~\ref{prop:rho_alpha}.
\begin{proof}[Proof of Proposition~\ref{prop:rho_alpha}]
By Propositions~\ref{prop:rho*_uniqueness} and~\ref{prop:rho*}, for all $\alpha > 0$, $\mathcal F_\alpha$ admits a unique minimizer 
\begin{equation*}
\rho^*_\alpha \propto \exp \po -\beta\left(U - \frac{\alpha}{\alpha+1} A\right)\pf\,.
\end{equation*}
The log-Sobolev inequality for $\rho^*_\alpha$  is then a direct consequence of  Proposition~\ref{prop:HS}, since $A \in L^\infty(\T^m)$.
\end{proof}

\begin{rem}\label{rem:Gamma} Similarly to $\Gamma_{\alpha,\epsilon}$, it is possible to introduce a map $\Gamma_\alpha$ such that, for any probability density $\rho$ on~$\T^d$,
$\Gamma_\alpha(\rho) \propto \exp(-\beta U-\alpha\ln \rho^1)$. The probability density $\Gamma_\alpha(\rho)$ is, of course, not well-defined for all $\rho$. A sufficient condition for $\Gamma_\alpha(\rho)$ to be well defined is for example $\inf_{\T^d}\rho>0$. However,  $\exp(-\beta U-\alpha\ln \rho^1)\notin L^1(\T^d)$ (and consequently $\Gamma_\alpha(\rho)$ is not well-defined) if we consider for example $\rho$ such that $\rho^1$ is zero over an open interval. 

By the explicit formula for $\rho^*_\alpha$ obtained in Proposition~\ref{prop:rho*}, we deduce that $\Gamma_\alpha(\rho^*_\alpha)=\rho^*_\alpha$. Besides, by Lemma~\ref{lem:cv_Dirac}, for any regular probability density $\rho$ on $\T^d$, we know that $\Gamma_{\alpha,\epsilon}(\rho)\rightarrow\Gamma_{\alpha}(\rho)$ as $\epsilon \to 0$. The characterization of $\rho^*_{\alpha,\epsilon}$ as the fixed point of the map $\Gamma_{\alpha,\epsilon}$ thus passes to the limit $\epsilon \to 0$ in order to provide a similar characterization for $\rho^*_\alpha$. In particular, $\rho^*_\alpha$ is the unique fixed point of $\Gamma_\alpha$ on the set of probability densities $\rho$ on $\T^d$ such that $\inf_{\T^d} \rho > 0$. 
\end{rem}

\subsection{Convergence of the free energy minimizer $\rho^*_{\alpha,\epsilon}$ when $\epsilon \to 0$}\label{sec:CV_H1}

As seen in Proposition~\ref{prop:rho*} and Remark~\ref{rem:Gamma}, the minimizers of $\mathcal F_{\alpha,\epsilon}$ and $\mathcal F_{\alpha}$ are characterized as fixed points of maps $\Gamma_{\alpha,\epsilon}$ and $\Gamma_\alpha$. The question of the convergence of $\rho_{\alpha,\epsilon}^*$ towards $\rho_\alpha^*$ thus requires to quantify how the fixed point of $\Gamma_{\alpha,\epsilon}$ depends on $\epsilon$, which will be done using estimates obtained from the Banach fixed point theorem, see Lemma~\ref{lem:banach_fixedpoint} below, which can be seen as a variant of the implicit function theorem.


Let us start with a technical lemma. Let us recall that the function $A \in C^\infty (\T^m) $ is defined by~\eqref{eq:FE_x1}, and is such that $\rho^{*,1}=\exp(-\beta A)$.

\begin{lem}\label{prop:F_endo_Hk} Suppose that $k\in \N^+$ and $2k>m$.
For $\epsilon\geq 0$ and $f\in H^k(\T^m)$ with $\inf f>0$, let us define 
\begin{equation}\label{eq:F}
F(\epsilon,f)=\ln f+\beta A+\alpha K^m_\epsilon\star\ln(K^m_\epsilon\star f),
\end{equation}
where by convention $K^m_0\star\ln(K^m_0\star f)=\ln f$. Then $F(\epsilon,f)\in H^k(\T^m).$
\end{lem}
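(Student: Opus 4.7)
The plan is to decompose
\[
F(\epsilon,f) = \ln f + \beta A + \alpha\, K^m_\epsilon\star\ln(K^m_\epsilon\star f)
\]
and treat the three summands separately. The term $\beta A$ lies in $H^k(\T^m)$ for free, since $U\in C^\infty(\T^d)$ implies $A\in C^\infty(\T^m)$ by~\eqref{eq:FE_x1}. So the real work is to show that the other two pieces are in $H^k(\T^m)$, and I expect the main difficulty to be the composition $\ln f$, which also covers the case $\epsilon=0$ via the stated convention.

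For $\epsilon>0$, I would argue that $K^m_\epsilon\star\ln(K^m_\epsilon\star f)$ is in fact $C^\infty(\T^m)$, hence trivially in $H^k(\T^m)$. The ingredients are that the Gaussian kernel $K^m_\epsilon$ introduced in Appendix~\ref{sec:ap} is smooth, nonnegative, and satisfies $\int_{\T^m}K^m_\epsilon=1$. Differentiation under the integral then gives $K^m_\epsilon\star f\in C^\infty(\T^m)$ (using $f\in L^1$). Moreover, by nonnegativity of $K^m_\epsilon$,
\[
K^m_\epsilon\star f(x) \geq (\inf f)\int_{\T^m}K^m_\epsilon(x-y)\,\dd y = \inf f > 0,
\]
so $\ln(K^m_\epsilon\star f)\in C^\infty(\T^m)$, and a second convolution with the smooth kernel $K^m_\epsilon$ preserves $C^\infty$.

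The main step is to show $\ln f \in H^k(\T^m)$ for any $f\in H^k(\T^m)$ with $\inf f >0$. The hypothesis $2k>m$ gives two key facts: the Sobolev embedding $H^k(\T^m)\hookrightarrow C^0(\T^m)$, and the fact that $H^k(\T^m)$ is a Banach algebra. The embedding implies $f$ is continuous on the compact torus, so $f$ takes values in a compact interval $[\inf f, \|f\|_\infty]\subset (0,\infty)$ on which $\ln$ is $C^\infty$ with bounded derivatives. Then I would apply the Fa\`a di Bruno formula: for any multi-index $\theta$ with $|\theta|\leq k$,
\[
D^\theta(\ln f) \;=\; \sum c_{\theta,p,(\eta_i),(m_i)}\, \ln^{(p)}(f)\, \prod_i \bigl(D^{\eta_i} f\bigr)^{m_i},
\]
the sum being finite, with $\sum_i m_i\eta_i=\theta$ and $\sum_i m_i=p$. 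Each factor $\ln^{(p)}(f)=(-1)^{p-1}(p-1)!\,f^{-p}$ is bounded above by powers of $(\inf f)^{-1}$. For the remaining products, the Banach algebra property of $H^k$ (applied to the factors of order $|\eta_i|<k$, which embed into $L^\infty$, times the at most one top-order factor which lies in $L^2$) shows that every such product is in $L^2(\T^m)$. Hence $D^\theta(\ln f)\in L^2(\T^m)$ for all $|\theta|\leq k$, proving $\ln f\in H^k(\T^m)$.

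The main obstacle is this nonlinear composition estimate for $\ln f$; everything else reduces to smoothness of $K^m_\epsilon$ and smoothness of $A$. The algebra property of $H^k(\T^m)$ for $2k>m$ is exactly what makes the Fa\`a di Bruno expansion produce terms in $L^2$, which is why this assumption appears in the statement.
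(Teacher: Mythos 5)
Your proof is correct and follows essentially the same route as the paper: the crux is the nonlinear composition estimate $\ln g\in H^k(\T^m)$ for $g\in H^k(\T^m)$ bounded away from $0$, which you establish via Fa\`a di Bruno, the embedding $H^k\hookrightarrow L^\infty$ for $2k>m$, and the algebra structure of $H^k$ --- this is exactly the content of the paper's Proposition~\ref{prop:phi(g)}, which the paper invokes rather than reproves, and which it applies to both $\ln f$ and $\ln(K^m_\epsilon\star f)$ before using Lemma~\ref{lem:convolution} to finish. The one minor variation is that for $\epsilon>0$ you shortcut by observing $K^m_\epsilon\star\ln(K^m_\epsilon\star f)\in C^\infty(\T^m)$ directly from the smoothness of the Gaussian kernel (which is valid and slightly slicker), whereas the paper reuses the same $H^k$ machinery uniformly in $\epsilon$ --- a presentational choice that also yields the quantitative bounds on $\|F(\epsilon,f)\|_{H^k}$ needed later, which your $C^\infty$ argument does not provide.
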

\begin{proof}
Notice that by Lemma~\ref{lem:convolution}, $K^m_\epsilon\star f\in H^k(\T^m)$. Furthermore, $\inf K^m_\epsilon\star f\geq \inf f>0$, and by Lemma~\ref{lem:embedding}, $\sup f<\infty$, thus $\sup K^m_\epsilon\star f\leq \sup f<\infty$. Moreover,  $\ln x$ and $(\ln x)^{(j)}=(-1)^{j-1} \frac{(j-1)!}{x^j}$ for $j\geq 1$ are bounded over $[\inf f,\sup f]\supseteq[\inf K^m_\epsilon\star f,\sup K^m_\epsilon\star f]$. Thus by Proposition~\ref{prop:phi(g)}, $\ln(f),\ln(K^m_\epsilon\star f)\in H^k(\T^m)$. Using again Lemma~\ref{lem:convolution}, $K^m_\epsilon\star\ln(K^m_\epsilon\star f)\in H^k(\T^m).$ Since $A$ is smooth, hence in $H^k(\T^m)$, we get that $F(\epsilon,f)\in H^k(\T^m).$ 
\end{proof}

In Theorem~\ref{thm:implicit} below, 
for sufficiently small $\epsilon \ge 0$, we prove the existence of a zero $f_\epsilon$ of the map $f\mapsto F(\epsilon,f)$, such that $f_\epsilon\rightarrow f_0$ when $\epsilon\rightarrow 0^+$, in some Sobolev norm. Then, as we will see in the proof of Theorem~\ref{thm:cv_infty} below, we will express $\rho^*_{\alpha,\epsilon}$ in terms of $f_\epsilon$,  and thus deduce that $\rho^*_{\alpha,\epsilon}\rightarrow \rho^*_\alpha$ in a suitable sense, as $\epsilon\rightarrow 0^+$. 
The proof of Theorem~\ref{thm:implicit} relies on a contraction property and Lemma~\ref{lem:banach_fixedpoint} taken from~\cite[Corollaries 1 and 3 (page 229-230)]{loomis-sternberg-1968} with a slight modification of the notation.
\begin{lem}\label{lem:banach_fixedpoint}
Let $(X,d)$ be a complete metric space.
\begin{enumerate}
    \item Let ${\mathcal B}(x_0,r)=\{x \in X, d(x,x_0) \le r\}$ be the closed ball centered at $x_0$ with radius $r>0$, and let $\Phi: {\mathcal B}(x_0,r) \to X$ be a contraction with constant $\|\Phi\|_{Lip}\in [0,1)$ such that:
$$d(x_0,\Phi(x_0)) \le (1 - \|\Phi\|_{Lip})r.$$
 Then $\Phi({\mathcal B}(x_0,r))\subseteq {\mathcal B}(x_0,r)$, and $\Phi$ has a unique fixed point in ${\mathcal B}(x_0,r)$.
 \item  Let $\Phi:X \to X$ be a function, let $x_0 \in X$ be a point and let us set $d=d(x_0,\Phi(x_0))$. Assume that $\Phi$ is a contraction with constant $\|\Phi\|_{Lip} \in [0,1)$ on ${\mathcal B}\left(x_0,\frac{d}{1-\|\Phi\|_{Lip}}\right)$. Then, $\Phi$ admits a unique fixed point in ${\mathcal B}\left(x_0,\frac{d}{1-\|\Phi\|_{Lip}}\right)$.
\end{enumerate}
\end{lem}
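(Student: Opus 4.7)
Both parts are variants of the Banach fixed point theorem, so my overall strategy is to reduce each to a direct application of the standard contraction mapping principle on a complete metric space. The only non-trivial ingredient is to check that the closed ball under consideration is stable under $\Phi$, after which completeness of the ball (inherited from $(X,d)$ since it is closed) lets the classical theorem take over.

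For part (1), I would first verify that $\Phi(\mathcal B(x_0,r)) \subseteq \mathcal B(x_0,r)$. For any $x \in \mathcal B(x_0,r)$, the triangle inequality, the contraction hypothesis, and the assumption $d(x_0,\Phi(x_0)) \le (1-\|\Phi\|_{Lip})r$ give
\[
d(\Phi(x),x_0) \le d(\Phi(x),\Phi(x_0)) + d(\Phi(x_0),x_0) \le \|\Phi\|_{Lip}\, d(x,x_0) + (1-\|\Phi\|_{Lip})r \le r.
\]
Since $\mathcal B(x_0,r)$ is closed in the complete space $(X,d)$, it is itself a complete metric space. Applying the standard Banach fixed point theorem to the restriction $\Phi|_{\mathcal B(x_0,r)}$, which is a contraction with the same constant, yields a unique fixed point in $\mathcal B(x_0,r)$. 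Alternatively, one can construct the fixed point explicitly as the limit of the Picard iteration $x_{n+1} = \Phi(x_n)$ starting from $x_0$, since then $d(x_{n+1},x_n) \le \|\Phi\|_{Lip}^n\, d(x_1,x_0)$ gives a Cauchy sequence whose limit lies in the closed ball.

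For part (2), I would set $r := d(x_0,\Phi(x_0))/(1-\|\Phi\|_{Lip})$, so that by construction $(1-\|\Phi\|_{Lip})r = d(x_0,\Phi(x_0))$, matching the hypothesis of part (1) with equality. Applying part (1) to the restriction of $\Phi$ to $\mathcal B(x_0,r)$ — which is a contraction on this ball by assumption, and which we now know stabilizes the ball — produces a unique fixed point of $\Phi$ in that ball.

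There is no real obstacle here: the statement is essentially textbook. The only points that deserve care are (i) ensuring the ball is closed so that completeness transfers from $X$, and (ii) noting that in part (2), while $\Phi$ is defined on all of $X$, the contraction hypothesis is only assumed on the ball, so one must apply part (1) to the restriction rather than to $\Phi$ on all of $X$. Uniqueness in part (2) is uniqueness within the prescribed ball, which is all one needs downstream (in Theorem~\ref{thm:implicit}) to locate $f_\epsilon$ near $f_0$ and quantify their distance through the radius $d/(1-\|\Phi\|_{Lip})$.
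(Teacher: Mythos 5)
Your proof is correct and follows essentially the same route as the paper's: verify stability of the closed ball via the triangle inequality, invoke the Banach fixed point theorem on the (complete) closed ball for part (1), and reduce part (2) to part (1) with $r = d/(1-\|\Phi\|_{Lip})$. The paper's proof is just a more terse version of this.
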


\begin{proof}
    Let us provide the proof for completeness. The first result is a simple consequence of the Banach fixed point theorem. Indeed for any $x \in {\mathcal B}(x_0,r)$, 
\begin{align*}
    d(\Phi(x),x_0)&\le d(\Phi(x),\Phi(x_0))+ d(\Phi(x_0),x_0)\\
    &\le \|\Phi\|_{Lip} d(x,x_0)+ (1 - \|\Phi\|_{Lip})r\leq r
\end{align*}
and thus $\Phi({\mathcal B}(x_0,r))\subseteq {\mathcal B}(x_0,r)$ and $\Phi:{\mathcal B}(x_0,r) \to {\mathcal B}(x_0,r)$ is a contraction over the complete metric space ${\mathcal B}(x_0,r)$, and therefore admits a unique fixed point in ${\mathcal B}(x_0,r)$.

The second result is a direct consequence of the first one with $r=\frac{d}{1-\|\Phi\|_{Lip}}$. 
\end{proof}

We are now in position to state and prove Theorem~\ref{thm:implicit}.

\begin{theorem}\label{thm:implicit}
Suppose that $k\in \mathbb{N^+}$ and $2k>m$. There exist constants $C_1,C_2>0$ independent of $\alpha,\epsilon$, such that for all $\epsilon \in [0, \frac{C_1}{\alpha+1}]$, 
there exists  $f_\epsilon\in H^k(\T^m)$ with $\inf_{\T^m} f_\epsilon>0$ such that $F(\epsilon,f_\epsilon)=0$ (where $F$ is defined by~\eqref{eq:F}),  with additionaly $\|f_\epsilon-f_0\|_{H^k}\leq C_2{\epsilon}$. Moreover,  $f_0=\exp(-\frac{\beta A}{\alpha+1}).$  
\end{theorem}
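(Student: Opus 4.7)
The plan is to apply the quantitative Banach fixed point lemma (Lemma~\ref{lem:banach_fixedpoint}(2)) to a reformulation of $F(\epsilon,f)=0$ as a fixed-point problem in $H^k(\T^m)$. Rewriting the equation as $(\alpha+1)\ln f = -\beta A + \alpha\bigl(\ln f - K^m_\epsilon\star\ln(K^m_\epsilon\star f)\bigr)$ suggests setting
\[
\Theta_\epsilon(f) \;:=\; \exp\!\left(-\frac{\beta A}{\alpha+1} + \frac{\alpha}{\alpha+1}\bigl[\ln f - K^m_\epsilon\star\ln(K^m_\epsilon\star f)\bigr]\right),
\]
whose fixed points coincide exactly with the zeros of $F(\epsilon,\cdot)$. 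The decisive feature of this formulation is that $\Theta_0$ is the \emph{constant map} equal to $f_0:=\exp(-\beta A/(\alpha+1))$; this both identifies $f_0$ as claimed and provides the obvious base point from which to perturb in $\epsilon$. Lemma~\ref{prop:F_endo_Hk} and Proposition~\ref{prop:phi(g)} ensure that $\Theta_\epsilon$ maps $\{f\in H^k:\inf f>0\}$ into itself.

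To apply Banach, one must estimate the residual $d:=\|\Theta_\epsilon(f_0)-f_0\|_{H^k}$ and a local Lipschitz constant. Setting $G_\epsilon(f):=\ln f - K^m_\epsilon\star\ln(K^m_\epsilon\star f)$, one has $\Theta_\epsilon(f_0)=f_0\exp(\tfrac{\alpha}{\alpha+1}G_\epsilon(f_0))$, so since $\ln f_0=-\beta A/(\alpha+1)$ is smooth with $\|\ln f_0\|_{H^{k+1}}\le C/(\alpha+1)$, the approximation properties of $K^m_\epsilon$ from Appendix~\ref{sec:ap} combined with Proposition~\ref{prop:phi(g)} give $\|G_\epsilon(f_0)\|_{H^k}\le C\epsilon/(\alpha+1)$, and hence $d\le C\alpha\epsilon/(\alpha+1)^2\le C\epsilon$. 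The Fréchet derivative at $f_0$ is $\partial_f\Theta_\epsilon(f_0)[g]=\frac{\alpha}{\alpha+1}A_\epsilon[g]$ with $A_\epsilon[g]:=g-f_0\,K^m_\epsilon\star\bigl(K^m_\epsilon\star g/K^m_\epsilon\star f_0\bigr)$, and the core technical step is the operator-norm bound $\|A_\epsilon\|_{H^k\to H^k}\le 1+C\epsilon$: using the semigroup identity $K^m_\epsilon\star K^m_\epsilon=K^m_{\epsilon\sqrt2}$, one decomposes $A_\epsilon g=(I-K^m_{\epsilon\sqrt2}\star)g + R_\epsilon g$, bounds the remainder $R_\epsilon$, generated by commutators of multiplication by $f_0$ and by $1/K^m_\epsilon\star f_0$ with the convolutions, by $C\epsilon$ using the smoothness of $f_0$, the Banach-algebra property of $H^k$ for $2k>m$ (Lemma~\ref{lem:embedding}), and the estimate $\|K^m_\epsilon\star f_0-f_0\|_{H^k}=O(\epsilon)$; the main piece $I-K^m_{\epsilon\sqrt2}\star$ has $H^k$-operator norm at most $1$, as seen from its Fourier multiplier $1-e^{-\epsilon^2|\xi|^2}$.

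Combining gives $\|\partial_f\Theta_\epsilon(f_0)\|_{H^k\to H^k}\le\alpha(1+C\epsilon)/(\alpha+1)=1-(1-C\alpha\epsilon)/(\alpha+1)$, so choosing $C_1$ with $CC_1\le 1/2$ enforces $\|\partial_f\Theta_\epsilon(f_0)\|\le 1-1/(2(\alpha+1))$ whenever $\epsilon(\alpha+1)\le C_1$. A standard perturbation argument, again using the Banach-algebra structure of $H^k$, yields a uniform Lipschitz bound $L\le 1-\eta/(\alpha+1)$ on $\Theta_\epsilon$ on every $H^k$-ball $B(f_0,r)$ with $r\le c/(\alpha+1)$, for absolute constants $\eta,c>0$. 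Lemma~\ref{lem:banach_fixedpoint}(2) then produces a unique fixed point $f_\epsilon\in B(f_0,d/(1-L))$, with $d/(1-L)\le 2(\alpha+1)d/\eta\le C_2\epsilon$ for an absolute $C_2$. Taking $C_1$ small enough so that this radius also satisfies $r\le c/(\alpha+1)$ closes the loop; positivity $\inf f_\epsilon>0$ then follows from the Sobolev embedding $H^k\hookrightarrow L^\infty$ and the smallness of $C_2\epsilon$ compared to $\inf f_0$.

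The main difficulty is the contraction estimate for $A_\epsilon$: the dominant piece $I-K^m_{\epsilon\sqrt2}\star$ has operator norm exactly $1$ on $H^k$, so all the gain needed to land strictly below $1$ in the final Lipschitz constant must come from (i)~the $O(\epsilon)$ commutator remainder $R_\epsilon$, which requires careful exploitation of the smoothness of $f_0$ with constants genuinely independent of $\alpha$, and (ii)~the prefactor $\alpha/(\alpha+1)<1$. Balancing these two effects precisely produces the scaling $\epsilon\le C_1/(\alpha+1)$ that appears in the statement.
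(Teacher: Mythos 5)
Your proposal follows the same overall strategy as the paper's proof: reformulate $F(\epsilon,\cdot)=0$ as a fixed-point equation in $H^k(\T^m)$, verify a contraction estimate near $f_0$ together with a residual of size $O(\epsilon/(\alpha+1))$, and invoke the quantitative Banach lemma (Lemma~\ref{lem:banach_fixedpoint}(2)). The specific fixed-point map differs: the paper uses the Newton-type linearization $K(\epsilon,f)=f-\frac{f_0}{\alpha+1}F(\epsilon,f)$, whereas you exponentiate to get $\Theta_\epsilon(f)=f_0\exp\bigl(\tfrac{\alpha}{\alpha+1}[\ln f-K^m_\epsilon\star\ln(K^m_\epsilon\star f)]\bigr)$. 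These are equivalent reformulations with the same Fréchet derivative at $(0,f_0)$; your choice is slightly more natural (since $\Theta_0\equiv f_0$) but carries a bit more bookkeeping, because $\partial_f\Theta_\epsilon(f_0)$ picks up the factor $\Theta_\epsilon(f_0)/f_0=1+O(\epsilon/(\alpha+1))$, and the residual $\Theta_\epsilon(f_0)-f_0=f_0\bigl(\exp(\tfrac{\alpha}{\alpha+1}G_\epsilon(f_0))-1\bigr)$ needs a nonlinear composition estimate in $H^k$ (Proposition~\ref{prop:phi(g)}). Both points are glossed over but are harmless.

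One quantitative claim is off: you bound the remainder operator $R_\epsilon$, i.e.\ the commutator $f_0\,K^m_\epsilon\star(h/f_0)-K^m_\epsilon\star h$ and its companion, by $C\epsilon$, but the tools of the paper (Lemma~\ref{lem:h_1h_2}) give only $O(\sqrt{\epsilon}\,\|D^1(1/f_0)\|_\infty)=O(\sqrt\epsilon/(\alpha+1))$ in $H^k$ operator norm, and the paper accordingly proves $\|D_fK(\epsilon,f_0)\|_{\mathcal L(H^k)}\le\frac{\alpha}{\alpha+1}(1+C\sqrt\epsilon/(1+\alpha))$. The first-order Taylor term in the commutator does not vanish and genuinely contributes an $O(\sqrt\epsilon)$ piece. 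This is an over-claim rather than a fatal gap: plugging the correct $O(\sqrt\epsilon/(\alpha+1))$ estimate into your final contraction check (with $\epsilon\le C_1/(\alpha+1)$) still delivers a Lipschitz constant $\le 1-\eta/(\alpha+1)$, and the rest of your argument, including the $(\alpha+1)$-cancellation between the residual and $1/(1-L)$ that makes $C_2$ uniform in $\alpha$, goes through unchanged. Also note the semigroup identity should read $K^m_\epsilon\star K^m_\epsilon=K^m_{2\epsilon}$ (Lemma~\ref{lem:e_var}), not $K^m_{\epsilon\sqrt 2}$, a harmless slip.
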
  
\begin{proof}
In all this proof, the only property we use about $A$ is that $A\in C^\infty(\T^m).$ Besides, we can assume without loss of generality that $\beta=1$, the general case being simply obtained by replacing $A$ by~$\beta A$.

First of all, when $\epsilon=0,$ $f_0=\exp(-\frac{A}{\alpha+1})$ already satisfies the requirements that $F(0,f_0)=0,f_0\in H^k(\T^m), \inf f_0>0$. In the rest of the proof we assume that $\epsilon \in (0,   1]$ (since in the statement of Theorem~\ref{thm:implicit} we can always chose $C_1$ to be less than $1$, so that the condition $\epsilon\leq \frac{C_1}{\alpha+1}$ implies $\epsilon\leq   1$). Thus all the estimates on $K_\epsilon^m$ requiring $\epsilon\leq1$ proven in Appendix~\ref{sec:ap} can be used.

We define, for $f\in H^k(\T^m)$ such that $\inf_{\T^m} f>0$, \begin{equation}\label{eq:K}
K(\epsilon,f)=f-\frac{\exp{(-\frac{A}{\alpha+1})}}{\alpha+1}F(\epsilon,f)=f-\frac{f_0}{\alpha+1}F(\epsilon,f).  
\end{equation}
By Lemma~\ref{prop:F_endo_Hk}, $F(\epsilon,f)\in H^k(\T^m).$ Since $f_0\in H^k(\T^m)$, then by Lemma~\ref{lem:product},  $f_0 F(\epsilon,f) \in H^k(\T^m)$, and thus $K(\epsilon,f)\in H^k(\T^m)$. Let us denote $H^k_+(\T^m)$ the subset of functions $f \in H^k(\T^m)$  such that $\inf_{\T^m} f>0$. By Equation~\eqref{eq:embedding_infty} from Lemma~\ref{lem:embedding}, $H^k_+(\T^m)$ is an open set of $H^k(\T^m)$. Thus for fixed $\epsilon$, $K(\epsilon,\cdot)$ can be viewed as a map from  $H^k_+(\T^m)$    to $H^k(\T^m).$

Notice that if $f_\epsilon$ exists, then it should be a fixed point in $H^k(\T^m)$ of the map $f\mapsto K(\epsilon,f)$. The map $K(\epsilon,f)$ is built such that its Fréchet derivative $D_f K(0,f_0)$ at $(0,f_0)$ is zero, and the hope is thus to build a contraction for small enough $\epsilon$.


Let us first give the main ideas of the proof before presenting the details of the arguments. The proof is decomposed into four steps:
\begin{enumerate}
\item The (Fréchet) derivative of $K(\epsilon,\cdot)$ is well defined and can be explicitly written. More precisely, there exists a constant $c>0$, such that for all $h_1\in H^k(\T^m)$ with $\|h_1-f_0\|_{H^k}\leq c\inf f_0$, and $\inf h_1>0$, for all $\epsilon \in (0,1]$, the map $f\mapsto K(\epsilon,f)$ is Fréchet differentiable at $f=h_1$. Moreover, the Fréchet derivative is given by:
for all $g\in H^k(\T^m)$,
\begin{equation}\label{eq:DfK}
D_fK(\epsilon,h_1)(g)=g-\frac{f_0}{\alpha+1}\frac{g}{h_1}-\frac{\alpha f_0}{\alpha+1} \left( K^m_{\epsilon}\star\frac{K^m_{\epsilon}\star g}{K^m_{\epsilon}\star h_1}\right).
\end{equation}
\item The linear operator $g \in H^k(\T^m) \mapsto D_fK(\epsilon,f_0)(g) \in H^k(\T^m) $  has an operator norm smaller than $\frac{\alpha}{\alpha+1}\left(1+C\frac{\sqrt{\epsilon}}{1+\alpha}\right)$, for a constant $C$ independent of $\alpha$ and $\epsilon \in (0,1]$.
\item For any $h_1 \in H^k(\T^m)$ such that
$\|h_1-f_0\|_{H^k}\le c\inf f_0$, where $c>0$ is the constant introduced in step 1, the linear operator $g\in H^k(\T^m)\mapsto D_fK(\epsilon,h_1)(g)-D_fK(\epsilon,f_0)(g)\in H^k(\T^m)$ has an operator norm smaller than $C\|f_0-h_1\|_{H^k}$, for a constant $C$ independent of $\alpha,\epsilon$ and $h_1$.
\item From the two previous steps, by the mean value theorem, we obtain that for any $h_1,h_2 \in H^k(\T^m)$ such that $\max( \|h_1-f_0\|_{H^k},\|h_2-f_0\|_{H^k}) \le c\inf f_0$, where $c>0$ is the constant introduced in step 1, 
\begin{align}
    &\|K(\epsilon,h_1)-K(\epsilon,h_2)\|_{H^k}\nonumber\\
    &\leq \left(M_1\max(\|h_1-f_0\|_{H^k},\|h_2-f_0\|_{H^k})+\frac{\alpha}{\alpha+1}\left(1+M_1\frac{\sqrt{\epsilon}}{1+\alpha}\right)\right)\|h_1-h_2\|_{H^k},\label{eq:MVT}
\end{align}
for a constant $M_1$ independent of $\alpha,\epsilon$.  The conclusion will then follow by applying~Lemma \ref{lem:banach_fixedpoint} to the function $K(\epsilon,\cdot)$.
\end{enumerate}
Let us now provide the details of each step. Each time an $H^k$ norm is used, all the terms included in the $H^k$ norm belong to $H^k(\T^m)$. We check this only in Step 1, and we do not repeat the arguments in the other steps in order to avoid repetitive discussions. Readers can check it easily by using the same arguments. In the following proof, $C$ is a positive constant independent of $\alpha,\epsilon$, which may vary from one occurrence to another.

\medskip\noindent
\textbf{Step 1.} 
For any $g \in H^k(\T^m)$, let us denote by
$$\delta(g)=g-\frac{f_0}{\alpha+1}\frac{g}{h_1}-\frac{\alpha f_0}{\alpha+1} \left( K^m_{\epsilon}\star\frac{K^m_{\epsilon}\star g}{K^m_{\epsilon}\star h_1}\right)$$
the function in the right-hand side of~\eqref{eq:DfK}.

Let
\begin{equation}\label{eq:c}
  c=\frac{1}{2C_\infty},  
\end{equation}
 where $C_\infty$ is introduced in Lemma~\ref{lem:embedding}. We suppose that  $h_1,g \in H^k(\T^m)$, and  $\|f_0-h_1\|_{H^k}\leq c\inf f_0$, $\|g\|_{H^k}\leq \frac{c}{2} \inf f_0$. We have $\inf h_1\geq \inf f_0-\|f_0-h_1\|_\infty\geq \frac{1}{2} \inf f_0>0$, 
 and similarly, $\inf (h_1+g)\geq\frac{1}{4} \inf  f_0.$ Thus by Lemma~\ref{prop:F_endo_Hk} and Lemma~\ref{lem:product}, $K(\epsilon,h_1),K(\epsilon,h_1+g)\in H^k(\T^m).$ 
 
Moreover, since $h_1\in H^k(\T^m)$ and $\inf h_1>0$, by Lemma~\ref{lem:inverse_Kf_1}, $\frac{1}{h_1}\in H^k(\T^m)$, and then by Lemma~\ref{lem:product}, $\frac{f_0g}{h_1}\in H^k(\T^m).$  Using Lemma~\ref{lem:convolution}, one has $f_0( K^m_{\epsilon}\star\frac{K^m_{\epsilon}\star g}{K^m_{\epsilon}\star h_1})\in H^k(\T^m)$. Thus, it holds $$\forall g \in H^k(\T^m), \, \delta(g) \in H^k(\T^m).$$

Proving~\eqref{eq:DfK}, namely that the Fréchet derivative $D_fK(\epsilon,h_1)(g)$ of $f\mapsto K(\epsilon,f)$ at  $f=h_1$ is the function $\delta(g)$, amounts to proving that (notice that from the discussion above, $K(\epsilon,h_1),\, K(\epsilon,h_1+g), \,\delta(g) \in H^k(\T^m)$):
\[
\lim_{ \|g\|_{H^k} \to 0} \frac{\| K(\epsilon, h_1 + g) - K(\epsilon, h_1) - \delta(g) \|_{H^k}}{\| g \|_{H^k}} = 0.
\]
This is equivalent to proving that
$$\frac{1}{\alpha +1}\left\|f_0\left(F(\epsilon,h_1+g) - F(\epsilon,h_1) - \left( \frac{g}{h_1}+\alpha K^m_{\epsilon}\star\frac{K^m_{\epsilon}\star g}{K^m_{\epsilon}\star h_1}\right) \right) \right\|_{H^k} = o(\|g\|_{H^k}).$$
Using Lemma~\ref{lem:product}, since $f_0 \in H^k$, to get~\eqref{eq:DfK}, it thus suffices to prove that 
\begin{align}
&\left\|\ln(h_1+g)-\ln(h_1)-\frac{g}{h_1}\right\|_{H^k}
= o(\|g\|_{H^k})\label{eq:DfF1} \\
&\left\|\alpha K^m_\epsilon\star\left(\ln(K^m_\epsilon\star(h_1+g)) - \ln(K^m_\epsilon\star h_1)-\frac{K^m_{\epsilon}\star g}{K^m_{\epsilon}\star h_1}\right) \right\|_{H^k}= o(\|g\|_{H^k}).    \label{eq:DfF2}
\end{align}

This is a consequence of the the following estimates. On the one hand,  we apply Proposition~\ref{prop:estimate_h} to $\phi(x)=\ln (1+x)-x$
$$\left\|\ln(h_1+g)-\ln(h_1)-\frac{g}{h_1}\right\|_{H^k}=o\po \left\|\frac{g}{h_1}\right\|_{H^k}\pf=o(\|g\|_{H^k})$$
where we used Lemma~\ref{lem:product} to ensure that $\|\frac{g}{h_1}\|_{H^k}\leq C\|g\|_{H^k}\|\frac{1}{h_1}\|_{H^k}$ for some constant $C$.
This yields~\eqref{eq:DfF1}.

On the other hand, to get~\eqref{eq:DfF2}, notice first that by Lemma \ref{lem:convolution}, the convolution by $K^m_\epsilon$ decreases the $H^k$ norm. The result is thus obtained following the same reasoning as for~\eqref{eq:DfF1}, replacing $g$ by $K^m_\epsilon\star g$ and $h_1$ by $K^m_\epsilon\star h_1$, using the linearity of the convolution and Lemma~\ref{lem:inverse_Kf_1}. 

This concludes the proof of~\eqref{eq:DfK}. 

\medskip\noindent
\textbf{Step 2.} For a given $g \in H^k(\T^m)$, we would like to estimate the $H^k$ norm of 
$$D_fK(\epsilon,f_0)(g)=\frac{\alpha}{\alpha+1}g-\frac{\alpha}{\alpha+1}f_0 \left( K^m_{\epsilon}\star\frac{K^m_{\epsilon}\star g}{K^m_{\epsilon}\star f_0}\right) \in  H^k(\T^m)$$
in terms of $\|g\|_{H^k}$. First, by Lemma \ref{lem:convolution}, one has
\begin{equation}\label{eq:step2.1}
\frac{\alpha}{\alpha+1}\|g-K^m_{2\epsilon}\star g\|_{H^k}\leq \frac{\alpha}{\alpha+1}\|g\|_{H^k}.
\end{equation}
Let us now estimate 
$$\left\|K^m_{2\epsilon}\star g-f_0 \left( K^m_{\epsilon}\star\frac{K^m_{\epsilon}\star g}{K^m_{\epsilon}\star f_0}\right)\right\|_{H^k}.$$
We will use that by Lemma~\ref{lem:e_var}, $K^m_{2\epsilon}=K^m_{\epsilon}\star K^m_{\epsilon}.$


By the triangle inequality, it is sufficient to estimate separately, 
\begin{equation}\label{eq:step1_triangle} \left\|K^m_{2\epsilon}\star g-f_0 \left( K^m_{\epsilon}\star\frac{K^m_{\epsilon}\star g}{f_0}\right)\right\|_{H^k},\left\|f_0 \left( K^m_{\epsilon}\star\frac{K^m_{\epsilon}\star g}{f_0}\right)-f_0 \left( K^m_{\epsilon}\star\frac{K^m_{\epsilon}\star g}{K^m_{\epsilon}\star f_0}\right)\right\|_{H^k}.
\end{equation}

 By Lemma~\ref{lem:convolution} and Lemma~\ref{lem:product}, the second term is bounded by 
$$C\|g\|_{H^k} \|f_0\|_{H^k}\left\|\frac{1}{f_0}\right\|_{H^k}\left\|\frac{1}{K^m_\epsilon\star f_0}\right\|_{H^k}\|f_0-K^m_\epsilon\star f_0\|_{H^k}.$$
By Lemma~\ref{lem:inverse_Kf_1}, $\|\frac{1}{ f_0}\|_{H^k}$ and $\|\frac{1}{K^m_\epsilon\star f_0}\|_{H^k}$
are bounded by a constant independent of $\alpha,\epsilon$. By Lemma~\ref{lem:Kf-f} and the fact that $C_1 \le 1$ so that $\epsilon\leq 1$, it holds  
$\|f_0-K^m_\epsilon\star f_0\|_{H^k}\leq \frac{C{\epsilon}}{\alpha+1}\leq \frac{C{\sqrt{\epsilon}}}{\alpha+1}$. 

For the first term in~\eqref{eq:step1_triangle}, again by Lemma~\ref{lem:product}, $$\left\|K^m_{2\epsilon}\star g-f_0 \left( K^m_{\epsilon}\star\frac{K^m_{\epsilon}\star g}{f_0}\right)\right\|_{H^k}\leq C\|f_0\|_{H^k}\left\|\frac{1}{f_0}K^m_{2\epsilon}\star g- \left( K^m_{\epsilon}\star\frac{K^m_{\epsilon}\star g}{f_0}\right)\right\|_{H^k}.$$ 
Let us define $g_1=\frac{1}{f_0}$ and $g_2=K^m_\epsilon\star g$, so that we need to estimate 
$$\left\|g_1K^m_{\epsilon}\star g_2- K^m_{\epsilon}\star(g_1g_2)\right\|_{H^k}.$$
Therefore, it suffices to estimate, for $\theta_1,\theta_2\in \mathbb{N}^m$ such that $|\theta_1|+|\theta_2|\leq k$, 
$$\left\|D^{\theta_1}g_1K^m_{\epsilon}\star D^{\theta_2}g_2- K^m_{\epsilon}\star(D^{\theta_1}g_1D^{\theta_2}g_2)\right\|_{2}.$$
Since $D^{\theta_1}g_1\in C^1(\T^m),D^{\theta_2}g_2\in L^2(\T^m)$, we apply the Lemma~\ref{lem:h_1h_2}, and notice that $\|D^{\theta_2} g_2\|_2\leq \|g_2\|_{H^k},$ 
$$\left\|D^{\theta_1}g_1K^m_{\epsilon}\star D^{\theta_2}g_2- K^m_{\epsilon}\star(D^{\theta_1}g_1D^{\theta_2}g_2)\right\|_{2}\leq C\sqrt{\epsilon m} \|D^1 (D^{\theta_1}g_1)\|_\infty\|g_2\|_{H^k},$$
where we recall that, for any function $f$, $\|D^1 f\|_\infty =\max_{|\theta|=1}\|D^\theta f\|_\infty$. Notice that $\|D^1 (D^{\theta_1}g_1)\|_\infty=\|D^1 (D^{\theta_1}\frac{1}{f_0})\|_\infty\leq \frac{C}{\alpha+1}$ for a constant $C$ independent of $\alpha,\epsilon$, since $\frac{1}{f_0}=\exp(\frac{A}{\alpha+1}).$
Thus $$\left\|K^m_{2\epsilon}\star g-f_0 \left( K^m_{\epsilon}\star\frac{K^m_{\epsilon}\star g}{K^m_{\epsilon}\star f_0}\right)\right\|_{H^k}\leq \frac{C\sqrt{\epsilon}}{\alpha+1}\|g\|_{H^k}.$$

We thus have proved that for all $g \in H^k(\T^m)$,
$$\|D_fK(\epsilon,f_0)(g)\|_{H^k}\leq 
\frac{\alpha}{\alpha+1}\left(1+C\frac{\sqrt{\epsilon}}{1+\alpha}\right)\|g\|_{H^k}.$$
This concludes the proof of Step 2.

\medskip\noindent
\textbf{Step 3.} Let $h_1 \in H^k(\T^m)$ be such that
$\|f_0-h_1\|_{H^k} \le c \inf f_0$, where $c$ is given by~\eqref{eq:c} in Step 1, and let $g \in H^k(\T^m)$. Notice that from Step 1, 
\begin{align}
&\|D_fK(\epsilon,h_1)(g)-D_fK(\epsilon,f_0)(g)\|_{H^k}\nonumber\\
&\leq\frac{1}{\alpha+1} \left\|\frac{f_0g}{h_1}-g\right\|_{H^k}+\frac{\alpha}{\alpha+1} \left\|f_0 \left( K^m_{\epsilon}\star \left(\frac{K^m_{\epsilon}\star g}{K^m_{\epsilon}\star h_1}-\frac{K^m_{\epsilon}\star g}{K^m_{\epsilon}\star f_0}\right)\right)\right\|_{H^k}.\label{eq:step3.1}
\end{align}
By Lemma \ref{lem:product},
\begin{equation}\label{eq:step3.2}
\left\|\frac{f_0g}{h_1}-g\right\|_{H^k}\leq  C\|f_0-h_1\|_{H^k} \left\|\frac{1}{h_1}\right\|_{H^k} \|g\|_{H^k}.
\end{equation}
Notice that again  
$\|\frac{1}{h_1}\|_{H^k}$ is bounded independently of $\alpha,\epsilon$, by Lemma~\ref{lem:inverse_Kf_1}.

For the second term, we can deduce similarly by Lemmas \ref{lem:convolution} and \ref{lem:product} that 
\begin{equation}\label{eq:step3.3}
\left\|f_0\left( K^m_{\epsilon}\star\left(\frac{K^m_{\epsilon}\star g}{K^m_{\epsilon}\star h_1}-\frac{K^m_{\epsilon}\star g}{K^m_{\epsilon}\star f_0}\right)\right)\right\|_{H^k}\leq C\|f_0\|_{H^k}\|g\|_{H^k}\|f_0-h_1\|_{H^k} \left\|\frac{1}{K^m_\epsilon\star f_0}\right\|_{H^k}\left\|\frac{1}{K^m_\epsilon\star h_1}\right\|_{H^k}.
\end{equation}
Notice again that by Lemma~\ref{lem:inverse_Kf_1}, $\left\|\frac{1}{K^m_\epsilon\star f_0}\right\|_{H^k},\left\|\frac{1}{K^m_\epsilon\star h_1}\right\|_{H^k}$ are bounded independently of $\alpha,\epsilon$, and so is $\|f_0\|_{H^k}$.

By combining~\eqref{eq:step3.1}-\eqref{eq:step3.2}-\eqref{eq:step3.3}, one obtains that there exist constants $c,C>0$ independent of $\alpha,\epsilon$, such that, for all $h_1 \in H^k(\T^m)$, such that $\|f_0-h_1\|_{H^k}\le c \inf f_0$,
$$\|D_fK(\epsilon,h_1)(g)-D_fK(\epsilon,f_0)(g)\|_{H^k}\leq C\|f_0-h_1\|_{H^k}\|g\|_{H^k}.$$

\medskip\noindent
\textbf{Step 4.} The estimate~\eqref{eq:MVT} is a straightforward consequence of the mean value theorem. Let us give some details. Let us introduce, for $t \in [0,1]$, $k(t)=K(\epsilon,t h_1 + (1-t) h_2)$. By the mean value theorem, $K(\epsilon,h_1)-K(\epsilon,h_2)=k(1)-k(0)=k'(t^*)=D_f K(\epsilon, t^* h_1 + (1-t^*) h_2) (h_1-h_2)$ for some $t^* \in [0,1]$. To get~\eqref{eq:MVT}, we now have to estimate the operator norm of $D_f K(\epsilon, t^* h_1 + (1-t^*) h_2)$ over $H^k$, that we denote as $\|\cdot\|_{\mathcal{L}(H^k)}$. Using Steps 2 and 3, and the fact that $\|f_0-t^* h_1- (1-t^*) h_2\|_{H^k} \le  c \inf f_0$,
\begin{align*}
    \|&D_f K(\epsilon, t^* h_1 + (1-t^*) h_2)\|_{\mathcal{L}(H^k)}\\
    &\le \|D_f K(\epsilon, t^* h_1 + (1-t^*) h_2) - D_f K(\epsilon, f_0)\|_{\mathcal{L}(H^k)} + \|D_f K(\epsilon, f_0)\|_{\mathcal{L}(H^k)}\\
    &\le C\|t^* h_1 + (1-t^*) h_2-f_0\|_{H^k} + \frac{\alpha}{\alpha+1}\left(1+C\frac{\sqrt{\epsilon}}{1+\alpha}\right)\\
    &\le C (t^* \|h_1 - f_0\|_{H^k} + (1-t^*)\|h_2 - f_0\|_{H^k}) + \frac{\alpha}{\alpha+1}\left(1+C\frac{\sqrt{\epsilon}}{1+\alpha}\right)
\end{align*}
and this yields the estimate~\eqref{eq:MVT}.

From~\eqref{eq:MVT}, for $\epsilon>0$ and $\delta>0$ small enough, the functional $f \in H^k(\T^m) \mapsto K(\epsilon,f) \in H^k(\T^m)$ is a contraction on the $H^k$ ball $${\mathcal B}(f_0, \delta)=\{f \in H^k(\T^m), \|f-f_0\|_{H^k} \le \delta\},$$ with a Lipschitz constant uniform in $\epsilon$. Indeed, let us define $\delta >0$ by:
\begin{equation}\label{eq:assump_delta}
\delta = \min \left(\frac{1}{3M_1(\alpha+1)} , c \inf f_0 \right),
\end{equation} 
where, we recall, $M_1$ is the constant appearing in~\eqref{eq:MVT}.
Let us assume that $\epsilon >0$ satisfies \begin{equation}\label{eq:assump_eps1}
\sqrt{\epsilon} \leq \min \left( 1, \frac{\alpha+1}{3 \alpha M_1}\right). 
\end{equation}
Then $K(\epsilon,\cdot)$ restricted to ${\mathcal B}(f_0, \delta)$ is a contraction with Lipschitz constant which satisfies:
\begin{align}
    \|K(\epsilon,\cdot)\|_{Lip} &\le M_1 \delta + \frac{\alpha}{\alpha+1}\left(1+M_1\frac{\sqrt{\epsilon}}{1+\alpha}\right)\nonumber\\
  &  \leq \frac{1}{3(\alpha+1)} + \frac{\alpha}{\alpha+1}+\frac{1}{3(\alpha+1)} =1-\frac{1}{3(\alpha
+1)}. \label{eq:estim_lipK}
\end{align}

Besides, let us introduce the constant $M_2$ such that (see Lemma~\ref{lem:product} and Lemma~\ref{lem:KlnKf-lnf}),
\begin{equation}
\label{eq:ecart_PF}
\|K(\epsilon,f_0)-f_0\|_{H^k}=\frac{\alpha}{\alpha+1}\left\|f_0 \Big(K^m_\epsilon\star\ln(K^m_\epsilon\star f_0)-\ln f_0 \Big)\right\|_{H^k}\leq \frac{M_2 {\epsilon}}{\alpha+1}.
\end{equation}
By choosing $\epsilon > 0$ which satisfies
\begin{equation}\label{eq:assump_eps2}
\epsilon \le \frac{\delta}{3 M_2}
\end{equation}
one has (from~\eqref{eq:estim_lipK}) 
\begin{align*}
 \|K(\epsilon,f_0)-f_0\|_{H^k}
  \le \frac{M_2{\epsilon}}{\alpha+1}
  \le \frac{\delta}{3(\alpha+1)} \le \delta \left( 1- \|K(\epsilon,\cdot)\|_{Lip} \right).\nonumber
\end{align*}

Therefore, by the first result of Lemma~\ref{lem:banach_fixedpoint}, the map $f\mapsto K(\epsilon,f)$ is a contraction from ${\mathcal B}(f_0, \delta)$ into ${\mathcal B}(f_0, \delta)$, with constant $1-\frac{1}{3(\alpha+1)}.$
Then by applying the second result of Lemma \ref{lem:banach_fixedpoint} with $X={\mathcal B}(f_0, \delta)$ with $\delta$ satisfying~\eqref{eq:assump_delta}, one gets that for any $\epsilon>0$ such that
\begin{equation}\label{eq:assump_eps}
\epsilon \le \min\left(1,\left(\frac{\alpha +1}{3\alpha M_1}\right)^2, \frac{1}{9 M_1M_2 (\alpha+1)} , \frac{c\inf f_0}{3 M_2} \right)
\end{equation}
(so that~\eqref{eq:assump_eps1} and~\eqref{eq:assump_eps2} are satisfied) one has the existence of a function $f_\epsilon$ which is a fixed point of $K(\epsilon,\cdot)$ and such that (using~\eqref{eq:estim_lipK} and~\eqref{eq:ecart_PF})
\begin{align*}
    \|f_\epsilon-f_0\|_{H^k} &\le  \frac{\|K(\epsilon,f_0)-f_0\|_{H^k}}{1-\|K(\epsilon,\cdot)\|_{Lip}}\le \frac{M_2{\epsilon}}{\alpha+1} 3(\alpha+1) = 3 M_2 \epsilon.
\end{align*}

To conclude, we take $C_2=3M_2$ and
 observe  that there exists $C_1\in[0,1]$ independent of $\alpha,\epsilon$ such that for all $\alpha \ge 0$,
$$\epsilon \in \left[0,\frac{C_1}{1+\alpha}\right] \Rightarrow \eqref{eq:assump_eps}.$$

\end{proof}

We are now in position to complete the proof of Theorem~\ref{thm:cv_infty}.
\begin{proof}[Proof of Theorem~\ref{thm:cv_infty}]
Again, we assume without loss of generality that $\beta=1$.

Let us recall that the first statement of Theorem~\ref{thm:cv_infty} about the existence and uniqueness of the minimizer $\rho^*_{\alpha,\epsilon}$ of $\mathcal F_{\alpha,\epsilon}$ has already been proven in Proposition~\ref{prop:rho*_uniqueness}. It thus only remains to prove the two other results.

 At first, we fix a $k\in\N$ such that $2k>m$. By Theorem~\ref{thm:implicit} (an $H^k$ estimate) and Lemma~\ref{lem:embedding} (Sobolev embedding on $\T^m$, from $H^k$  to $L^\infty$), there exist constants $C_1,C_2>0$ which are independent of $\alpha,\epsilon$, such that when $\epsilon\leq \frac{C_1}{\alpha+1}$, 
there exists  $f_\epsilon$, such that $F(\epsilon,f_\epsilon)=0$ (where $F$ is defined by~\eqref{eq:F}), and  $\|f_\epsilon-f_0\|_{\infty}\leq C_2{\epsilon}$.  Obviously $f_0=\exp(-\frac{A}{\alpha+1})$ is bounded from below by a positive constant $C_3$ independent of $\alpha$. Since replacing \(C_1\) with a smaller positive constant does not affect the conclusion of Theorem~\ref{thm:implicit}, we can assume that $C_1 \in (0, \frac{C_3}{2C_2}]$.

When  $\epsilon\leq \frac{C_1}{\alpha+1}\leq \frac{\inf f_0}{2C_2}$, it holds that $\|f_\epsilon-f_0\|_{\infty}\leq C_2{\epsilon}\leq \frac{\inf f_0}{2}\leq\frac{\sup f_0}{2}$.  
Therefore, $$\inf f_\epsilon\geq\frac{\inf f_0}{2}\geq\frac{\exp(-\max\{\sup A,0\})}{2} \text{ and } \sup f_{\epsilon}\leq\frac{3}{2}\sup f_0\leq\frac{3\exp(-\min\{\inf A,0\})}{2}.$$ 

Let us define  $g_\epsilon=\exp(-U+A)f_\epsilon$, and
\begin{align*}
M_1&=\frac{\exp(-\max\{\sup A,0\})}{2}\inf\exp(-U+A)\\
M_2& =\frac{3\exp(-\min\{\inf A,0\})}{2}\sup\exp(-U+A).    
\end{align*}
Then $g_\epsilon \in[M_1,M_2]$, and $\|g_\epsilon-g_0\|_\infty\leq C_2\|\exp(-U+A)\|_\infty\epsilon.$

We now express $\rho^*_{\alpha,\epsilon}$ in terms of $f_\epsilon$, first in the case $\epsilon \in (0, C_1/(\alpha+1)]$, and then in the case $\epsilon = 0$.
When $\epsilon\in(0,\frac{C_1}{\alpha+1}]$, we define the probability density $\hat\rho^*_{\alpha,\epsilon}\propto \exp(-U-\alpha K^m_\epsilon\star\ln(K^m_\epsilon\star f_\epsilon)).$ Then $\hat\rho^{*,1}_{\alpha,\epsilon}\propto \exp(-A-\alpha K^m_\epsilon\star\ln(K^m_\epsilon\star f_\epsilon))=f_\epsilon.$ Thus $\hat\rho^*_{\alpha,\epsilon}\propto \exp(-U-\alpha K^m_\epsilon\star\ln(K^m_\epsilon\star \hat\rho^{*,1}_{\alpha,\epsilon})).$ By Proposition~\ref{prop:rho*}, $\hat\rho^*_{\alpha,\epsilon}=\rho^*_{\alpha,\epsilon}.$ Therefore, $$\rho^*_{\alpha,\epsilon}\propto\exp(-U-\alpha K^m_\epsilon\star\ln(K^m_\epsilon\star \rho^{*,1}_{\alpha,\epsilon} ))\propto \exp(-U+A)\rho^{*,1}_{\alpha,\epsilon}\propto \exp(-U+A) f_\epsilon=g_\epsilon.$$
When $\epsilon=0$, by Proposition~\ref{prop:rho*},
$$\rho^*_\alpha\propto\exp\left(-U+\frac{\alpha}{\alpha+1}A\right)=g_0.$$

From the two former equations, it holds 
\begin{align*}
\|\rho^*_{\alpha,\epsilon}-\rho^*_{\alpha}\|_\infty=& \left\|\frac{g_\epsilon}{\int_{\T^d}g_\epsilon}-\frac{g_0}{\int_{\T^d}g_0}\right \|_\infty \\
\leq&\left\|\frac{g_\epsilon}{\int_{\T^d}g_\epsilon}-\frac{g_0}{\int_{\T^d}g_\epsilon}\right\|_\infty+\left\|\frac{g_0}{\int_{\T^d}g_\epsilon}-\frac{g_0}{\int_{\T^d}g_0}\right\|_\infty 
\\ \leq&\|g_\epsilon- g_0\|_\infty\frac{1}{M_1}+\|g_0\|_\infty\left|\frac{\int_{\T^d}g_\epsilon-\int_{\T^d}g_0}{\int_{\T^d}g_\epsilon\int_{\T^d}g_0}\right|\\\leq& \left(\frac{1}{M_1}+\frac{M_2}{M_1^2}\right)\|g_\epsilon-g_0\|_\infty\\ 
\leq&C_2 \left(\frac{1}{M_1}+\frac{M_2}{M_1^2}\right)\|\exp(-U+A)\|_\infty\epsilon. 
\end{align*}
This concludes the proof of~\eqref{eq:diffrhostar}.

The last statement of Theorem~\ref{thm:cv_infty} is then a simple consequence of the Proposition~\ref{prop:HS} (Holley-Stroock criterion), see Corollary~\ref{cor:ULSI} below.
\end{proof}

\begin{cor}\label{cor:ULSI}
Let us recall that $\rho^*_\alpha$ satisfies a log-Sobolev inequality with constant $D_\alpha$ (see Proposition~\ref{prop:rho_alpha}). There exist constants $c \in (0,1]$ and $C>0$ independent of $\alpha,\epsilon$, such that when $\epsilon\leq \frac{c}{1+\alpha}$, $\rho^*_{\alpha,\epsilon}$ satisfies the log-Sobolev inequality with constant $D_\alpha \exp(-C\epsilon)$. 
\end{cor}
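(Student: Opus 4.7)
The plan is to apply the Holley--Stroock perturbation principle (Proposition~\ref{prop:HS}) with the reference measure chosen as $\rho^*_\alpha$, which by Proposition~\ref{prop:rho_alpha} satisfies a log-Sobolev inequality with constant $D_\alpha$. Writing
\[
\rho^*_{\alpha,\epsilon} = e^{-\psi_{\alpha,\epsilon}}\rho^*_\alpha, \qquad \psi_{\alpha,\epsilon}=\ln\left(\frac{\rho^*_\alpha}{\rho^*_{\alpha,\epsilon}}\right),
\]
it will be enough to show that $\sup\psi_{\alpha,\epsilon}-\inf\psi_{\alpha,\epsilon}\le C\epsilon$, uniformly in $\alpha>0$, as soon as $\epsilon\le c/(1+\alpha)$ for a suitable constant $c\in(0,1]$.

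The two ingredients that drive the estimate are already available. First, Corollary~\ref{cor:inf_rho} provides a uniform lower bound $\delta := \inf_{\alpha>0}\inf_{\T^d}\rho^*_\alpha > 0$. Second, Theorem~\ref{thm:cv_infty} gives constants $C_1,C_2>0$ (independent of $\alpha,\epsilon$) such that, as long as $\epsilon\le C_1/(1+\alpha)$,
\[
\|\rho^*_{\alpha,\epsilon}-\rho^*_\alpha\|_\infty \le C_2\epsilon.
\]
Combining these two facts, the ratio $h_{\alpha,\epsilon} := (\rho^*_{\alpha,\epsilon}-\rho^*_\alpha)/\rho^*_\alpha$ satisfies $\|h_{\alpha,\epsilon}\|_\infty \le C_2\epsilon/\delta$.

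Setting $c:=\min(C_1,\delta/(2C_2),1)$, the hypothesis $\epsilon\le c/(1+\alpha)$ forces $\|h_{\alpha,\epsilon}\|_\infty\le 1/2$, and then the elementary inequality $|\ln(1+h)|\le 2|h|$ (for $|h|\le 1/2$) yields
\[
\|\psi_{\alpha,\epsilon}\|_\infty \le \frac{2C_2}{\delta}\,\epsilon, \qquad \text{hence} \qquad \sup\psi_{\alpha,\epsilon}-\inf\psi_{\alpha,\epsilon} \le \frac{4C_2}{\delta}\,\epsilon.
\]
Applying Proposition~\ref{prop:HS} with $\rho=\rho^*_\alpha$ and $\tilde\rho=\rho^*_{\alpha,\epsilon}$ (both genuine probability densities on $\T^d$ thanks to Proposition~\ref{prop:rho*_uniqueness}) then gives the log-Sobolev constant $D_\alpha\exp(-C\epsilon)$ for $\rho^*_{\alpha,\epsilon}$, with $C=4C_2/\delta$.

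There is no substantial obstacle here: the argument is a direct combination of Corollary~\ref{cor:inf_rho}, the $L^\infty$-stability estimate~\eqref{eq:diffrhostar} of Theorem~\ref{thm:cv_infty}, and Holley--Stroock. The only point requiring minor care is that the smallness condition on $\epsilon$ required to apply~\eqref{eq:diffrhostar} and the one required to linearize $\ln(1+h_{\alpha,\epsilon})$ are both of the form $\epsilon\le c/(1+\alpha)$, so a single constant $c$ can be chosen to control them simultaneously while also keeping $c\le 1$ so that $\epsilon\in(0,1]$.
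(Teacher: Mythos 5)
Your proof is correct and follows essentially the same route as the paper: both apply Holley--Stroock with $\rho^*_\alpha$ as reference, using the uniform lower bound from Corollary~\ref{cor:inf_rho} and the $L^\infty$ estimate~\eqref{eq:diffrhostar} to control the oscillation of $\ln(\rho^*_{\alpha,\epsilon}/\rho^*_\alpha)$ by a constant times $\epsilon$. The only cosmetic difference is the elementary inequality used to pass from $\|\rho^*_{\alpha,\epsilon}-\rho^*_\alpha\|_\infty$ to the log-ratio (you use $|\ln(1+h)|\le 2|h|$ on $|h|\le 1/2$, the paper uses the equivalent bound $|\ln a-\ln b|\le |a-b|/\min(a,b)$), and both yield the same constant $C=4C_2/\inf\rho^*_\alpha$.
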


\begin{proof}
We take $C_1,C_2$ as in the Theorem~\ref{thm:cv_infty} and define $C_3=\inf_{\alpha>0} \inf_{x\in \T^d} \rho^*_\alpha(x)$. By Corollary~\ref{cor:inf_rho}, $C_3>0.$ Let $c = \min\{1,C_1,C_3/(2C_2)\}$. Assume that $\epsilon\leq \frac{c}{\alpha+1}$, thus $\epsilon\in[0,1]$. Then, by~\eqref{eq:diffrhostar}, it holds $\|\rho^*_{\alpha,\epsilon}-\rho^*_{\alpha}\|_{\infty}\leq C_2{\epsilon}\leq\frac{cC_2}{\alpha+1}\leq \frac{\inf \rho^*_\alpha}{2}$. Therefore, $\inf \rho^*_{\alpha,\epsilon}\geq\frac{\inf \rho^*_\alpha}{2}\geq \frac{C_3}{2}$. Thus $\|\ln\rho^*_{\alpha,\epsilon}-\ln\rho^*_\alpha\|_\infty\leq \frac{2C_2\epsilon}{C_3}.$ Let us  define $C=\frac{4C_2}{C_3}.$ 
The result is then a consequence of Proposition~\ref{prop:HS}.
\end{proof}

\section{Proofs of the long time convergence results}\label{
sec:longtimeproof}

This section is organized as follows. Section~\ref{sec:longtime_overdamped_dependent} is devoted to the proof of Theorem~\ref{thm:CVoverdamped}. Section~\ref{sec:proofCVkinetic} is devoted to the proof of Theorem~\ref{thm:CVkinetic}. Finally, Section~\ref{sec:longtime_overdamped} is devoted to the proof of Theorem~\ref{th:CVoverdamped-sharp}.

\subsection{The overdamped case, with an $\epsilon$-dependent convergence rate}\label{sec:longtime_overdamped_dependent}

In this section, we prove Theorem~\ref{thm:CVoverdamped}, which will be a simple consequence of the following.

\begin{prop}\label{prop:chizat}
Suppose that $\epsilon\in(0,1]$. For any probability density $\rho$ on $\T^d$,
\begin{equation}\label{eq:sandwich}
\mathcal{H}(\rho \mid {\rho}^{*}_{\alpha,\epsilon}) \leq \beta\F_{\alpha,\epsilon}(\rho)-\beta\F_{\alpha,\epsilon}\left(\rho^{*}_{\alpha,\epsilon}\right) \leq \mathcal{H}(\rho \mid \Gamma_{\alpha,\epsilon}(\rho)).
\end{equation}

Let $\rho_t$ be the probability density of the adaptive biasing potential process~\eqref{eq:McKean-Voverdamped_convolution}. It holds: for all $t\ge 0$,
\begin{equation}\label{eq:ent_decay}
\frac{d}{dt} \mathcal F_{\alpha,\epsilon}(\rho_t) = - \frac{1}{\beta^2}\int_{\T^d} \left|\nabla \ln \frac{\rho_t}{\Gamma_{\alpha,\epsilon}(\rho_t)}\right|^2 \rho_t.
\end{equation}
\end{prop}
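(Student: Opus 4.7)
The plan is to unfold both inequalities in \eqref{eq:sandwich} using the explicit form $\Gamma_{\alpha,\epsilon}(\rho)\propto\exp(-\beta U-\alpha K^m_\epsilon\star\ln(K^m_\epsilon\star \rho))$ together with the fixed-point identity $\Gamma_{\alpha,\epsilon}(\rho^*_{\alpha,\epsilon})=\rho^*_{\alpha,\epsilon}$ from Proposition~\ref{prop:rho*}. The single bookkeeping observation that drives everything is that, since $K^m_\epsilon$ is even, Fubini gives $\int_{\T^d}\rho\,(K^m_\epsilon\star g)=\int_{\T^d}(K^m_\epsilon\star\rho)\,g$, so the cross-term $\int \rho\,(K^m_\epsilon\star\ln(K^m_\epsilon\star\rho))$ coincides exactly with the marginal entropy $\int (K^m_\epsilon\star\rho)\ln(K^m_\epsilon\star\rho)$ appearing in $\F_{\alpha,\epsilon}$. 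Writing $Z_\rho$ for the normalizing constant of $\Gamma_{\alpha,\epsilon}(\rho)$, a one-line expansion yields the key identity $\mH(\rho\mid\Gamma_{\alpha,\epsilon}(\rho))=\beta\F_{\alpha,\epsilon}(\rho)+\ln Z_\rho$, and specializing at $\rho=\rho^*_{\alpha,\epsilon}$ gives $\beta\F_{\alpha,\epsilon}(\rho^*_{\alpha,\epsilon})=-\ln Z_{\rho^*_{\alpha,\epsilon}}$.

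For the lower bound in \eqref{eq:sandwich}, I would plug $\ln\rho^*_{\alpha,\epsilon}=-\beta U-\alpha K^m_\epsilon\star\ln(K^m_\epsilon\star \rho^*_{\alpha,\epsilon})-\ln Z_{\rho^*_{\alpha,\epsilon}}$ into $\mH(\rho\mid\rho^*_{\alpha,\epsilon})$ and apply the same Fubini trick; the terms $\int \rho\ln\rho$, $\beta\int U\rho$ and $\ln Z_{\rho^*_{\alpha,\epsilon}}$ cancel between $\mH(\rho\mid\rho^*_{\alpha,\epsilon})$ and $\beta\F_{\alpha,\epsilon}(\rho)-\beta\F_{\alpha,\epsilon}(\rho^*_{\alpha,\epsilon})$, and the remaining gap collapses to $\alpha\,\mH(K^m_\epsilon\star\rho\mid K^m_\epsilon\star\rho^*_{\alpha,\epsilon})\ge 0$. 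For the upper bound, the same identities reduce the gap $\mH(\rho\mid\Gamma_{\alpha,\epsilon}(\rho))-(\beta\F_{\alpha,\epsilon}(\rho)-\beta\F_{\alpha,\epsilon}(\rho^*_{\alpha,\epsilon}))$ to $\ln(Z_\rho/Z_{\rho^*_{\alpha,\epsilon}})$. To show this is non-negative --- the only slightly delicate point --- I would use the fixed-point identity once more to rewrite
\[
\frac{Z_\rho}{Z_{\rho^*_{\alpha,\epsilon}}}=\int_{\T^d}\rho^*_{\alpha,\epsilon}(x)\exp\po\alpha\,K^m_\epsilon\star\ln\frac{K^m_\epsilon\star\rho^*_{\alpha,\epsilon}}{K^m_\epsilon\star\rho}(x)\pf \dd x,
\]
and apply Jensen's inequality to the convex function $\exp$ against the probability measure $\rho^*_{\alpha,\epsilon}$, which lower-bounds the right-hand side by $\exp(\alpha\,\mH(K^m_\epsilon\star\rho^*_{\alpha,\epsilon}\mid K^m_\epsilon\star\rho))\ge 1$.

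For the dissipation identity \eqref{eq:ent_decay}, I would first compute the linear functional derivative
\[
\frac{\delta\F_{\alpha,\epsilon}}{\delta\rho}(\rho,x)=U(x)+\frac{1}{\beta}\ln\rho(x)+\frac{\alpha}{\beta}(K^m_\epsilon\star\ln(K^m_\epsilon\star\rho))(x)
\]
(up to an additive constant, where the contribution of the convoluted entropy term again relies on the same Fubini identity), and then simply read off from the definition of $\Gamma_{\alpha,\epsilon}$ that $\beta\,\na(\delta\F_{\alpha,\epsilon}/\delta\rho)(\rho)=\na\ln(\rho/\Gamma_{\alpha,\epsilon}(\rho))$. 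The PDE \eqref{eq:PDEoverdamped_convolution} thereby rewrites in Wasserstein gradient-flow form $\partial_t\rho_t=\na\cdot(\rho_t\,\na(\delta\F_{\alpha,\epsilon}/\delta\rho)(\rho_t))$, and the announced formula follows by the chain rule and integration by parts, namely
\[
\frac{\dd}{\dd t}\F_{\alpha,\epsilon}(\rho_t)=\int_{\T^d}\frac{\delta\F_{\alpha,\epsilon}}{\delta\rho}(\rho_t)\,\partial_t\rho_t=-\int_{\T^d}\rho_t\,\left|\na\frac{\delta\F_{\alpha,\epsilon}}{\delta\rho}(\rho_t)\right|^2.
\]
The regularity of $\rho_t$ (density, finite entropy and finite Fisher information for $t>0$) required to justify these manipulations is precisely what is provided by the well-posedness statement \cite[Lemma 4.5]{chen2022uniform} recalled right after \eqref{eq:McKeanVkinetic_convolution}, so no additional PDE work is needed.
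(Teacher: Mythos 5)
Your proof is correct, and the sandwich-inequality part follows a genuinely different route from the paper's. The paper follows the convexity argument of Chizat: it sets $G(\rho)=\int\beta U\rho+\alpha\int(K^m_\epsilon\star\rho)\ln(K^m_\epsilon\star\rho)$, uses the first-order convexity inequality twice (once at $\rho$ and once at $\rho^*_{\alpha,\epsilon}$), and obtains the slightly tighter upper estimate
$\beta\F_{\alpha,\epsilon}(\rho)-\beta\F_{\alpha,\epsilon}(\rho^*_{\alpha,\epsilon})\leq\mH(\rho\mid\Gamma_{\alpha,\epsilon}(\rho))-\mH(\rho^*_{\alpha,\epsilon}\mid\Gamma_{\alpha,\epsilon}(\rho))$
before dropping the second (nonnegative) term. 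You instead compute everything explicitly from the identity $\mH(\rho\mid\Gamma_{\alpha,\epsilon}(\rho))=\beta\F_{\alpha,\epsilon}(\rho)+\ln Z_\rho$, a pleasant bookkeeping trick that makes both gaps exact: the lower-bound gap is precisely $\alpha\,\mH(K^m_\epsilon\star\rho\mid K^m_\epsilon\star\rho^*_{\alpha,\epsilon})$ (arguably more informative than the paper's, which does not display it), and the upper-bound gap is $\ln(Z_\rho/Z_{\rho^*_{\alpha,\epsilon}})$ which you control by Jensen. In fact, a short computation shows $\ln(Z_\rho/Z_{\rho^*_{\alpha,\epsilon}})=\mH(\rho^*_{\alpha,\epsilon}\mid\Gamma_{\alpha,\epsilon}(\rho))+\alpha\,\mH(K^m_\epsilon\star\rho^*_{\alpha,\epsilon}\mid K^m_\epsilon\star\rho)$, so your slack strictly exceeds the paper's — the paper's intermediate bound is sharper, but since both slacks are nonnegative the final sandwich is identical. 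One minor point worth making explicit for the finiteness bookkeeping: since $\epsilon\in(0,1]$, both $\Gamma_{\alpha,\epsilon}(\rho)$ and $\rho^*_{\alpha,\epsilon}$ are bounded away from $0$ and $\infty$ (by Lemma~\ref{lem:estim_K} and the fixed-point property), so each of $\mH(\rho\mid\Gamma_{\alpha,\epsilon}(\rho))$, $\mH(\rho\mid\rho^*_{\alpha,\epsilon})$ and $\beta\F_{\alpha,\epsilon}(\rho)$ differs from $\int\rho\ln\rho$ by a bounded quantity, and the three are simultaneously finite or infinite, so your identities hold in $(-\infty,+\infty]$.

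For the dissipation identity~\eqref{eq:ent_decay} you take the same route as the paper: identify $\frac{1}{\beta}\nabla\ln\frac{\rho}{\Gamma_{\alpha,\epsilon}(\rho)}$ as $\nabla\frac{\delta\F_{\alpha,\epsilon}}{\delta\rho}(\rho)$, rewrite the PDE in Wasserstein gradient-flow form, and integrate by parts. The paper spells out the cancellation of the term coming from $\partial_t(K^m_\epsilon\star\ln(K^m_\epsilon\star\rho_t))$ — this is the place where $\int\partial_t\rho_t=0$ is used — while you absorb it into the Fubini remark and the chain rule for the linear functional derivative; both are fine, but if you write it up you should verify explicitly that the marginal-entropy term differentiates to $K^m_\epsilon\star\ln(K^m_\epsilon\star\rho)$ plus a constant (the paper's computation does this).
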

\begin{proof}
The sandwich inequality~\eqref{eq:sandwich} is a consequence of~\cite[Lemma 3.4]{chizat-2022}. Let us provide the proof of this inequality for the sake of completeness. Let us denote by$$G(\rho)=\int_{\T^d} \beta U \rho + \alpha  (K^m_\epsilon\star\rho)\ln (K^m_\epsilon\star\rho),$$
so that $\mathcal F_{\alpha,\epsilon}(\rho)=\frac{1}{\beta}\left(\int_{\T^d} \rho \ln \rho + G(\rho)\right)$. By standard convexity property, one has, for $\delta \in [0,1]$ and two probability densities $\mu$ and $\nu$
$$G((1-\delta)\mu+\delta \nu) \le (1-\delta) G(\mu) + \delta G(\nu)$$
so that for $\delta \in (0,1]$,
$$\frac{G(\mu +\delta (\nu-\mu))-G(\mu)}{\delta} \le  G(\nu) -  G(\mu).$$
By sending $\delta$ to $0$, and using the boundedness of $U$ and $K^m_\epsilon$ from Lemma~\ref{lem:estim_K}  to apply the dominated convergence theorem, one gets
$$\int_{\T^d} (\nu-\mu) (\beta U+ \alpha K^m_\epsilon\star\ln(K^m_\epsilon\star \mu))  \le  G(\nu) -  G(\mu).$$
By applying this inequality twice with $(\mu,\nu)=(\rho,\rho^*_{\alpha,\epsilon})$ and then $(\mu,\nu)=(\rho^*_{\alpha,\epsilon},\rho)$, one gets:
$$
\int_{\T^d} (\rho-\rho^*_{\alpha,\epsilon})(\beta U + \alpha  K^m_\epsilon\star\ln(K^m_\epsilon\star\rho^{*}_{\alpha,\epsilon})) \le G(\rho)-G(\rho^*_{\alpha,\epsilon}) \le \int_{\T^d} (\rho-\rho^*_{\alpha,\epsilon})(\beta U + \alpha  K^m_\epsilon\star\ln(K^m_\epsilon\star\rho))
$$
which can be rewritten as:
$$
- \int_{\T^d} \ln(\Gamma_{\alpha,\epsilon}(\rho^*_{\alpha,\epsilon})) (\rho-\rho^*_{\alpha,\epsilon})  \le G(\rho)-G(\rho^*_{\alpha,\epsilon}) \le - \int_{\T^d} \ln(\Gamma_{\alpha,\epsilon}(\rho) )(\rho-\rho^*_{\alpha,\epsilon}).
$$
By adding $\int_{\T^d} \rho \ln \rho - \int_{\T^d} \rho^*_{\alpha,\epsilon} \ln \rho^*_{\alpha,\epsilon}$, and using the fact that $\Gamma_{\alpha,\epsilon}(\rho^*_{\alpha,\epsilon})=\rho^*_{\alpha,\epsilon}$, one thus gets
$$ 
\mathcal{H}(\rho \mid {\rho}^{*}_{\alpha,\epsilon})  \leq \beta\F_{\alpha,\epsilon}(\rho)-\beta\F_{\alpha,\epsilon}\left(\rho^{*}_{\alpha,\epsilon}\right) \leq \mathcal{H}(\rho \mid \Gamma_{\alpha,\epsilon}(\rho)) - \mathcal{H}(\rho^{*}_{\alpha,\epsilon} \mid \Gamma_{\alpha,\epsilon}(\rho)).
$$
This concludes the proof of~\eqref{eq:sandwich} by using the fact that $\mathcal{H}(\rho^{*}_{\alpha,\epsilon} \mid \Gamma_{\alpha,\epsilon}(\rho))\ge 0$.

The second result~\eqref{eq:ent_decay} is obtained by a classical calculation. Let us provide some details, for the sake of completeness. Let us introduce
$$\gamma_{\alpha,\epsilon}(\rho)=\exp(-\beta U- \alpha K^m_\epsilon\star\ln (K^m_\epsilon\star\rho)),$$
so that $\Gamma_{\alpha,\epsilon}(\rho)=\gamma_{\alpha,\epsilon}(\rho)/\int_{\T^d}\gamma_{\alpha,\epsilon}$ and
$$\mathcal F_{\alpha,\epsilon}(\rho) =\frac{1}{\beta}\int_{\T^d} \rho \ln \left( \frac{\rho}{\gamma_{\alpha,\epsilon}(\rho)}\right).$$
Notice that $\rho_t$ satisfies the non-linear Fokker-Planck equation~\eqref{eq:PDEoverdamped_convolution}, which can be rewritten as
$$\partial_t \rho_t = \frac{1}{\beta}{\rm div} \left(\gamma_{\alpha,\epsilon}(\rho_t) \nabla \left(\frac{\rho_t}{\gamma_{\alpha,\epsilon}(\rho_t)}\right) \right).$$
Thus by noticing that $\int_{\T^d}\rho_t\partial_t \ln \rho_t=\int_{\T^d} \partial_t\rho_t=\frac{d}{dt}\int_{\T^d} \rho_t=0$,
\begin{align*}
    \frac{d}{dt} \mathcal F_{\alpha,\epsilon}(\rho_t)&=\frac{1}{\beta}\int_{\T^d} \partial_t \rho_t\ln \left( \frac{\rho_t}{\gamma_{\alpha,\epsilon}(\rho_t)}\right)-\frac{1}{\beta} \int_{\T^d}  \rho_t \partial_t \ln \gamma_{\alpha,\epsilon}(\rho_t) \\
    &= \frac{1}{\beta^2}\int_{\T^d} {\rm div} \left(\gamma_{\alpha,\epsilon}(\rho_t) \nabla \left(\frac{\rho_t}{\gamma_{\alpha,\epsilon}(\rho_t)}\right) \right) \ln \left( \frac{\rho_t}{\gamma_{\alpha,\epsilon}(\rho_t)}\right)+\frac\alpha\beta \int_{\T^d}  \rho_t 
    \partial_t (K^m_{\epsilon}\star\ln(K^m_\epsilon\star\rho_t))\\
    &= -\frac{1}{\beta^2}\int_{\T^d} \left|\nabla \ln \frac{\rho_t}{\gamma_{\alpha,\epsilon}(\rho_t)}\right|^2 \rho_t + \frac\alpha\beta\int_{\T^d}  K^m_\epsilon\star\rho_t \frac{\partial_t (K^m_\epsilon\star\rho_t)}{K^m_\epsilon\star\rho_t}\\
    &= -\frac{1}{\beta^2}\int_{\T^d} \left|\nabla \ln \frac{\rho_t}{\Gamma_{\alpha,\epsilon}(\rho_t)}\right|^2 \rho_t.
\end{align*}
\end{proof}

\begin{proof}[Proof of Theorem~\ref{thm:CVoverdamped}]
Recall that
$$\Gamma_{\alpha,\epsilon}(\rho)\propto \exp(-\beta U-\alpha K^m_\epsilon\star\ln (K^m_\epsilon\star \rho)).$$

Since $\inf K^m_\epsilon \le K^m_\epsilon\star \rho\le \sup K^m_\epsilon$ (since $\rho \ge0$ and $\int_{\T^d} \rho =1$), by~\eqref{eq:Linf_K} of Lemma~\ref{lem:estim_K}, 

$$\max \ln(K^m_\epsilon\star \rho)- \min \ln(K^m_\epsilon\star \rho)\leq C+\frac{m}{8\epsilon},$$
for some constant $C$. Then, for any probability density $\rho$, by Proposition~\ref{prop:HS}, $\Gamma_{\alpha,\epsilon}(\rho)$ satisfies a log-Sobolev inequality with constant $D \exp(-C\alpha-\frac{m\alpha}{8\epsilon})=\frac{c_{\alpha,\epsilon}\beta}{2}$, where $c_{\alpha,\epsilon}$ is defined in the statement of Theorem~\ref{thm:CVoverdamped}.

Writing $f(t)=\F_{\alpha,\epsilon}(\rho_t)-\F_{\alpha,\epsilon}\left(\rho^{*}_{\alpha,\epsilon}\right)$, we obtain the result by  using~\eqref{eq:ent_decay} and~\eqref{eq:sandwich} from Proposition~\ref{prop:chizat}
\[f'(t) = -\frac{1}{\beta^2} \int_{\T^d} \left|\nabla \ln \frac{\rho_t}{\Gamma_{\alpha,\epsilon}(\rho_t)}\right|^2 \rho_t \leq - \frac{c_{\alpha,\epsilon}}{\beta} \mathcal{H}(\rho_t \mid \Gamma_{\alpha,\epsilon}(\rho_t)) \leq  - c_{\alpha,\epsilon} f(t)\,. \]    
The conclusion follows by using again~\eqref{eq:sandwich}.
\end{proof}

\begin{rem}\label{rem:adapt}
Let us comment on the possible adaptation of the latter proof for the non-regularized dynamics~\eqref{eq:McKean-Voverdamped}. For the free energy $\F_\alpha$, the sandwich inequality stated in Proposition~\ref{prop:chizat} also holds when replacing the map $\Gamma_{\alpha,\epsilon}$ by the function $\Gamma_{\alpha}$ introduced in Remark~\ref{rem:Gamma}, at least for probability densities $\rho$ such that $\Gamma_\alpha(\rho)$ is well defined.
In view of the proof of Theorem~\ref{thm:CVoverdamped}, if we could prove a uniform log-Sobolev inequality for $\Gamma_\alpha(\rho_t)$ (for all positive time), we could get an exponential convergence to zero for $\F_\alpha(\rho_t)-\F_\alpha\left(\rho^{*}_\alpha\right)$, hence also for $\mathcal{H}(\rho_t \mid {\rho}^{*}_\alpha)$.

However, since $\Gamma_\alpha(\rho_t)\propto\exp(-U-\alpha \ln\rho^1_t)$, a uniform log-Sobolev inequality for $\Gamma_\alpha(\rho_t)$ would require for example $-\ln\rho_t^1$ to be bounded uniformly in time, a property which does not seem obvious to obtain (such bounds are established in \cite{10.1214/24-EJP1217,M33} for some non-linear McKean-Vlasov equations but the methods in these works do not apply to the present non-linearity).
\end{rem}

\subsection{The underdamped case, with an $\epsilon$-dependent convergence rate}\label{sec:proofCVkinetic}
In this section, we consider the long time behaviour of the regularized adaptive biasing potential method for the underdamped Langevin dynamics introduced in~\eqref{eq:PDEkinetic_convolution}.




In a similar way as we defined $\Gamma_{\alpha,\epsilon}$ by \eqref{eq:Gamma} in the overdamped case, 
let us introduce in  the kinetic case the map $\tilde{\Gamma}_{\alpha,\epsilon}$ defined as follows  (for $\epsilon\in(0,1]$ and $\alpha>0$): for a given probability density $\nu(x,v)$ on $\T^d\times\R^d$,  $\tilde\Gamma_{\alpha,\epsilon}(\nu)$ is the probability density on $\T^d\times\R^d$ such that 
$$\tilde{\Gamma}_{\alpha,\epsilon}(\nu)\propto \exp\left(-\beta  H - \alpha K^m_\epsilon \star \ln(K^m_\epsilon \star \nu^{x,1})\right)$$
where, we recall $H(x,v)=U+\frac{|v|^2}{2}$.
Equivalently, $\tilde{\Gamma}_{\alpha,\epsilon}(\nu) =\Gamma_{\alpha,\epsilon}(\nu^x) \otimes \mathcal N(0, \beta^{-1} I_d)$.

The next proposition is obtained following the same reasoning as for the proofs of Proposition~\ref{prop:rho*_uniqueness} and Proposition~\ref{prop:rho*}.
\begin{prop}\label{prop:minimizer_underdamped}For $\epsilon\in(0,1]$, the functional $\tilde{\mathcal{F}}_{\alpha,\epsilon}$
admits a unique minimizer $\tilde{\rho}^*_{\alpha,\epsilon}$, which is a probability density on $\T^d\times \R^d$. Moreover $\nu^*_{\alpha,\epsilon} = \rho^*_{\alpha,\epsilon} \otimes \mathcal N(0, \beta^{-1} I_d)$, so that $\tilde \Gamma_{\alpha,\epsilon}( \nu^*_{\alpha,\epsilon})= \nu^*_{\alpha,\epsilon}$.
\end{prop}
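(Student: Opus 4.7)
The overall plan is to avoid redoing the variational argument of Proposition~\ref{prop:rho*_uniqueness} on the non-compact state space $\T^d\times\R^d$ and instead to reduce everything to the overdamped result already proven. The key observation is that the velocity marginal decouples in $\tilde{\mathcal F}_{\alpha,\epsilon}$ from the position-dependent part, so a Gaussian in $v$ should be optimal regardless of the position marginal.

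For the first step I would decompose any probability density $\nu$ on $\T^d\times\R^d$ with finite $\tilde{\mathcal F}_{\alpha,\epsilon}$ as $\nu(x,v)=\nu^x(x)g_x(v)$, where $g_x$ is the conditional law of $v$ given $X=x$. Using the chain rule
\[\int_{\T^d\times\R^d}\nu\ln\nu=\int_{\T^d}\nu^x\ln\nu^x+\int_{\T^d}\nu^x(x)\int_{\R^d} g_x\ln g_x\,\dd v\,\dd x\]
and rewriting the contribution of the kinetic energy as a relative entropy with respect to the Maxwell density $M(v)=(2\pi/\beta)^{-d/2}\exp(-\beta|v|^2/2)$, I would establish the identity
\[\tilde{\mathcal F}_{\alpha,\epsilon}(\nu) = \mathcal F_{\alpha,\epsilon}(\nu^x) + \frac{1}{\beta}\int_{\T^d}\nu^x(x)\,\mathcal H(g_x|M)\,\dd x + c,\]
for an explicit constant $c$ depending only on $\beta,d$.

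In the second step, since $\mathcal H(g_x|M)\geq 0$ with equality iff $g_x=M$ a.e., minimizing the right-hand side amounts to taking $\nu=\nu^x\otimes M$ and then minimizing $\mathcal F_{\alpha,\epsilon}(\nu^x)$ over probability densities on $\T^d$. By Proposition~\ref{prop:rho*_uniqueness}, the latter has a unique minimizer $\rho^*_{\alpha,\epsilon}$, so $\tilde{\mathcal F}_{\alpha,\epsilon}$ admits a unique minimizer $\tilde\rho^*_{\alpha,\epsilon}=\nu^*_{\alpha,\epsilon}=\rho^*_{\alpha,\epsilon}\otimes M$ as claimed.

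The fixed-point identity follows by direct computation: $\tilde\Gamma_{\alpha,\epsilon}(\nu)$ always factorizes into $M(v)$ in the velocity variable times the normalization of $\exp(-\beta U-\alpha K^m_\epsilon\star\ln(K^m_\epsilon\star\nu^{x,1}))$ in the position variable, i.e.\ $\tilde\Gamma_{\alpha,\epsilon}(\nu)=\Gamma_{\alpha,\epsilon}(\nu^x)\otimes M$. Applying this to $\nu=\rho^*_{\alpha,\epsilon}\otimes M$ and invoking Proposition~\ref{prop:rho*}, which gives $\Gamma_{\alpha,\epsilon}(\rho^*_{\alpha,\epsilon})=\rho^*_{\alpha,\epsilon}$, concludes the argument. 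The only mildly delicate point is to justify the decomposition rigorously when $\tilde{\mathcal F}_{\alpha,\epsilon}(\nu)<\infty$ but $\nu^x$ need not be bounded away from zero; however, the coercivity provided by $|v|^2/2$ in $H$ forces both the second velocity moment and $\int\nu\ln\nu$ to be finite, which together make the rearrangement legitimate by standard disintegration arguments, so no real obstacle arises.
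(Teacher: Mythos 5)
Your proof is correct, and it takes a slightly different (and arguably tighter) route than the paper. The paper proves existence and uniqueness of the minimizer of $\tilde{\mathcal F}_{\alpha,\epsilon}$ separately, by repeating the lower-semicontinuity/strict-convexity argument of Proposition~\ref{prop:rho*_uniqueness} on the non-compact phase space $\T^d\times\R^d$, and then characterizes the minimizer via the one-sided marginalization inequality $\mathcal H(\nu|\nu^*)\geq\mathcal H(\nu^x|\nu^{*,x})$, proven there by the Donsker--Varadhan variational formula, combined with the equality $\mathcal H(\nu^x\otimes\nu^{*,v}|\nu^*)=\mathcal H(\nu^x|\nu^{*,x})$. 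You instead write the \emph{exact} decomposition
$\tilde{\mathcal F}_{\alpha,\epsilon}(\nu)=\mathcal F_{\alpha,\epsilon}(\nu^x)+\tfrac1\beta\int_{\T^d}\nu^x(x)\,\mathcal H(g_x|M)\,\dd x+c$,
i.e.\ you keep the nonnegative conditional term on the right-hand side instead of discarding it. This buys you existence, uniqueness, and the explicit form of the minimizer all in one stroke, entirely reduced to the overdamped result, so there is no need to redo the variational argument on the non-compact space. The two routes are of course related: the paper's inequality is exactly what survives of your identity after dropping the conditional term, and both boil down to the chain rule for relative entropy. One small suggestion for rigor: rather than invoking the Shannon-entropy chain rule $\int\nu\ln\nu=\int\nu^x\ln\nu^x+\int\nu^x\int g_x\ln g_x$ (which in principle has $\infty-\infty$ issues), it is cleaner to note that $\tilde{\mathcal F}_{\alpha,\epsilon}(\nu)<\infty$ and the boundedness of $U$ and of $K^m_\epsilon\star\nu^{x,1}$ force $\mathcal H(\nu|\nu^*)<\infty$, and then apply the \emph{relative}-entropy chain rule $\mathcal H(\nu|\nu^*)=\mathcal H(\nu^x|\nu^{*,x})+\int\nu^x(x)\,\mathcal H(g_x|\nu^{*,v})\,\dd x$, in which both right-hand terms are automatically nonnegative. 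Your identity then follows by unwinding $\mathcal H(\nu^x|\nu^{*,x})$ into $\mathcal F_{\alpha,\epsilon}(\nu^x)$ plus a constant. The fixed-point step and the factorization $\tilde\Gamma_{\alpha,\epsilon}(\nu)=\Gamma_{\alpha,\epsilon}(\nu^x)\otimes M$ coincide with the paper's.
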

\begin{proof}
The existence and uniqueness of a minimizer $\nu^*_{\alpha,\epsilon}$ to $\tilde{\mathcal{F}}_{\alpha,\epsilon}$ is proven following the same reasoning as in the proof of Proposition~\ref{prop:rho*_uniqueness}. 
Let us now prove that $\nu^*_{\alpha,\epsilon} = \rho^*_{\alpha,\epsilon} \otimes \mathcal N(0, \beta^{-1} I_d)$.

Let us recall (see~\eqref{eq:def_gibbs_kin}) that $\nu^*(x,v)=\nu^{*,x}(x)\nu^{*,v}(v)$ where
$$\nu^{*,x}(x)=\frac{\exp(-\beta U(x))}{\int_{\T^d}\exp(-\beta U(x))dx},\qquad \nu^{*,v}(v)=\frac{\exp(-\beta\frac{|v|^2}{2})}{\int_{\R^d}\exp(-\beta\frac{|v|^2}{2})dv}.$$

Let  $\nu$ be a probability density on $\T^d\times\R^d$. Let us define the probability density on $\T^d\times\R^d$: $\tilde{\nu}(x,v)=\nu^{x}(x)\nu^{*,v}(v)$. Simple calculations lead to
\begin{align}
\mH(\tilde{\nu}|\nu^{*})=&\int_{\T^d\times\R^d}\nu^x\nu^{*,v}\ln(\nu^x\nu^{*,v})-\int_{\T^d\times\R^d}\nu^x\nu^{*,v}\ln(\nu^{*,x}\nu^{*,v})\nonumber\\
=&\int_{\T^d}\nu^x\ln\nu^x+\int_{\R^d}\nu^{*,v}\ln\nu^{*,v}-\int_{\T^d}\nu^x\ln\nu^{*,x}-\int_{\R^d}\nu^{*,v}\ln\nu^{*,v}\nonumber \\
=&\int_{\T^d}\nu^x\ln\nu^x-\int_{\T^d}\nu^x\ln\nu^{*,x}\nonumber\\
=&\mH(\nu^x|\nu^{*,x}). \label{eq:res1}
\end{align}
By the Donsker-Varadhan variational formula of relative entropy (e.g.~\cite[Lemma 1.4.3]{dupuis1997weak}),  denoting by $C_b(E)$   the set of bounded continuous functions on $E=\T^d\times\R^d$ or $\T^d$, we have that
\begin{align}
\mH(\nu|\nu^*)=&\sup_{g\in C_b(\T^d\times\R^d)}\left\{\int_{\T^d\times\R^d}g(x,v)\nu(x,v)dxdv-\ln\left( \int_{\T^d\times\R^d}\exp(g(x,v))\nu^*(x,v)dxdv\right)\right\} \nonumber \\ 
\geq& \sup_{g\in C_b(\T^d)}\left\{\int_{\T^d\times\R^d}g(x)\nu(x,v)dxdv-\ln\left( \int_{\T^d\times\R^d}\exp(g(x))\nu^*(x,v)dxdv\right)\right\}\nonumber \\
=& \sup_{g\in C_b(\T^d)}\left\{\int_{\T^d}g(x)\nu^x(x)dx-\ln\left( \int_{\T^d}\exp(g(x))\nu^*(x)dx\right)\right\}\nonumber \\
=&\mH(\nu^x|\nu^{*,x}). \label{eq:res2}
\end{align}
This inequality is also a direct consequence of the extensivity property of the entropy, see for example~\cite[Lemma 2.1]{Lelievre_twoscale}.
By combining~\eqref{eq:res1} and~\eqref{eq:res2}, we thus have proven that
$$\mH(\nu|\nu^{*}) \ge  \mH(\tilde{\nu}|\nu^{*}).$$
This leads to  $\tilde{\F}_{\alpha,\epsilon}(\nu)\geq \tilde{\F}_{\alpha,\epsilon}(\tilde{\nu}).$
Besides, by direct calculation, $\tilde{\F}_{\alpha,\epsilon}(\tilde{\nu})={\F}_{\alpha,\epsilon}({\nu}^x)-\frac{1}{\beta}\ln \int_{\R^d}\exp(-\beta\frac{|v|^2}{2})dv$. 
Since $\rho^*_{\alpha,\epsilon}$ is the minimizer of $\F_{\alpha,\epsilon}$, one thus gets
$$\tilde{\F}_{\alpha,\epsilon}(\nu)\geq {\F}_{\alpha,\epsilon}(\rho^*_{\alpha,\epsilon})-\frac{1}{\beta}\ln \int_{\R^d}\exp\left(-\beta\frac{|v|^2}{2}\right)dv.$$
It is easy to check that the lower bound is obtained by choosing $\nu(x,v) = \rho^*_{\alpha,\epsilon}(x) \nu^{*,v}(v)$. This shows that $\nu^*_{\alpha,\epsilon}$ is the (unique) minimizer of $\tilde{\F}_{\alpha,\epsilon}$. 

The fact that $\tilde \Gamma_{\alpha,\epsilon}( \nu^*_{\alpha,\epsilon})= \nu^*_{\alpha,\epsilon}$ easily follows by using the analytical expression of $\nu^*_{\alpha,\epsilon}$, and the fact that $\rho^*_{\alpha,\epsilon}$ is a fixed point of $\Gamma_{\alpha,\epsilon}$ (see Proposition~\ref{eq:Gamma}):
$$\tilde \Gamma_{\alpha,\epsilon}( \nu^*_{\alpha,\epsilon}) \propto \Gamma_{\alpha,\epsilon}(\rho^*_{\alpha,\epsilon}) \nu^{*,v} = \rho^*_{\alpha,\epsilon}\nu^{*,v}.$$
\end{proof}

Our objective is now to prove Theorem~\ref{thm:CVkinetic}, namely that $\nu_t$ solution to~\eqref{eq:PDEkinetic_convolution} converges to $\nu^*_{\alpha,\epsilon}$ in the sense of relative entropy when $t\rightarrow\infty$.
Compared to the overdamped dynamics~\eqref{eq:PDEoverdamped_convolution}, there is an additional difficulty related to the fact that time derivative of the entropy only gives a partial Fisher information. We circumvent this difficulty by applying  \cite[Theorem 2.1]{chen-lin-ren-wang-2024}, which relies on a modified free energy.

Notice that \cite[Theorem 2.1]{chen-lin-ren-wang-2024} is written in the case when the position variable $x$ is in $\R^d$. It is easy to check that the proof applies verbatim for $x \in \T^d$, and this is what we use in the following.

\begin{proof}
[Proof of Theorem~\ref{thm:CVkinetic}]
First of all, by \cite[Lemma 4.2]{chen-lin-ren-wang-2024}, any probability measure $\nu\in\mathcal{P}_2(\T^d \times \R^d)$  satisfies a sandwich inequality similar to the inequality~\eqref{eq:sandwich} stated in Proposition~\ref{prop:chizat}:
$$ 
\mathcal{H}(\nu \mid \nu^*_{\alpha,\epsilon}) \leq \beta\mathcal{\tilde{F}}_{\alpha,\epsilon}\left(\nu\right)-\beta\mathcal{\tilde{F}}_{\alpha,\epsilon}\left(\nu^*_{\alpha,\epsilon}\right) \leq \mathcal{H}(\nu \mid \tilde{\Gamma}_{\alpha,\epsilon}(\nu)).
$$
 To prove the exponential convergence, we will use \cite[Theorem 2.1 and Proposition 5.5]{chen-lin-ren-wang-2024}. These two results rely on three conditions, that we will check in a second step.

\medskip
\noindent
{\bf Step 1}: Let us first show that \cite[Theorem 2.1 and Proposition 5.5]{chen-lin-ren-wang-2024} yield the stated results.

By \cite[Proposition 5.5]{chen-lin-ren-wang-2024}, for $t_0>0$,
$$\mathcal{I}\left(\nu_{t_0} \mid \tilde{\Gamma}_{\alpha,\epsilon}(\nu_{t_0})\right)\leq \frac{C''_{\alpha,\epsilon}\beta^2}{t_0^3}\left(\mathcal{\tilde{F}}_{\alpha,\epsilon}\left(\nu_{0}\right)-\mathcal{\tilde{F}}_{\alpha,\epsilon}\left(\nu^*_{\alpha,\epsilon}\right)\right),$$
where $C''_{\alpha,\epsilon}$ is a constant\footnote{The constant $C''_{\alpha,\epsilon}$ actually depends on $\beta$, which is a fixed constant in all this work. We introduce somewhat artificially the factor $\beta^2$ in the inequality so that, at the end of Step 1, $C_{\alpha,\epsilon}$ is easily expressed in terms of $C''_{\alpha,\epsilon}$.}. In particular, $\mathcal{I}\left(\nu_{t_0} \mid \tilde{\Gamma}_{\alpha,\epsilon}(\nu_{t_0})\right)< \infty$.

Thus, \cite[Theorem 2.1]{chen-lin-ren-wang-2024} shows that for all $0< t_0\leq t$, (applying the cited result with an initial condition at time $t_0>0$ instead of $0$),
$$
\beta\mathcal{\tilde{F}}_{\alpha,\epsilon}\left(\nu_{t}\right)-\beta\mathcal{\tilde{F}}_{\alpha,\epsilon}\left(\nu^*_{\alpha,\epsilon}\right) \leq\left(\beta\mathcal{\tilde{F}}_{\alpha,\epsilon}\left(\nu_{t_0}\right)-\beta\mathcal{\tilde{F}}_{\alpha,\epsilon}\left(\nu^*_{\alpha,\epsilon}\right)+C'_{\alpha,\epsilon}\frac{1}{\beta} \mathcal{I}\left(\nu_{t_0} \mid \tilde{\Gamma}_{\alpha,\epsilon}(\nu_{t_0})\right)\right) e^{-\kappa_{\alpha,\epsilon} (t-t_0)},
$$where $C'_{\alpha,\epsilon},\kappa_{\alpha,\epsilon}$ are constants.
Besides, by a similar calculation as the one done to prove Proposition~\ref{prop:chizat},
$$\frac{d}{dt} \tilde{\mathcal F}_{\alpha,\epsilon}(\nu_t) = -\frac{1}{\beta^2} \int_{\T^d\times \R^d} \left|\nabla_v \ln \frac{\nu_t}{\tilde{\Gamma}_{\alpha,\epsilon}(\nu_t)}\right|^2 \nu_t\leq 0,$$thus $\mathcal{\tilde{F}}_{\alpha,\epsilon}\left(\nu_{t_0}\right)-\mathcal{\tilde{F}}_{\alpha,\epsilon}\left(\nu^*_{\alpha,\epsilon}\right)\leq \mathcal{\tilde{F}}_{\alpha,\epsilon}\left(\nu_{0}\right)-\mathcal{\tilde{F}}_{\alpha,\epsilon}\left(\nu^*_{\alpha,\epsilon}\right).$

The conclusions thus follows by choosing $t_0=1$ and $C_{\alpha,\epsilon}=(1+C'_{\alpha,\epsilon}C''_{\alpha,\epsilon})\exp(\kappa_{\alpha,\epsilon}).$

\medskip
\noindent
{\bf Step 2}: Let us now check that the conditions required to apply the results of \cite{chen-lin-ren-wang-2024} are satisfied.

First, notice that $\nu_{t_0}$ has finite second moments, finite entropy and finite Fisher information. Besides, thanks to Proposition~\ref{prop:minimizer_underdamped}, $\nu^*_{\alpha,\epsilon} = \rho^*_{\alpha,\epsilon} \otimes \mathcal N(0, \beta^{-1} I_d)$ has finite second moments, finite
exponential moments, finite entropy and finite Fisher information, and satisfies $\tilde{\Gamma}_{\alpha,\epsilon}(\nu^*_{\alpha,\epsilon})=\nu^*_{\alpha,\epsilon}$, which are conditions required by \cite[Theorem 2.1 and Proposition 5.5]{chen-lin-ren-wang-2024}.  

Let us now check that the three conditions (conditions (2.1), (2.2) and (2.3)) of \cite[Theorem 2.1, Lemma 4.2, Proposition 5.5]{chen-lin-ren-wang-2024} are satisfied in our context. These conditions are expressed in terms of a functional $F_{\alpha,\epsilon}$ defined as follows: for any probability density $\nu^x$ on~$\T^d$, $$F_{\alpha,\epsilon}(\nu^x)=\beta \int_{\T^d} U\nu^x+\alpha \int_{\T^m} (K^m_\epsilon\star\nu^{x,1}) \ln(K^m_\epsilon\star\nu^{x,1}).$$
To make a link with our notation, let us notice that
$\mathcal F_{\alpha,\epsilon}(\nu^x)=\frac 1 \beta \left( F_{\alpha,\epsilon}(\nu^x) + \int_{\T^d} \nu^x \ln \nu^x \right)$ and $\Gamma_{\alpha,\epsilon}(\nu^x) \varpropto \exp \left(- \frac{\delta F_{\alpha,\epsilon}}{\delta \nu^x}(\nu^x, \cdot)\right)$, where here and in the following, $\frac{\delta F_{\alpha,\epsilon}}{\delta \nu^x}$ denotes the functional derivative of $F_{\alpha,\epsilon}$ with respect to $\nu^x$. 
The three conditions of \cite{chen-lin-ren-wang-2024} are the following: 
\begin{enumerate}
    \item The map $\nu^x \mapsto  F_{\alpha,\epsilon}(\nu^x)$ is convex, i.e. for any probability densities $\nu^x_1,\nu^x_2$ on $\T^d$, for any $\lambda\in [0,1]$, $$\lambda F_{\alpha,\epsilon}(\nu^x_1)+(1-\lambda)F_{\alpha,\epsilon}(\nu^x_2)\geq F_{\alpha,\epsilon}(\lambda\nu^x_1+(1-\lambda)\nu^x_2).$$ 
    \item The family of probability densities $ ({\Gamma}_{\alpha,\epsilon}(\nu^x) )_{\nu^x\in \mathcal{P}(\T^d)}$ satisfies a uniform log-Sobolev inequality. 
    \item The function $\nabla \frac{\delta F_{\alpha,\epsilon}}{\delta\nu^x}=\beta \nabla  U+\alpha \nabla K^m_\epsilon\star\ln (K^m_\epsilon\star\nu^{x,1})$ (where here and in the following, the derivatives $\nabla$ are with respect to the position variable $x$) is Lipschitz in the two variables  $(\nu^x,x)$, in the following sense: there exists constants $L_x,L_\nu$, such that for any pairs $(\nu^x,x),(\nu'^x,x')$,     
    $$\left|\nabla \frac{\delta F_{\alpha,\epsilon}}{\delta\nu^x}(\nu^x,x)-\nabla \frac{\delta F_{\alpha,\epsilon}}{\delta\nu^x}(\nu'^x,x')\right|\leq L_x|x-x'|+L_\nu W_1(\nu^x,\nu'^x),$$
    where $W_1$ is the Wasserstein 1-distance defined as follows: for $\mu,\nu\in\mathcal{P}(\T^d)$,
$$W_1(\mu,\nu)=\sup_{f\in C(\T^d),\|f\|_{Lip}\leq 1}\int_{\T^d} fd\mu-fd\nu.$$
\end{enumerate}

The first condition follows from the convexity of $x\mapsto x\ln x$.

To verify the second condition, by Proposition~\ref{prop:HS}, it is sufficient to verify that $\alpha K^m_\epsilon\star\ln(K^m_\epsilon\star \nu^{x,1})$ is  bounded independently of $\nu$. This is ensured by the fact that $K^m_\epsilon \in [c_{\epsilon},C_\epsilon]$ for some constants $C_\epsilon,c_\epsilon>0$, see~\eqref{eq:Linf_K}.

For the third condition, let us first note that $\nabla U(x)$ is Lipschitz continuous in $x$, since $\nabla^2 U$ is continuous in $\T^d$, thus bounded. Besides, for any $\nu$,
\[
\| \nabla \left( \nabla K^m_\epsilon\star\ln (K^m_\epsilon\star\nu^{x,1})\right)\|_\infty  = \|  \nabla^2 K^m_\epsilon\star\ln (K^m_\epsilon\star\nu^{x,1}) \|_\infty  
\leq   \|\nabla^2 K^m_\epsilon\|_\infty \ln\max\left( C_\epsilon,\frac{1}{c_\epsilon}\right) \,,
\]
which concludes the proof for the Lipschitz continuity in $x$ (notice that the upper bound is independent of $\nu$). For the Lipschitz continuity in $\nu$, we bound
\begin{align*}
    \left\|\nabla \frac{\delta F_{\alpha,\epsilon}}{\delta\nu^x}(\nu^x, \cdot)-\nabla \frac{\delta F_{\alpha,\epsilon}}{\delta\nu^x}(\nu'^x,\cdot)\right\|_\infty &\leq \alpha \|\nabla K^m_\epsilon\|_\infty \|\ln (K^m_\epsilon\star\nu^{x,1})-\ln (K^m_\epsilon\star\nu'^{x,1})\|_\infty \\
    &\leq \alpha \|\nabla K^m_\epsilon\|_\infty \frac{\|K^m_\epsilon\star\nu^{x,1}-K^m_\epsilon\star\nu'^{x,1}\|_\infty}{c_\epsilon}\\
    &  \leq \alpha \|\nabla K^m_\epsilon\|_\infty \frac{\|\nabla K^m_\epsilon\|_\infty W_1(\nu^{x,1},\nu'^{x,1})}{c_\epsilon}\\
    & \leq \alpha \|\nabla K^m_\epsilon\|_\infty \frac{\|\nabla K^m_\epsilon\|_\infty W_1(\nu^x,\nu'^x)}{c_\epsilon},
\end{align*} which concludes the proof.
\end{proof}

\subsection{The overdamped case, with an $\epsilon$-independent convergence rate}\label{sec:longtime_overdamped}

This section is devoted to the proof of Theorem~\ref{th:CVoverdamped-sharp}. Let us start with a technical lemma.

\begin{lem}\label{lem:LSI_gamma} Suppose that $\rho^*_{\alpha,\epsilon}$ satisfies the log-Sobolev inequality with constant $D_{\alpha,\epsilon}$. Then there exist  constants $C,c>0$ independent of $\alpha$ and $\epsilon$ such that when $\epsilon\in(0,\frac{c}{\alpha+1}]$, for any probability density $\rho$ on $\T^d$ such that $\mathcal{H}(\rho|\rho^*_{\alpha,\epsilon})\leq c\epsilon^m,$ $\Gamma_{\alpha,\epsilon}(\rho)$ satisfies the log-Sobolev inequality with constant $D_{\alpha,\epsilon}\exp\left(-{C\alpha}\epsilon^{-m/2}\sqrt{\mathcal{H}(\rho|\rho^*_{\alpha,\epsilon})}\right)$.
\end{lem}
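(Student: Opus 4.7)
The plan is to combine the Holley--Stroock perturbation principle (Proposition~\ref{prop:HS}) with the fact that $\rho^*_{\alpha,\epsilon}$ is a fixed point of $\Gamma_{\alpha,\epsilon}$ (Proposition~\ref{prop:rho*}). Writing
\[
\Gamma_{\alpha,\epsilon}(\rho) \propto \rho^*_{\alpha,\epsilon}\, e^{-\psi},\qquad
\psi \;=\; \alpha\, K^m_\epsilon\star\bigl[\ln(K^m_\epsilon\star\rho)-\ln(K^m_\epsilon\star\rho^*_{\alpha,\epsilon})\bigr],
\]
the Holley--Stroock criterion yields an LSI for $\Gamma_{\alpha,\epsilon}(\rho)$ with constant $D_{\alpha,\epsilon}\exp(\inf\psi-\sup\psi)$, and since $\sup\psi-\inf\psi\le 2\|\psi\|_\infty$, the entire task reduces to proving
\[
\|\psi\|_\infty \;\le\; C\alpha\,\epsilon^{-m/2}\sqrt{\mathcal{H}(\rho|\rho^*_{\alpha,\epsilon})}.
\]

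Two steps should give this. First, Young's inequality together with the Gaussian-kernel bound $\|K^m_\epsilon\|_\infty \le C\epsilon^{-m/2}$ (Lemma~\ref{lem:estim_K}) and Pinsker's inequality imply
\[
\|K^m_\epsilon\star\rho-K^m_\epsilon\star\rho^*_{\alpha,\epsilon}\|_\infty
\;\le\; \|K^m_\epsilon\|_\infty\,\|\rho-\rho^*_{\alpha,\epsilon}\|_{L^1}
\;\le\; C\epsilon^{-m/2}\sqrt{\mathcal{H}(\rho|\rho^*_{\alpha,\epsilon})}.
\]
Second, to convert this into a difference of logarithms I need an $(\alpha,\epsilon)$-uniform lower bound on $K^m_\epsilon\star\rho^*_{\alpha,\epsilon}$. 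By Proposition~\ref{prop:rho*} and Corollary~\ref{cor:inf_rho}, the first marginal $\rho^{*,1}_\alpha\propto\exp(-\beta A/(\alpha+1))$ is bounded below by some $c_0>0$ independent of $\alpha$; by Theorem~\ref{thm:cv_infty}, $\|\rho^*_{\alpha,\epsilon}-\rho^*_\alpha\|_\infty\le C\epsilon$, so for $\epsilon\le c/(\alpha+1)$ with $c$ small enough the same bound $\inf\rho^{*,1}_{\alpha,\epsilon}\ge c_0/2$ holds, and since $\int K^m_\epsilon = 1$ this passes to $K^m_\epsilon\star\rho^{*,1}_{\alpha,\epsilon}\ge c_0/2$. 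The hypothesis $\mathcal{H}(\rho|\rho^*_{\alpha,\epsilon})\le c\epsilon^m$ together with the first step then ensures $K^m_\epsilon\star\rho\ge c_0/4$ pointwise, so $\ln$ is Lipschitz with constant $4/c_0$ on the relevant range, yielding $\|\ln(K^m_\epsilon\star\rho)-\ln(K^m_\epsilon\star\rho^*_{\alpha,\epsilon})\|_\infty\le C\epsilon^{-m/2}\sqrt{\mathcal{H}(\rho|\rho^*_{\alpha,\epsilon})}$. The $L^\infty$-contractivity of the convolution by $K^m_\epsilon$ (again because $\int K^m_\epsilon=1$) then closes the estimate on $\|\psi\|_\infty$.

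The main obstacle, and the reason the statement is phrased under the smallness constraint $\mathcal{H}(\rho|\rho^*_{\alpha,\epsilon})\le c\epsilon^m$, is exactly the degeneracy of $\inf K^m_\epsilon$ as $\epsilon\to 0$: a naive bound $K^m_\epsilon\star\rho\ge\inf K^m_\epsilon$ would reintroduce the exponential-in-$1/\epsilon$ loss already encountered in the proof of Theorem~\ref{thm:CVoverdamped}. The point of this lemma is to bypass this loss by working perturbatively around $\rho^*_{\alpha,\epsilon}$ and transferring the $(\alpha,\epsilon)$-uniform positivity of $\rho^{*,1}_\alpha$ (provided by Theorem~\ref{thm:cv_infty} and Corollary~\ref{cor:inf_rho}) to $K^m_\epsilon\star\rho$ itself. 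All the other ingredients (Pinsker, Young, Holley--Stroock) are standard; the scaling $\epsilon^m$ in the smallness threshold arises from the square of $\|K^m_\epsilon\|_\infty\sim\epsilon^{-m/2}$ needed to absorb the kernel blow-up when closing the pointwise lower bound on $K^m_\epsilon\star\rho$.
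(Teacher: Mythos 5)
Your proposal is correct and follows essentially the same route as the paper's proof: write $\Gamma_{\alpha,\epsilon}(\rho)\propto \rho^*_{\alpha,\epsilon}e^{-\psi}$ with $\psi=\alpha K^m_\epsilon\star[\ln(K^m_\epsilon\star\rho)-\ln(K^m_\epsilon\star\rho^*_{\alpha,\epsilon})]$, bound $\|K^m_\epsilon\star(\rho-\rho^*_{\alpha,\epsilon})\|_\infty$ via Young's inequality, $\|K^m_\epsilon\|_\infty\lesssim\epsilon^{-m/2}$ and Pinsker, use the $(\alpha,\epsilon)$-uniform lower bound on $\rho^{*,1}_{\alpha,\epsilon}$ from Theorem~\ref{thm:cv_infty}/Corollary~\ref{cor:inf_rho} together with the smallness of $\mathcal H(\rho|\rho^*_{\alpha,\epsilon})$ to get a pointwise lower bound on $K^m_\epsilon\star\rho^1$, and finally convert to a log-difference and invoke Holley--Stroock. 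Your identification of the $\epsilon^m$ threshold's origin (the square of the kernel sup-norm needed to preserve the lower bound on $K^m_\epsilon\star\rho^1$) is also exactly what the paper does.
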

\begin{proof}
First of all, referring to the proof of Corollary~\ref{cor:ULSI}, there exists a constant $c_1 \in (0,1]$ independent of $\alpha,\epsilon$ such that, for all $\epsilon> 0$ such that $$\epsilon\leq \frac{c_1}{\alpha+1}$$
it holds
$$\inf_{\T^m} \rho^{*,1}_{\alpha,\epsilon}\geq\inf_{\T^d} \rho^*_{\alpha,\epsilon}\geq \frac{\inf_{\T^d} \rho^*_\alpha}{2}\geq\frac{C_3}{2}$$
where $C_3=\inf_{\alpha>0} \inf_{x\in \T^d} \rho^*_\alpha(x)>0$.

Let now $\rho$ be a probability density on $\T^d$.
We would like to apply Proposition~\ref{prop:HS} to $\Gamma_{\alpha,\epsilon}(\rho)$. Since $\Gamma_{\alpha,\epsilon}(\rho)\propto \exp(-\beta U-\alpha K^m_\epsilon\star\ln(K^m_\epsilon\star \rho))\propto\rho^*_{\alpha,\epsilon}\exp(-\alpha K^m_\epsilon\star\ln(K^m_\epsilon\star\rho)+\alpha K^m_\epsilon\star\ln(K^m_\epsilon\star\rho^{*}_{\alpha,\epsilon}))$, and $\|K^m_\epsilon\star (\ln(K^m_\epsilon\star\rho^1)-\ln(K^m_\epsilon\star\rho^{*,1}_{\alpha,\epsilon}))\|_\infty\leq \|\ln(K^m_\epsilon\star\rho^1)-\ln(K^m_\epsilon\star\rho^{*,1}_{\alpha,\epsilon})\|_\infty,$ we only need to estimate the $L^\infty$ norm of $\ln(K^m_\epsilon\star\rho^1)-\ln(K^m_\epsilon\star\rho^{*,1}_{\alpha,\epsilon}).$

Using Pinsker’s inequality (see for example~\cite{ane-2000}) and the fact that, thanks to \eqref{eq:Linf_K}, $\|K^m_\epsilon\|_\infty\leq C'\epsilon^{-m/2}$ for some positive constant $C'$ independent of $\alpha,\epsilon$, 
$$ \|K^m_\epsilon\star (\rho^1-\rho^{*,1}_{\alpha,\epsilon})\|_\infty \leq C'\epsilon^{-m/2}\|\rho-\rho^*_{\alpha,\epsilon}\|_{TV}\leq C'\epsilon^{-m/2}\sqrt{2\mathcal{H}(\rho|\rho^*_{\alpha,\epsilon})},
 $$
where $\|\cdot\|_{TV}$ is the total variation between probability measure defined as
$$\|\mu-\nu\|_{TV}=\sup_{\|f\|_\infty\leq 1}\int_E fd\mu-fd\nu.$$
As a consequence, when $\rho$ is such that $\frac{C_3}{2}\geq 2C'\epsilon^{-m/2}\sqrt{2\mathcal{H}(\rho|\rho^*_{\alpha,\epsilon})}$, or equivalently $\mathcal{H}(\rho|\rho^*_{\alpha,\epsilon})\leq \epsilon^m\frac{C^2_3}{32C'^2}$, we have that
$$K^m_\epsilon\star\rho^1\geq K^m_\epsilon\star\rho^{*,1}_{\alpha,\epsilon}-C'\epsilon^{-m/2}\sqrt{2\mathcal{H}(\rho|\rho^*_{\alpha,\epsilon})}\geq \inf \rho^{*,1}_{\alpha,\epsilon}-\frac{C_3}{4}\geq\frac12\inf \rho^{*,1}_{\alpha,\epsilon}.$$
Hence, using the fact that $\inf \rho_{\alpha,\epsilon}^{*,1} \geq \frac{C_3}{2}$,
$$|\ln(K^m_\epsilon\star\rho^1)-\ln(K^m_\epsilon\star\rho^{*,1}_{\alpha,\epsilon})|\leq \frac{C'\epsilon^{-m/2}\sqrt{2\mathcal{H}(\rho|\rho^*_{\alpha,\epsilon})}}{\frac12 \inf (\rho^{*,1}_{\alpha,\epsilon})}\leq  \frac{4\sqrt{2}C'}{C_3}\epsilon^{-m/2}\sqrt{\mathcal{H}(\rho|\rho^*_{\alpha,\epsilon})}.$$

As a conclusion,  when $\rho$ is such that $\mathcal{H}(\rho|\rho^*_{\alpha,\epsilon})\leq \epsilon^m\frac{C^2_3}{32C'^2},$
\begin{align*}
   \|\alpha K^m_\epsilon\star\ln(K^m_\epsilon\star \rho^1)-\alpha K^m_\epsilon\star\ln(K^m_\epsilon\star \rho^{*,1}_{\alpha,\epsilon})\|_\infty & \leq \alpha\|\ln(K^m_\epsilon\star\rho^1)-\ln(K^m_\epsilon\star\rho^{*,1}_{\alpha,\epsilon})\|_\infty \\
   & \leq \frac{4\sqrt{2}\alpha C'}{C_3}\epsilon^{-m/2}\sqrt{\mathcal{H}(\rho|\rho^*_{\alpha,\epsilon})}. 
\end{align*}
We take $c=\min\{c_1,\frac{C^2_2}{32C'^2}\}$  and $C=\frac{8\sqrt{2}C'}{C_3}.$
Then, we conclude by Proposition~\ref{prop:HS}.
\end{proof}

 Notice that when $\epsilon \le \frac{c}{\alpha+1}$, we have that $\epsilon\leq 1$, and thus all the estimates on $K_\epsilon^m$ requiring $\epsilon\leq 1$ proven in Appendix~\ref{sec:ap}  can be used.

\begin{proof}[Proof of Theorem~\ref{th:CVoverdamped-sharp}] For $\epsilon\in(0,1]$,
we define $f(t)=\F_{\alpha,\epsilon}(\rho_t)-\F_{\alpha,\epsilon}\left(\rho^{*}_{\alpha,\epsilon}\right)$. Denote by $D_t$ a log-Sobolev inequality  constant of~$\Gamma_{\alpha,\epsilon}(\rho_t)$. From Proposition~\ref{prop:chizat},
$$f'(t)=\frac{d}{dt} \mathcal F_{\alpha,\epsilon}(\rho_t) = -\frac{1}{\beta^2} \int_{\T^d} \left| \nabla \ln \frac{\rho_t}{\Gamma_{\alpha,\epsilon}(\rho_t)}\right|^2 \rho_t\leq-\frac{2}{\beta^2}D_t\mathcal{H}(\rho_t|\Gamma_{\alpha,\epsilon}(\rho_t))\leq -\frac{2}{\beta}D_t f(t).$$
As a consequence, using that we can suppose that $2D_t \geq \beta c_{\alpha,\epsilon}$ for some $c_{\alpha,\epsilon} >0$ as we saw in the proof of Theorem~\ref{thm:CVoverdamped}, for all $t\geqslant 0$,
\[\mathcal{H}(\rho_t \mid \rho^*_{\alpha,\epsilon})\leq\beta f(t) \leq \beta e^{-\int_0^t \frac{2D_s}{\beta} d s } f(0) \leq \beta e^{-t c_{\alpha,\epsilon}} f(0)\,.\]

Now, let $C,c$ be as in Lemma~\ref{lem:LSI_gamma}, and let $t_0 = \frac{1}{c_{\alpha,\epsilon}}\ln\left(\frac{\beta f(0)}{c \epsilon^m} \right)$, so that for all $t \ge t_0$, $\mathcal{H}(\rho_{t}|\rho^*_{\alpha,\epsilon})\leq c\epsilon^m$. Applying Lemma~\ref{lem:LSI_gamma}, we get that,  for all $t\geq t_0$, we can suppose
\[D_t \geq D_{\alpha,\epsilon}\exp\left(-C\alpha\epsilon^{-m/2}\sqrt{\mathcal{H}(\rho_t|\rho^*_{\alpha,\epsilon})}\right) \geq D_{\alpha,\epsilon}\exp\left(-C\alpha\epsilon^{-m/2}\sqrt{\beta e^{-t c_{\alpha,\epsilon}}f(0)}\right). \]
Writing $C'_\epsilon=C\alpha \epsilon^{-m/2} \sqrt{\beta f(0)}$ and using that $e^{-C'_\epsilon x} \geq 1 - C'_\epsilon x$ for all $x \in \R$, we get that, for all $t\geq t_0$,
\[\int_{t_0}^t D_s d s \geq D_{\alpha,\epsilon} (t-t_0) - D_{\alpha,\epsilon} C'_\epsilon \int_{t_0}^t e^{-\frac{s c_{\alpha,\epsilon}}{2}} ds \geq D_{\alpha,\epsilon} (t-t_0) - \frac{ 2D_{\alpha,\epsilon} C'_\epsilon}{c_{\alpha,\epsilon}}=D_{\alpha,\epsilon}t + M_1   \]
for $M_1=- D_{\alpha,\epsilon} t_0 - \frac{ 2D_{\alpha,\epsilon} C'_\epsilon}{c_{\alpha,\epsilon}}$.
Thus, for all $t \ge t_0$, 
$$    f(t)\le f(0) e^{-\frac{2}{\beta}\int_{0}^t D_s d s } \le f(0)e^{-\frac{2}{\beta}\int_{0}^{t_0} D_s d s } e^{-\frac{2}{\beta}D_{\alpha,\epsilon}t} e^{-\frac{2}{\beta}M_1}\le f(0) e^{-c_{\alpha,\epsilon} t_0-\frac{2}{\beta}M_1} e^{-\frac{2}{\beta}D_{\alpha,\epsilon}t}.
$$
For $t\leq t_0$, we simply bound
\[f(t) \leq   f(0) \leq e^{-\frac{2}{\beta} D_{\alpha,\epsilon} t}  e^{\frac{2}{\beta} D_{\alpha,\epsilon} t_0} f(0)\,, \]
which concludes the exponential convergence stated in Theorem~\ref{th:CVoverdamped-sharp}.


\end{proof}

\begin{rem}\label{rem:sharpkinetic}

    Similarly to Theorem~\ref{thm:CVoverdamped} in the overdamped case, the convergence rate in Theorem~\ref{thm:CVkinetic} is {\em a priori} not bounded away from $0$ as $\epsilon \to 0$. One could expect a result similar to Theorem~\ref{th:CVoverdamped-sharp}, namely that in the long time the measure $\nu_t$ becomes smooth and close to $\nu_{\alpha,\epsilon}^*$, so that the exponential rate of convergence can be made independent of $\epsilon$, for sufficiently small $\epsilon$ (recall that $\alpha$ is fixed). As seen in the proof of Theorem~\ref{thm:CVkinetic}, the convergence rate  depends on three quantities, which should be  controlled uniformly in $\epsilon$ in order to get such a result:
    \begin{itemize}
        \item {\bf The log-Sobolev inequality constant of ${\Gamma}_{\alpha,\epsilon}(\nu^x_t)$}. We can use similar arguments as in the proof of Theorem~\ref{th:CVoverdamped-sharp} to show that for $t$ large enough, it is bounded from below independently of $\epsilon$.
        \item {\bf The Lipschitz constant $L_x$}. More precisely, by checking the proof of \cite{chen-lin-ren-wang-2024}, we see that what should be bounded is $L_x(t)$ where for all $t\geq 0$,
        $$L_x(t):= \left\|\nabla^2 \frac{\delta F_{\alpha,\epsilon}}{\delta\nu^x}(\cdot,\nu_t^x)\right\|_\infty.$$
        In our case, by the fact that ${\Gamma}_{\alpha,\epsilon}(\rho^*_{\alpha,\epsilon})=\rho^*_{\alpha,\epsilon},\tilde{\Gamma}_{\alpha,\epsilon}(\nu^*_{\alpha,\epsilon})=\nu^*_{\alpha,\epsilon},\nu^*_{\alpha,\epsilon}\propto\rho^*_{\alpha,\epsilon}\exp(-\beta\frac{|v|^2}{2}),$ we have that (again, $\na$ refers to derivatives
        with respect to position variable $x$) 
        $$\nabla \frac{\delta F_{\alpha,\epsilon}}{\delta\nu^x}=\beta\nabla U+\alpha \nabla K^m_\epsilon\star\ln (K^m_\epsilon\star\nu^{x,1})=\nabla\ln \rho^{*}_{\alpha,\epsilon}+\alpha \nabla K^m_\epsilon\star(\ln (K^m_\epsilon\star\nu^{x,1})-\ln (K^m_\epsilon\star\nu^{*,x,1}_{\alpha,\epsilon})).$$
        For fixed $\epsilon\in(0,1]$, the derivative of the second term converges to $0$ when $\mH(\nu^x|\nu^{*,x}_{\alpha,\epsilon})\to 0$ by a similar argument as in the proof of Lemma~\ref{lem:LSI_gamma}. Thus, $L_x(t)$ converges to $\|\nabla^2\ln \rho^{\star}_{\alpha,\epsilon} \|_\infty$ as $t\rightarrow\infty$. We would then conclude that $L_x(t)$ is independent of $\epsilon$ for large times by applying Theorem~\ref{thm:implicit} to higher order Sobolev space and by a similar argument as in the proof of Theorem~\ref{thm:cv_infty} to get that $\|\nabla^2\ln \rho^{\star}_{\alpha,\epsilon} \|_\infty$ converges to $\|\nabla^2\ln \rho^{\star}_{\alpha} \|_\infty$
        as $\epsilon$ vanishes.
        \item {\bf The Lipschitz constant $L_\nu$}. Again, by checking the proof of \cite{chen-lin-ren-wang-2024}, we see that what is precisely used is that for all $t\geq 0$ and all small $s\geq 0$, 
        $$\left\|\nabla \frac{\delta F_{\alpha,\epsilon}}{\delta\nu^x}(\cdot,\nu_t^x)-\nabla \frac{\delta F_{\alpha,\epsilon}}{\delta\nu^x}(\cdot,\nu_{t+s}^x)\right\|_\infty \leq  L_\nu(t) W_1(\nu_t^x,\nu_{t+s}^x)$$
        where $W_1$ is the Wasserstein 1-distance.
        Here, it is less clear how to exploit that $\nu_t \rightarrow \nu_{\alpha,\epsilon}^*$ for large times.  Let us recall that for $\epsilon=0$, $\frac{\delta F_{\alpha,0}}{\delta\nu^x}(\cdot,\nu^x)=\beta U + \alpha \ln\nu^{x,1}$. It is already not trivial to identify a subset of smooth probability measures on $\T^m$ over which $\mu \mapsto \nabla \ln \mu \in L^\infty(\T^m)$ is Lipschitz, $\mathcal P(\T^d)$ being endowed with the $W_1$ norm. 
     \end{itemize}
    This question of the behaviour of these quantities as $\varepsilon$ vanishes is related to the question of the well-posedness of the non-regularized dynamics from Section~\ref{sec:idealfreeenergy}. 
\end{rem}

\appendix

\section{Standard results on Sobolev spaces}\label{sec:ap2}
Recall the notation on Sobolev spaces introduced in Section~\ref{sec:notation}.

In all the following, when needed,   $\T$ is identified to the interval $I=\left[-\frac{1}{2},\frac{1}{2}\right)$, and thus $\T^m$ to $I^m=\left[-\frac{1}{2},\frac{1}{2}\right)^m$. We will also need the following definition:
\begin{defi}\label{def:extension_f}
Given a function $f: \T^m\rightarrow \R$, we define its  extension to $\R^m$ as, for $x=(x_1,x_2,...,x_m)\in \R^m,$
$$\tilde{f}(x_1,x_2,...,x_n)=f(\pi(x_1),\pi(x_2),...,\pi(x_n)),$$ where $\pi$ is the wrapping function defined by
\begin{equation}\label{eq:wrapping}
\pi:\left\{
\begin{aligned}
    \R &\to I\\
    x & \mapsto x \, {\rm mod} \, 1
\end{aligned}
\right.
\end{equation}
where $x \, {\rm mod} \, 1 = x+k$ for the integer $k \in \Z$ such that $x + k \in I$.
\end{defi}

\begin{lem}\label{lem:equivalent}
Suppose that $B_1$ is the unit ball of $\R^m$. For any $k\in \N, \, p\in [1,+\infty],\, f\in W^{k,p}(\T^m),$ let $F$ be the restriction of $\tilde{f}$ to $B_1$,  $\tilde{f}$ being the extension of $f$ to $\R^m$ as introduced in Definition~\ref{def:extension_f}. Then we have
$$2^{-m}\|F\|_{W^{k,p}(B_1)}\leq\|f\|_{W^{k,p}(\T^m)}\leq \|F\|_{W^{k,p}(B_1)}.$$
\end{lem}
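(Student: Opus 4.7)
The overall strategy is to exploit that $\tilde f$ is $\Z^m$-periodic on $\R^m$ to bridge the $W^{k,p}$ norm over a fundamental domain $I^m\simeq \T^m$ and the $W^{k,p}$ norm over $B_1$. A preliminary point I would verify is the standard fact that if $f\in W^{k,p}(\T^m)$, then $\tilde f\in W^{k,p}_{\mathrm{loc}}(\R^m)$ with weak derivatives $D^\theta \tilde f=\widetilde{D^\theta f}$ for all $|\theta|\le k$: this follows from the definition of weak derivatives on $\T^m$ via periodic test functions, combined with a partition-of-unity decomposition of any compactly supported test function on $\R^m$ into pieces each supported in a single translate of $I^m$.

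For the upper bound $\|f\|_{W^{k,p}(\T^m)}\le \|F\|_{W^{k,p}(B_1)}$, I would use the inclusion $I^m\subset B_1$ (taken as implicit in the defining role of the unit ball as containing a fundamental domain of the torus). For each multi-index $\theta$ with $|\theta|\le k$ and each $p\in[1,\infty)$, identifying $\T^m$ with $I^m$ gives
\begin{equation*}
\int_{\T^m}|D^\theta f|^p\,\dd x=\int_{I^m}|\widetilde{D^\theta f}|^p\,\dd x\le \int_{B_1}|D^\theta F|^p\,\dd x,
\end{equation*}
after which summing over $\theta$ and extracting $p$-th roots yields the claim. The case $p=\infty$ follows from the same monotonicity of the essential supremum under inclusion of sets.

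For the lower bound $\|F\|_{W^{k,p}(B_1)}\le 2^m\|f\|_{W^{k,p}(\T^m)}$, I would cover $B_1$ by a finite family of integer translates $\{I^m+k\}_{k\in K}$ with $K\subset \Z^m$ finite, of cardinality $|K|\le 2^m$ (up to a crude lattice count inside a cube containing $B_1$; the constant is deliberately not sharp). By $\Z^m$-periodicity of $\widetilde{D^\theta f}$,
\begin{equation*}
\int_{I^m+k}|\widetilde{D^\theta f}|^p\,\dd x=\int_{\T^m}|D^\theta f|^p\,\dd x,
\end{equation*}
so that $\int_{B_1}|D^\theta F|^p\le |K|\int_{\T^m}|D^\theta f|^p$. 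Summing over $\theta$ and using $|K|^{1/p}\le|K|\le 2^m$ for $p\in[1,\infty)$ delivers the bound; for $p=\infty$ periodicity directly gives $\|D^\theta F\|_{L^\infty(B_1)}\le \|D^\theta f\|_{L^\infty(\T^m)}$, which is trivially $\le 2^m\|D^\theta f\|_{L^\infty(\T^m)}$.

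The only non-routine ingredient is the periodic-extension step mentioned at the outset, which is classical but needs some care at the boundaries of the tiling to ensure that the weak-derivative identity propagates across cell faces. Once this is granted, both bounds reduce to elementary covering and monotonicity arguments, and the $2^m$ constant in the lower bound is a loose but convenient dimensional factor that absorbs all slack in the covering count.
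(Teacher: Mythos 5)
Your overall strategy matches the paper's: use $I^m\subset B_1$ for the first inequality, and for the second compare the $W^{k,p}$ norm over $B_1$ to that over a fundamental domain via the $\Z^m$-periodicity of $\tilde f$. Your preliminary verification that $\tilde f\in W^{k,p}_{\mathrm{loc}}(\R^m)$ with $D^\theta\tilde f=\widetilde{D^\theta f}$ is a worthwhile point that the paper leaves implicit, and the first inequality is handled correctly.

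The gap is in your covering count for the second inequality. You claim that $B_1$ can be covered by integer translates $\{I^m+k\}_{k\in K}$ with $K\subset\Z^m$ of cardinality at most $2^m$, but this is not achievable. The integer translates of $I^m=[-\tfrac{1}{2},\tfrac{1}{2})^m$ that meet $B_1$ are indexed by $k\in\{-1,0,1\}^m$, and already in dimension $m=1$ all three cells $[-\tfrac{3}{2},-\tfrac{1}{2})$, $[-\tfrac{1}{2},\tfrac{1}{2})$, $[\tfrac{1}{2},\tfrac{3}{2})$ are required, so $|K|=3>2^1$; more generally, for the dimensions where $I^m\subset B_1$ actually holds, one finds $|K|=3^m$. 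Your chain $|K|^{1/p}\le|K|\le 2^m$ therefore produces the constant $3^m$, not $2^m$, and does not establish the stated inequality. The paper's argument avoids lattice counting altogether: it uses $B_1\subset[-1,1)^m$ together with the fact that $[-1,1)^m$ partitions into exactly $2^m$ cubes of side $1$, namely the translates of $I^m$ by vectors in $\{-\tfrac{1}{2},\tfrac{1}{2}\}^m$. Since $\tilde f$ has period $1$ in each coordinate, $\int_{I^m+\epsilon}|\widetilde{D^\theta f}|^p=\int_{I^m}|D^\theta f|^p$ holds for \emph{every} shift $\epsilon\in\R^m$, not only integer ones, whence $\|\tilde f\|_{W^{k,p}([-1,1)^m)}=2^{m/p}\|f\|_{W^{k,p}(\T^m)}\le 2^m\|f\|_{W^{k,p}(\T^m)}$. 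Substituting this half-integer partition of $[-1,1)^m$ for your integer-lattice covering closes the gap.
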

\begin{proof}
Notice that $I^m=[-\frac{1}{2},\frac{1}{2})^m\subset B_1$, thus $$\|f\|_{W^{k,p}(\T^m)}=\|F\|_{W^{k,p}(I^m)}\leq \|F\|_{W^{k,p}(B_1)}.$$

Moreover, since $B_1 \subset (2I)^m=[-1,1)^m$, it holds
$$\|F\|_{W^{k,p}(B_1)}\leq \|\tilde{f}\|_{W^{k,p}([-1,1)^m]}=2^{\frac{m}{p}}\|\tilde{f}\|_{W^{k,p}(I^m)}=2^{\frac{m}{p}}\|f\|_{W^{k,p}(\T^m)}\leq 2^{m}\|f\|_{W^{k,p}(\T^m)}.$$
\end{proof}
\begin{lem}[Sobolev embedding theorem in $\T^m$]\label{lem:embedding}
Suppose that $m,k\in \N^+$ and $2k>m$. Then, there exists a constant $C_{\infty}$, such that for all $f\in H^k(\T^m),$
\begin{equation}\label{eq:embedding_infty}
\|f\|_{\infty}\leq C_{\infty}\|f\|_{H^k}.    
\end{equation}
Moreover, for $i\in \N \cap [k-\frac{m}{2},k-1],$
and $q\in [2,\infty)$ such that $\frac{1}{2}-\frac{k}{m}< \frac{1}{q}-\frac{i}{m}$, there exists a constant $C_{i,q}$ depending only on $k,i,q$, such that for all $f\in H^k(\T^m),\theta\in \N^m, |\theta|=i,$

\begin{equation}\label{eq:embedding_q}
\|D^\theta f\|_{q}\leq C_{i,q}\|f\|_{H^k}. \end{equation}

\end{lem}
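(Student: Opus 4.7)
The plan is to reduce both estimates to the classical Sobolev embedding theorems on the unit Euclidean ball $B_1 \subset \R^m$, using Lemma~\ref{lem:equivalent} as the transfer mechanism. Given $f \in H^k(\T^m)$, I would introduce its periodic extension $\tilde f$ to $\R^m$ (Definition~\ref{def:extension_f}) and set $F := \tilde f|_{B_1}$; by Lemma~\ref{lem:equivalent} applied with $p=2$, $F$ lies in $H^k(B_1)$ with $\|F\|_{H^k(B_1)} \leq 2^m \|f\|_{H^k}$, and conversely $\|f\|_{H^k} \leq \|F\|_{H^k(B_1)}$. Since $B_1$ is a smooth bounded domain, the classical Sobolev embedding theorems apply to $F$, and it then suffices to transfer the resulting $L^\infty$ or $L^q$ bounds from $B_1$ back to $\T^m$ via the inclusion $I^m \subset B_1$.

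For~\eqref{eq:embedding_infty}, the assumption $2k>m$ triggers the Morrey-type embedding $H^k(B_1) \hookrightarrow L^\infty(B_1)$, giving $\|F\|_{L^\infty(B_1)} \leq C \|F\|_{H^k(B_1)}$ for some absolute constant $C$. Since $\|f\|_\infty = \|F\|_{L^\infty(I^m)} \leq \|F\|_{L^\infty(B_1)}$, combining with Lemma~\ref{lem:equivalent} produces the inequality with $C_\infty = 2^m C$.

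For~\eqref{eq:embedding_q}, I would first observe that for a multi-index $\theta$ with $|\theta|=i$, the weak derivative $D^\theta F$ agrees with the periodic extension of $D^\theta f$ restricted to $B_1$, so that $\|D^\theta F\|_{H^{k-i}(B_1)} \leq \|F\|_{H^k(B_1)}$. Writing $j := k-i$, the hypothesis $i \in [k-m/2,\,k-1]\cap\N$ forces $j \in [1,\lfloor m/2\rfloor]\cap\N$ (or $j = m/2$ in the critical case when $m$ is even), so in any event $2j \leq m$. The condition $\tfrac{1}{q} - \tfrac{i}{m} > \tfrac{1}{2} - \tfrac{k}{m}$ rewrites as $\tfrac{1}{q} > \tfrac{1}{2} - \tfrac{j}{m}$, which is exactly the strict Sobolev condition ensuring $H^{j}(B_1) \hookrightarrow L^q(B_1)$ (both in the subcritical range $2j<m$ and, when $m$ is even, at the critical level $2j=m$ where any $q<\infty$ is admissible). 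This yields $\|D^\theta F\|_{L^q(B_1)} \leq C_{i,q}'\|F\|_{H^k(B_1)}$, and the inclusion $I^m \subset B_1$ together with Lemma~\ref{lem:equivalent} closes the argument with constant $C_{i,q} = 2^m C'_{i,q}$.

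The only mildly delicate point is checking that the periodic extension preserves weak derivatives across the seams of the fundamental domain $I^m$, so that $F$ truly belongs to $H^k(B_1)$ with the expected norm control; this is however already contained in Lemma~\ref{lem:equivalent}. Apart from that, the argument is a direct transport of classical embeddings on bounded smooth Euclidean domains to the torus, and no genuine obstacle arises.
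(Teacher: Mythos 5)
Your proof is correct and follows the same overall strategy as the paper: transfer everything to the unit Euclidean ball $B_1$ via Lemma~\ref{lem:equivalent}, then invoke the classical Sobolev embedding theorems on $B_1$, and transfer back using $I^m\subset B_1$. For~\eqref{eq:embedding_infty} the two arguments are identical. For~\eqref{eq:embedding_q} there is a small technical divergence worth noting. You apply the embedding $H^{j}(B_1)\hookrightarrow L^q(B_1)$ with $j=k-i$ directly, noting that the strict condition $\tfrac1q>\tfrac12-\tfrac{j}{m}$ covers both the subcritical range $2j<m$ and, when $m$ is even and $i=k-m/2$, the borderline case $2j=m$. The paper instead picks $p\in[1,2)$ defined by $\tfrac1q=\tfrac1p-\tfrac{k-i}{m}$ so that $W^{k-i,p}(B_1)\hookrightarrow L^q(B_1)$ is the \emph{exact} Gagliardo--Nirenberg--Sobolev embedding of \cite[Theorem~6, p.~270]{evans2010partial} (and one checks $(k-i)p<m$ holds precisely because $q<\infty$), then bounds $\|D^\theta f\|_{W^{k-i,p}}$ by $\|D^\theta f\|_{H^{k-i}}$ via H\"older on the bounded domain. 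The paper's two-step route is engineered so that only the strictly subcritical case $(k-i)p<m$ of Evans' theorem is ever invoked, thereby sidestepping the borderline $2j=m$ situation that your direct argument has to address separately (Evans' Theorem~6 as stated does not cover $kp=n$). Your version is slightly more streamlined but requires a citation for the critical Sobolev embedding $H^{m/2}(B_1)\hookrightarrow L^q(B_1)$, $q<\infty$, when $m$ is even. Both arguments are valid.
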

\begin{proof} By Lemma~\ref{lem:equivalent}, we need only to consider the Sobolev space $W^{k,p}(B_1)$ in the unit ball $B_1$ of $\R^m$ instead of $\T^m$. Notice that $B_1$ is a bounded subdomain of $\R^m$ with $C^1$ boundary.

Equation~\eqref{eq:embedding_infty} is a direct consequence of~\cite[Theorem 6, p. 270]{evans2010partial}, and the fact that $2k>m$. And to prove~\eqref{eq:embedding_q}, notice that 

$$\|D^\theta f\|_{H^{k-i}}\leq \|f\|_{H^k},$$

We take $p$ such that $\frac{1}{q}=\frac{1}{p}-\frac{k-i}{m}.$ Notice that $p\in[1,2)$, since

$$\frac{1}{2}<\frac{1}{q}+\frac{k-i}{m}\leq \frac{1}{2}+\frac{1}{2}=1.$$

Thus by \cite[Theorem 6, p. 270]{evans2010partial}, and the fact that $k-i\leq \frac{m}{2}<\frac{m}{p},$ there exists a constant $C'_{i,q}$ depending only on $i,q$ (since $p$ depends on $i,q$), such that
$$\|D^\theta f\|_q\leq C'_{i,q}\|D^\theta f\|_{W^{k-i,p}}.$$

By Holder's inequality, there exists a constant $C''_{i,q}$ only depending on $i,q$ (since $p$ depends on $i,q$), such that 
$$\|D^\theta f\|_{W^{k-i,p}}\leq C''_{i,q} \|D^\theta f\|_{H^{k-i}}.$$
The result follows by gathering these inequalities.
\end{proof}
\begin{lem}\label{lem:holder}
Suppose that $j,m,k\in \mathbb{N^+}$ and $2k>m$. Then there exists a constant $C$, such that if $f_i\in H^k(\T^m),\theta_i\in \mathbb{N}^m$ for $i=1,2,...,j$, 
$\sum_{i=1}^j|\theta_{i}|\leq k$, $q_i=\frac{2\sum_{i=1}^j|\theta_{i}|}{|\theta_i|}$ (if $|\theta_i|=0$, then $q_i=\infty$ by convention), then 

$$\|\Pi_{i=1}^j D^{\theta_i}f_i\|_{2}\leq \Pi_{i=1}^j \|D^{\theta_i}f_i\|_{q_i}\leq C \Pi_{i=1}^j\|f_i\|_{H^k}.$$
\end{lem}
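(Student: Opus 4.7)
The first inequality is a direct application of generalized Hölder in $L^2(\T^m)$. Setting $N := \sum_i |\theta_i|$, when $N > 0$ one checks
\[ \sum_{i=1}^j \frac{1}{q_i} \;=\; \sum_{i\,:\,|\theta_i|>0} \frac{|\theta_i|}{2N} \;=\; \frac{1}{2}, \]
so Hölder applies directly; the degenerate case $N=0$ (all $\theta_i=0$) is handled by Hölder with exponent $\infty$ together with $\|\cdot\|_2 \le \|\cdot\|_\infty$ on the unit-volume torus. The substance of the lemma is therefore the second inequality, which by multiplicativity reduces to showing $\|D^{\theta_i} f_i\|_{q_i} \le C \|f_i\|_{H^k}$ separately for each $i$.

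I would then split on $n_i := |\theta_i|$ into four subcases. If $n_i = 0$, then $q_i = \infty$ and the bound is exactly the embedding~\eqref{eq:embedding_infty} of Lemma~\ref{lem:embedding}, applicable since $2k > m$. If $n_i = k$, then necessarily $N = k$ and only $\theta_i$ is non-zero, so $q_i = 2$ and the estimate is immediate from the definition of $\|\cdot\|_{H^k}$. If $0 < n_i < k - m/2$, then $D^{\theta_i} f_i \in H^{k-n_i}(\T^m)$ with $2(k-n_i) > m$, so~\eqref{eq:embedding_infty} applied at regularity $k-n_i$ yields $\|D^{\theta_i} f_i\|_\infty \le C \|f_i\|_{H^k}$, which controls the $L^{q_i}$-norm since $\T^m$ has unit volume.

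The genuine case is $k - m/2 \le n_i \le k-1$, where I would apply~\eqref{eq:embedding_q} with the parameters $i = n_i$ and $q = q_i$. The membership $q_i \in [2,\infty)$ is automatic from $0 < n_i \le N \le k$. The main obstacle is verifying the strict inequality $\frac{1}{q_i} - \frac{n_i}{m} > \frac{1}{2} - \frac{k}{m}$: substituting $\frac{1}{q_i} = \frac{n_i}{2N}$ and clearing denominators, this is equivalent to
\[ N(2k - m) + n_i(m - 2N) \;>\; 0. \]
Since $2k > m$ by hypothesis and $N \ge 1$, the first summand is strictly positive; if $2N \le m$ the second summand is non-negative and we conclude, while if $2N > m$ a short manipulation using $n_i \le N \le k$ shows that the inequality fails only in the degenerate situation $n_i = N = k$, which has already been absorbed into the trivial case $n_i = k$.

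Taking $C$ to be the maximum of the constants produced across the four subcases and combining the per-factor bounds by multiplication concludes the argument.
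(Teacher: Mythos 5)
Your argument is correct and follows essentially the same route as the paper: the generalized H\"older inequality via $\sum_i 1/q_i = 1/2$, followed by a per-factor case split on $|\theta_i|$ that invokes \eqref{eq:embedding_infty} when $|\theta_i| < k-m/2$, the trivial $L^2$ bound when $|\theta_i|=k$, and \eqref{eq:embedding_q} in the intermediate range. The only cosmetic difference is the algebraic form in which you verify the critical exponent inequality for \eqref{eq:embedding_q} (you clear denominators to reach $N(2k-m)+n_i(m-2N)>0$, while the paper keeps the inequality in the form $k(\tfrac{1}{2k}-\tfrac{1}{m})< |\theta_i|(\tfrac{1}{2k}-\tfrac{1}{m})\le \tfrac1{q_i}-\tfrac{|\theta_i|}{m}$); both are equivalent.
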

\begin{proof}
If $\sum_{i=1}^j|\theta_{i}|=0$, then we conclude directly by~\eqref{eq:embedding_infty} of
Lemma~\ref{lem:embedding}.

In the following, we assume that $\sum_{i=1}^j|\theta_{i}|\geq 1.$ Notice that $\sum_{i=1}^j \frac{2}{q_i}=1$, we deduce the first inequality by Holder's inequality, 

\begin{align*}
\int_{\T^m} \Pi_{i=1}^j (D^{\theta_i}f_i)^2\leq \Pi_{i=1}^j \left(\int_{\T^m}(D^{\theta_i}f_i)^{q_i}\right)^{\frac{2}{q_i}}
=\Pi_{i=1}^j \|D^{\theta_i}f_i\|_{q_i}^{2}.
\end{align*}

To obtain the second inequality, notice that $|\theta_i|\leq\sum_{i=1}^j|\theta_{i}|\leq k.$

If $0\leq|\theta_i|<k-\frac{m}{2}$ (when $|\theta_i|=0,q_i=\infty$), then $D^{\theta_i} f_i\in H^{k-|\theta_i|}(\T^m), 2(k-|\theta_i|)>m.$ Thus by~\eqref{eq:embedding_infty}, $$\|D^{\theta_i}f_i\|_{q_i}\leq\|D^{\theta_i}f_i\|_{\infty}\leq C_\infty \|D^{\theta_i} f_i\|_{H^{k-|\theta_i|}}\leq C_\infty \|f_i\|_{H^k}.$$

If $|\theta_i|=k$, then since $\sum_{i=1}^j|\theta_{i}|\leq k$, we have that $\sum_{i=1}^j|\theta_{i}|=k$, and thus $q_i=2$, and 
$$\|D^{\theta_i} f_i\|_{q_i}=\|D^{\theta_i} f_i\|_{2}\leq \|f_i\|_{H^k}.$$

If $|\theta_i|\in \N\cap [k-\frac{m}{2},k-1],$ then by the fact that $\frac{1}{2k}-\frac{1}{m}< 0,  \sum_{i=1}^j|\theta_{i}|\leq k$,
$$\frac{1}{2}-\frac{k}{m}=k\po \frac{1}{2k}-\frac{1}{m}\pf< |\theta_i|\po\frac{1}{2k}-\frac{1}{m}\pf \leq \frac{|\theta_i|}{2\sum_{i=1}^j|\theta_{i}|}-\frac{|\theta_i|}{m} =\frac{1}{q_i}-\frac{|\theta_i|}{m}.$$ By~\eqref{eq:embedding_q} of Lemma~\ref{lem:embedding}, this ensures that
$$\|D^{\theta_i}f_i\|_{q_i}\leq C_{|\theta_i|,q_i} \|f_i\|_{H^k}.$$
By convention, we pose $C_{k,2}=1,C_{|\theta_i|,q_i}=C_\infty$ if $0\leq|\theta_i|<k-\frac{m}{2}.$
The conclusion follows by taking $$C=\max\left\{\Pi_{i=1}^j C_{|\theta_i|,q_i} \Big|1\leq\sum_{i=1}^j|\theta_i|\leq k, q_i=\frac{2\sum_{i=1}^j|\theta_{i}|}{|\theta_i|} \right\}$$
which is finite since $\left\{(|\theta_i|,q_i)_{1 \le i \le j} \in (\N \times (\R \cup \{+\infty\}))^j \mid 1\leq\sum_{i=1}^j|\theta_i|\leq k, q_i=\frac{2\sum_{i=1}^j|\theta_{i}|}{|\theta_i|}\right\}$ is a finite set. \end{proof}

\begin{lem}\label{lem:product}Suppose that $2k>m$.
If $f,g\in H^k(\T^m)$, then $fg \in H^k(\T^m)$ and $\|fg\|_{H^k}\leq C\|f\|_{H^k}\|g\|_{H^k}$, where $C$ is a constant depending only on $k,m$. 
\end{lem}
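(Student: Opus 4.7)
The plan is to combine the generalized Leibniz rule with the multilinear Hölder-type estimate already established in Lemma~\ref{lem:holder}. First I would treat the case of smooth $f,g \in C^\infty(\T^m)$. For any multi-index $\theta$ with $|\theta|\leq k$, the classical Leibniz rule gives
\[
D^\theta(fg) = \sum_{\eta \leq \theta} \binom{\theta}{\eta}\, D^\eta f \cdot D^{\theta-\eta} g,
\]
where $\eta\leq\theta$ means $\eta_i\leq\theta_i$ componentwise, and $\binom{\theta}{\eta}=\prod_i\binom{\theta_i}{\eta_i}$. For each such $(\eta,\theta-\eta)$, we have $|\eta|+|\theta-\eta|=|\theta|\leq k$, so Lemma~\ref{lem:holder} applied with $j=2$, $f_1=f$, $f_2=g$, $\theta_1=\eta$, $\theta_2=\theta-\eta$ yields a constant $C'$ depending only on $k,m$ such that
\[
\|D^\eta f \cdot D^{\theta-\eta} g\|_{2} \leq C'\,\|f\|_{H^k}\|g\|_{H^k}.
\]

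Taking the $L^2$-norm of the Leibniz expansion, applying the triangle inequality, and then summing over $|\theta|\leq k$ immediately produces a bound of the form $\|fg\|_{H^k}\leq C\,\|f\|_{H^k}\|g\|_{H^k}$ with $C$ depending only on $k$ and $m$ (absorbing the finitely many binomial coefficients and the finite number of multi-indices of length at most $k$).

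To pass from smooth functions to general $f,g\in H^k(\T^m)$, I would use the density of $C^\infty(\T^m)$ in $H^k(\T^m)$ (obtained via convolution with a smooth mollifier on the torus, as discussed in Appendix~\ref{sec:ap}). Pick sequences $f_n, g_n \in C^\infty(\T^m)$ with $f_n\to f$ and $g_n\to g$ in $H^k(\T^m)$. Writing $f_n g_n - f_p g_p = (f_n-f_p)g_n + f_p(g_n-g_p)$ and applying the smooth-case bound shows $(f_n g_n)$ is Cauchy in $H^k(\T^m)$, hence has an $H^k$-limit $h$. Because $2k>m$, Lemma~\ref{lem:embedding} implies $f_n\to f$ and $g_n\to g$ uniformly, so $f_n g_n\to fg$ in $L^\infty$ and therefore in $L^2$; identifying the two limits gives $h=fg\in H^k(\T^m)$. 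Passing to the limit in the smooth estimate $\|f_n g_n\|_{H^k}\leq C\|f_n\|_{H^k}\|g_n\|_{H^k}$ yields the claim.

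No step is a serious obstacle: the argument is a standard Sobolev multiplication lemma in the regime $2k>m$. The only point that deserves care is the approximation step, namely checking that the $H^k$-limit of $f_n g_n$ actually coincides with the pointwise product $fg$; the uniform convergence provided by the Sobolev embedding of Lemma~\ref{lem:embedding} is precisely what makes this identification clean.
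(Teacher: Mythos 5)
Your proposal is correct and follows essentially the same approach as the paper: both proofs reduce, via the Leibniz rule, to bounding terms of the form $D^{\theta_1}f\cdot D^{\theta_2}g$ with $|\theta_1|+|\theta_2|\le k$ in $L^2$ and then invoke Lemma~\ref{lem:holder} with $j=2$. The only difference is that you spell out a density/approximation argument to justify the Leibniz rule for weak derivatives, a step the paper takes for granted as standard.
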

\begin{proof}
It suffices to prove that for $\theta\in \N^m,$ $|\theta|\leq k$, we have that $\|D^\theta(fg)\|_2\leq C\|f\|_{H^k}\|g\|_{H^k}.$ Therefore, it suffices to prove that for
$\theta_1,\theta_2\in \N^m,$
$|\theta_1|+|\theta_2|\leq k$, it holds $\|D^{\theta_1}fD^{\theta_2}g\|_2\leq C\|f\|_{H^k}\|g\|_{H^k}$. This is a consequence of Lemma~\ref{lem:holder} for $j=2, f_1=f,f_2=g.$
\end{proof}

\begin{prop}\label{prop:phi(g)}
Suppose that $m,k\in \N^+, 2k>m,$ then there exists a constant $C>0$, such that if  
\begin{itemize}
    \item  $g\in H^{k}(\T^m)$, (thus $\inf g,\sup g\in \R$ by~\eqref{eq:embedding_infty}).
    \item $\phi\in C^k((a,\infty))$, where $a\in\R$ and $a<\inf g$,  and for $j=0,1,...,k,$ its $j$-th order derivative $\phi^{(j)}$ is bounded over $[\inf g,\sup g]$.
\end{itemize}
Then $\phi(g)\in H^k(\T^m),$ and one has the following estimate of its $H^k$ norm:
\begin{align}
\|\phi(g)\|_{H^k(\T^m)}\leq& C\sum_{j=0}^k\|\phi^{(j)}(g)\|_{L^\infty(\T^m)}\|g\|^j_{H^k(\T^m)}\label{eq:phi(g)1} \\ 
\leq&C\sum_{j=0}^k\|\phi^{(j)}\|_{L^\infty([\inf g,\sup g])}\|g\|^j_{H^k(\T^m)}.\label{eq:phi(g)}
\end{align}
\end{prop}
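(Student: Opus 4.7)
The strategy is the multivariate chain rule (Faà di Bruno's formula), combined with Lemma~\ref{lem:holder} to handle the products of derivatives of $g$ that appear, together with a mollification argument to make the formula rigorous in the $H^k$ setting.

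First, I would set up the formal identity. Faà di Bruno's formula in several variables states that for any multi-index $\theta$ with $1 \le |\theta| \le k$ and any smooth $h$,
\[
D^\theta \phi(h) \;=\; \sum_{j=1}^{|\theta|} \phi^{(j)}(h) \sum_{\substack{\theta_1+\cdots+\theta_j=\theta\\|\theta_i|\ge 1}} c_{\theta_1,\ldots,\theta_j}\,\prod_{i=1}^{j} D^{\theta_i} h,
\]
for explicit combinatorial constants $c_{\theta_1,\ldots,\theta_j}\ge 0$ that depend only on $m$ and $|\theta|\le k$. Applied to $h=g$, for each fixed index pattern $(j;\theta_1,\ldots,\theta_j)$ with $\sum_i|\theta_i|=|\theta|\le k$, Lemma~\ref{lem:holder} gives
\[
\left\|\,\prod_{i=1}^{j} D^{\theta_i}g\,\right\|_2 \;\le\; C\,\prod_{i=1}^{j}\|g\|_{H^k} \;=\; C\,\|g\|_{H^k}^{\,j},
\]
while $\|\phi^{(j)}(g)\|_{L^\infty}\le \|\phi^{(j)}\|_{L^\infty([\inf g,\sup g])}$ because $g$ takes values in $[\inf g,\sup g]$. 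Summing over the finitely many terms (their number depends only on $k,m$) and over $|\theta|\le k$, together with the obvious bound $\|\phi(g)\|_2\le \|\phi(g)\|_\infty$ for the $\theta=0$ term, yields the estimate~\eqref{eq:phi(g)1}, from which~\eqref{eq:phi(g)} follows immediately.

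The main obstacle is justifying that the Faà di Bruno identity truly gives the weak derivative $D^\theta\phi(g)$ in $L^2(\T^m)$ when $g$ is only in $H^k(\T^m)$ (rather than $C^k$). I would handle this by mollification: set $g_n := K^m_{1/n}\star g$, so that by Lemma~\ref{lem:convolution} one has $g_n \in C^\infty(\T^m)$, $g_n\to g$ in $H^k(\T^m)$, and crucially $\inf g\le g_n\le\sup g$ for every $n$ since $K^m_{1/n}$ is a probability density (so $\phi^{(j)}(g_n)$ takes values in a fixed bounded range on which $\phi^{(j)}$ is bounded by hypothesis). For each $n$, Faà di Bruno applies classically, and the right-hand side is bounded in $L^2$ by $C\sum_{j=0}^{k}\|\phi^{(j)}\|_{L^\infty([\inf g,\sup g])}\|g_n\|_{H^k}^{\,j}$, uniformly in $n$. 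Since $g_n\to g$ a.e.\ along a subsequence and $\phi^{(j)}$ is continuous, $\phi^{(j)}(g_n)\to \phi^{(j)}(g)$ a.e.\ with uniform $L^\infty$ bound; combined with $H^k$-convergence of $g_n$ (hence $L^2$-convergence of $\prod_i D^{\theta_i}g_n$ to $\prod_i D^{\theta_i}g$, via Lemma~\ref{lem:holder} applied to differences), dominated convergence allows one to pass to the limit in the identity tested against any $C^\infty$ test function. This identifies the distributional derivative $D^\theta\phi(g)$ with an $L^2$ function obeying~\eqref{eq:phi(g)1}, and hence $\phi(g)\in H^k(\T^m)$.

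A few minor care points in the execution: in the $L^2$ convergence $\prod_i D^{\theta_i}g_n\to\prod_i D^{\theta_i}g$, one uses Lemma~\ref{lem:holder} to dominate each factor, telescoping the difference so that one factor is $D^{\theta_i}(g_n-g)\to 0$ in $L^2$ (or even in $L^{q_i}$) and the remaining factors stay bounded in the appropriate $L^{q_{i'}}$ norms uniformly in $n$; for the $L^\infty$-convergence of $\phi^{(j)}(g_n)$ one only needs boundedness plus a.e.\ convergence to pass the limit against test functions. None of this is difficult; the combinatorial chain-rule identity and Lemma~\ref{lem:holder} are the substantive ingredients, and the proof is essentially a bookkeeping exercise once they are in place.
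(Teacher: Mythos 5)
Your proposal is correct and takes essentially the same route as the paper's proof: the Faà di Bruno expansion of $D^\theta\phi(g)$ as a sum of terms $\phi^{(j)}(g)\prod_i D^{\theta_i}g$, each estimated in $L^2$ via Lemma~\ref{lem:holder}, with the trivial $L^\infty$ bound for the $\theta=0$ term. The only difference is that you supply a mollification argument to rigorously identify the weak derivative of $\phi(g)$, a step the paper treats tacitly ("it is easy to prove by induction"); this is a sound and welcome addition but not a different approach.
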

\begin{proof}

It suffices to bound $\|D^\theta \phi(g)\|_{L^2(\T^m)}$, for $\theta\in \N^m, |\theta|\leq k$. 

When $|\theta|=0$, the result holds since $\|\phi(g)\|_{L^2(\T^m)}\leq \|\phi(g)\|_{L^\infty(\T^m)}$.

When $|\theta|\geq 1$, it is easy to prove by induction the following claim: $D^\theta \phi(g)$ is a linear combination of functions of the form
$$\phi^{(j)}(g)\Pi_{i=1}^j D^{\theta_i}g$$
where $1\leq j\leq|\theta|,\theta_i\in \mathbb{N}^m$, $\sum_{i=1}^j |\theta_i|=|\theta|$. Moreover, one has $$\|\phi^{(j)}(g)\Pi_{i=1}^jD^{\theta_i}g\|_{L^2(\T^m)}\leq\|\phi^{(j)}(g)\|_{L^\infty(\T^m)}\|\Pi_{i=1}^jD^{\theta_i}g\|_{L^2(\T^m)}.$$
By applying Lemma~\ref{lem:holder} with $f_i=g$ for $i=1,2,...,j$, one gets $$ \|\Pi_{i=1}^j D^{\theta_i}g\|_{L^2(\T^m)}\leq C\|g\|_{H^k(\T^m)}^{j}.$$
This yields~\eqref{eq:phi(g)1}. The inequality~\eqref{eq:phi(g)} then follows since $$\|\phi^{(j)}(g)\|_{L^\infty(\T^m)}\leq \|\phi^{(j)}\|_{L^\infty ([\inf g,\sup g])}.$$ 

\end{proof}

\begin{prop}\label{prop:estimate_h}
Suppose that $m,k\in \N^+, 2k>m$.
Let $\phi\in C^k((-1,\infty))$ be such that $\phi(0)=\phi'(0)=0$. Then, there exists $\delta>0$, such that when $g\in H^k(\T^m)$ and $\|g\|_{H^k(\T^m)}\leq \delta$, $\phi(g)\in H^k(\T^m)$. Moreover, $\|\phi(g)\|_{H^k(\T^m)}=o(\|g\|_{H^k(\T^m)})$, i.e.
\begin{equation}\label{eq:o(g)}
\lim_{\|g\|_{H^k(\T^m)}\rightarrow 0} \frac{\|\phi(g)\|_{H^k(\T^m)}}{\|g\|_{H^k(\T^m)}}=0.    
\end{equation}

\end{prop}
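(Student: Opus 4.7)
The plan is to derive this from Proposition~\ref{prop:phi(g)} by carefully tracking how each term in the estimate depends on $\|g\|_{H^k(\T^m)}$ as this norm goes to zero, using the two vanishing conditions $\phi(0)=0$ and $\phi'(0)=0$.

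First I would use the Sobolev embedding~\eqref{eq:embedding_infty} from Lemma~\ref{lem:embedding} to pick $\delta>0$ small enough that $C_\infty \delta < \tfrac12$. Then, whenever $\|g\|_{H^k}\le \delta$, one has $\|g\|_\infty \le C_\infty \delta < \tfrac12$, so in particular $\inf g > -1$ and $[\inf g,\sup g]\subset (-1,\infty)$. This puts us in the setting of Proposition~\ref{prop:phi(g)} (with $a=-1$), which together with $\phi\in C^k((-1,\infty))$ gives $\phi(g)\in H^k(\T^m)$ and the estimate
\begin{equation}\label{eq:plan_phig}
\|\phi(g)\|_{H^k(\T^m)}\leq C\sum_{j=0}^k\|\phi^{(j)}\|_{L^\infty([\inf g,\sup g])}\,\|g\|_{H^k(\T^m)}^{\,j}\,.
\end{equation}

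Next I would show that each of the $k+1$ terms on the right-hand side of~\eqref{eq:plan_phig} is $o(\|g\|_{H^k})$ as $\|g\|_{H^k}\to 0$. For $j\ge 2$, the factor $\|\phi^{(j)}\|_{L^\infty([\inf g,\sup g])}$ stays bounded (by continuity of $\phi^{(j)}$ at $0$ and the fact that $[\inf g,\sup g]\subset [-1/2,1/2]$), while $\|g\|_{H^k}^j = \|g\|_{H^k}\cdot \|g\|_{H^k}^{j-1} = o(\|g\|_{H^k})$. For $j=1$, continuity of $\phi'$ at $0$ combined with $\phi'(0)=0$ gives $\|\phi'\|_{L^\infty([\inf g,\sup g])}\to 0$ as $\|g\|_\infty\to 0$, hence as $\|g\|_{H^k}\to 0$; multiplying by $\|g\|_{H^k}$ yields $o(\|g\|_{H^k})$. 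For $j=0$, writing $\phi(x)=\int_0^x \phi'(t)\,\dd t$ gives $|\phi(x)|\le |x|\sup_{|t|\le |x|}|\phi'(t)|$, so $\|\phi\|_{L^\infty([\inf g,\sup g])}\le \|g\|_\infty\sup_{|t|\le \|g\|_\infty}|\phi'(t)|\le C_\infty \|g\|_{H^k}\sup_{|t|\le \|g\|_\infty}|\phi'(t)|$, which is again $o(\|g\|_{H^k})$ by the same argument as for $j=1$.

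Summing the $k+1$ contributions in~\eqref{eq:plan_phig} yields~\eqref{eq:o(g)}. There is no serious obstacle here: the whole content of the statement is captured by Proposition~\ref{prop:phi(g)}, and the two assumptions $\phi(0)=\phi'(0)=0$ are exactly what is needed to force the two leading terms ($j=0$ and $j=1$) to be small relative to $\|g\|_{H^k}$. The only mildly delicate point is to keep $[\inf g,\sup g]$ away from the singularity at $-1$, which is handled by shrinking $\delta$ via the Sobolev embedding.
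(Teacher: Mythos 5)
Your proof is correct and follows essentially the same route as the paper's: shrink $\delta$ via the Sobolev embedding so that $[\inf g,\sup g]\subset(-1/2,1/2)$, invoke Proposition~\ref{prop:phi(g)}, and show term-by-term that each summand in the resulting estimate is $o(\|g\|_{H^k})$ using $\phi(0)=\phi'(0)=0$ for $j=0,1$ and boundedness for $j\ge 2$. The only minor difference is in the $j=0$ term, where you use $\phi(x)=\int_0^x\phi'(t)\,\dd t$ together with $\phi'(0)=0$ (reducing it to the same continuity argument as $j=1$), while the paper uses a second-order Taylor bound involving $\phi''$; both are valid and yield the same conclusion.
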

\begin{proof}
By~\eqref{eq:embedding_infty} of Lemma~\ref{lem:embedding}, there exists $\delta>0$ such that when $\|g\|_{H^k}\leq \delta$ then $\|g\|_\infty\leq\frac{1}{2}$. By assumption, for $j=0,1,...,k,$ $\phi^{(j)}$ is continuous in $[-\frac{1}{2},\frac{1}{2}]$, thus also bounded in $[-\frac{1}{2},\frac{1}{2}]$. By
applying Proposition~\ref{prop:phi(g)} to $\phi(g)$ (with $a=-1$), one thus gets $\phi(g)\in H^k(\T^m).$ 

Let us now prove~\eqref{eq:o(g)}. Let us assume that $\|g\|_{H^k(\T^m)}\leq\delta$, so that $\|g\|_{L^\infty(\T^m)}\leq \frac{1}{2}.$ We will
estimate $\|\phi(g)\|_{H^k(\T^m)}$ using~\eqref{eq:phi(g)1}. Notice that, $[\inf g,\sup g]\subseteq [-\frac{1}{2},\frac{1}{2}],$ and by Lemma~\ref{lem:embedding}, $\lim_{\|g\|_{H^k(\T^m)}\rightarrow 0} \|g\|_{L^\infty(\T^m)} \rightarrow 0$. 

For the term $j=0$ in~\eqref{eq:phi(g)1}, since  $\phi(0)=\phi'(0)=0$, one has $\|\phi(g)\|_{L^\infty(\T^m)}\leq \frac{1}{2}\|\phi''\|_{L^\infty([-\frac{1}{2},\frac{1}{2}]) }\|g\|^2_{L^\infty(\T^m)} =o(\|g\|_{H^k(\T^m)})$.

For the term $j=1$ in~\eqref{eq:phi(g)1}, it holds
$$\lim_{\|g\|_{H^k(\T^m)}\rightarrow 0}\|\phi'(g)\|_{L^\infty(\T^m)}=\lim_{\|g\|_{L^\infty(\T^m)}\rightarrow 0}\|\phi'(g)\|_{L^\infty(\T^m)}=|\phi'(0)|=0.$$

Finally, for the terms $j\geq 2$ in~\eqref{eq:phi(g)1}, we notice that $$\lim_{\|g\|_{H^k(\T^m)}\rightarrow 0}\|\phi^{(j)}(g)\|_{L^\infty(\T^m)}=\lim_{\|g\|_{L^\infty(\T^m)}\rightarrow 0}\|\phi^{(j)}(g)\|_{L^\infty(\T^m)}=|\phi^{(j)}(0)|.$$

This concludes the proof.
\end{proof}

\section{Gaussian kernel on the torus $\T^m$}\label{sec:ap}
This appendix is devoted to the definition and properties the Gaussian kernel on the torus~$\T^m$, which is extensively used in this work.

\subsection{The Gaussian kernel, lower and upper bounds}

\begin{defi}
Let $\phi(x)=\frac{1}{\sqrt{2\pi}} \exp\left(-\frac{x^2}{2}\right)$ be the density of a standard Gaussian random variable. For any $\epsilon>0$, we define the Gaussian kernel  $K^1_\epsilon$ on $\T$  by: for all $x \in \T$,

\begin{equation}\label{eq:sum_gaussian_kernel}
K^1_\epsilon(x)=\frac{1}{\sqrt \epsilon}\sum_{n\in \mathbb{Z}} \phi\left(\frac{x+n}{\sqrt\epsilon}\right).    
\end{equation}

More generally, for $\epsilon>0$ and $m\geqslant 1$, we  define the Gaussian kernel  $K^m_\epsilon$ on $\T^m$  by: for all $x=(x_1,x_2,...,x_m)\in \T^m$:

\begin{equation}\label{eq:product_gaussian_kernel}
K^m_\epsilon(x)=\Pi_{i=1}^m K^1_\epsilon(x_i).    
\end{equation}

\end{defi}

Notice that by definition, $K^1_\epsilon$ is indeed a periodic function (with period $1$). As a periodic function, one can introduce its Fourier coefficients: for $k \in \Z$,
$$\hat{K^1_\epsilon}(k)=\int_\T \exp(-2i\pi kx) K^1_\epsilon(x) dx= \int_\R \frac{1}{\sqrt \epsilon} \phi\left( \frac{x}{\sqrt \epsilon}\right) \exp(-2i \pi k x) \, dx = \exp(-2\pi^2 k^2 \epsilon),$$
and $K^1_\epsilon(x) = \sum_{k \in \Z} \hat{K^1_\epsilon}(k) \exp (2i \pi kx) $.

The function $\epsilon \mapsto K^1_\epsilon$ can be identified as the heat kernel on $[-\frac{1}{2},\frac{1}{2})$ with periodic boundary conditions ($\epsilon \ge 0$ being the time). Besides, a probabilistic interpretation can be obtained using the wrapping function $\pi$ defined in~\eqref{eq:wrapping}. Indeed, $K^1_\epsilon|_I$ is the density over $I$ of $\pi(\sqrt{\epsilon} G)$ where $G$ is a standard Gaussian random variable. In particular $\int_I K^1_\epsilon=\int_\T K^1_\epsilon = 1$.

Let us now provide lower and upper bounds for $K^m_\epsilon$.

\begin{lem}\label{lem:estim_K}
There exists a constant $C>1$ such that, for all $\epsilon \in (0,1]$, for all $x \in  I$,
\begin{equation}\label{eq:estim_K}
  \frac{1}{\sqrt \epsilon}\phi\left(\frac{x}{\sqrt \epsilon}\right)\le K^1_\epsilon(x) \le \frac{C}{\sqrt \epsilon} \phi\left(\frac{x}{\sqrt \epsilon}\right).
\end{equation}    
As a consequence, for all $x\in I^m$, 
\begin{equation}\label{eq:Linf_K}
  \left(\frac{1}{\sqrt {2\pi\epsilon}}\right)^m\exp\left(-\frac{m}{8\epsilon}\right)\le K^m_\epsilon(x) \le \left(\frac{C}{\sqrt {2\pi\epsilon}}\right)^m.    
\end{equation}

\end{lem}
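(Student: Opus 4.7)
The plan is to first establish~\eqref{eq:estim_K} in dimension one, and then to deduce~\eqref{eq:Linf_K} directly by taking the tensor product and using elementary bounds on $\phi$ and on $|x|$ for $x\in I^m$.

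For the lower bound in~\eqref{eq:estim_K}, the observation is simply that $K^1_\epsilon(x)$ is defined as a sum of non-negative terms, so keeping only the $n=0$ term already yields $K^1_\epsilon(x)\geq \frac{1}{\sqrt\epsilon}\phi(x/\sqrt\epsilon)$.

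The upper bound is the main computational step. Rewriting
\[
K^1_\epsilon(x)=\frac{1}{\sqrt\epsilon}\phi\!\left(\tfrac{x}{\sqrt\epsilon}\right)\left(1+\sum_{n\neq 0}\exp\!\left(-\tfrac{n^2+2xn}{2\epsilon}\right)\right),
\]
it suffices to bound the sum on the right-hand side by a constant independent of $\epsilon\in(0,1]$ and $x\in I$. For $n\geq 1$ and $x\in[-1/2,1/2)$, one has $n^2+2xn\geq n^2-n=n(n-1)$, and similarly for $n\leq -1$ one has $n^2+2xn\geq n(n+1)$. For $|n|\geq 2$, this yields $n^2+2xn\geq \tfrac12 n^2\geq 2$, so that, using $\epsilon\leq 1$,
\[
\exp\!\left(-\tfrac{n^2+2xn}{2\epsilon}\right)\leq \exp\!\left(-\tfrac{n^2}{4\epsilon}\right)\leq \exp\!\left(-\tfrac{n^2}{4}\right),
\]
which is summable. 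For $|n|=1$, we simply use the crude bound $\exp(-(n^2+2xn)/(2\epsilon))\leq 1$, coming from $n(n\mp 1)\geq 0$. Summing everything gives
\[
\sum_{n\neq 0}\exp\!\left(-\tfrac{n^2+2xn}{2\epsilon}\right)\leq 2+2\sum_{n\geq 2}\exp\!\left(-\tfrac{n^2}{4}\right)=:C-1,
\]
and the constant $C>1$ is manifestly independent of $\epsilon\in(0,1]$ and $x\in I$.

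Finally, \eqref{eq:Linf_K} follows from the product structure $K^m_\epsilon(x)=\prod_{i=1}^m K^1_\epsilon(x_i)$ together with \eqref{eq:estim_K}. For the upper bound, one uses $\phi\leq (2\pi)^{-1/2}$ to kill the exponential factor and obtain $K^m_\epsilon(x)\leq \left(\tfrac{C}{\sqrt{2\pi\epsilon}}\right)^m$. For the lower bound, one keeps the exponentials and notes that for $x=(x_1,\ldots,x_m)\in I^m$ one has $\sum_{i=1}^m x_i^2\leq m/4$, so that
\[
\prod_{i=1}^m \frac{1}{\sqrt\epsilon}\phi\!\left(\tfrac{x_i}{\sqrt\epsilon}\right)=\frac{1}{(2\pi\epsilon)^{m/2}}\exp\!\left(-\tfrac{1}{2\epsilon}\sum_{i=1}^m x_i^2\right)\geq \left(\tfrac{1}{\sqrt{2\pi\epsilon}}\right)^m \exp\!\left(-\tfrac{m}{8\epsilon}\right).
\]
The only non-routine part of the argument is the uniform-in-$\epsilon$ control of the tail sum in the upper bound on $K^1_\epsilon$, which is precisely why the assumption $\epsilon\leq 1$ is used to replace $\exp(-n^2/(4\epsilon))$ by the $\epsilon$-independent $\exp(-n^2/4)$.
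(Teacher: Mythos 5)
Your proof is correct and takes essentially the same approach as the paper: the lower bound comes from the $n=0$ term, the upper bound from a uniform-in-$\epsilon$ control of the tail sum, and the tensor-product deduction of~\eqref{eq:Linf_K} is identical. The only cosmetic difference is in bounding the tail: you split off $|n|=1$ and use $\exp(-n^2/(4\epsilon))\le \exp(-n^2/4)$ for $|n|\geq 2$, while the paper pairs $\pm n$ into a $\cosh$, uses the index shift $-n^2+n\le -(n-1)^2$ and a Gaussian-integral comparison — giving a slightly sharper (but unneeded) bound $C-1 = O(1+\sqrt{\epsilon})$; both are valid for $\epsilon\in(0,1]$.
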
\begin{proof}
The lower bound is obvious by simply considering the term $n=0$ in the sum defining~$K^1_\epsilon$. Notice that the upper bound is equivalent to: for all $x \in I$,
$$\sum_{n \in \Z} \exp\left(-\frac{nx}{ \epsilon} \right) \exp\left(-\frac{n^2}{ 2 \epsilon} \right) \le C$$
which is equivalent to
$$ \sum_{n \ge 1} \exp\left(-\frac{n^2}{ 2\epsilon}\right) \cosh\left(\frac{nx}{ \epsilon} \right) \le \frac{C-1}{2}.$$
Since $|x|\le 1/2$, one has (since $-n^2+n \le -(n-1)^2$ for all $n \ge 1$)
\begin{align*}
    \sum_{n \ge 1} \exp\left(-\frac{n^2}{ 2\epsilon}\right) \cosh\left(\frac{nx}{ \epsilon} \right)
&    \le
    \sum_{n \ge 1} \exp\left(-\frac{n^2}{ 2 \epsilon}\right) \cosh\left(\frac{n}{ 2 \epsilon} \right)\\
    & \le
    \sum_{n \ge 1} \exp\left(-\frac{n^2}{ 2 \epsilon}\right) \left( 1 + \exp\left(\frac{n}{ 2 \epsilon} \right)\right) \\
    & \le
    2 \sum_{n \ge 0} \exp\left(-\frac{n^2}{ 2 \epsilon}\right)\\
    & \le 2 \left( 1 + \int_0^\infty \exp\left(-\frac{x^2}{2 \epsilon}\right) \, dx\right) = 2 \left( 1 + \sqrt{\frac{\epsilon \pi}{2} } \right)
\end{align*}
which proves the upper bound in~\eqref{eq:estim_K} since $\epsilon \in (0,1]$, and thus yields the result.

Finally, the inequalities~\eqref{eq:Linf_K} are consequences of the fact that, for all $x \in I$, $\phi\left(\frac{1}{2\sqrt{\epsilon}}\right)\leq\phi\left(\frac{x}{\sqrt{\epsilon}}\right)\leq\phi(0)$.
\end{proof}
Notice that the previous lemma is useful to provide lower and upper bounds of probability densities convoluted with $K^m_\epsilon$ since, for any probability density $\rho$ on $\T^d$, one has (since $\rho \ge0$ and $\int_{\T^d} \rho =1$):
$$\inf_{\T^d} K^m_\epsilon \le K^m_\epsilon\star \rho\le \sup_{\T^d} K^m_\epsilon.$$
Besides, it is easy to check (since $K^m_\epsilon \ge0$ and $\int_{\T^d} K^m_\epsilon =1$) that, for any function $\rho:\T^d \to \R$
$$\inf_{\T^d} \rho \le K^m_\epsilon\star \rho\le \sup_{\T^d} \rho.$$

\begin{lem}\label{lem:e_var}
For all $\epsilon,\epsilon'$, $K^m_\epsilon\star K^m_{\epsilon'}=K^m_{\epsilon+\epsilon'}$. Moreover, there exists a constant $C>0$, such that for all $\epsilon\in (0,1]$,  $\int_{I^m} |x| K^m_\epsilon(x)dx\leq C m\sqrt{\epsilon}$ and
$\int_{I^m} |x|^2K^m_\epsilon(x)dx\leq C m\epsilon$. \end{lem}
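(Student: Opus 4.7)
}

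The plan is to prove both statements by first reducing to the one-dimensional case via the tensor-product structure $K^m_\epsilon(x) = \prod_{i=1}^m K^1_\epsilon(x_i)$, and then applying the Gaussian comparison from Lemma~\ref{lem:estim_K} together with a rescaling to reduce to moments of the standard Gaussian on $\mathbb{R}$.

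For the semigroup identity $K^m_\epsilon \star K^m_{\epsilon'} = K^m_{\epsilon+\epsilon'}$, I would write out the convolution integral and apply Fubini: since both $K^m_\epsilon$ and $K^m_{\epsilon'}$ factorize over coordinates, we obtain
\[
(K^m_\epsilon \star K^m_{\epsilon'})(x)
= \prod_{i=1}^m (K^1_\epsilon \star K^1_{\epsilon'})(x_i),
\]
which reduces the problem to $m=1$. For the one-dimensional identity I would use the Fourier coefficients already computed in the excerpt, namely $\widehat{K^1_\epsilon}(k) = e^{-2\pi^2 k^2 \epsilon}$. Since convolution on $\T$ corresponds to multiplication of Fourier coefficients, one gets $\widehat{K^1_\epsilon \star K^1_{\epsilon'}}(k) = e^{-2\pi^2 k^2(\epsilon+\epsilon')} = \widehat{K^1_{\epsilon+\epsilon'}}(k)$ for every $k \in \mathbb{Z}$, and the identity follows by Fourier inversion (both sides are continuous functions with absolutely convergent Fourier series).

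For the moment bounds, I would invoke the upper estimate $K^1_\epsilon(x) \leq C \epsilon^{-1/2} \phi(x/\sqrt{\epsilon})$ on $I$ from Lemma~\ref{lem:estim_K}. For $j \in \{1,2\}$, the change of variable $y = x/\sqrt{\epsilon}$ yields
\[
\int_I |x|^j K^1_\epsilon(x)\,\dd x
\;\leq\; C\,\epsilon^{j/2} \int_{I/\sqrt{\epsilon}} |y|^j \phi(y)\,\dd y
\;\leq\; C\,\epsilon^{j/2} \int_{\R} |y|^j \phi(y)\,\dd y
\;=\; C_j\,\epsilon^{j/2},
\]
using the finiteness of the standard Gaussian moments. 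For the second moment on $I^m$, I expand $|x|^2 = \sum_{i=1}^m x_i^2$, use the factorization of $K^m_\epsilon$ together with $\int_I K^1_\epsilon = 1$, and obtain $\int_{I^m} |x|^2 K^m_\epsilon(x)\,\dd x = m \int_I x^2 K^1_\epsilon(x)\,\dd x \leq C_2\, m\,\epsilon$. For the first moment, the key step is the bound $|x| \leq \sum_{i=1}^m |x_i|$ (rather than Cauchy--Schwarz, which would only give a $\sqrt{m}$ dependence), so that
\[
\int_{I^m} |x| K^m_\epsilon(x)\,\dd x
\;\leq\; \sum_{i=1}^m \int_{I^m} |x_i| K^m_\epsilon(x)\,\dd x
\;=\; m \int_I |x| K^1_\epsilon(x)\,\dd x
\;\leq\; C_1\,m\,\sqrt{\epsilon}.
\]

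There is no substantive obstacle: the argument is a routine reduction to Gaussian moments on the real line. The only small subtlety worth pointing out explicitly is the use of $|x| \leq \sum_i |x_i|$ to obtain the linear (rather than $\sqrt{m}$) dependence on $m$ in the first-moment estimate, which is the form stated in the lemma.
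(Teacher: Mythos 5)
Your treatment of the moment bounds is the same as the paper's: reduce to $m=1$ by the tensor-product structure, using $|x|\leq\sum_{i=1}^m|x_i|$ for the first moment (the paper uses exactly this, for exactly the reason you flag) and $|x|^2=\sum_{i=1}^m x_i^2$ for the second, then invoke the upper Gaussian bound of Lemma~\ref{lem:estim_K} and a rescaling to reduce to standard Gaussian moments on $\R$. Where you genuinely diverge is in the semigroup identity $K^m_\epsilon\star K^m_{\epsilon'}=K^m_{\epsilon+\epsilon'}$. The paper remarks in passing that this can be seen as the heat-semigroup property on the torus, but then writes out a probabilistic argument instead: using the wrapping function $\pi$ and independent Gaussians $G,G'$ of variances $\epsilon,\epsilon'$, one has $\pi(\pi(G)+\pi(G'))=\pi(G+G')$, and matching densities gives the identity. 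You instead argue in Fourier, using $\widehat{K^1_\epsilon}(k)=e^{-2\pi^2 k^2\epsilon}$ (computed in the text just after the definition) and the convolution-to-product rule~\eqref{eq:convol_fourier}; this is in effect a rigorous version of the heat-semigroup remark the paper makes without elaborating. Both routes are correct and comparably short. The Fourier argument has the slight advantage of reusing facts already recorded in the appendix, while the probabilistic argument is self-contained at the level of random variables and also doubles as a proof that $\pi$ intertwines Gaussian convolution on $\R$ with convolution on $\T$, which is conceptually useful elsewhere in the paper's discussion of the wrapped kernel.
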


\begin{proof}
Let us first consider the case  $m=1$.
The first property can be seen as a consequence of a semigroup property, since $K^1_\epsilon$ is the heat kernel on the torus. This can also be checked directly using the wrapping function $\pi$, as follows. Let $G$ (resp. $G'$) be a Gaussian random variable of variance $\epsilon$ (resp. $\epsilon'$). Moreover, $\pi(G)$ (respectively $\pi(G')$) has density $K^1_\epsilon$ (respectively $K^1_{\epsilon'}$) over $\left[-\frac 1 2, \frac 1 2\right)$. By the probabilistic interpretation of the convolution mentioned above, $\pi(\pi(G)+\pi(G'))$ then has a density $K^1_\epsilon \star K^1_{\epsilon'}$. On the other hand, $\pi(\pi(G)+\pi(G'))=\pi(G+G')$ and $G+G'$ is a Gaussian random variable with variance $\epsilon + \epsilon'$. Thus $\pi(\pi(G)+\pi(G'))$ has a density $K^1_{\epsilon + \epsilon'}$ over $\left[-\frac 1 2, \frac 1 2\right)$ and this concludes the proof.


For the inequalities, using Lemma~\ref{lem:estim_K}, one has (by a change of variable):
$$\int_I |x| K^1_\epsilon(x)dx\leq \frac{C}{\sqrt \epsilon} \int_I |x| \phi \left(\frac{x}{\sqrt{\epsilon}}\right) \, dx \leq  C'\sqrt{\epsilon}$$
with $C' = C\int_\R |x| \phi(x) \, dx$, and
$$\int_I x^2K^1_\epsilon(x)dx\leq \frac{C}{\sqrt \epsilon} \int_I x^2\phi \left(\frac{x}{\sqrt{\epsilon}}\right) \, dx \leq  C''\epsilon,$$
with $C'' = C\int_\R |x|^2 \phi(x) \, dx$. This concludes the proof for the case when $m=1$.

The results for $m>1$ are easily deduced from the definition of $K^m_\epsilon$, and the fact that $|x|\leq \sum_{i=1}^m |x_i|, |x|^2= \sum_{i=1}^m x_i^2.$
\end{proof}

\subsection{Regularizing using convolution with the Gaussian kernel $K^m_\epsilon$}

Let us define the convolution $\star$ on the torus as follows: for any two functions $f:\T \to \R$ and $g:\T \to \R$, for all $x \in \T$
$$f\star g(x) = \int_\T f(x-y) g(y) \, dy.$$

The probabilistic interpretation of this convolution is the following: if $f:\T\to \R$ and $g:\T \to \R$ are such that $f|_I$ and $g|_I$ are probability densities over $I$, and if $X \in I$ and $Y \in I$ are independent random variables with respective densities $f|_I$ and $g|_I$, then $\pi(X+Y)$ has density $f \star g$. For the sake of completeness, let us provide a proof of this.  First, $X+Y$ (seen as a random variable over $\R$) has density: for all $z \in \R$, $f_{X+Y}(z)=\int_\R f(z-y) 1_{z-y \in I} g(y) 1_{y \in I} dy$. Notice that $X+Y$ is with values in $[-1,1)$. Then $\pi(X+Y)$ has density: for all $z \in I$,
\begin{align*}
\sum_{n \in \Z} f_{X+Y}(z+n)&=
f_{X+Y}(z-1)+f_{X+Y}(z)+f_{X+Y}(z+1)\\
&=\int_\R \left( f(z-1-y) 1_{z-1-y \in I} + f(z-y) 1_{z-y \in I}  + f(z+1-y) 1_{z+1-y \in I}\right) g(y) 1_{y \in I} dy\\
&=\int_I f(z-y) ( 1_{z-1-y \in I}+ 1_{z-y \in I} + 1_{z+1-y \in I}) g(y) \, dy\\
&=\int_\T f(z-y) g(y) \, dy
\end{align*}
where we used the periodicity of $f$ and the fact that for $y \in I$ and $z \in I$,
$1_{z-1-y \in I}+ 1_{z-y \in I} + 1_{z+1-y \in I}=1$. This shows that $\pi(X+Y)$ has density $f \star g$.

Finally, let us recall that if $f(x)=\sum_{k \in \Z} \hat f(k) \exp (2i \pi kx)$ and $g(x)=\sum_{k \in \Z} \hat g(k) \exp (2i \pi kx)$, then $f \star g(x) = \sum_{k \in \Z} \widehat{f\star g} (k) \exp (2i \pi kx)$ where
\begin{equation}\label{eq:convol_fourier}
\widehat{f\star g} (k)=\hat f (k) \hat g(k).
\end{equation}

Recall that the centered Gaussian kernel on $\R^d$ with variance $\sigma^2 {\rm Id}$ converges when $\sigma \to 0$ to the Dirac measure in some sense (for example, in the sense of distribution). Let us give a lemma to state a similar property for the Gaussian kernel $K^m_\epsilon$ on $\T^m$ (this lemma will be used in 
Lemma~\ref{lem:limit_entropy} below).

\begin{lem}\label{lem:cv_Dirac}
Suppose that $f\in L^1(\T^m),$ then for almost every $x\in \T^m$, $K^m_\epsilon\star f(x)\rightarrow f(x),$ when $\epsilon\rightarrow 0^+.$   
\end{lem}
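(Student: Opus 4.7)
The strategy is to reduce the convolution on the torus to a standard Gaussian convolution on $\R^m$ applied to the periodic extension of $f$, and then invoke the classical approximation-of-the-identity result.

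First, I would unfold the definition of $K^m_\epsilon$ to establish the identity
\[
K^m_\epsilon \star f (x) = \int_{\R^m} G_\epsilon(x-y)\,\tilde f(y)\,\dd y,
\]
where $\tilde f$ is the periodic extension of $f$ to $\R^m$ (in the sense of Definition~\ref{def:extension_f}) and $G_\epsilon(z)=(2\pi\epsilon)^{-m/2}\exp(-|z|^2/(2\epsilon))$ is the centered Gaussian density on $\R^m$ with covariance $\epsilon I_m$. Indeed, by definition,
\[
K^m_\epsilon(x-y)=\sum_{n\in\Z^m}G_\epsilon(x-y+n),
\]
and by the periodicity of $\tilde f$, the sum over $n\in\Z^m$ combined with the integral over $I^m$ unfolds into a single integral over $\R^m$ against $\tilde f$. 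Note that $\tilde f\in L^1_{\mathrm{loc}}(\R^m)$ since $f\in L^1(\T^m)$, so this rewriting is legitimate.

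Next, I would invoke the classical result that the Gaussian family $(G_\epsilon)_{\epsilon>0}$ is an approximation of the identity on $\R^m$: for every $g\in L^1_{\mathrm{loc}}(\R^m)$, one has $G_\epsilon \star g(x)\to g(x)$ for almost every $x\in\R^m$, as a consequence of the Lebesgue differentiation theorem (the standard argument controls the convolution at every Lebesgue point $x$ by splitting the integral into a small ball around $x$, on which the oscillation of $g$ is small, and its complement, on which the tail of $G_\epsilon$ vanishes; the bound~\eqref{eq:estim_K} of Lemma~\ref{lem:estim_K} is not even needed here since we work directly with the Gaussian on $\R^m$). Applying this to $g=\tilde f$ yields $G_\epsilon\star \tilde f(x)\to \tilde f(x)$ for a.e.\ $x\in\R^m$, hence in particular for a.e.\ $x\in I^m$. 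Since $\tilde f=f$ on $I^m$ and $K^m_\epsilon\star f=G_\epsilon\star\tilde f$ on $I^m$, this gives the announced a.e.\ convergence on $\T^m\simeq I^m$.

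The only step requiring some care is the unfolding identity $K^m_\epsilon\star f=G_\epsilon\star\tilde f$ on $I^m$; once this is written out, the proof reduces to a classical result. No genuine obstacle is expected. An alternative, essentially equivalent, route would be to verify directly that $(K^m_\epsilon)_{\epsilon>0}$ is an approximation of the identity on $\T^m$ by checking that $\int_{\T^m} K^m_\epsilon=1$, that $K^m_\epsilon\ge 0$, and that for every $\delta\in(0,1/2)$, $\int_{\T^m\setminus B(0,\delta)}K^m_\epsilon\to 0$ as $\epsilon\to 0^+$ (using the bounds from Lemma~\ref{lem:estim_K}), and then running the usual Lebesgue-point argument directly on $\T^m$.
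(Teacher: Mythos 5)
Your approach is essentially the same as the paper's: rewrite $K^m_\epsilon\star f$ as the Euclidean Gaussian $\Phi_\epsilon$ convolved against the periodic extension $\tilde f$, and then run the Lebesgue-point argument on $\R^m$. The reduction step is fine and matches the paper's equation \eqref{eq:phi_eps_f_tilde}.

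However, the second step leans on a citation that is stronger than what the textbooks actually give. The statement \emph{``for every $g\in L^1_{\mathrm{loc}}(\R^m)$, $G_\epsilon\star g(x)\to g(x)$ for a.e.\ $x$''} is not a standard theorem in that generality: for a general locally integrable $g$ the convolution $G_\epsilon\star g$ need not even be absolutely convergent (take $g(y)=e^{|y|^2}$), and the annulus-decomposition proof of a.e.\ convergence at Lebesgue points requires control of the averaged oscillation
\[
\mathcal{A}(x,r)=\frac{1}{r^m}\int_{|y|\le r}\bigl|g(x-y)-g(x)\bigr|\,\dd y
\]
not only as $r\to 0^+$ (that is the Lebesgue differentiation theorem) but \emph{also as $r\to\infty$}, since the tail of the sum over annuli involves $\mathcal{A}(x,2^{k+1}\sqrt\epsilon)$ with $k$ ranging over all of $\N$. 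The classical result (e.g.\ Stein's) is stated for $g\in L^p(\R^m)$, where such global control is automatic; it is not automatic for $L^1_{\mathrm{loc}}$. This is precisely the point the paper's proof does not skip: after setting up the annuli, it devotes a paragraph to showing $\sup_{r}\mathcal{A}(x,r)<\infty$, and the argument there relies crucially on the \emph{periodicity} of $\tilde f$ (the local $L^1$ mass over a ball of radius $r$ grows only like $r^m$). So your reduction is correct, but the ``classical result'' you invoke as a black box is, in substance, the bulk of what needs to be proved; the periodicity of $\tilde f$ is the missing ingredient and should be used explicitly. Your alternative route (verifying directly that $K^m_\epsilon$ is an approximate identity on the compact torus) avoids this subtlety altogether, since on a compact space $L^1=L^1_{\mathrm{loc}}$ and no control at infinity is needed, and would be a clean way to close the gap.
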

\begin{proof}
This is a standard result, which can be found in~\cite[p. 112]{stein2009real} for $f$ defined on $\R^m$. For the sake of completeness, let us give an adaptation of the proof on $\T^m$.

Let $\tilde{f}$ be the extension of $f$ to $\R^m$. For $x=(x_1,x_2,...,x_m)\in I^m$, 
\begin{align}
K^m_\epsilon\star f(x)=&\int_{I^m} K^m_\epsilon(y)f(x-y)dy\nonumber\\
=&\int_{\R^m} \left(\frac{1}{\sqrt{\epsilon}}\right)^m\prod_{i=1}^m\phi\left(\frac{y_i}{\sqrt{\epsilon}}\right)\tilde{f}(x_1-y_1,x_2-y_2,...,x_m-y_m)dy_1dy_2...dy_m\nonumber\\
=& \Phi_\epsilon\star \tilde{f}(x), \label{eq:phi_eps_f_tilde}
\end{align}
where $\Phi_\epsilon(y)=(\frac{1}{2\pi\epsilon})^{\frac{m}{2}}\exp(-\frac{|y|^2}{2\epsilon})$.

Since $\tilde{f}$ is locally integrable on $\R^m$, by \cite[Corollary 1.6, p. 107]{stein2009real},  almost every point $x\in \R^m$
is a Lebesgue point of $\tilde{f}$, i.e., a point $x$ such that
$$\lim_{r \rightarrow 0^+} \mathcal{A}(x,r) =0$$
where
$$\mathcal{A}(x,r)=\frac{\int_{|y|\leq r}|\tilde{f}(x-y)-\tilde{f}(x)|dy}{r^m}.$$
We will prove that for every Lebesgue point $x \in I^m$ of $\tilde{f}$, $\lim_{\epsilon\rightarrow 0^+}\Phi_\epsilon\star\tilde{f}(x)=\tilde{f}(x)=f(x)$, which, thanks to~\eqref{eq:phi_eps_f_tilde}, yields the result.

Let $x \in I$ be a Lebesgue point of $\tilde{f}$ so that $\lim_{r\rightarrow 0^+}\mathcal{A}(x,r)=0$. The function $r \mapsto \mathcal{A}(x,r)$ is continuous over $(0,+\infty)$, since $y \in \R^m \mapsto |\tilde{f}(x-y)-\tilde{f}(x)|$ is locally in $L^1$. Besides, we claim that the function $r\mapsto\mathcal{A}(x,r)$ is bounded, which is a consequence of the fact that 
\begin{equation}\label{eq:A_bdd}
\limsup_{r\rightarrow \infty} \mathcal{A}(x,r)<\infty.
\end{equation}
Indeed, notice that $$\frac{\int_{|y|\leq r}|\tilde{f}(x-y)|dy}{r^m}\leq \frac{\int_{|y|\leq r+|x|}|\tilde{f}(y)|dy}{r^m}.$$
Let us take $R\in \N$ such that $R\leq r+|x|<R+1$, then by the periodicity of $\tilde{f},$
$$\frac{\int_{|y|\leq r+|x|}|\tilde{f}(y)|dy}{r^m}\leq \frac{\int_{|y|\leq R+1}|\tilde{f}(y)|dy}{r^m}\leq \frac{\int_{[-R-1,R+1]^m}|\tilde{f}(y)|dy}{r^m}=\frac{4^m(R+1)^m}{r^m}\int_{I^m}|\tilde{f}(y)|dy,$$
and in particular $\frac{4^m(R+1)^m}{r^m}\leq \frac{4^m(r+|x|+1)^m}{r^m}$, which converges to $4^m$ when $r\rightarrow \infty.$
This yields~\eqref{eq:A_bdd} since $$\limsup_{r\rightarrow \infty} \mathcal{A}(x,r)\leq |\tilde{f}(x)|m(B_1)+4^m
\int_{I^m}|\tilde{f}(y)|dy$$ 
where $m(B_1)$ is the volume of unit ball in $\R^m$.

Let us now write
the integral 
$$|\Phi_\epsilon \star \tilde f(x) - \tilde f (x)| = \left| \int_{\R^m} (\tilde{f}(x - y) - \tilde{f}(x))\Phi_\epsilon(y) \, dy \right| \le \int_{\R^m} |\tilde{f}(x - y) - \tilde{f}(x)| \Phi_\epsilon(y) \, dy $$
over \( \R^m\) as a sum of integrals over annuli as follows:
\begin{align*}
\int_{\R^m} |\tilde{f}(x - y) - \tilde{f}(x)| \Phi_\epsilon(y) \, dy &= \int_{|y| \leq \sqrt{\epsilon}} |\tilde{f}(x - y) - \tilde{f}(x)| \Phi_\epsilon(y) \, dy 
\\ &\quad + \sum_{k=0}^{\infty} \int_{2^k \sqrt{\epsilon} < |y| \leq 2^{k+1} \sqrt{\epsilon}} |\tilde{f}(x - y) - \tilde{f}(x)| \Phi_\epsilon(y) \, dy.
\end{align*}
We pose $c=(\frac{1}{\sqrt{2\pi}})^m$. The first term is estimated by, 
\begin{align*}
\int_{|y| \leq \sqrt{\epsilon}} |\tilde{f}(x - y) - \tilde{f}(x)| \Phi_\epsilon(y) \, dy&\leq \left(\frac{1}{\sqrt{2\pi\epsilon}}\right)^m\int_{|y| \leq \sqrt{\epsilon}} |\tilde{f}(x - y) - \tilde{f}(x)|  \, dy =c \mathcal{A}(x,\sqrt{\epsilon}).
\end{align*}
And since $\lim_{k\rightarrow\infty} 2^{(k+1)m+k}\exp(-2^{2k-1})=0
$, we take $c'=c\sup_{k\in \N}  2^{(k+1)m+k}\exp(-2^{2k-1})<\infty
,$ and thus
\begin{align*}
\int_{2^k \sqrt{\epsilon} < |y| \leq 2^{k+1} \sqrt{\epsilon}}  |\tilde{f}(x - y) - \tilde{f}(x)| \Phi_\epsilon(y) \, dy 
   &\leq \frac{c}{( \sqrt{\epsilon})^{m}}\exp(-2^{2k-1}) \int_{|y| \leq 2^{k+1} \sqrt{\epsilon}} |\tilde{f}(x - y) - \tilde{f}(x)| \, dy \\
&= c2^{(k+1)m}\exp(-2^{2k-1})\mathcal{A}(x,2^{k+1}\sqrt\epsilon)\\ 
&\leq  c' 2^{-k}\mathcal{A}(x,2^{k+1}\sqrt{\epsilon}).
\end{align*}
Putting these estimates together, we find that
\[
| \tilde{f} \star \Phi_\epsilon(x) - \tilde{f}(x) | \leq c \mathcal{A}(x,\sqrt{\epsilon}) + c' \sum_{k=0}^{\infty} 2^{-k} \mathcal{A}(x,2^{k+1} \sqrt{\epsilon}).
\]

Given \( \delta > 0 \), we first choose \( N \) sufficiently large so that \( \sum_{k \geq N} 2^{-k} < \delta \). Then, since $\lim_{r\rightarrow 0+}\mathcal{A}(x,r)=0$, by making \( \epsilon \) sufficiently small, we have 
\[
\mathcal{A}(x,2^k \sqrt{\epsilon}) < \frac{\delta}{N}, \quad \text{whenever } k = 0, 1, \dots, N - 1.
\]
Hence, recalling that \( \mathcal{A}(x,r) \) is bounded, we find
\[
| \tilde{f} \star \Phi_\epsilon(x) - \tilde{f}(x) | \leq \max(c,c')(\sup_r \mathcal{A}(x,r)+1) \delta
\]
for all sufficiently small \( \epsilon\), and the theorem is proved, since $\delta$ is arbitrary.
\end{proof}

A consequence of the previous result is the following lemma.

\begin{lem}\label{lem:limit_entropy}
Suppose that $\rho$ is a probability density on $\T^d$ such that $\rho\ln\rho\in L^1(\T^d)$, then 
$$\lim_{\delta\rightarrow 0^+}\int_{\T^d}(K^d_\delta\star \rho)\ln(K^d_\delta\star \rho)=\int_{\T^d}\rho\ln\rho.$$    
\end{lem}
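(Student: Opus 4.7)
The plan is to sandwich $\int_{\T^d} (K^d_\delta \star \rho) \ln(K^d_\delta \star \rho)$ between a $\limsup$ bound obtained from Jensen's inequality and a $\liminf$ bound obtained from Fatou's lemma, both equal to $\int_{\T^d} \rho \ln \rho$.

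For the upper bound, I would use that $K^d_\delta$ is a probability density on $\T^d$ (non-negative, with $\int K^d_\delta = 1$), together with the convexity of $\phi(x)=x\ln x$ on $[0,\infty)$ (with the convention $\phi(0)=0$). Jensen's inequality applied pointwise in $x \in \T^d$ gives
\[
\phi\bigl((K^d_\delta \star \rho)(x)\bigr) = \phi\left(\int_{\T^d} K^d_\delta(x-y)\rho(y)\,dy\right) \leq \int_{\T^d} K^d_\delta(x-y)\,\phi(\rho(y))\,dy,
\]
and integrating over $x$ and applying Fubini (the integrand is bounded below by $-1/e$, so standard Tonelli/Fubini applies after adding a constant) yields $\int_{\T^d} \phi(K^d_\delta \star \rho) \leq \int_{\T^d} \phi(\rho) = \int_{\T^d} \rho\ln\rho$. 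Taking $\limsup_{\delta \to 0^+}$ gives one half of the sandwich.

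For the lower bound, I would apply Lemma~\ref{lem:cv_Dirac} to $\rho \in L^1(\T^d)$, which gives $(K^d_\delta \star \rho)(x) \to \rho(x)$ for almost every $x \in \T^d$ as $\delta \to 0^+$. Since $\phi$ is continuous on $[0,\infty)$, it follows that $\phi(K^d_\delta \star \rho) \to \phi(\rho)$ a.e. Because $\phi \geq -1/e$ uniformly, the functions $\phi(K^d_\delta \star \rho) + 1/e$ are non-negative, so Fatou's lemma yields
\[
\int_{\T^d} \phi(\rho) + \frac{1}{e} \leq \liminf_{\delta \to 0^+} \int_{\T^d} \phi(K^d_\delta \star \rho) + \frac{1}{e},
\]
i.e.\ $\int_{\T^d} \rho\ln\rho \leq \liminf_{\delta \to 0^+} \int_{\T^d} (K^d_\delta\star\rho)\ln(K^d_\delta\star\rho)$. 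Combining the two bounds gives the limit, and the conclusion follows.

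I do not anticipate a serious obstacle here: the only slightly delicate point is making sure the a.e.\ pointwise convergence is legitimately available (handled by Lemma~\ref{lem:cv_Dirac}, which only requires $\rho \in L^1$), and that the convention $\phi(0)=0$ is compatible with continuity of $\phi$ at $0$ (it is, since $x\ln x \to 0$ as $x \to 0^+$). The hypothesis $\rho\ln\rho \in L^1(\T^d)$ is used precisely to ensure the right-hand side $\int_{\T^d}\rho\ln\rho$ is finite, so that the Jensen bound above is meaningful and yields a nontrivial $\limsup$ estimate.
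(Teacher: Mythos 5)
Your proof is correct and follows essentially the same approach as the paper's: the lower bound via Fatou's lemma (after invoking Lemma~\ref{lem:cv_Dirac} for a.e.\ convergence, and shifting by the uniform lower bound $-1/e$ of $x\ln x$), and the upper bound via Jensen's inequality applied to the probability kernel $K^d_\delta$. The paper presents the two halves in the opposite order, but the decomposition, the lemmas invoked, and the justification are the same.
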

\begin{proof}
It suffices to prove the limit above for any sequence $\delta_n\rightarrow 0^+$.
On the one hand, since $\rho\in L^1(\T^d)$, by Lemma~\ref{lem:cv_Dirac}, $K^d_{\delta_n}\star\rho\rightarrow \rho$ a.e.
And since $\phi(x)=x\ln x$ is lower bounded, then by Fatou's lemma,
$$\liminf_{n\rightarrow \infty}\int_{\T^d}(K^d_{\delta_n}\star\rho)\ln (K^d_{\delta_n}\star\rho)\geq \int_{\T^d}\rho\ln \rho.$$

On the other hand, by the convexity of $\phi(x)$ and Jensen's inequality,
\begin{align*}
\int_{\T^d}(K^d_\delta\star\rho)\ln (K^d_\delta\star\rho)=& \int_{\T^d}\phi(K^d_\delta\star\rho)\\=&  \int_{\T^d}\phi\left(\int_{\T^d}K^d_\delta(x-y)\rho(y)dy\right)dx\\\leq&\int_{\T^d}\int_{\T^d}K^d_\delta(x-y)\phi(\rho(y))dydx\\=&\int_{\T^d} \phi(\rho(y))dy=\int_{\T^d}\rho\ln\rho,
\end{align*}
where $\rho\ln\rho\in L^1(\T^d)$ justifies the Fubini theorem for the interchange of integrations.
Thus it holds
$$\limsup_{n\rightarrow \infty}\int_{\T^d}(K^d_{\delta_n}\star\rho)\ln (K^d_{\delta_n}\star\rho)\leq \int_{\T^d}\rho\ln \rho.$$
This concludes the proof.
\end{proof}

For regular functions, the convergence of $K^m_\epsilon\star g$ to $g$ can be quantified in terms of $\epsilon$.
\begin{lem}\label{lem:error_convolution}
There exists $C>0$ such that for any $\epsilon\in(0,1]$ and any function $g\in C^1(\T^m)$, $\|g-K^m_\epsilon\star g\|_\infty\leq Cm\|
D^1 g\|_\infty \sqrt{\epsilon}$.
Likewise, there exists $C>0$ such that for any $\epsilon\in(0,1]$ and any  function $g\in C^2(\T^m)$, $\|g-K^m_\epsilon\star g\|_\infty\leq Cm^2\|D^2 g\|_\infty \epsilon$.
\end{lem}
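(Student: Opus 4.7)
The approach is a direct Taylor expansion of $g$ inside the convolution. Since $\int_{I^m} K^m_\epsilon(y)\,dy = 1$, we can write, for every $x \in \T^m$,
\[
g(x) - K^m_\epsilon \star g(x) = \int_{I^m} \bigl(g(x) - g(x-y)\bigr) K^m_\epsilon(y)\,dy,
\]
where $g$ is viewed as a smooth periodic function on $\R^m$. All the work will be estimating the integrand in the two regularity settings and then invoking Lemma~\ref{lem:e_var} to bound the relevant moments of $K^m_\epsilon$.

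For the $C^1$ case, I would telescope coordinate by coordinate to get the pointwise bound $|g(x)-g(x-y)| \leq \|D^1 g\|_\infty \sum_{i=1}^m |y_i|$. Integrating against $K^m_\epsilon$ and using the product structure of the Gaussian kernel,
\[
\int_{I^m} \sum_{i=1}^m |y_i|\, K^m_\epsilon(y)\,dy \;=\; m \int_I |y_1|\, K^1_\epsilon(y_1)\, dy_1 \;\leq\; C m \sqrt{\epsilon},
\]
by Lemma~\ref{lem:e_var} applied with $m=1$. This yields the first estimate $\|g - K^m_\epsilon \star g\|_\infty \leq C m \|D^1 g\|_\infty \sqrt{\epsilon}$.

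For the $C^2$ case, I would perform a second-order Taylor expansion,
\[
g(x-y) \;=\; g(x) - y\cdot \nabla g(x) + \tfrac{1}{2} y^T \nabla^2 g(\xi) y,
\]
for some $\xi$ on the segment between $x$ and $x-y$. Since $K^1_\epsilon$ is even (as $\phi$ is even), $K^m_\epsilon$ is also even and therefore $\int_{I^m} y_i K^m_\epsilon(y)\,dy = 0$ for each $i$, so the first-order term integrates out. For the quadratic remainder, bound $|y^T \nabla^2 g(\xi) y| \leq \|D^2 g\|_\infty (\sum_i |y_i|)^2$. Expanding the square and using the product structure of $K^m_\epsilon$, each off-diagonal term $\int_{I^m}|y_i||y_j|K^m_\epsilon(y)\,dy$ factors as $(\int_I |y_1| K^1_\epsilon)^2 \leq C\epsilon$, and each diagonal term $\int_{I^m} y_i^2 K^m_\epsilon(y)\,dy = \int_I y_1^2 K^1_\epsilon(y_1)\,dy_1 \leq C\epsilon$ by Lemma~\ref{lem:e_var}. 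Summing over the $m^2$ pairs $(i,j)$ gives the moment bound $\int_{I^m} (\sum_i |y_i|)^2 K^m_\epsilon(y)\,dy \leq C m^2 \epsilon$, so
\[
\|g - K^m_\epsilon \star g\|_\infty \;\leq\; \tfrac{1}{2}\|D^2 g\|_\infty \cdot C m^2 \epsilon,
\]
which is the second claim.

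There is no real obstacle here; the only care needed is the bookkeeping of the factors of $m$, which requires using the product structure of $K^m_\epsilon$ (rather than the cruder Euclidean bound $\sum_i |y_i| \leq \sqrt{m}|y|$) to avoid losing a spurious factor of $\sqrt{m}$ when passing from the one-dimensional moment estimates of Lemma~\ref{lem:e_var} to the $m$-dimensional integrand.
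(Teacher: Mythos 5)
Your proof is correct and follows essentially the same route as the paper: expand $g(x)-g(x-y)$ inside the convolution, exploit the evenness of $K^m_\epsilon$ to kill the first-order term in the $C^2$ case, and invoke the moment bounds of Lemma~\ref{lem:e_var}. One small virtue of your version in the $C^1$ case: by telescoping coordinate by coordinate to get $|g(x)-g(x-y)| \le \|D^1 g\|_\infty \sum_i |y_i|$ and then integrating against $K^m_\epsilon$ using its product structure, you obtain the factor $m$ cleanly, whereas the paper writes the intermediate bound $|g(x)-g(x-y)|\le \|D^1 g\|_\infty |y|$, which, with the paper's convention $\|D^1 g\|_\infty = \max_{|\theta|=1}\|D^\theta g\|_\infty$, should really carry an extra $\sqrt{m}$ (a harmless imprecision since the final moment estimate is the same, but your bookkeeping is tighter).
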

\begin{proof}
For the first inequality, notice that \begin{align*}
    |g(x)-K^m_\epsilon\star g(x)|&=\left|\int_{\T^m} K^m_\epsilon(y)(g(x)-g(x-y))dy\right|\\
&=\left|\int_{I^m} K^m_\epsilon(y)(g(x)-g(x-y))dy\right|\\  &  \leq \|D^1 g\|_\infty\int_{I^m} K^m_\epsilon (y)|y|dy
\end{align*} which yields the conclusion using Lemma~\ref{lem:e_var}.

Likewise, for the second inequality, we consider the second order Taylor expansion
$$g(x-y)-g(x)=-\sum_{i=1}^m \frac{\partial g}{\partial x_i}(x)y_i+\sum_{i,j=1}^m R_{i,j}(x,y)y_iy_j, $$
where $R_{i,j}(x,y)=\int_0^1 (1-t)\frac{\partial^2 g}{\partial_i\partial_j}(x - ty) \, dt$, and thus $|R_{i,j}(x,y)|\leq \frac{1}{2}\|D^2 g\|_\infty.$
Thus, using the fact that $K^m_\epsilon$ is even on $I^m=(-\frac{1}{2},\frac{1}{2})^m$, it holds: for all $x \in \T^m$,
\begin{align*}
|g(x)-K^m_\epsilon\star g(x)|&=\left|\int_{\T^m} K^m_\epsilon(y)(g(x-y)-g(x))dy\right|\\&=\left|\int_{I^m} K^m_\epsilon(y)\left(-\sum_{i=1}^m \frac{\partial g}{\partial x_i}(x)y_i+\sum_{i,j=1}^m R_{i,j}(x,y)y_iy_j\right)dy\right|\\
&=\left|\int_{I^m} K^m_\epsilon(y)\sum_{i,j=1}^m R_{i,j}(x,y)y_iy_j dy\right|\\&\leq \frac{m}{2}\|D^2g\|_\infty\int_{I^m} K^m_\epsilon (y)|y|^2dy,
\end{align*}
which yields the conclusion using again Lemma~\ref{lem:e_var}.
\end{proof}

\subsection{Estimates involving convolutions with the Gaussian kernel $K^m_\epsilon$}\label{sec:ap3}

\begin{lem}\label{lem:convolution}
For any $f\in L^2(\T^m)$,
$\|f-K^m_{\epsilon}\star f\|_2\leq \|f\|_2$, $\|K^m_{\epsilon}\star f\|_2\leq \|f\|_2$. Thus, for any $f\in H^k(\T)$, $K^m_{\epsilon}\star f,f-K^m_{\epsilon}\star f\in H^k(\T^m),$ and
$\|f-K^m_{\epsilon}\star f\|_{H^k}\leq \|f\|_{H^k}$, $\|K^m_{\epsilon}\star f\|_{H^k}\leq \|f\|_{H^k}$.
\end{lem}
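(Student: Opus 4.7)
The plan is to prove both statements by Fourier analysis on the torus. First I would extend the identity $\hat{K^1_\epsilon}(k)=\exp(-2\pi^2 k^2\epsilon)$ established just after the definition of $K^1_\epsilon$ to the multi-dimensional kernel: since $K^m_\epsilon(x)=\prod_{i=1}^m K^1_\epsilon(x_i)$, one gets by a direct computation
\[\hat{K^m_\epsilon}(k)=\exp(-2\pi^2|k|^2\epsilon)\in[0,1]\qquad \text{for all }k\in\Z^m.\]
Combined with the Fourier convolution formula~\eqref{eq:convol_fourier}, this gives $\widehat{K^m_\epsilon\star f}(k)=\hat{K^m_\epsilon}(k)\hat f(k)$, hence
\[|\widehat{K^m_\epsilon\star f}(k)|\leq|\hat f(k)|\qquad\text{and}\qquad |\widehat{f-K^m_\epsilon\star f}(k)|=|1-\hat{K^m_\epsilon}(k)||\hat f(k)|\leq|\hat f(k)|\,,\]
the second inequality using that $\hat{K^m_\epsilon}(k)\in[0,1]$. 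Parseval's identity then yields the two $L^2$ bounds.

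For the $H^k$ statements, I would use the fact that differentiation commutes with convolution: for any multi-index $\theta$ with $|\theta|\leq k$,
\[D^\theta(K^m_\epsilon\star f)=K^m_\epsilon\star D^\theta f\,,\]
which can be justified either in the sense of weak derivatives using~\eqref{eq:weak_deri} or directly on the Fourier side. Applying the already established $L^2$ bound to $D^\theta f\in L^2(\T^m)$ gives $\|D^\theta(K^m_\epsilon\star f)\|_2\leq\|D^\theta f\|_2$ and $\|D^\theta(f-K^m_\epsilon\star f)\|_2\leq\|D^\theta f\|_2$. Summing squares over $|\theta|\leq k$ and taking square roots yields $\|K^m_\epsilon\star f\|_{H^k}\leq\|f\|_{H^k}$ and $\|f-K^m_\epsilon\star f\|_{H^k}\leq\|f\|_{H^k}$, as required.

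There is no real obstacle here: the result is a standard Fourier-multiplier estimate, and the only point to be careful about is to verify that the Fourier symbol of the kernel satisfies $0\leq\hat{K^m_\epsilon}(k)\leq 1$, which is immediate from the explicit Gaussian form of $\hat{K^m_\epsilon}$. Commutation of $D^\theta$ with $K^m_\epsilon\star$ on $H^k(\T^m)$ is routine since $K^m_\epsilon\in C^\infty(\T^m)$.
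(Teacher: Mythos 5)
Your proof is correct and follows essentially the same route as the paper: Fourier coefficients of $K^m_\epsilon$ lie in $[0,1]$, the convolution theorem~\eqref{eq:convol_fourier}, and Parseval give the $L^2$ bounds, and the $H^k$ bounds follow by applying the same estimate to each derivative $D^\theta f$. The paper states this more tersely (it proves one of the four bounds and says the others follow similarly), while you spell out the commutation $D^\theta(K^m_\epsilon\star f)=K^m_\epsilon\star D^\theta f$, but the underlying argument is identical.
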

\begin{proof}
This is a consequence of the Parseval equality, the equality~\eqref{eq:convol_fourier} and the fact that the Fourier coefficients $\widehat{K^m_\epsilon}(k)=\exp(-2\pi^2\epsilon\sum_{i=1}^m k_i^2)$ of $K^m_\epsilon$ are in $[0,1]$. Indeed, for example, for $f:\T^m \to \R$ an $L^2$ function,
\begin{align*}
    \|f-K^m_{\epsilon}\star f\|_2^2&=\sum_{k \in \Z^m} |\widehat{(f-K^m_{\epsilon}\star f)}(k) |^2\\
&=    \sum_{k \in \Z^m} |1-\exp(-2\pi^2\epsilon\sum_{i=1}^m k_i^2)|^2 |\hat f(k) |^2\\
&\le  \sum_{k \in \Z^m}  |\hat f(k) |^2 =\|f\|_2^2.
\end{align*}
The other results are obtained in a similar way.
\end{proof}

\begin{lem}
    
\label{lem:inverse_Kf_1}
Suppose that $m,k\in \mathbb{N^+}$, $2k>m$,  $f_0\in H^k(\T^m)$, $\inf f_0>0$. Let $c=\frac{1}{2C_\infty}$ where $C_\infty$ has been defined in Lemma~\ref{lem:embedding}.
Then there exists a constant $C>0$, such that for all $\epsilon\in(0,1]$ and $f_1\in H^k(\T^m)$ such that $\|f_0-f_1\|_{H^k}\leq c\inf f_0$, if $f$ is one of the functions  $f_0$, $f_1$, $K^m_\epsilon\star f_0$, or $K^m_\epsilon\star f_1$, then $\frac{1}{f}\in H^k(\T^m)$ and
$$\left\|\frac{1}{f}\right\|_{H^k}\leq \frac{C}{\inf f_0}\sum_{j=0}^k\po \frac{\|f_0\|_{H^k}}{\inf f_0}\pf ^{j}.$$

As a consequence, if $f_0=\exp(-\frac{A}{\alpha+1})$ for some function $A \in C^{k+3}(\T^m)$ then there exists a constant $M>0$ independent of $\alpha,\epsilon$, such that if $f$ is one of the functions  $f_0$, $f_1$, $K^m_\epsilon\star f_0$, or $K^m_\epsilon\star f_1$,
$$\left\|\frac{1}{f}\right\|_{H^k} \leq M.$$  
\end{lem}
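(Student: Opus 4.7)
The plan is to apply Proposition~\ref{prop:phi(g)} to the function $\phi(x)=1/x$ composed with $g=f$, where $f$ is any of the four functions listed. This requires checking three things: (i) $f\in H^k(\T^m)$, (ii) $\inf f>0$ with a quantitative lower bound, and (iii) a quantitative upper bound on $\|f\|_{H^k}$ together with bounds on the derivatives $\phi^{(j)}$ on $[\inf f,\sup f]$.

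For (i), $f_0,f_1\in H^k(\T^m)$ by assumption, and $K_\epsilon^m\star f_0,K_\epsilon^m\star f_1\in H^k(\T^m)$ by Lemma~\ref{lem:convolution}. For (ii), the Sobolev embedding~\eqref{eq:embedding_infty} together with $\|f_0-f_1\|_{H^k}\leq c\inf f_0=\frac{\inf f_0}{2C_\infty}$ gives $\|f_0-f_1\|_\infty\leq \frac{\inf f_0}{2}$, so $\inf f_1\geq\frac{\inf f_0}{2}$. Since $K_\epsilon^m$ is a probability density, one has $\inf(K_\epsilon^m\star f_0)\geq\inf f_0$ and $\inf(K_\epsilon^m\star f_1)\geq\inf f_1\geq\frac{\inf f_0}{2}$. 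Thus in all four cases $\inf f\geq\frac{\inf f_0}{2}$. For (iii), using the same estimate together with Lemma~\ref{lem:convolution}, we get $\|f\|_{H^k}\leq \tfrac{3}{2}\|f_0\|_{H^k}$ in all four cases, and similarly $\sup f\leq \tfrac{3}{2}\|f_0\|_\infty$. Since $\phi^{(j)}(x)=(-1)^j j!/x^{j+1}$, we have $\|\phi^{(j)}\|_{L^\infty([\inf f,\sup f])}\leq j!\,(\inf f_0/2)^{-(j+1)}$.

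Plugging into the estimate~\eqref{eq:phi(g)} from Proposition~\ref{prop:phi(g)} yields
\[
\left\|\tfrac{1}{f}\right\|_{H^k}\leq C'\sum_{j=0}^k \frac{j!}{(\inf f_0/2)^{j+1}}\left(\tfrac{3}{2}\|f_0\|_{H^k}\right)^j\leq \frac{C}{\inf f_0}\sum_{j=0}^k\left(\frac{\|f_0\|_{H^k}}{\inf f_0}\right)^j,
\]
for a constant $C$ depending only on $k,m$ (absorbing the factors $2^{j+1}$, $(3/2)^j$ and $j!$ into $C$). This proves the main estimate.

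For the consequence, observe that $f_0=\exp(-A/(\alpha+1))$ satisfies $\inf f_0\geq \exp(-\max(\sup A,0))$ and $\sup f_0\leq \exp(-\min(\inf A,0))$, both bounds being independent of $\alpha$. To bound $\|f_0\|_{H^k}$ uniformly in $\alpha$, apply Proposition~\ref{prop:phi(g)} again with $\phi(x)=e^x$ and $g=-A/(\alpha+1)$: since $A\in C^{k+3}(\T^m)\subset H^k(\T^m)$, we have $\|{-A/(\alpha+1)}\|_{H^k}\leq \|A\|_{H^k}$ and $\|\phi^{(j)}\|_{L^\infty([\inf g,\sup g])}\leq \exp(\|A\|_\infty)$, so $\|f_0\|_{H^k}$ is bounded by a constant independent of $\alpha$. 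Thus $\|f_0\|_{H^k}/\inf f_0$ is bounded uniformly in $\alpha$, and the main estimate yields the uniform bound $\|1/f\|_{H^k}\leq M$. There is no real obstacle here; the only point requiring a little care is the verification of $\inf(K_\epsilon^m\star f_1)\geq \inf f_0/2$, which proceeds via the elementary fact that convolution with a probability density preserves the lower bound.
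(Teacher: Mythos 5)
Your proof is correct and follows essentially the same route as the paper's: apply Proposition~\ref{prop:phi(g)} to $\phi(x)=1/x$ composed with $f$, after establishing via the Sobolev embedding and Lemma~\ref{lem:convolution} that $\inf f \geq \tfrac{1}{2}\inf f_0$ and $\|f\|_{H^k}\leq \tfrac{3}{2}\|f_0\|_{H^k}$ in all four cases. You are somewhat more explicit than the paper in two places (the paper treats only $f=K^m_\epsilon\star f_1$ and says the others are similar, and it asserts without detail that $\|f_0\|_{H^k}$ is bounded uniformly in $\alpha$, whereas you derive it by a second application of Proposition~\ref{prop:phi(g)} with $\phi=e^x$), but the substance is the same.
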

\begin{proof}
We only prove this result for $f=K^m_\epsilon\star f_1$. The other cases can be proved in the same way.

First of all, by Lemma~\ref{lem:embedding}, since $\|f_0-f_1\|_{H^k}\leq c\inf f_0$ with $c=\frac{1}{2C_\infty}$, $$\inf K^m_\epsilon\star f_1\geq\inf f_1\geq \inf f_0-\|f_0-f_1\|_\infty\geq \inf f_0 -C_\infty\|f_0-f_1\|_{H^k}\geq \frac{1}{2}\inf f_0>0.$$ 
By Lemma~\ref{lem:convolution}, $K^m_\epsilon\star f_1\in H^k(\T^m).$ Notice that the function $\phi(x)=\frac{1}{x}\in C^k((0,\infty))$ and satisfies that, for $j\in \N,$  $\phi^{(j)}(x)=(-1)^j\frac{j!}{x^{j+1}}$ is bounded by a constant times  $(\frac{1}{\inf f_0})^{j+1}$ over $[\frac{1}{2}\inf f_0,\infty)\supseteq [\inf K^m_\epsilon\star f_1,\sup  K^m_\epsilon\star f_1].$ By Proposition~\ref{prop:phi(g)}, $\phi(K^m_\epsilon\star f_1)=\frac{1}{K^m_\epsilon\star f_1}\in H^k(\T^m),$ and
$$\left\|\frac{1}{K^m_\epsilon\star f_1}\right\|_{H^k}\leq \frac{C}{\inf K^m_\epsilon\star f_1}\sum_{j=0}^k\left(\frac{\|K^m_\epsilon\star f_1\|_{H^k}}{\inf K^m_\epsilon\star f_1}\right)^{j}.$$
Then by Lemma~\ref{lem:convolution} and the fact that $\inf K^m_\epsilon\star f_1\geq \inf f_1$, we have that
$$\frac{C}{\inf K^m_\epsilon\star f_1}\sum_{j=0}^k \left(\frac{\|K^m_\epsilon\star f_1\|_{H^k}}{\inf K^m_\epsilon\star f_1}\right)^{j}\leq \frac{C}{\inf  f_1}\sum_{j=0}^k \left(\frac{\| f_1\|_{H^k}}{\inf f_1}\right)^{j}.$$
Finally, we have $\inf f_1\geq \frac{1}{2}\inf f_0$, and $\|f_1\|_{H^k}\leq \|f_0-f_1\|_{H^k}+\|f_0\|_{H^k}\leq c \inf f_0 +\|f_0\|_{H^k}\leq c\|f_0\|_{\infty}+\|f_0\|_{H^k}\leq \frac{3}{2}\|f_0\|_{H^k}$.
Therefore, upon changing the value of $C$, it holds 
$$\left\|\frac{1}{K^m_\epsilon\star f_1}\right\|_{H^k}\leq \frac{C}{\inf f_0}\sum_{j=0}^k\po\frac{\|f_0\|_{H^k}}{\inf f_0}\pf^{j} .$$

If in addition $f_0=\exp(-\frac{A}{\alpha+1}),$ then  $\inf f_0$ is bounded from below by a constant independent of $\alpha$, and $\|f_0\|_{H^k}$ is bounded from above by a constant independent of $\alpha$, as $0\leq \frac{1}{\alpha+1}\leq 1$. This yields the second statement of the lemma.
\end{proof}

\begin{lem}\label{lem:Kf-f}
Suppose $\epsilon\in(0,1]$, $k\in \N^+, 2k>m,$ $f_0=\exp(-\frac{A}{\alpha+1})$, where $A\in C^{k+3}(\T^m)$. There exists a constant $C>0$ independent of $(\alpha,\epsilon)$, such that, for $\theta\in \N^m, |\theta|=0,1$,
$$ \|K^m_\epsilon\star D^\theta f_0-D^\theta f_0\|_{H^k}\leq \frac{C\epsilon}{\alpha+1},$$

and 

$$ \|K^m_\epsilon\star D^\theta \ln f_0-D^\theta \ln f_0\|_{H^k}\leq \frac{C\epsilon}{\alpha+1}.$$
\end{lem}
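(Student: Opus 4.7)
The plan is to reduce both estimates to the pointwise quantitative error bound of Lemma~\ref{lem:error_convolution} (second inequality, linear in $\epsilon$), then track the factor $(\alpha+1)^{-1}$ through the chain rule. The main (mild) technical point will be the Faà di Bruno bookkeeping for $f_0$.

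\textbf{Step 1 (reduce $H^k$ norm to $L^\infty$ norms of derivatives).} For any multi-index $\gamma$ with $|\gamma|\leq k$, convolution commutes with differentiation on $\T^m$, so
\[
D^\gamma\bigl( K^m_\epsilon\star D^\theta g - D^\theta g\bigr) = K^m_\epsilon\star D^{\gamma+\theta}g - D^{\gamma+\theta}g,
\]
where $g$ will be either $f_0$ or $\ln f_0$. Using $\|\cdot\|_2\leq \|\cdot\|_\infty$ on $\T^m$ and applying the second inequality of Lemma~\ref{lem:error_convolution} to $h=D^{\gamma+\theta}g\in C^2(\T^m)$ yields
\[
\|K^m_\epsilon\star D^{\gamma+\theta}g - D^{\gamma+\theta}g\|_{2}\leq C m^2\,\epsilon\,\|D^{\gamma+\theta+2}g\|_\infty.
\]
Summing over $|\gamma|\leq k$, it suffices to control $\max_{|\gamma|\leq k}\|D^{\gamma+\theta+2}g\|_\infty$ by $C/(\alpha+1)$ for $|\theta|\in\{0,1\}$, i.e.\ to bound $\|D^\eta g\|_\infty$ by $C/(\alpha+1)$ for every $\eta$ with $|\eta|\leq k+3$ and $|\eta|\geq 2$.

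\textbf{Step 2 (the logarithmic case).} Since $\ln f_0 = -A/(\alpha+1)$, one has directly
\[
D^\eta \ln f_0 = -\frac{D^\eta A}{\alpha+1},
\qquad
\|D^\eta \ln f_0\|_\infty \leq \frac{\|D^\eta A\|_\infty}{\alpha+1}\leq \frac{C}{\alpha+1}
\]
for all $|\eta|\leq k+3$, using $A\in C^{k+3}(\T^m)$. This settles the second estimate of the lemma.

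\textbf{Step 3 (the exponential case, main technical step).} For $f_0=\exp(-A/(\alpha+1))$ and $|\eta|\geq 1$, Faà di Bruno's formula gives
\[
D^\eta f_0 = f_0\sum_{\pi\in\mathcal{P}(\eta)} c_\pi \Bigl(-\tfrac{1}{\alpha+1}\Bigr)^{|\pi|} \prod_{B\in\pi} D^{\eta_B} A,
\]
where the sum runs over partitions $\pi$ of the multi-index $\eta$ into non-empty blocks $B$ with corresponding sub multi-index $\eta_B$. Every term carries a factor $(\alpha+1)^{-|\pi|}$ with $|\pi|\geq 1$, so we can factor out one $(\alpha+1)^{-1}$; the remaining factor $(\alpha+1)^{-(|\pi|-1)}$ is bounded by $1$ since $\alpha>0$. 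Moreover $\|f_0\|_\infty\leq \exp(\|A\|_\infty/(\alpha+1))\leq e^{\|A\|_\infty}$ uniformly in $\alpha>0$, and each $\|D^{\eta_B} A\|_\infty$ is bounded by a constant depending only on $A$ (using $|\eta_B|\leq|\eta|\leq k+3$). Therefore
\[
\|D^\eta f_0\|_\infty \leq \frac{C}{\alpha+1}\qquad \text{for all }1\leq |\eta|\leq k+3.
\]
Since in Step 1 we always apply this with $|\eta|=|\gamma+\theta+2|\geq 2$, the hypothesis $|\eta|\geq 1$ is satisfied and we obtain the first estimate of the lemma.

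\textbf{Potential obstacle.} There is no genuine obstacle; the only place to be careful is to check that in Faà di Bruno every summand indeed carries at least one copy of $(\alpha+1)^{-1}$ (which it does, because $|\pi|\geq 1$ whenever $|\eta|\geq 1$), and to keep track that all other $(\alpha+1)^{-j}$ factors are harmless thanks to $\alpha>0$. Combining the three steps gives $\|K^m_\epsilon\star D^\theta g - D^\theta g\|_{H^k}\leq C\epsilon/(\alpha+1)$ for both $g\in\{f_0,\ln f_0\}$ and $|\theta|\in\{0,1\}$, as claimed.
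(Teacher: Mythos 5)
Your proof is correct and follows essentially the same route as the paper's: reduce the $H^k$ norm to $L^\infty$ estimates on derivatives via commutation of convolution with $D^\gamma$, apply the $O(\epsilon)$ bound of Lemma~\ref{lem:error_convolution}, and extract the $(\alpha+1)^{-1}$ factor from the derivatives of $\ln f_0 = -A/(\alpha+1)$ and $f_0$. The paper treats only $\ln f_0$, $\theta = 0$ explicitly and dismisses the remaining cases with ``similar way by noticing that for $1\le |\eta|\le k+3$, $\|D^\eta\ln f_0\|_\infty$ and $\|D^\eta f_0\|_\infty$ are bounded by $C/(\alpha+1)$''; your Step~3 simply supplies, via Fa\`a di Bruno, the justification the paper leaves implicit, so the two arguments coincide.
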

\begin{proof}
In this proof,  $C$ denotes a generic constant independent of $(\alpha,\epsilon)$, whose value may vary from one occurrence to another.

By Lemma \ref{lem:error_convolution}, 
\begin{align*}
    \|K^m_\epsilon\star\ln f_0-\ln f_0\|_{H^k}&\le \sum_{\eta\in\N^m,|\eta|=0}^k\|D^\eta(K^m_\epsilon\star\ln f_0- \ln f_0)\|_{2}\\&\le \sum_{\eta\in\N^m,|\eta|=0}^k\|D^\eta(K^m_\epsilon\star\ln f_0- \ln f_0)\|_{\infty}\\
    &= \sum_{\eta\in\N^m,|\eta|=0}^k\|K^m_\epsilon\star(D^\eta\ln f_0)-D^\eta \ln f_0\|_{\infty}\\
&\le Cm^2\epsilon\sum_{\eta\in\N^m,|\eta|=2}^{k+2}\|D^\eta \ln f_0\|_{\infty}\\
&        \leq\frac{Cm^2\epsilon}{\alpha+1}\sum_{\eta\in\N^m,|\eta|=2}^{k+2}\|D^\eta A\|_{\infty}\leq \frac{C\epsilon}{\alpha+1}.
\end{align*}

The other estimates are proven in a similar way by noticing that for $1\leq |\eta|\leq k+3,$ $\|D^\eta \ln f_0\|_\infty$ and $\|D^\eta f_0\|_\infty$ are bounded by $\frac{C}{\alpha+1},$ where $C$ is a constant independent of $\alpha$.
\end{proof}

\begin{lem}\label{lem:KlnKf-lnf}
Suppose that $\epsilon\in(0,1],k\in\N^+,2k>m,$ $f_0=\exp(-\frac{A}{\alpha+1})$, where $A\in C^{k+3}(\T^m)$. There exists a constant $C>0$ independent of $(\alpha,\epsilon)$, such that,
$$ \|K^m_\epsilon\star\ln(K^m_\epsilon\star f_0)-\ln f_0\|_{H^k}\leq \frac{C\epsilon}{\alpha+1}.$$
\end{lem}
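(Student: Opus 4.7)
The plan is to split the quantity of interest additively as
$$K^m_\epsilon\star\ln(K^m_\epsilon\star f_0)-\ln f_0 \;=\; K^m_\epsilon\star\bigl[\ln(K^m_\epsilon\star f_0)-\ln f_0\bigr] \;+\; \bigl[K^m_\epsilon\star\ln f_0 - \ln f_0\bigr].$$
The second bracket is directly controlled by Lemma~\ref{lem:Kf-f} with $\theta=0$, giving a bound of order $\epsilon/(\alpha+1)$. For the first bracket, Lemma~\ref{lem:convolution} ensures that convolution by $K^m_\epsilon$ does not increase the $H^k$ norm, so it suffices to prove
$$\bigl\|\ln(K^m_\epsilon\star f_0)-\ln f_0\bigr\|_{H^k} \leq \frac{C\epsilon}{\alpha+1}.$$

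To establish this, I rewrite the difference as $\ln(1+w)$, where $w := (K^m_\epsilon\star f_0 - f_0)/f_0$. Combining Lemma~\ref{lem:Kf-f} (bounding $\|K^m_\epsilon\star f_0 - f_0\|_{H^k}$ by $C\epsilon/(\alpha+1)$), Lemma~\ref{lem:inverse_Kf_1} (which gives $\|1/f_0\|_{H^k}\leq M$ for a constant $M$ independent of $\alpha,\epsilon$, thanks to the explicit form $f_0=\exp(-A/(\alpha+1))$), and Lemma~\ref{lem:product}, I obtain $\|w\|_{H^k}\leq C\epsilon/(\alpha+1)$. The key observation is that $1+w=(K^m_\epsilon\star f_0)/f_0$ takes values in the interval $[\inf f_0/\sup f_0,\ \sup f_0/\inf f_0]$, which, since $\inf f_0 \geq \exp(-\sup A)$ and $\sup f_0 \leq \exp(-\inf A)$, is contained in a fixed compact interval of $(0,\infty)$ independently of $\alpha,\epsilon$. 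Consequently, $w$ takes values in some compact $[a,b]\subset(-1,\infty)$ uniformly in the parameters.

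The final step exploits the factorization $\ln(1+x)=x\cdot\phi(x)$ with
$$\phi(x) \;=\; \int_0^1 \frac{dt}{1+tx},$$
which is $C^\infty$ on $(-1,\infty)$, hence has all derivatives bounded on $[a,b]$. By Proposition~\ref{prop:phi(g)}, $\|\phi(w)\|_{H^k} \leq C\sum_{j=0}^{k}\|\phi^{(j)}\|_{L^\infty([a,b])}\|w\|_{H^k}^{j}$, which stays bounded uniformly in $\alpha,\epsilon$ because $\|w\|_{H^k}\leq C\epsilon/(\alpha+1)\leq C$. Lemma~\ref{lem:product} then yields
$$\|\ln(1+w)\|_{H^k} \;=\; \|w\,\phi(w)\|_{H^k} \;\leq\; C\,\|w\|_{H^k}\,\|\phi(w)\|_{H^k} \;\leq\; \frac{C\epsilon}{\alpha+1},$$
completing the proof.

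The main obstacle is keeping every constant uniform in $\alpha$; this is already handled upstream through the explicit form $f_0=\exp(-A/(\alpha+1))$, which is precisely why Lemmas~\ref{lem:inverse_Kf_1} and~\ref{lem:Kf-f} produce $\alpha$-free bounds. The additional care needed here is to verify that $w$ lies in a fixed compact subset of $(-1,\infty)$ so that the nonlinearity $\ln(1+\cdot)$ is controlled by a smooth factor $\phi$, avoiding any dependence on $\alpha$ or $\epsilon$ in the composition estimate.
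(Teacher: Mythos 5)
Your proof is correct, and it follows the same high-level decomposition as the paper (writing the target quantity as $K^m_\epsilon\star[\ln(K^m_\epsilon\star f_0)-\ln f_0]+[K^m_\epsilon\star\ln f_0-\ln f_0]$, bounding the second piece with Lemma~\ref{lem:Kf-f}, dropping the outer convolution on the first piece via Lemma~\ref{lem:convolution}), but then handles the core estimate $\|\ln(K^m_\epsilon\star f_0)-\ln f_0\|_{H^k}\leq C\epsilon/(\alpha+1)$ by a genuinely different argument. The paper splits this into its $L^\infty$ norm and the $H^{k-1}$ norms of the first-order partial derivatives, expands $D^\theta\ln(K^m_\epsilon\star f_0)-D^\theta\ln f_0$ explicitly as a difference of quotients, and estimates term by term using Lemmas~\ref{lem:product}, \ref{lem:inverse_Kf_1}, \ref{lem:Kf-f}. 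You instead set $w=(K^m_\epsilon\star f_0-f_0)/f_0$, note $\ln(K^m_\epsilon\star f_0)-\ln f_0=\ln(1+w)$ with $\|w\|_{H^k}\leq C\epsilon/(\alpha+1)$, and factor $\ln(1+x)=x\,\phi(x)$ with $\phi(x)=\int_0^1(1+tx)^{-1}\,dt$ smooth on $(-1,\infty)$; the composition estimate (Proposition~\ref{prop:phi(g)}) bounds $\|\phi(w)\|_{H^k}$ uniformly, and Lemma~\ref{lem:product} closes the argument. Your route is cleaner: it avoids explicit differentiation of logarithms and quotients, extracting the factor of $\epsilon/(\alpha+1)$ directly from $w$ while packaging the nonlinearity into a bounded factor. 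The paper's route is more pedestrian but does not require introducing the auxiliary function $\phi$.

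One small imprecision worth noting: the individual bounds $\inf f_0\geq\exp(-\sup A)$ and $\sup f_0\leq\exp(-\inf A)$ are not correct when $\sup A<0$ or $\inf A>0$ (the exponent $A/(\alpha+1)$ lies between $0$ and $A$, so the safe bounds are $\inf f_0\geq\exp(-\max(0,\sup A))$ and $\sup f_0\leq\exp(-\min(0,\inf A))$, in the spirit of Corollary~\ref{cor:inf_rho}). This does not affect your conclusion: the ratio $(K^m_\epsilon\star f_0)/f_0=1+w$ always lies in $[\exp(-(\sup A-\inf A)),\exp(\sup A-\inf A)]$, a compact subinterval of $(0,\infty)$ independent of $\alpha,\epsilon$, which is all you use.
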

\begin{proof}
In the following,  $C$ denotes a generic constant independent of $(\alpha,\epsilon)$, whose value may vary from one occurrence to another.

By the triangle inequality,
$$\|K^m_\epsilon\star\ln(K^m_\epsilon\star f_0)-\ln f_0\|_{H^k}\le \|K^m_\epsilon\star\ln f_0-\ln f_0\|_{H^k} + \|K^m_\epsilon\star\ln(K^m_\epsilon\star f_0)-K^m_\epsilon\star\ln f_0\|_{H^k}.$$
By Lemma~\ref{lem:Kf-f},  $$\|K^m_\epsilon\star\ln f_0-\ln f_0\|_{H^k}\leq\frac{C\epsilon}{\alpha+1}.$$
Besides, by Lemma \ref{lem:convolution},
\begin{align*}
\|K^m_\epsilon\star\ln(K^m_\epsilon\star f_0)-K^m_\epsilon\star\ln f_0\|_{H^k}&\leq  \|\ln(K^m_\epsilon\star f_0)-\ln f_0\|_{H^k}\\&\leq \|\ln(K^m_\epsilon\star f_0)-\ln f_0\|_{\infty}+\sum_{|\theta|=1}\|D^\theta\ln(K^m_\epsilon\star f_0)-D^\theta\ln f_0\|_{H^{k-1}}.    
\end{align*}

We have that, by mean value theorem, Lemma \ref{lem:error_convolution}, and the explicit formula of $D^\eta f_0$, where $\eta\in\N^m,|\eta|=2$,
$$\|\ln(K^m_\epsilon\star f_0)-\ln f_0\|_{\infty}\leq \left\|\frac{1}{f_0}\right\|_\infty\|K^m_\epsilon\star f_0-f_0\|_\infty\leq C\epsilon m^2\left\|\frac{1}{f_0}\right\|_\infty\|D^2f_0\|_\infty\leq \frac{C\epsilon}{\alpha+1}.$$
And, again by triangle inequality, when $|\theta|=1$,
\begin{align*}
\|D^\theta\ln(K^m_\epsilon\star f_0)-D^\theta\ln f_0\|_{H^{k-1}} 
&= \left\|\frac{K^m_\epsilon\star D^{\theta}f_0}{K^m_\epsilon\star f_0}-\frac{D^{\theta}f_0}{f_0}\right\|_{H^{k-1}} \\
&\leq \left\|\frac{K^m_\epsilon\star D^{\theta}f_0-D^{\theta}f_0}{K^m_\epsilon\star f_0}\right\|_{H^{k-1}} + \left\|\frac{D^{\theta}f_0}{K^m_\epsilon\star f_0}-\frac{D^{\theta}f_0}{f_0}\right\|_{H^{k-1}}.
\end{align*}
By Lemma~\ref{lem:product}, Lemma~\ref{lem:inverse_Kf_1}, and Lemma~\ref{lem:Kf-f}, 
\begin{align*}
\left\|\frac{K^m_\epsilon\star D^{\theta}f_0-D^{\theta}f_0}{K^m_\epsilon\star f_0}\right\|_{H^{k-1}}&\leq \left\|\frac{K^m_\epsilon\star D^{\theta}f_0-D^{\theta}f_0}{K^m_\epsilon\star f_0}\right\|_{H^{k}}\\&\leq C\left\|\frac{1}{K^m_\epsilon\star f_0}\right\|_{H^{k}}\left\|{K^m_\epsilon\star D^{\theta}f_0-D^{\theta}f_0}\right\|_{H^{k}} \\& \leq \frac{C\epsilon}{\alpha+1}.   
\end{align*}
Similarly, by noticing that $\|D^\theta f_0\|_{H^k}$ is also bounded from above by a constant independent of $\alpha$,
\begin{align*}
\left\|\frac{D^{\theta}f_0}{K^m_\epsilon\star f_0}-\frac{D^{\theta}f_0}{f_0}\right\|_{H^{k-1}}&\leq \left\|\frac{D^{\theta}f_0}{K^m_\epsilon\star f_0}-\frac{D^{\theta}f_0}{f_0}\right\|_{H^{k}}\\&\leq C\left\|\frac{1}{K^m_\epsilon\star f_0}\right\|_{H^{k}}\left\|\frac{1}{f_0}\right\|_{H^{k}}\left\|D^{\theta}f_0\right\|_{H^{k}}\left\|{K^m_\epsilon\star f_0-f_0}\right\|_{H^{k}} \\& \leq \frac{C\epsilon}{\alpha+1}.   
\end{align*}
Thus $\|\ln(K^m_\epsilon\star f_0)-\ln f_0\|_{H^k} \le \frac{C\epsilon}{\alpha+1}$ and this concludes the proof.
\end{proof}
\begin{lem}\label{lem:h_1h_2}
Suppose that $\epsilon\in(0,1]$. There exists a constant $C > 0$, such that for any $ h_1\in C^1(\T^m),h_2\in L^2(\T^m)$, it holds: 
$$\|h_1K^m_{\epsilon}\star h_2- K^m_{\epsilon}\star(h_1h_2)\|_2\leq C\sqrt{\epsilon m}\|D^1h_1\|_\infty\|h_2\|_2 $$
where, we recall, $\|D^1 h\|_\infty =\max_{|\theta|=1}\|D^\theta h\|_\infty$.
\end{lem}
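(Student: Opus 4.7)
The plan is to rewrite the difference pointwise as an integral over the torus, use the mean value theorem on $h_1$ together with the tensor structure of $K^m_\epsilon$, apply the Cauchy--Schwarz inequality with respect to the measure $K^m_\epsilon(y)\,dy$, and finish by invoking the second-moment estimate of Lemma~\ref{lem:e_var}.

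First I would write, using that $K^m_\epsilon$ integrates to $1$,
\[
  h_1(x)\,K^m_\epsilon\star h_2(x) - K^m_\epsilon\star(h_1h_2)(x)
  = \int_{I^m} K^m_\epsilon(y)\bigl(h_1(x)-h_1(x-y)\bigr)h_2(x-y)\,dy.
\]
Since $h_1\in C^1(\T^m)$ (viewed as a periodic function), the fundamental theorem of calculus applied along the straight segment from $x-y$ to $x$ (for $y\in I^m$, so that $|y|\le\sqrt m/2$) gives the standard gradient bound $|h_1(x)-h_1(x-y)|\le \sqrt m\,\|D^1 h_1\|_\infty |y|$, since $|\nabla h_1|\le \sqrt m\,\|D^1 h_1\|_\infty$ by the paper's max-norm convention on $D^1$.

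Next I would apply the Cauchy--Schwarz inequality with respect to the probability measure $K^m_\epsilon(y)\,dy$:
\[
  \Bigl|\!\int K^m_\epsilon(y)\bigl(h_1(x)-h_1(x-y)\bigr)h_2(x-y)\,dy\Bigr|^2
  \le \Bigl(\!\int K^m_\epsilon(y)|h_1(x)-h_1(x-y)|^2 dy\Bigr)\bigl(K^m_\epsilon\star|h_2|^2(x)\bigr).
\]
Plugging in the pointwise bound and using Lemma~\ref{lem:e_var}, which gives $\int_{I^m} K^m_\epsilon(y)|y|^2 dy \le C m\epsilon$ thanks to the product structure $K^m_\epsilon=\prod_i K^1_\epsilon$, the first factor is at most $C m\epsilon\,m\,\|D^1 h_1\|_\infty^2$.

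Finally I would integrate in $x$ and use that convolution with the nonnegative kernel $K^m_\epsilon$ of unit mass preserves the $L^1$ norm, so that $\int_{\T^m} K^m_\epsilon\star|h_2|^2(x)\,dx=\|h_2\|_2^2$. Putting everything together yields the desired inequality (up to adjusting the constant $C$, and absorbing the extra $\sqrt m$ that may arise from the max-norm convention on $D^1$). There is no genuine obstacle here: the only subtlety is book-keeping the $m$-dependence in the mean value step and in the second moment of $K^m_\epsilon$, which both behave like powers of $m$ under the tensor structure.
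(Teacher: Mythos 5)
Your proposal follows exactly the same route as the paper's proof: rewrite the difference as $\int_{I^m} K^m_\epsilon(y)\bigl(h_1(x)-h_1(x-y)\bigr)h_2(x-y)\,dy$, bound the increment of $h_1$ linearly in $|y|$ via the gradient, apply Cauchy--Schwarz against the probability measure $K^m_\epsilon(y)\,dy$ so the second factor becomes $K^m_\epsilon\star h_2^2(x)$, invoke the second-moment bound of Lemma~\ref{lem:e_var}, and integrate in $x$ using $\int K^m_\epsilon\star h_2^2=\|h_2\|_2^2$. You are in fact a bit more scrupulous than the paper about the extra $\sqrt m$ coming from the max-norm convention on $D^1$ (the paper writes $|h_1(x)-h_1(x-y)|\le\|D^1h_1\|_\infty|y|$ outright), but that factor is harmlessly absorbed into $C$, so the two arguments are substantively identical.
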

\begin{proof}

For $x \in \T^m$, using Cauchy-Schwarz inequality and Lemma~\ref{lem:e_var},
\begin{align*}
(h_1(x)K^m_{\epsilon}\star h_2(x)- K^m_{\epsilon}\star(h_1h_2)(x))^2 &=\left(\int_{\T^m} K^m_\epsilon(y)h_2(x-y)(h_1(x)-h_1(x-y))dy\right)^2\\ &\leq \|D^1 h_1\|_\infty^2 \left( \int_{\T^m} K^m_\epsilon(y)|h_2(x-y)\|y|dy\right)^2 \\  &\leq \|D^1 h_1\|_\infty^2  \int_{\T^m} K^m_\epsilon(y)|y|^2dy\int_{\T^m} K^m_\epsilon(y)h_2^2(x-y)dy
\\&\leq Cm\epsilon\|D^1 h_1\|_\infty^2   K^m_\epsilon\star(h_2^2)(x).
\end{align*}
Thus using the fact that $\int_{\T^m}K^m_\epsilon\star(h_2^2)(x)=\int_{\T^m}h_2^2(x)dx$, we obtain that
$$\|h_1K^m_{\epsilon}\star h_2- K^m_{\epsilon}\star(h_1h_2)\|_2\leq \sqrt{C \epsilon m}\|D^1h_1\|_\infty\|h_2\|_2. $$
which yields the result.
\end{proof}

\section{About the dynamics~\eqref{eq:McKean-Voverdamped}}\label{sec:McKean-Voverdamped}

In this appendix, we provide some results on the non-regularized dynamics~\eqref{eq:McKean-Voverdamped} using entropy techniques in the spirit of~\cite{LRS07}. As already mentioned in Section~\ref{sec:ABF}, we do not know how to generalize such results to the non-regularized kinetic Langevin dynamics~\eqref{eq:McKeanVkinetic}.

\subsection{Preliminaries}
Let us consider the non-regularized dynamics~\eqref{eq:McKean-Voverdamped} (taking $\beta=1$ for simplicity):
\begin{align*}
\partial_t \rho &= {\rm div}(\nabla (U +\alpha \ln \rho^1) \rho) + \Delta \rho\\
&={\rm div} \left( \Gamma_\alpha(\rho) \nabla \frac{\rho}{\Gamma_\alpha(\rho)} \right)\\
&={\rm div} \left( \rho \nabla \ln \frac{\rho}{\Gamma_\alpha(\rho)} \right)
\end{align*}
where, we recall
$$\Gamma_\alpha(\rho) \propto \exp(- U -\alpha \ln \rho^1).$$

Notice that $\rho^1$ satisfies the parabolic equation (this can again be seen as a manifestation of the fact that the temperature has been multiplied by a factor $1+\alpha$ along the collective variable)
$$\partial_t \rho^1 = \partial_1(A_t' \rho^1) + (\alpha+1) \partial_{1,1} \rho^1$$
where
$$A_t'(x_1)=\frac{\int \partial_{x_1} U(x_1,x_2) \rho(t,x_1,x_2) \, dx_2 }{\rho^1(t,x_1)}.$$
In particular, one has that for all $t>0$ $\rho^1(t,\cdot)>0$, since it can be written as
$$\rho^1(t,x) = K_t \star \rho^1(0,\cdot) + \int_0^t K_{t-s} \star \int \partial_{x_1} U (x_1,x_2) \rho(s,\cdot,x_2) \, dx_2 \, ds$$
where $K_t$ denotes the heat kernel.
In the following, we assume for simplicity that this is also true at time $t=0$.

We want to prove the long time convergence of $\rho$ towards
\begin{equation}\label{eq:rho_inf}
\rho_\infty \propto \exp\left(-U + \frac{\alpha}{\alpha +1} A\right).
\end{equation}

Recall that $\rho^1_\infty \propto \exp\left(- \frac{A}{\alpha +1} \right)$. In particular,
\begin{align*}
    A_\infty'(x_1)&=\frac{\int \partial_{x_1} U(x_1,x_2) \rho_\infty(x_1,x_2) \, dx_2 }{\rho_\infty^1(x_1)}\\
    &=\frac{\int \partial_{x_1} U(x_1,x_2) \exp(-U(x_1,x_2) \, dx_2 }{\int  \exp(-U(x_1,x_2) \, dx_2}\\
    &=A'(x_1)=-(\alpha+1) (\ln \rho^1_\infty)'.
\end{align*}
Thus, the equation on $\rho^1$ can be rewritten as:
\begin{align*}
    \partial_t \rho^1 &= \partial_1(A_t' \rho^1) + (\alpha+1) \partial_{1,1} \rho^1\\
    &=\partial_1((A_t'-A_\infty') \rho^1) + (\alpha+1) \partial_{1} \left( - \ln(\rho^1_\infty)' \rho^1 + \partial_1 \rho^1 \right)\\
    &=\partial_1((A_t'-A_\infty') \rho^1) + (\alpha+1) \partial_{1} \left(  \rho^1  \partial_1 \ln \frac{\rho^1}{\rho^1_\infty} \right).
\end{align*}

\subsection{Long-time behavior}

One has on the one hand
\begin{align}
\frac{d}{dt} \int \rho \ln \frac{\rho}{\rho_\infty}
&= \int {\rm div} \left( \rho \nabla \ln \frac{\rho}{\Gamma_\alpha(\rho)} \right) \ln \frac{\rho}{\rho_\infty}\nonumber \\
&= - \int  \rho \nabla \ln \frac{\rho}{\Gamma_\alpha(\rho)} \cdot \nabla \ln \frac{\rho}{\rho_\infty} \nonumber \\
&= - \int  \rho \left|  \nabla \ln \frac{\rho}{\rho_\infty} \right|^2 - \int  \rho \nabla \ln \frac{\rho_\infty}{\Gamma_\alpha(\rho)} \cdot \nabla \ln \frac{\rho}{\rho_\infty}. \nonumber
\end{align}
Besides
\begin{align*}
   \nabla \ln \frac{\rho_\infty}{\Gamma_\alpha(\rho)}
    &= \nabla \left( \frac{\alpha}{\alpha+1} A + \alpha \ln \rho^1 \right)=\alpha \nabla \ln \frac{\rho^1}{\rho_\infty^1}.
\end{align*}
Thus,
\begin{align}
\frac{d}{dt} \int \rho \ln \frac{\rho}{\rho_\infty}
&=- \int  \rho \left|  \nabla \ln \frac{\rho}{\rho_\infty} \right|^2 - \alpha \int  \rho \, \partial_1 \ln \frac{\rho^1}{\rho_\infty^1} \partial_1 \ln \frac{\rho}{\rho_\infty}. \label{eq:ent_total}
\end{align}

On the other hand, for the marginal in $x_1$, it holds
\begin{align*}
\frac{d}{dt} \int \rho^1 \ln \frac{\rho^1}{\rho_\infty^1}
&= \int \partial_1((A_t'-A_\infty') \rho^1)  \ln \frac{\rho^1}{\rho_\infty^1} +  (\alpha+1) \int  \partial_{1} \left(  \rho^1  \partial_1 \ln \frac{\rho^1}{\rho^1_\infty} \right) \ln \frac{\rho^1}{\rho_\infty^1}\\
&= - \int (A_t'-A_\infty') \rho^1 \partial_1 \ln \frac{\rho^1}{\rho_\infty^1} -  (\alpha+1) \int  
\rho^1  \left| \partial_1 \ln \frac{\rho^1}{\rho^1_\infty} \right|^2.
\end{align*}
Besides, from standard results on the convergence of ABF like dynamics (see~\cite[Lemmas 4, 5, and 6]{LRS07}), one has for some constant $c>0$
$$\int (A_t'-A_\infty')^2 \rho^1 \le c \frac{\|\partial_{1,2} U\|_\infty^2}{\delta} \int \rho \left| \partial_2 \ln \frac{\rho}{\rho_\infty} \right|^2$$
where $\delta$ is a (uniform in $x_1$) log-Sobolev constant of the conditional measures (indexed by $x_1$): $\rho_\infty/\rho_\infty^1=\exp(-U(x_1,x_2)) dx_2 / \exp(-A(x_1)$. Let us denote
\begin{equation}\label{eq:M}
M=c \frac{\|\partial_{1,2} U\|_\infty^2}{\delta}.
\end{equation}
The smaller is the coupling and the easier is the sampling of the conditional measures (i.e. the larger $\delta$), the smaller is $M$.
Thus, by Young's inequality
\begin{align}
\frac{d}{dt} \int \rho^1 \ln \frac{\rho^1}{\rho_\infty^1}
&= \frac{1}{2(\alpha+1)} \int (A_t'-A_\infty')^2 \rho^1 -  \frac{\alpha+1}{2} \int  
\rho^1  \left| \partial_1 \ln \frac{\rho^1}{\rho^1_\infty} \right|^2 \nonumber \\
&\le \frac{M}{2(\alpha+1)} \int \rho \left| \partial_2 \ln \frac{\rho}{\rho_\infty} \right|^2 -  \frac{\alpha+1}{2} \int  
\rho^1  \left| \partial_1 \ln \frac{\rho^1}{\rho^1_\infty} \right|^2  \label{eq:ent_marginal}
\end{align}

By combining a Young's inequality applied to~\eqref{eq:ent_total}, and~\eqref{eq:ent_marginal}, one gets (for $\epsilon,\gamma >0$ to be chosen):
\begin{align*}
&\frac{d}{dt} \int \rho \ln \frac{\rho}{\rho_\infty}
+\gamma \frac{d}{dt} \int \rho^1 \ln \frac{\rho^1}{\rho_\infty^1}\\
&\le  - \int  \rho \left|  \nabla \ln \frac{\rho}{\rho_\infty} \right|^2 + \epsilon \alpha \int  \rho_1 \left| \partial_1 \ln \frac{\rho^1}{\rho_\infty^1} \right|^2 + \frac{\alpha}{4 \epsilon} \rho \left| \partial_1 \ln \frac{\rho}{\rho_\infty}\right|^2\\
& \quad  + \frac{\gamma M}{2(\alpha+1)} \int \rho \left| \partial_2 \ln \frac{\rho}{\rho_\infty} \right|^2 -  \gamma \frac{\alpha+1}{2} \int  
\rho^1  \left| \partial_1 \ln \frac{\rho^1}{\rho^1_\infty} \right|^2.
\end{align*}
To get negative signs on the right-hand side, one needs
$$\epsilon \alpha < \gamma \frac{\alpha+1}{2}, \quad \frac{\alpha}{4 \epsilon} < 1, \quad \frac{\gamma M}{2(\alpha+1)} < 1.$$
We are typically interested in the large $\alpha$ regime. It is natural to choose $\epsilon=\frac{\alpha+1}{2}$ so that the second constraint disappears, and one needs
$$\alpha < \gamma, \quad \frac{\gamma M}{2(\alpha+1)} < 1$$
which rewrites
$$\alpha< \gamma < \frac{2}{M} (1+\alpha).$$
If $M > 2$ we thus requires
$$\alpha < \frac{2}{M-2},$$
otherwise, there are no constraints on $\alpha$.
Let us choose $\gamma=\frac{1}{M}+\alpha\left(\frac{1}{M}+\frac 1 2 \right)$ (the mean of the upper and lower bounds imposed on $\gamma$). One thus gets an exponential convergence of the entropy to zero with rate
\begin{equation}\label{eq:rate}
r=2\min\left(D,D_1\frac{\alpha+1}{2}  \left( 1 -\frac \alpha \gamma\right)\right)=\min\left(2 D,D_1(\alpha+1) \frac{1+\alpha-\alpha M /2}{1+\alpha+\alpha M /2}\right)
\end{equation}
where, $D$ (resp. $D_1$) is a log-Sobolev inequality constant of $\rho_\infty$ (resp. $\rho_\infty^1$), and $M$ is defined by~\eqref{eq:M}.

We thus have obtained the following result:
\begin{lem}
    Let us consider a smooth solution to the non-regularized dynamics~\eqref{eq:PDEoverdamped}-\eqref{eq:McKean-Voverdamped}. Let us denote by $D$ (resp. $D_1$ and $\delta$) a log-Sobolev constant for $\rho_\infty$ (resp. $\rho_\infty^1$ and the conditional measures $\rho_\infty(x_1,\cdot)/\rho_\infty^1(x_1)$). Let us assume that $\alpha >0$ and $\alpha <\frac{2}{M-2}$ if $M>2$, where $M$ is defined by~\eqref{eq:M}. Then, for any $t \ge 0$,
    $$\mathcal H(\rho_t|\rho_\infty) \le \mathcal H(\rho_0|\rho_\infty) 
    \exp(- r t)$$
    where $r$ is defined by~\eqref{eq:rate}.
\end{lem}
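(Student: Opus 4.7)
The plan is to control the decay of the entropy $\mathcal H(\rho_t|\rho_\infty)$ by coupling it with the decay of the marginal entropy $\mathcal H(\rho_t^1|\rho_\infty^1)$ in a suitable Lyapunov functional of the form $\mathcal L_t=\mathcal H(\rho_t|\rho_\infty)+\gamma\mathcal H(\rho_t^1|\rho_\infty^1)$, following the philosophy of the ABF analysis in \cite{LRS07}. The key point that allows this strategy is that the marginal $\rho_t^1$ evolves autonomously under a one-dimensional Fokker--Planck equation with time-dependent drift $A_t'$ and enhanced diffusion $(\alpha+1)$, as already recorded in the preliminary subsection.

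First, I would differentiate $\mathcal H(\rho_t|\rho_\infty)$ along the non-linear Fokker--Planck equation \eqref{eq:PDEoverdamped} rewritten as $\partial_t\rho=\mathrm{div}(\rho\nabla\ln(\rho/\Gamma_\alpha(\rho)))$, producing the full Fisher information $-\int\rho|\nabla\ln(\rho/\rho_\infty)|^2$ together with an error term $-\int\rho\,\nabla\ln(\rho_\infty/\Gamma_\alpha(\rho))\cdot\nabla\ln(\rho/\rho_\infty)$. The crucial identity $\nabla\ln(\rho_\infty/\Gamma_\alpha(\rho))=\alpha\nabla\ln(\rho^1/\rho_\infty^1)$ shows that this error only involves the $x_1$-derivative and is naturally controlled by the Fisher information of the marginal. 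Next, I would differentiate $\mathcal H(\rho_t^1|\rho_\infty^1)$ using the closed equation on $\rho_t^1$, obtaining $-(\alpha+1)\int\rho^1|\partial_1\ln(\rho^1/\rho_\infty^1)|^2$ plus a term $-\int(A_t'-A_\infty')\rho^1\partial_1\ln(\rho^1/\rho_\infty^1)$. The latter is then estimated via the ABF lemma of \cite{LRS07}, which bounds $\int(A_t'-A_\infty')^2\rho^1$ by $M\int\rho|\partial_2\ln(\rho/\rho_\infty)|^2$, where the constant $M$ defined in \eqref{eq:M} incorporates the conditional log-Sobolev constant $\delta$.

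The main step then is to combine the two differential inequalities after applying Young's inequality twice (with parameters $\epsilon$ on the cross-term in the full entropy, and a separate one on the cross-term in the marginal entropy), so that the right-hand side of $\frac{d}{dt}\mathcal L_t$ becomes a negative combination of the three quantities $\int\rho|\partial_1\ln(\rho/\rho_\infty)|^2$, $\int\rho|\partial_2\ln(\rho/\rho_\infty)|^2$ and $\int\rho^1|\partial_1\ln(\rho^1/\rho_\infty^1)|^2$. The choice $\epsilon=(\alpha+1)/2$ eliminates one constraint and reduces the feasibility conditions to $\alpha<\gamma<\frac{2(\alpha+1)}{M}$, which is solvable exactly when $\alpha<\frac{2}{M-2}$ (only needed if $M>2$); the balanced choice $\gamma=\frac{1}{M}+\alpha(\frac{1}{M}+\frac{1}{2})$ gives explicit coefficients.

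Finally, I would conclude by invoking the log-Sobolev inequality for $\rho_\infty$ (constant $D$) to absorb the $\nabla\ln(\rho/\rho_\infty)$ dissipation into $-2D\mathcal H(\rho|\rho_\infty)$, and the log-Sobolev inequality for $\rho_\infty^1$ (constant $D_1$) to absorb the remaining marginal Fisher information into $-2D_1\mathcal H(\rho^1|\rho_\infty^1)$, yielding $\frac{d}{dt}\mathcal L_t\le -r\,\mathcal L_t$ with $r$ as in \eqref{eq:rate}, and then Grönwall plus $\mathcal H(\rho_t|\rho_\infty)\le\mathcal L_t\le\mathcal L_0=\mathcal H(\rho_0|\rho_\infty)+\gamma\mathcal H(\rho_0^1|\rho_\infty^1)$ (the initial marginal term can be bounded by the joint one by extensivity of the entropy, though strictly speaking the statement only requires the first term; one could slightly strengthen the statement to $\le(1+\gamma)\mathcal H(\rho_0|\rho_\infty)e^{-rt}$). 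The main obstacle, which also explains the restriction on $\alpha$, is the delicate balancing of the three Fisher informations: the coupling term $\alpha\nabla\ln(\rho^1/\rho_\infty^1)$ that comes from the non-linearity scales unfavorably with $\alpha$, so the scheme only closes when the $x_1$-$x_2$ coupling in $U$ is weak enough compared to $\delta$, namely when $M<2+2/\alpha$.
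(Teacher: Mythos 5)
Your proposal follows exactly the same route as the paper's Appendix~\ref{sec:McKean-Voverdamped}: the coupled Lyapunov functional $\mathcal H(\rho_t|\rho_\infty)+\gamma\mathcal H(\rho_t^1|\rho_\infty^1)$, the identity $\nabla\ln(\rho_\infty/\Gamma_\alpha(\rho))=\alpha\nabla\ln(\rho^1/\rho_\infty^1)$, the ABF lemma from~\cite{LRS07} to control $A_t'-A_\infty'$, Young's inequality with $\epsilon=(\alpha+1)/2$ and the same choice of $\gamma$, and the two log-Sobolev inequalities followed by Gr\"onwall. You also correctly note that the argument actually yields $\mathcal H(\rho_t|\rho_\infty)\le\mathcal L_0\,e^{-rt}\le(1+\gamma)\mathcal H(\rho_0|\rho_\infty)e^{-rt}$ rather than the prefactor $1$ as literally written in the lemma, which is a legitimate (if minor) refinement of the statement.
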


For example, if $M \le 2$, by choosing $\alpha$ sufficiently large, one gets exponential convergence with rate $2D$. If $M>2$, by choosing $\alpha = \frac{1}{M-2}$, one gets exponential convergence with rate $\min\left(2D,D_1 \frac{M-1}{3M-2}\right)$.

\subsection*{Acknowledgements}

P.M. would like to thank Anna Korba for fruitfull discussions. His research is supported by the projects SWIDIMS (ANR-20-CE40-0022) and CONVIVIALITY (ANR-23-CE40-0003) of the French National Research Agency. The works of P.M. and T.L. are supported by project EMC2 from the European Union’s Horizon 2020 research and innovation program (grant agreement No 810367). The works of T.L. are also partially
funded by the Agence Nationale de la Recherche through
the grants ANR-19-CE40-0010-01 (QuAMProcs) and ANR-21-CE40-0006 (SINEQ). The research of X.L. was supported by an internship from CERMICS (Ecole Nationale des Ponts et Chaussées).

\bibliographystyle{plain}  
\bibliography{biblio}

@book{loomis-sternberg-1968,
  title={Advanced calculus},
  author={Loomis, Lynn Harold and Sternberg, Shlomo},
  year={1968},
  publisher={World Scientific}
}

@article{burger2023porous,
  title={Porous medium equation and cross-diffusion systems as limit of nonlocal interaction},
  author={Burger, Martin and Esposito, Antonio},
  journal={Nonlinear Analysis},
  volume={235},
  pages={113347},
  year={2023},
  publisher={Elsevier}
}

@article{deuschel-stroock-1990,
  title={Hypercontractivity and spectral gap of symmetric diffusions with applications to the stochastic {I}sing models},
  author={Deuschel, Jean-Dominique and Stroock, Daniel W.},
  journal={Journal of Functional Analysis},
  volume={92},
  pages={30--48},
  year={1990},
  publisher={Elsevier}
}

@ARTICLE{M33,
       author = {{Guillin}, Arnaud and {Le Bris}, Pierre and {Monmarch{\'e}}, Pierre},
        title = "{Uniform in time propagation of chaos for the 2D vortex model and other singular stochastic systems}",
  journal={Journal of the European Mathematical Society},
  pages={1--28},
  year={2024}
}

@article{10.1214/24-EJP1217,
author = {Pierre Monmarch{\'e} and Zhenjie Ren and Songbo Wang},
title = {{Time-uniform log-Sobolev inequalities and applications to propagation of chaos}},
volume = {29},
journal = {Electronic Journal of Probability},
number = {none},
publisher = {Institute of Mathematical Statistics and Bernoulli Society},
pages = {1 -- 38},
keywords = {Log-Sobolev inequality, mean field limit, propagation of chaos},
year = {2024},
doi = {10.1214/24-EJP1217},
URL = {https://doi.org/10.1214/24-EJP1217}
}

@book{ane-2000,
  title={Sur les in{\'e}galit{\'e}s de Sobolev logarithmiques},
  author={An{\'e}, C{\'e}cile and Blach{\`e}re, S{\'e}bastien and Chafa{\"\i}, Djalil and Foug{\`e}res, Pierre and Gentil, Ivan and Malrieu, Florent and Roberto, Cyril and Scheffer, Gr{\'e}gory},
  volume={10},
  year={2000},
  publisher={Soci{\'e}t{\'e} math{\'e}matique de France Paris}
}

@book{stein2009real,
  title={Real analysis: measure theory, integration, and Hilbert spaces},
  author={Stein, Elias M. and Shakarchi, Rami},
  year={2009},
  publisher={Princeton University Press}
}

@article{chen-lin-ren-wang-2024,
  title={Uniform-in-time propagation of chaos for kinetic mean field {L}angevin dynamics},
  author={Chen, Fan and Lin, Yiqing and Ren, Zhenjie and Wang, Songbo},
  journal={Electronic Journal of Probability},
  volume={29},
  pages={1--43},
  year={2024},
  publisher={The Institute of Mathematical Statistics and the Bernoulli Society}
}

@article{chizat-2022,
  title={Mean-field {L}angevin dynamics: Exponential convergence and annealing},
  author={Chizat, L{\'e}na{\"\i}c},
  journal={Transactions on Machine Learning Research},
  year={2022}
}

@article{crandall1992user,
  title={User’s guide to viscosity solutions of second order partial differential equations},
  author={Crandall, Michael G. and Ishii, Hitoshi and Lions, Pierre-Louis},
  journal={Bulletin of the American mathematical society},
  volume={27},
  number={1},
  pages={1--67},
  year={1992}
}

@article{nesterov,
author = {Yi-An Ma and Niladri S. Chatterji and Xiang Cheng and Nicolas Flammarion and Peter L. Bartlett and Michael I. Jordan},
title = {{Is there an analog of Nesterov acceleration for gradient-based MCMC?}},
volume = {27},
journal = {Bernoulli},
number = {3},
publisher = {Bernoulli Society for Mathematical Statistics and Probability},
pages = {1942 -- 1992},
keywords = {accelerated gradient descent, Langevin Monte Carlo, Markov chain Monte Carlo, sampling algorithms},
year = {2021},
doi = {10.3150/20-BEJ1297},
URL = {https://doi.org/10.3150/20-BEJ1297}
}

@book{evans2010partial,
  title={Partial Differential Equations},
  author={Evans, Lawrence C},
  volume={19},
  year={2010},
  publisher={American Mathematical Soc.}
}

@article{chen2022uniform,
author = {Fan Chen and Zhenjie Ren and Songbo Wang},
title = {{Uniform-in-time propagation of chaos for mean field Langevin dynamics}},
volume = {61},
journal = {Annales de l'Institut Henri Poincaré, Probabilités et Statistiques},
number = {4},
publisher = {Institut Henri Poincaré},
pages = {2357 -- 2404},
keywords = {Convergence to equilibrium, Fokker–Planck equation, hypercontractivity, Langevin diffusion, Logarithmic Sobolev inequality, mean field interaction, Relative entropy, Uniform-in-time propagation of chaos, Wasserstein distance},
year = {2025},
doi = {10.1214/24-AIHP1499},
URL = {https://doi.org/10.1214/24-AIHP1499}
}

@article{suzuki2024mean,
  title={Mean-field {L}angevin dynamics: Time-space discretization, stochastic gradient, and variance reduction},
  author={Suzuki, Taiji and Wu, Denny and Nitanda, Atsushi},
  journal={Advances in Neural Information Processing Systems},
  volume={36},
  year={2024}
}

@book{dupuis1997weak,
  title={A Weak Convergence Approach to the Theory of Large Deviations},
  author={Dupuis, Paul and Ellis, Richard S.},
  year={1997},
  publisher={Wiley}
}

@article{Hu2021MeanField,
  title={Mean-field {L}angevin dynamics and energy landscape of neural networks},
  author={Hu, Kaitong and Ren, Zhenjie and {\v{S}}i{\v{s}}ka, David and Szpruch, {\L}ukasz},
  journal={Annales de l’Institut Henri Poincar{\'e}, Probabilit{\'e}s et Statistiques},
  volume={57},
  number={4},
  pages={2043--2065},
  year={2021},
  doi={10.1214/20-AIHP1140},
  url={https://doi.org/10.1214/20-AIHP1140}
}

@ARTICLE{M31,
       author = {{Guillin}, Arnaud and {Le Bris}, Pierre and {Monmarch{\'e}}, Pierre},
title = {{Convergence rates for the {V}lasov-{F}okker-{P}lanck equation and uniform in time propagation of chaos in non convex cases}},
volume = {27},
journal = {Electronic Journal of Probability},
number = {none},
publisher = {Institute of Mathematical Statistics and Bernoulli Society},
pages = {1 -- 44},
keywords = {Coupling method, long-time convergence, propagation of chaos, Vlasov-Fokker-Planck equation},
year = {2022},
doi = {10.1214/22-EJP853},
URL = {https://doi.org/10.1214/22-EJP853}
}

@article{monmarche2023note,
     author = {Pierre Monmarch\'e},
     title = {A note on a {Vlasov{\textendash}Fokker{\textendash}Planck} equation with non-symmetric interaction},
     journal = {Annales de la Facult\'e des sciences de Toulouse : Math\'ematiques},
     pages = {243--255},
     year = {2025},
     publisher = {Universit\'e de Toulouse, Toulouse},
     volume = {Ser. 6, 34},
     number = {2},
     doi = {10.5802/afst.1812},
     language = {en},
     url = {https://afst.centre-mersenne.org/articles/10.5802/afst.1812/}
}

@article{Schuh,
author = {Katharina Schuh},
title = {{Global contractivity for Langevin dynamics with distribution-dependent forces and uniform in time propagation of chaos}},
volume = {60},
journal = {Annales de l'Institut Henri Poincaré, Probabilités et Statistiques},
number = {2},
publisher = {Institut Henri Poincaré},
pages = {753 -- 789},
keywords = {Convergence to equilibrium, coupling, Langevin dynamics, propagation of chaos, Vlasov–Fokker–Planck equation, Wasserstein distance},
year = {2024},
doi = {10.1214/22-AIHP1337},
URL = {https://doi.org/10.1214/22-AIHP1337}
}

@ARTICLE{M35,
       author = {{Monmarch{\'e}}, Pierre and {Ramil}, Mouad},
title = {{Overdamped limit at stationarity for non-equilibrium Langevin diffusions}},
volume = {27},
journal = {Electronic Communications in Probability},
number = {none},
publisher = {Institute of Mathematical Statistics and Bernoulli Society},
pages = {1 -- 8},
keywords = {Langevin diffusion, overdamped limit, Smoluchowski-Kramers limit, Wasserstein distance},
year = {2022},
doi = {10.1214/22-ECP447},
URL = {https://doi.org/10.1214/22-ECP447}
}

@ARTICLE{GuillinWuZhang,
author = {Arnaud Guillin and Wei Liu and Liming Wu and Chaoen Zhang},
title = {{Uniform {P}oincaré and logarithmic {S}obolev inequalities for mean field particle systems}},
volume = {32},
journal = {The Annals of Applied Probability},
number = {3},
publisher = {Institute of Mathematical Statistics},
pages = {1590 -- 1614},
keywords = {Logarithmic Sobolev inequality, McKean–Vlasov equation, mean field particle models, Poincaré inequality},
year = {2022},
doi = {10.1214/21-AAP1707},
URL = {https://doi.org/10.1214/21-AAP1707}
}

@article{MonmarcheReygner,
  title={Local convergence rates for Wasserstein gradient flows and McKean-Vlasov equations with multiple stationary solutions},
  author={Monmarch{\'e}, Pierre and Reygner, Julien},
  journal={Probability Theory and Related Fields},
  pages={1--59},
  year={2025},
  publisher={Springer}
}

@article {MenzSchlichting,
    AUTHOR = {Menz, Georg and Schlichting, Andr\'e},
     TITLE = {Poincar\'e and logarithmic {S}obolev inequalities by
              decomposition of the energy landscape},
   JOURNAL = {Ann. Probab.},
  FJOURNAL = {The Annals of Probability},
    VOLUME = {42},
      YEAR = {2014},
    NUMBER = {5},
     PAGES = {1809--1884},
      ISSN = {0091-1798},
   MRCLASS = {60J60 (35A23 35J15 35P15)},
  MRNUMBER = {3262493},
       DOI = {10.1214/14-AOP908},
       URL = {http://dx.doi.org/10.1214/14-AOP908},
}

@inproceedings{lelievre2012two,
  title={Two mathematical tools to analyze metastable stochastic processes},
  author={Leli{\`e}vre, Tony},
  booktitle={Numerical Mathematics and Advanced Applications 2011: Proceedings of ENUMATH 2011, the 9th European Conference on Numerical Mathematics and Advanced Applications, Leicester, September 2011},
  pages={791--810},
  year={2012},
  organization={Springer}
}

@book{ambrosio2005gradient,
  title={Gradient flows: in metric spaces and in the space of probability measures},
  author={Ambrosio, Luigi and Gigli, Nicola and Savar{\'e}, Giuseppe},
  year={2005},
  publisher={Springer Science \& Business Media}
}

@article{Lelievre_twoscale,
title = {A general two-scale criteria for logarithmic {S}obolev inequalities},
journal = {Journal of Functional Analysis},
volume = {256},
number = {7},
pages = {2211-2221},
year = {2009},
issn = {0022-1236},
doi = {https://doi.org/10.1016/j.jfa.2008.09.019},
url = {https://www.sciencedirect.com/science/article/pii/S0022123608004084},
author = {Tony Lelièvre},
keywords = {Logarithmic Sobolev inequality, Two-scale criteria},
abstract = {We present a general criteria to prove that a probability measure satisfies a logarithmic Sobolev inequality, knowing that some of its marginals and associated conditional laws satisfy a logarithmic Sobolev inequality. This is a generalization of a result by N. Grunewald et al. [N. Grunewald, F. Otto, C. Villani, M.G. Westdickenberg, A two-scale approach to logarithmic Sobolev inequalities and the hydrodynamic limit, Ann. Inst. H. Poincaré Probab. Statist., in press].}
}

@article {Holley,
    AUTHOR = {Holley, Richard A. and Kusuoka, Shigeo and Stroock, Daniel W.},
     TITLE = {Asymptotics of the spectral gap with applications to the
              theory of simulated annealing},
   JOURNAL = {J. Funct. Anal.},
  FJOURNAL = {Journal of Functional Analysis},
    VOLUME = {83},
      YEAR = {1989},
    NUMBER = {2},
     PAGES = {333--347},
      ISSN = {0022-1236},
     CODEN = {JFUAAW},
   MRCLASS = {60J60 (47A10 47F05 60K99)},
  MRNUMBER = {995752 (92d:60081)},
MRREVIEWER = {Shuenn Jyi Sheu},
       DOI = {10.1016/0022-1236(89)90023-2},
       URL = {http://dx.doi.org/10.1016/0022-1236(89)90023-2},
}

@article {Villani2009,
    AUTHOR = {Villani, Cédric},
     TITLE = {Hypocoercivity},
   JOURNAL = {Mem. Amer. Math. Soc.},
  FJOURNAL = {Memoirs of the American Mathematical Society},
    VOLUME = {202},
      YEAR = {2009},
    NUMBER = {950},
     PAGES = {iv+141},
      ISSN = {0065-9266},
     CODEN = {MAMCAU},
      ISBN = {978-0-8218-4498-4},
   MRCLASS = {35Q84 (35H10 76N10 76P05 82C70)},
  MRNUMBER = {2562709 (2011e:35381)},
MRREVIEWER = {Andr{\'a}s Domokos},
       DOI = {10.1090/S0065-9266-09-00567-5},
       URL = {http://dx.doi.org/10.1090/S0065-9266-09-00567-5},
}

@ARTICLE{2024arXiv240917901M,
       author = {{Monmarch{\'e}}, Pierre},
        title = "{Uniform log-Sobolev inequalities for mean field particles beyond flat-convexity}",
      journal = {to appear in Annales de la Facult\'e des sciences de Toulouse : Math\'ematiques},
        year = {2026}
}

@article{jourdain2010existence,
  title={Existence, uniqueness and convergence of a particle approximation for the adaptive biasing force process},
  author={Jourdain, Benjamin and Leli{\`e}vre, Tony and Roux, Rapha{\"e}l},
  journal={ESAIM: Mathematical Modelling and Numerical Analysis},
  volume={44},
  number={5},
  pages={831--865},
  year={2010},
  publisher={EDP Sciences}
}

@article{carrillo2024fisher,
  title={Fisher-Rao Gradient Flow: Geodesic Convexity and Functional Inequalities},
  author={Carrillo, Jos{\'e} A and Chen, Yifan and Huang, Daniel Zhengyu and Huang, Jiaoyang and Wei, Dongyi},
  journal={arXiv preprint arXiv:2407.15693},
  year={2024}
}

@article{hyvarinen2005estimation,
  title={Estimation of non-normalized statistical models by score matching.},
  author={Hyv{\"a}rinen, Aapo and Dayan, Peter},
  journal={Journal of Machine Learning Research},
  volume={6},
  number={4},
  year={2005}
}

@article{gianazza2009wasserstein,
  title={The {W}asserstein gradient flow of the {F}isher information and the quantum drift-diffusion equation},
  author={Gianazza, Ugo and Savar{\'e}, Giuseppe and Toscani, Giuseppe},
  journal={Archive for rational mechanics and analysis},
  volume={194},
  number={1},
  pages={133--220},
  year={2009},
  publisher={Springer}
}

@ARTICLE{GuillinMonmarche,
       author = {{Guillin}, Arnaud and {Monmarch{\'e}}, Pierre},
        title = "{Uniform long-time and propagation of chaos estimates for mean field kinetic particles in non-convex landscapes}",
      journal = {J Stat Phys},
      volume={185},
      number={15},
      year={2021},
      doi={https://doi.org/10.1007/s10955-021-02839-6}
}

@ARTICLE{Delgadino,
       author = {{Delgadino}, Mat{\'\i}as G. and {Gvalani}, Rishabh S. and {Pavliotis}, Grigorios A. and {Smith}, Scott A.},
        title = "{Phase Transitions, Logarithmic {S}obolev Inequalities, and Uniform-in-Time Propagation of Chaos for Weakly Interacting Diffusions}",
      journal = {Communications in Mathematical Physics},
     keywords = {Mathematics - Probability, Mathematical Physics, Mathematics - Analysis of PDEs},
         year = 2023,
        month = jul,
       volume = {401},
       number = {1},
        pages = {275-323},
          doi = {10.1007/s00220-023-04659-z},
archivePrefix = {arXiv},
       eprint = {2112.06304},
 primaryClass = {math.PR},
       adsurl = {https://ui.adsabs.harvard.edu/abs/2023CMaPh.401..275D},
      adsnote = {Provided by the SAO/NASA Astrophysics Data System},
}

@ARTICLE{Songbo,
       author = {{Wang}, Songbo},
        title = "{Uniform log-Sobolev inequalities for mean field particles with flat-convex energy}",
      journal = {arXiv e-prints},
     keywords = {Mathematics - Probability, 26D10 (Primary) 39B62, 60E15 (Secondary)},
         year = 2024,
        month = aug,
          eid = {arXiv:2408.03283},
        pages = {arXiv:2408.03283},
          doi = {10.48550/arXiv.2408.03283},
archivePrefix = {arXiv},
       eprint = {2408.03283},
 primaryClass = {math.PR},
       adsurl = {https://ui.adsabs.harvard.edu/abs/2024arXiv240803283W},
      adsnote = {Provided by the SAO/NASA Astrophysics Data System}
}

@article{M15,
author = {Monmarch{\'e}, Pierre},
     TITLE = {Long-time behaviour and propagation of chaos for mean field
              kinetic particles},
   JOURNAL = {Stochastic Process. Appl.},
  FJOURNAL = {Stochastic Processes and their Applications},
    VOLUME = {127},
      YEAR = {2017},
    NUMBER = {6},
     PAGES = {1721--1737},
      ISSN = {0304-4149},
   MRCLASS = {60J60 (35B40 35K58 35Q83 35R09 82B40)},
  MRNUMBER = {3646428},
       DOI = {10.1016/j.spa.2016.10.003},
       URL = {https://doi.org/10.1016/j.spa.2016.10.003},
}

@Article{ActaNumerica,
  author  = {Tony Lelièvre and Gabriel Stoltz},
  title   = {Partial differential equations and stochastic methods in molecular dynamics},
  journal = {Acta Numerica},
  year    = {2016},
  volume  = {25},
  groups  = {ABF convergence, Generalities on Statistical Physics},
}

@article{wang2001efficient,
  title={Efficient, multiple-range random walk algorithm to calculate the density of states},
  author={Wang, Fugao and Landau, David P.},
  journal={Physical review letters},
  volume={86},
  number={10},
  pages={2050},
  year={2001},
  publisher={APS}
}

@article{chevallier2020wang,
  title={Wang-{L}andau Algorithm: an adapted random walk to boost convergence},
  author={Chevallier, Augustin and Cazals, Fr{\'e}d{\'e}ric},
  journal={Journal of Computational Physics},
  volume={410},
  pages={109366},
  year={2020},
  publisher={Elsevier}
}

@article{liu2016stein,
  title={Stein variational gradient descent: A general purpose bayesian inference algorithm},
  author={Liu, Qiang and Wang, Dilin},
  journal={Advances in neural information processing systems},
  volume={29},
  year={2016}
}

@article{BBM,
author = {Michel Bena{\"i}m and Charles-Edouard Br{\'e}hier and Pierre Monmarch{\'e}},
title = {{Analysis of an Adaptive Biasing Force method based on self-interacting dynamics}},
volume = {25},
journal = {Electronic Journal of Probability},
number = {none},
publisher = {Institute of Mathematical Statistics and Bernoulli Society},
pages = {1 -- 28},
keywords = {adaptive biasing, free energy computation, Self-interacting diffusions},
year = {2020},
doi = {10.1214/20-EJP490},
URL = {https://doi.org/10.1214/20-EJP490}
}

@article{fort2017self,
  title={Self-healing umbrella sampling: convergence and efficiency},
  author={Fort, Gersende and Jourdain, Benjamin and Leli{\`e}vre, Tony and Stoltz, Gabriel},
  journal={Statistics and Computing},
  volume={27},
  pages={147--168},
  year={2017},
  publisher={Springer}
}

@article{marsili2006self,
  title={Self-healing umbrella sampling: a non-equilibrium approach for quantitative free energy calculations},
  author={Marsili, Simone and Barducci, Alessandro and Chelli, Riccardo and Procacci, Piero and Schettino, Vincenzo},
  journal={The Journal of Physical Chemistry B},
  volume={110},
  number={29},
  pages={14011--14013},
  year={2006},
  publisher={ACS Publications}
}

@article{Metadynamics,
author = {Alessandro Laio  and Michele Parrinello },
title = {Escaping free-energy minima},
journal = {Proceedings of the National Academy of Sciences},
volume = {99},
number = {20},
pages = {12562-12566},
year = {2002},
doi = {10.1073/pnas.202427399},
URL = {https://www.pnas.org/doi/abs/10.1073/pnas.202427399},
eprint = {https://www.pnas.org/doi/pdf/10.1073/pnas.202427399},
abstract = {We introduce a powerful method for exploring the properties of the multidimensional free energy surfaces (FESs) of complex many-body systems by means of coarse-grained non-Markovian dynamics in the space defined by a few collective coordinates. A characteristic feature of these dynamics is the presence of a history-dependent potential term that, in time, fills the minima in the FES, allowing the efficient exploration and accurate determination of the FES as a function of the collective coordinates. We demonstrate the usefulness of this approach in the case of the dissociation of a NaCl molecule in water and in the study of the conformational changes of a dialanine in solution.}}

@article{welltempered,
  title = {Well-Tempered Metadynamics: A Smoothly Converging and Tunable Free-Energy Method},
  author = {Barducci, Alessandro and Bussi, Giovanni and Parrinello, Michele},
  journal = {Phys. Rev. Lett.},
  volume = {100},
  issue = {2},
  pages = {020603},
  numpages = {4},
  year = {2008},
  month = {Jan},
  publisher = {American Physical Society},
  doi = {10.1103/PhysRevLett.100.020603},
  url = {https://link.aps.org/doi/10.1103/PhysRevLett.100.020603}
}

@Article{Comer,
  author    = {Comer, Jeffrey and Gumbart, James C. and H{\'e}nin, J{\'e}r{\^o}me and Leli{\`e}vre, Tony and Pohorille, Andrew and Chipot, Christophe},
  title     = {{The Adaptive Biasing Force Method: everything you always wanted to know but were afraid to ask}},
  journal   = {{Journal of Physical Chemistry B}},
  year      = {2015},
  volume    = {119},
  number    = {3},
  pages     = {1129--1151},
  month     = {January},
  groups    = {Chemists' point of view},
  publisher = {{American Chemical Society}},
}

@Book{FreeEnergy,
  title     = {Free Energy Computations},
  publisher = {Imperial College Press},
  year      = {2010},
  author    = {Lelièvre, Tony and Rousset, Mathias and Stoltz, Gabriel},
  groups    = {Generalities on Statistical Physics},
}

@article{bolley2010trend,
  title={Trend to equilibrium and particle approximation for a weakly selfconsistent {V}lasov-{F}okker-{P}lanck equation},
  author={Bolley, Fran{\c{c}}ois and Guillin, Arnaud and Malrieu, Florent},
  journal={ESAIM: Mathematical Modelling and Numerical Analysis},
  volume={44},
  number={5},
  pages={867--884},
  year={2010},
  publisher={EDP Sciences}
}

@article{herau2007short,
  title={Short and long time behavior of the {F}okker--{P}lanck equation in a confining potential and applications},
  author={H{\'e}rau, Fr{\'e}d{\'e}ric},
  journal={Journal of Functional Analysis},
  volume={244},
  number={1},
  pages={95--118},
  year={2007},
  publisher={Elsevier}
}

@article{herau2016global,
  title={On global existence and trend to the equilibrium for the {V}lasov--{P}oisson--{F}okker--{P}lanck system with exterior confining potential},
  author={H{\'e}rau, Fr{\'e}d{\'e}ric and Thomann, Laurent},
  journal={Journal of Functional Analysis},
  volume={271},
  number={5},
  pages={1301--1340},
  year={2016},
  publisher={Elsevier}
}

@article{M40,
     author = {Monmarch\'e, Pierre},
     title = {Wasserstein contraction and {Poincar\'e} inequalities for elliptic diffusions with high diffusivity},
     journal = {Annales Henri Lebesgue},
     pages = {941--973},
     publisher = {\'ENS Rennes},
     volume = {6},
     year = {2023},
     doi = {10.5802/ahl.182},
     language = {en},
     url = {https://ahl.centre-mersenne.org/articles/10.5802/ahl.182/}
}

@ARTICLE{Elementary,
       author = {{Monmarch{\'e}}, Pierre},
	title = {Elementary coupling approach for non-linear perturbation of {M}arkov processes with mean-field jump mechanisms and related problems},
	DOI= "10.1051/ps/2023002",
	url= "https://doi.org/10.1051/ps/2023002",
	journal = {ESAIM: PS},
	year = 2023,
	volume = 27,
	pages = "278-323",
}

@ARTICLE{JournelMonmarcheFV,
  title={Uniform convergence of the Fleming--Viot process in a hard killing metastable case},
  author={Journel, Lucas and Monmarch{\'e}, Pierre},
  journal={The Annals of Applied Probability},
  volume={35},
  number={2},
  pages={1019--1048},
  year={2025},
  publisher={Institute of Mathematical Statistics}
}

@article{mattingly2002ergodicity,
  title={Ergodicity for SDEs and approximations: locally {L}ipschitz vector fields and degenerate noise},
  author={Mattingly, Jonathan C. and Stuart, Andrew M. and Higham, Desmond J.},
  journal={Stochastic processes and their applications},
  volume={101},
  number={2},
  pages={185--232},
  year={2002},
  publisher={Elsevier}
}

@article {Talay,
    AUTHOR = {Talay, Denis},
     TITLE = {Stochastic {H}amiltonian systems: exponential convergence to
              the invariant measure, and discretization by the implicit
              {E}uler scheme},
   JOURNAL = {Markov Process. Related Fields},
  FJOURNAL = {Markov Processes and Related Fields},
    VOLUME = {8},
      YEAR = {2002},
    NUMBER = {2},
     PAGES = {163--198},
      ISSN = {1024-2953},
   MRCLASS = {60H10 (37J99 60H35)},
  MRNUMBER = {1924934 (2003e:60129)},
MRREVIEWER = {Jan I. Seidler},
}

@book {BakryGentilLedoux,
    AUTHOR = {Bakry, Dominique and Gentil, Ivan and Ledoux, Michel},
     TITLE = {Analysis and geometry of {M}arkov diffusion operators},
    SERIES = {Grundlehren der Mathematischen Wissenschaften [Fundamental
              Principles of Mathematical Sciences]},
    VOLUME = {348},
 PUBLISHER = {Springer, Cham},
      YEAR = {2014},
     PAGES = {xx+552},
      ISBN = {978-3-319-00226-2; 978-3-319-00227-9},
   MRCLASS = {60J25 (58J65 60J35 60J60)},
  MRNUMBER = {3155209},
MRREVIEWER = {Ming Liao},
       DOI = {10.1007/978-3-319-00227-9},
       URL = {http://dx.doi.org/10.1007/978-3-319-00227-9},
}

@ARTICLE{LRS07,
       author = {{Leli\`evre}, Tony and {Rousset}, Mathias and
         {Stoltz}, Gabriel},
        title = "{Long-time convergence of an {A}daptive {B}iasing {F}orce method}",
      journal = {Nonlinearity},
      volume = {21},
      publisher = {IOP Publishing},
      year={2008}
}

@ARTICLE{Chipot2011,
       author = {{Chipot}, Chris and {Leli{\`e}vre}, Tony},
        title = "{Enhanced sampling of multidimensional free-energy landscapes using adaptive biasing forces}",
      journal = {SIAM Journal on Applied Mathematics},
         year = {2011},
        volume = {71},
        number = {5},
        pages = {1673--1695},
          doi = {10.1137/10080600X},
      hal_id = {hal-00510981}
}

@Article{Henin-Chipot,
  author  = {Hénin,Jérôme and Chipot,Christophe},
  title   = {Overcoming free energy barriers using unconstrained molecular dynamics simulations},
  journal = {The Journal of Chemical Physics},
  year    = {2004},
  volume  = {121},
  number  = {7},
  pages   = {2904-2914},
  doi     = {10.1063/1.1773132}
}

@Article{Darve-Pohorille,
  author  = {Darve,Eric and Pohorille,Andrew},
  title   = {Calculating free energies using average force},
  journal = {The Journal of Chemical Physics},
  year    = {2001},
  volume  = {115},
  number  = {20},
  pages   = {9169-9183},
  doi     = {10.1063/1.1410978}
}

@book{robert-casella-2004,
  title={Monte Carlo Statistical Methods},
  author={Robert, Christian P. and Casella, George},
  year={2004},
  publisher={Springer-Verlag}
}

@book{bach-2024,
  title={Learning theory from first principles},
  author={Bach, Francis},
  year={2024},
  publisher={MIT press}
}

@article{henin-lelievre-shirts-valsson-delemotte-22,
title={Enhanced Sampling Methods for Molecular Dynamics Simulations},
volume={4},
number={1},
journal={Living Journal of Computational Molecular Science},
author={Hénin, Jérôme and Lelièvre, Tony and Shirts, Michael R. and Valsson, Omar and Delemotte, Lucie},
year={2022},
pages={1583}
}

@Book{chipot-pohorille-07,
  editor =		 {Chipot, Chris and Pohorille, Andrew},
  title = 		 {Free Energy Calculations},
  publisher = 	 {Springer},
  year = 		 2007,
  volume =		 86,
  series =		 {Springer Series in Chemical Physics}
}

@Article{lelievre-rousset-stoltz-07-b,
  author = 		 {Leli\`evre, Tony and Rousset, Mathias and Stoltz, Gabriel},
  title = 		 {Computation of free energy profiles with adaptive parallel dynamics},
  journal = 	 {J. Chem. Phys.},
  year = 		 2007,
  volume =		 126,
  pages =		 134111
}

@Article{fort-jourdain-lelievre-stoltz-18,
  author = 	 {Fort, Gersende and Jourdain, Benjamin and Leli\`evre, Tony and Stoltz, Gabriel},
  title = 	 {Convergence and Efficiency of Adaptive Importance Sampling Techniques with Partial Biasing},
  journal = 	 {J. Stat. Phys},
  year = 	 2018,
  volume =		 171,
  pages =		 {220-268}
}

@Article{fort-jourdain-kuhn-lelievre-stoltz-15,
  author = 	 {Fort, Gersende and Jourdain, Benjamin and Kuhn, Esetelle and Leli\`evre, Tony and Stoltz, Gabriel},
  title = 	 {Convergence  of the {W}ang-{L}andau algorithm},
  journal = 	 {Math. Comput.},
  year = 	 2015,
  volume =		 84,
  number =		 295,
  pages =		 {2297-2327}
}

@article{benaim-brehier-monmarche-20,
  title={Analysis of an Adaptive Biasing Force method based on self-interacting dynamics},
  author={Bena{\"\i}m, Michel and Br{\'e}hier, Charles-Edouard and Monmarch{\'e}, Pierre},
  year={2020},
  journal = 	 {Electron. J. Probab.},
  volume = 	 25,
  pages = 	 {1-28}
}

@article{benaim-brehier-16,
  title={Convergence of adaptive biasing potential methods for diffusions},
  author={Bena{\"\i}m, Michel and Br{\'e}hier, Charles-{\'E}douard},
  journal={Comptes Rendus. Math{\'e}matique},
  volume={354},
  number={8},
  pages={842--846},
  year={2016}
}

@article{ehrlacher-lelievre-monmarche-22,
  title={Adaptive force biasing algorithms: new convergence results and tensor approximations of the bias},
  author={Ehrlacher, Virginie and Leli{\`e}vre, Tony and Monmarch{\'e}, Pierre},
  journal={The Annals of Applied Probability},
  volume={32},
  number={5},
  pages={3850--3888},
  year={2022},
  publisher={Institute of Mathematical Statistics}
}

@article{lelievre-maurin-monmarche-22,
  title={The adaptive biasing force algorithm with non-conservative forces and related topics},
  author={Leli{\`e}vre, Tony and Maurin, Lise and Monmarch{\'e}, Pierre},
  journal={ESAIM: Mathematical Modelling and Numerical Analysis},
  volume={56},
  number={2},
  pages={529--564},
  year={2022},
  publisher={EDP Sciences}
}

@Article{lelievre-minoukadeh-11,
  author = 	 {Leli{\`e}vre, Tony and Minoukadeh, Kimiya},
  title = 	 {Long-time convergence of an Adaptive Biasing Force method: 
the bi-channel case},
  journal = 	 {Archive for Rational Mechanics and Analysis},
  year =         2011,
  volume =		 202,
  number =		 1,
  pages =		 {1-34}
}

@Article{chopin-lelievre-stoltz-12,
  author = 	 {Chopin, Nicolas and Leli\`{e}vre, Tony and Stoltz, Gabriel},
  title = 	 {Free energy methods for {B}ayesian inference: efficient exploration of univariate {G}aussian mixture posteriors},
  journal =      {Stat. Comput.},
  year = 	 2012,
  volume =       22,
  number =       4,
  pages =        {897-916}
}

@article{valsson2016enhancing,
  title={Enhancing important fluctuations: Rare events and metadynamics from a conceptual viewpoint},
  author={Valsson, Omar and Tiwary, Pratyush and Parrinello, Michele},
  journal={Annual review of physical chemistry},
  volume={67},
  number={1},
  pages={159--184},
  year={2016},
  publisher={Annual Reviews}
}

\end{document}